\documentclass[11pt,reqno]{amsart}
\usepackage[utf8]{inputenc}
\usepackage{amsmath,amssymb,amsthm,mathtools}
\usepackage{enumitem}
\usepackage[margin=1in]{geometry}
\usepackage{comment}

\newif\ifPDFout
\ifx\pdfoutput\undefined\else\ifnum\pdfoutput>0 \PDFouttrue\fi\fi
\ifPDFout
  \usepackage{hyperref}
  \hypersetup{colorlinks=true,linkcolor=blue,citecolor=blue}
\else
  \providecommand{\texorpdfstring}[2]{#1}
  
\fi

\DeclareTextSymbol{\S}{OMS}{120}
\DeclareTextSymbolDefault{\S}{OMS}

\theoremstyle{plain}
\newtheorem{theorem}{Theorem}[section]
\newtheorem{proposition}[theorem]{Proposition}
\newtheorem{lemma}[theorem]{Lemma}
\newtheorem{corollary}[theorem]{Corollary}
\theoremstyle{definition}
\newtheorem{definition}[theorem]{Definition}
\newtheorem{remark}[theorem]{Remark}
\newtheorem{example}[theorem]{Example}

\numberwithin{equation}{section}
\DeclareMathOperator{\GL}{GL}
\DeclareMathOperator{\Mat}{Mat}
\DeclareMathOperator{\Hom}{Hom}
\DeclareMathOperator{\Ind}{Ind}
\DeclareMathOperator{\tr}{tr}
\DeclareMathOperator{\val}{val}
\DeclareMathOperator{\WF}{WF}
\DeclareMathOperator{\SL}{SL}
\DeclareMathOperator{\Sp}{Sp}
\DeclareMathOperator{\SO}{SO}
\DeclareMathOperator{\GSpin}{GSpin}
\DeclareMathOperator{\vol}{vol}
\DeclareMathOperator{\pr}{pr}
\DeclareMathOperator{\PD}{PD}
\newcommand{\Oo}{\mathcal{O}}
\newcommand{\Rep}{\mathrm{Rep}}
\newcommand{\Irr}{\mathrm{Irr}}
\newcommand{\bR}{\mathbb{R}}
\newcommand{\bC}{\mathbb{C}}
\newcommand{\bZ}{\mathbb{Z}}
\newcommand{\bA}{\mathbb{A}}
\newcommand{\bF}{\mathbb{F}}
\newcommand{\cW}{\mathcal{W}}
\newcommand{\cX}{\mathcal{X}}
\newcommand{\cV}{\mathcal{V}}

\newcommand{\cT}{\mathcal{T}}
\newcommand{\diag}{\operatorname{diag}}
\newcommand{\abs}[1]{\lvert #1\rvert}
\newcommand{\norm}[1]{\lVert #1\rVert}
\newcommand{\extn}[2]{\lvert\wedge^{#1}(#2)\rvert_{\max}}
\newcommand{\ct}{\mathfrak{c}}

\newcommand{\bN}{\overline{N}}
\newcommand{\bu}{\bar{u}}
\newcommand{\bv}{\bar{v}}
\newcommand{\fd}{\mathfrak{d}}
\newcommand{\fX}{\mathfrak{X}}
\newcommand{\fY}{\mathfrak{Y}}
\newcommand{\bV}{\overline{V}}
\newcommand{\Psn}{\Psi_{N}}
\newcommand{\Psu}{\Psi_{U'}}

\begin{document}

\title[Bounds for \texorpdfstring{\lowercase{$(k,c)$}}{(k,c)} model functions]
{Bounds for \texorpdfstring{\lowercase{$(k,c)$}}{(k,c)} model functions over \texorpdfstring{\lowercase{$p$}}{p}-adic fields}
\author{Eyal Kaplan}
\address{Department of Mathematics, Bar Ilan University, Ramat Gan 5290002, Israel}
\email{kaplaney@gmail.com}
\subjclass[2020]{Primary 22E50; Secondary 11F70, 11F66, 22E55}
\keywords{$(k,c)$ representations, Speh representations, $(k,c)$ models, generalized doubling method, covering groups}

\begin{abstract}
Let $F$ be a $p$-adic field, let $k,c\ge1$, and let $\rho$ be a $(k,c)$ representation of
$\GL_{kc}(F)$: a complex representation of finite length whose wave front set has $(k^c)$ as
its unique maximal orbit, and which admits a $(k,c)$ functional that is unique up to scaling.
We prove that the functions in the
$(k,c)$ model of $\rho$ are of moderate growth, by relating this model to the Zelevinsky model
of $\rho$, and obtain as a consequence a bound for the restriction of these functions to the
standard Levi subgroup associated with the composition $(c^k)$ of $kc$. As applications, we study convergence problems 
of several integrals involving $(k,c)$ models: integrals from the generalized doubling method, and a new family of
integrals which extends the Rankin--Selberg integrals for Speh representations of Lapid and Mao.
\end{abstract}

\maketitle

\section{Introduction}\label{sec:intro}

Let $F$ be a non-archimedean local field of characteristic $0$, with ring of integers $\Oo$. Let $k,c\ge1$, and denote $n=kc$. Fix a nontrivial additive character $\psi$ of $F$.
Let $P=M\ltimes U$ be the standard parabolic subgroup of $\GL_n$ attached to the composition $(c^k)$, with $U$ its unipotent radical.
Define the character $\psi_k$ of $U(F)$ by $\psi_k(u)=\psi\big(\sum_{i=1}^{(k-1)c}u_{i,c+i}\big)$. A (complex) finite-length representation $\rho$ of $\GL_n(F)$
is called a $(k,c)$ representation if the nilpotent orbit $(k^c)$ is the unique maximal orbit in its wave front set and, moreover, $\dim\Hom_{U(F)}(\rho,\psi_k)=1$.
In this case a nonzero $\lambda\in \Hom_{U(F)}(\rho,\psi_k)$ is called a $(k,c)$ functional on $\rho$. The $(k,c)$ model is spanned by the functions \[W_{\xi}(g):=\lambda(\rho(g)\xi)\] with $\xi$ in the space $\cV_\rho$ of $\rho$. Define a height $\norm{\cdot}$ on $\GL_n(F)$ by composing the embedding $g\mapsto \diag(g,g^{-1})$
of $\GL_n(F)$ into $\SL_{2n}(F)$ with the sup-norm on $\SL_{2n}(F)$.
\begin{theorem}[see Theorem~\ref{thm:mgpsi}]\label{thm:intromg}
Let $\rho$ be a $(k,c)$ representation on a space $\cV_\rho$, and fix a $(k,c)$ functional $\lambda$ on $\rho$.
There is a constant $D\ge0$ such that for each $\xi\in\cV_\rho$ there is a constant $C(\xi)\geq0$ such that
\[\abs{W_{\xi}(g)}\leq C(\xi)\norm{g}^D,\qquad\forall g\in \GL_n(F).\]
\end{theorem}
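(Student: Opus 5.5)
The plan is to reduce moderate growth of the $(k,c)$ model to moderate growth of a model whose growth is already understood: the Zelevinsky (degenerate Whittaker) model attached to the orbit $(k^c)$. The abstract already points this way. Let $\lambda'$ be a functional on $\rho$ that is equivariant for a unipotent subgroup $N'$ and a degenerate character $\psi'$ attached to $(k^c)$. Concretely, take $N_n$, the maximal unipotent subgroup, with the character $\psi'(u)=\psi(\sum_i u_{i,i+1})$, where the sum runs over $i$ not divisible by $c$, or the transposed/conjugated version attached to the composition $(c^k)$. Since $(k^c)$ is the unique maximal orbit in the wave front set of $\rho$, the Mœglin--Waldspurger theory identifies the degenerate Whittaker functionals for $(k^c)$ with the $(k,c)$ functionals. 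This identification is realized by an explicit root-exchange integral
\[
\lambda'(\xi)=\int_{Y}\lambda(\rho(y w)\xi)\,dy,
\qquad\text{or conversely}\qquad
\lambda(\xi)=\int_{Y'}\lambda'(\rho(y w')\xi)\,dy ,
\]
with $w,w'$ fixed Weyl elements and $Y,Y'$ unipotent subgroups. The integrals are finite sums after smoothing, because one integrates a compactly supported function of the abelian variables. The first step is to set up this root exchange carefully and check that all the integrals converge absolutely, that is, that the integrands are compactly supported in $Y'$ for fixed $\xi$.

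Second, I would prove moderate growth for the Zelevinsky-type functions $W'_\xi(g)=\lambda'(\rho(g)\xi)$. Here $\rho$ has finite length and is realized as a quotient of an induced representation $\Ind_{Q}^{\GL_n}(\sigma)$ from products of characters or segments. The functional $\lambda'$ is then a Zelevinsky-model (Jacquet-type) integral, and the standard argument applies. One writes $g=u a k$ by Iwasawa decomposition, reduces to the torus, and uses asymptotic expansions via Jacquet modules along the standard parabolics (Casselman's theory). These give $\abs{W'_\xi(a)}\le C(\xi)\norm{a}^{D}$ on the positive cone with a uniform $D$ determined by the finitely many exponents of $\rho$. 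Outside the region where the character is nondegenerate on the support, $W'$ vanishes by the usual argument $W'(a)=\psi'(aua^{-1})W'(a)$ for $u$ small fixing $\xi$. Uniformity of $D$ in $\xi$ comes from finite length: there are only finitely many central exponents.

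Third, I would transport the bound back. In the expression $W_\xi(g)=\int_{Y'}W'_\xi(y w' g)\,dy$, the integrand is supported in a compact set. The obstacle is that this compact set may depend on $g$. Writing $g=uak$, the $k$ part is absorbed into $\xi$ by $K$-finiteness modulo an open subgroup, giving finitely many vectors. Conjugating $Y'$ by $a$, the support grows at most polynomially in $\norm{a}$, because the smoothness of $\rho(a)\xi$ degrades polynomially. Hence the volume of the support is bounded by $\norm{g}^{D''}$, and $\norm{yw'g}\le \norm{y}\norm{g}$ with $\norm{y}$ polynomially bounded on the support. Combining these gives $\abs{W_\xi(g)}\le C(\xi)\norm{g}^{D}$.

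I expect this support control to be the main difficulty. Making it polynomial rather than merely finite requires an inductive root-exchange argument, carried out one root block at a time. At each step one uses the smoothness of $W'$ under the exchanged roots together with the vanishing forced by the character, exactly as in Soudry-style root exchange lemmas.
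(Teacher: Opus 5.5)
Your overall plan is the paper's. You use a Zelevinsky functional for $(k^c)$, prove moderate growth of the Zelevinsky functions, and transfer through the Lapid--Mao integral $W_\xi(g)=c_0^{-1}\int_{U'\cap\bN}W^{\mathcal Z}_\xi(\bu w_0g)\,d\bu$, bounding it by the volume of the support times the supremum on it. One correction: the degenerate character must omit the positions divisible by $k$, so that it is generic on $N\cap M_{(k^c)}$, not on $N\cap M_{(c^k)}$.

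\textbf{The gap.} It lies in the step you yourself call the main difficulty. You never show that the support of $\bu\mapsto W^{\mathcal Z}_\xi(\bu w_0g)$ grows polynomially in $\norm g$ with an exponent independent of $\xi$, and the mechanism you sketch does not work.
\begin{enumerate}
\item You cannot Iwasawa-decompose $g=uak$ and pass to $a$. With $u\in N$, $a\in T$, the $N\cap M$-component of $u$ cannot be discarded, because $\lambda$ is only $(U,\psi_k)$-equivariant; this is exactly why $(k,c)$ functions are not controlled by their restriction to $T$.
\item With $u\in U$, $a\in M$, the element $w_0aw_0^{-1}$ normalizes $U'$ but not $U'\cap\bN$. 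So there is no change of variables ``conjugating $Y'$ by $a$''.
\item Your observation that smoothness degrades polynomially is correct: $\rho(g)\xi$ has level at most $r+2\log_q\norm g$. But turning this into a support bound needs a support estimate at a fixed point that is explicitly of size $q^{O(r)}$ in the level. A compactness statement such as \cite[Lemma~3.8]{LM20} gives no rate.
\end{enumerate}

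\textbf{How the paper proves the missing bound.} This is Lemma~\ref{lem:keysupport}, and the idea is to decompose $h=\bu g$, not $g$. Write $h=n_ha\kappa$.
\begin{enumerate}
\item The level gives $\abs{a_i}\le q^{r-e_\psi}\abs{a_{i+1}}$ for $k\nmid i$.
\item Across the positions $jk$, $\Psn$ is trivial and no vanishing argument applies. These directions are handled by $U'\subset D$: the rows $jk$ of $\bu$ are standard basis vectors, so row $jk$ of $h$ equals row $jk$ of $g$. Hence $\abs{a_{jk}}\le\norm g$.
\item Let $\mathcal{R}_i$ be the last $s=n-i+1$ rows of $\bu$. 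Then $\ct(\mathcal{R}_ig)=\abs{a_i\cdots a_n}$, and together with $\ct(\mathcal{R}_i)\le\ct(\mathcal{R}_ig)\norm g^{s}$ this bounds the bottom-row minors of $\bu$, among which are its entries.
\end{enumerate}

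\textbf{An alternative along your lines.} Your inductive root-exchange idea can also be made rigorous, through the maps $\cT_i$. Set $r'=r+2\log_q\norm g$. If $\varphi_g(u')\ne0$, then $c_i(u)(u')=1$ for all $u\in U_i\cap K(r')$. Since $c_i$ is a surjective pairing with integral coefficients, this confines $u'$ (modulo $U_i\cap U_{i+1}$) to a ball of radius $q^{r'+O(1)}$. So each $\cT_i$ preserves moderate growth, with a worse exponent.

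\textbf{Smaller points.}
\begin{enumerate}
\item For reducible $\rho$, neither the existence of the Zelevinsky functional nor the nonvanishing of its transfer follows directly from M\oe glin--Waldspurger, since $(N,\Psn)$ is not neutral for $c>1$. The paper uses \cite{GGS17} with exactness of twisted Jacquet functors for existence, and injectivity of $\cT$ (by Fourier inversion) for nonvanishing.
\item You do not need to realize $\rho$ as a quotient of an induced representation. The functional factors through $J_V(\rho)$.
\item The Whittaker asymptotics on $L$ must cover all of $T$, not only a positive cone. The $Z_L$-directions are handled by local finiteness of the $Z_L$-action.
\end{enumerate}
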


The stabilizer of $\psi_k$ in $M(F)$ is the diagonal embedding $\GL_c^{\triangle}(F)$ of $\GL_c(F)$ in $M(F)$. The functional $\lambda$ translates on the left under
$\GL_c^{\triangle}(F)$ with respect to a character $\eta_\rho$ of $\GL_c(F)$. If we write $m\in M(F)$ in the form $m=m_k^{\triangle}m'$, $m'=(m_k^{-1}m_1,\ldots,m_k^{-1}m_{k-1},I_c)$, we obtain
\[W_{\xi}(m)=\eta_\rho(m_k)W_{\xi}(m').\]
By a well-known argument going back to asymptotics of Whittaker functions (see, e.g., \cite{JPSS79}), the support of the function
$m'\mapsto W_{\xi}(m')$ is bounded in the sense that the entries of $m'$ belong to a certain (large) compact subgroup of $F$.
When we combine Theorem~\ref{thm:intromg} with this observation we derive the following result, a certain asymptotic bound for $(k,c)$ functions
along the full Levi subgroup $M$:

\begin{theorem}[see Theorem~\ref{thm:gaugepsi}]\label{thm:introgauge}
Let $D$ be the exponent of Theorem~\ref{thm:intromg}. For any $\xi\in \cV_{\rho}$, there is a constant $\gamma(\xi)$ such that for all
$m=\diag(m_1,\ldots,m_k)\in M(F)$, $u\in U(F)$ and $\kappa\in \GL_n(\Oo)$,
\begin{align*}
\abs{W_{\xi}(um\kappa)}\leq\gamma(\xi)\abs{\eta_\rho(m_k)}\cdot\prod_{i=1}^{k-1}\abs{\det (m_k^{-1}m_i)}^{-D}.
\end{align*}
In addition, there are Schwartz--Bruhat functions $\phi_1,\ldots,\phi_{k-1}$ on the space of $c\times c$ matrices over $F$ such that for all $m,u$ and $\kappa$ as above,
\begin{align*}
\abs{W_{\xi}(um\kappa)}\leq\abs{\eta_\rho(m_k)}\cdot\prod_{i=1}^{k-1}\abs{\det (m_k^{-1}m_i)}^{-D}\phi_i(m_{i+1}^{-1}m_i).
\end{align*}
\end{theorem}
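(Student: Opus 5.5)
First reduce to $u=I$, $\kappa=I$. Since $W_\xi(um\kappa)=\psi_k(u)W_{\rho(\kappa)\xi}(m)$ and $\abs{\psi_k(u)}=1$, we may take $u=I$. Because $\xi$ is smooth, the span of $\{\rho(\kappa)\xi:\kappa\in\GL_n(\Oo)\}$ is finite dimensional. It therefore suffices to prove both bounds for $W_{\xi_j}(m)$, where $\xi_1,\dots,\xi_r$ is a basis of this span, and then take the maximum (for $\gamma$) or the sum (for the $\phi_i$). Next write $m=m_k^{\triangle}m'$ with $m'=(a_1,\dots,a_{k-1},I_c)$ and $a_i=m_k^{-1}m_i$. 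Then $W_\xi(m)=\eta_\rho(m_k)W_\xi(m')$, so the factor $\abs{\eta_\rho(m_k)}$ is accounted for. Note that $m_{i+1}^{-1}m_i=a_{i+1}^{-1}a_i$, with $a_k=I_c$. Both statements thus become bounds for $W_\xi(m')$ in terms of $a_1,\dots,a_{k-1}$.

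Next, apply Theorem~\ref{thm:intromg}. We have $\norm{m'}=\max_i\max(\norm{a_i},\norm{a_i^{-1}},1)$, and this is not directly $\prod\abs{\det a_i}^{-1}$. So I would first use the support argument to cut the domain down.

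\emph{Support step.} For each $i\le k-1$, let $X$ be a $c\times c$ matrix and let $u_X\in U$ have $X$ in block $(i,i+1)$ and zeros elsewhere. There is a compact open $L\subset\Mat_c(F)$ with $\rho(u_X)\xi=\xi$ for $X\in L$. Conjugation gives $m'u_Xm'^{-1}=u_{a_i X a_{i+1}^{-1}}$. Therefore
\[W_\xi(m')=W_\xi(m'u_X)=\psi\big(\tr(a_iXa_{i+1}^{-1})\big)W_\xi(m').\]
So $W_\xi(m')\neq0$ forces $\psi(\tr(a_{i+1}^{-1}a_iX))=1$ for all $X\in L$. This confines $a_{i+1}^{-1}a_i$ to a compact set $\Omega$, namely the dual lattice of $L$. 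Set $\phi_i$ to be a suitable multiple of the characteristic function of $\Omega$. For the second inequality it then suffices to show that $\abs{W_\xi(m')}\le C\prod\abs{\det a_i}^{-D}$ on the support. The first inequality follows from the second by bounding the $\phi_i$ by constants.

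\emph{Main step.} On the support we bound $\norm{m'}$. Put $b_i=a_{i+1}^{-1}a_i\in\Omega$, so $a_i=b_{k-1}\cdots b_i$ is a product of matrices with bounded entries. Hence $\norm{a_i}\le C$. For the inverses, use the adjugate formula $a_i^{-1}=\det(a_i)^{-1}\mathrm{adj}(a_i)$, whose entries are bounded by $C\abs{\det a_i}^{-1}$. This gives
\[\norm{m'}\le C\max_i\max\bigl(1,\abs{\det a_i}^{-1}\bigr)\le C\prod_i\max\bigl(1,\abs{\det a_i}^{-1}\bigr).\]
Also $\abs{\det a_i}\le C'$ because $a_i$ has bounded entries, so $\max(1,\abs{\det a_i}^{-1})\le C''\abs{\det a_i}^{-1}$. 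Theorem~\ref{thm:intromg} then yields
\[\abs{W_\xi(m')}\le C(\xi)\,C^{D}\prod_i\abs{\det a_i}^{-D}.\]
The exponent $D$ is unchanged, as required.

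I expect the main obstacle to be this norm estimate: that is, converting $\norm{m'}$, which involves the entries of $a_i^{-1}$, into powers of $\abs{\det a_i}^{-1}$. This only works after the support step has bounded the entries. One must also check that the exponent arising from the adjugate does not exceed $D$. Here it is $1$ per factor before raising to the power $D$, so the exponent stays $D$.
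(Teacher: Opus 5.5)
Your proof is correct and follows essentially the same route as the paper. You reduce to $u=\kappa=I_n$, extract $\eta_\rho(m_k)$ via \eqref{eq:extractcenter}, and confine $m_{i+1}^{-1}m_i$ to a lattice using the unipotents in block $(i,i+1)$ (Lemma~\ref{lem:support}). You then bound the height by $\prod\abs{\det a_i}^{-1}$ via cofactors (Lemma~\ref{lem:cramer}), apply Theorem~\ref{thm:mgpsi}, and take the $\phi_i$ to be scaled characteristic functions of that lattice.

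The differences are cosmetic. The paper handles $\kappa$ by using the finite set of translates $\rho(\kappa)\xi$, which all have the same level; if you pass to a basis instead, you also need the boundedness of the coefficients $c_j(\kappa)$. Finally, your $\norm{a_i}$ must be read as the maximum of the absolute values of the entries, not the paper's height.
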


The analog of Theorem~\ref{thm:introgauge} over archimedean fields was proved in \cite[Lemma 3.14]{CFK-func} using the ideas of \cite{AGJ09} (see the discussion below).

Cai \textit{et al.} \cite{CFGK23} introduced a map $\rho_c$ from irreducible generic representations $\tau$ of $\GL_k(F)$ to $(k,c)$ representations of $\GL_n(F)$. When $\tau$ arises from a global construction, that is, $\tau$ is a local component of an irreducible cuspidal automorphic representation $\tau'$, and $\rho=\rho_c(\tau)$, it is useful to have a uniform bound for the constant $D$ in the results above, namely that $D$ applies simultaneously to $\rho_c(\tau'_v)$ for all finite places $v$. Our results express the constant $D$ using the cuspidal exponents of $\tau$, and we can thus obtain this uniformity, see
Theorems~\ref{thm:mgpsi} and \ref{thm:gaugepsi}.

In the local--global setting, that is, for $\rho=\rho_c(\tau'_v)$, where $\tau'$ is as above, we also give a
second, global proof of Theorems~\ref{thm:intromg} and \ref{thm:introgauge}, using a global
$(k,c)$ coefficient, see \eqref{eq:globalcoefficient} and Theorem~\ref{thm:globalgauge}.
This argument is softer than the local one, because in this setting we obtain the local theorem, 
Theorem~\ref{thm:intromg}, from the moderate growth of a single automorphic form. It also yields the uniformity of $D$ in $v$.

Our results also apply to certain central extensions of the group $\GL_n$; we discuss this in \S~\ref{sec:coverings}.

The elementary difficulty in showing that $W_{\xi}$ is of moderate growth is to control its restriction to $M(F)$. Our idea is to first show that functions in the Zelevinsky model are of moderate growth, then relate the two models. Let $N$ be the subgroup of $\GL_n$ of upper triangular unipotent matrices, and define a degenerate character $\Psi_N$ of $N(F)$ which is trivial on $U(F)$ but restricts to the generic character on $N(F)\cap M(F)$. Since $(k^c)$ is maximal in the wave front set of $\rho$,
$\rho$ admits a Zelevinsky model, i.e., a nonzero map $\rho\to\Ind_{N(F)}^{\GL_n(F)}(\Psi_N)$ which is unique up to scaling, see Lemmas~\ref{lem:zelexists} and \ref{lem:zele1dim}.
It is then straightforward to prove that functions in the Zelevinsky model have moderate growth, since these functions are left invariant under $U(F)$ and define Whittaker functions on $N(F)\cap M(F)$, where we can use the known asymptotic expansions for Whittaker functions, specifically \cite{LM09} and \cite{CM15}, to obtain the bound.

The two models are related by the transition map $\cT$ of Lapid and Mao \cite{LM20}, which is essentially a map
\[\mathcal{T}:\Ind_{N(F)}^{\GL_n(F)}\Psi_N\to\Ind_{U(F)}^{\GL_n(F)}\psi_k.\] We observe that $\mathcal{T}$ is injective (Lemma~\ref{lem:injective}), hence, because
$\dim\Hom_{U(F)}(\rho,\psi_k)=1$, the $(k,c)$ functional on $\rho$ can be realized by composing $\mathcal{T}$ with evaluation at the identity, see
Corollary~\ref{cor:identification}. It remains to bound $\mathcal{T}W^{\mathcal Z}_\xi$ where $W^{\mathcal Z}_\xi$ belongs to the image of $\rho$ in $\Ind_{N(F)}^{\GL_n(F)}(\Psi_N)$. The map $\mathcal{T}$ is given by an integral, and the integrand---$W^{\mathcal Z}_\xi$---is compactly supported on the domain of integration.
Our key insight here is to bound the support of $g\cdot W^{\mathcal Z}_\xi$ with explicit dependence on $g$ (Lemma~\ref{lem:keysupport}). This implies
Theorem~\ref{thm:intromg}.

It is worthwhile to compare the proof here with the archimedean proof. In the archimedean case, Theorem~\ref{thm:intromg} is immediate because
$\rho$ itself is of moderate growth and the functional is continuous. Over $p$-adic fields, since $\lambda$ is not smooth, $W_\xi$ is not a matrix coefficient of $\rho$ and there is a priori no reason for $W_\xi(g)$ to be bounded by a power of $\norm g$.
But the support argument, which allows us to deduce Theorem~\ref{thm:introgauge} from Theorem~\ref{thm:intromg}, is insufficient in the archimedean case,
because Schwartz functions need not be compactly supported; even if they were, the bounding constant would not be continuous as a function of the vector $\xi$. Instead, the proof proceeds
by differentiating with respect to the action of the enveloping algebra, in a way similar to the classical proof that the Fourier transform is a continuous map $\mathcal S(\bR)\to \mathcal S(\bR)$.

For $c=1$, the $(k,1)$ functional is simply a Whittaker functional. In this case, by the Iwasawa decomposition of $\GL_n(F)$ with respect to a Borel subgroup, one reduces to the study of Whittaker functions on the diagonal torus. Here the argument is classical and uses only the geometric properties of the functional; the results hold even if the dimension of the space of $(k,1)$ functionals is more than $1$. Very briefly, two functions $f_1$ and $f_2$ on the diagonal torus are defined to be equivalent if $f_1-f_2$ vanishes along a certain direction. The space of equivalence classes, i.e., the germ at $0$, is a quotient of a Jacquet module of the representation along the unipotent radical of a certain parabolic subgroup, and the center of the Levi part acts locally finitely. This gives the asymptotic expansion. See, e.g., \cite{JPSS79,JPSS83,CS80,LM09}.

When $k=2$, the $(2,c)$ functional is similar to the Shalika functional studied by Jacquet and Rallis \cite[\S 6.2]{JR96} (the Shalika model appeared first in \cite{JacquetShalika1990} in a global context); in that case an asymptotic expansion along the diagonal torus, similar to the case $c=1$, also leads to the bounds we obtain, and our results can be derived directly from their work.
For the case $k=2$ the asymptotic expansion uses a bit more, namely the left equivariance of $\lambda$ under $\GL_c^{\triangle}$ (which follows immediately when $\lambda$ is unique up to scaling), but still does not use the assumptions on the wave front set of $\rho$ (\cite[\S 6.2]{JR96} considered a more general class of representations).

In fact, it is straightforward to adapt \cite[\S 6.2]{JR96} to the restriction of $(k,c)$ model functions, for all $k$ and $c$, to
$\GL_c(F)$ embedded in the top left corner of $\GL_n(F)$. In a slightly more general setup, namely for the restriction of $(k,c)$ functions to
\[\{\diag(g,z_1I_c,\ldots,z_{k-1}I_c):g\in\GL_c(F),z_i\in F^\times\},\]
one can use the beautiful description of Cogdell and Matringe \cite{CM15} in order to obtain an asymptotic expansion. This was carried out
by Luo \cite{Luo24} with interesting applications.

An asymptotic expansion is stronger than the bounds we obtain, and has proved useful for studying questions of meromorphic continuation, but the modern approach to
meromorphic continuation is by Bernstein's continuation principle. In theory, one may attempt to write an asymptotic expansion
along the diagonal torus in the general case as well, for arbitrary $c$ and $k$, using certain vanishing properties which are implied by the assumption on the wave front set.
Even with the assumption on the wave front set, this turns out to be a difficult problem, and this is not surprising in light of the fact that the general setup falls outside the case of spherical varieties of Sakellaridis \cite{Yia2} and Sakellaridis and Venkatesh \cite{SakellaridisVenkatesh}.
To complicate things even further, even with a conjectural asymptotic expansion, it is not clear how to obtain Theorem~\ref{thm:intromg}, since restriction to the diagonal torus does not say much about the restriction to the full Levi subgroup $M(F)$, and
the only additional information is the equivariance under $\GL_c^{\triangle}(F)$.

The rest of this work is organized as follows.
Section~\ref{sec:prelim} fixes the notation; in particular $(k,c)$ representations are discussed in \S~\ref{ssec:kcdata}.
In \S~\ref{sec:whittaker} we obtain the moderate growth of Whittaker functions, in the precise form which will be useful later.
Section~\ref{sec:transition} contains the description of the transition map from \cite{LM20}, and then in
\S~\ref{sec:zelevinsky} we use this map to relate the Zelevinsky model of $\rho$ and the $(k,c)$ model of $\rho$, and prove the moderate growth of the Zelevinsky model functions.
In \S~\ref{sec:mg} we prove the moderate growth of the $(k,c)$ model functions, and in \S~\ref{sec:gauge} the gauge estimate.
Section~\ref{sec:global} contains the global argument. Section~\ref{sec:coverings} concerns central extensions.
Section~\ref{sec:applications} contains applications of our results: we prove convergence of certain
integrals involving $(k,c)$ models, among which are integrals from the (local) generalized doubling method of \cite{CFGK2,CFK22,CFGK23}, and a new family of integrals which extends the Rankin--Selberg integrals for Speh representations of Lapid and Mao \cite{LM20}. Finally in Appendix~\ref{sec:GLdoubling} we prove in detail the convergence of the
generalized doubling integral for $\GL_c$ from \cite{CFGK2,CFK22}.

\subsection*{Acknowledgments}
This research was supported by the ISRAEL SCIENCE FOUNDATION (grant No. 376/21).
The author also acknowledges the use of Claude Fable 5.1, a large language model developed by Anthropic, PBC (San Francisco, CA, USA),
for assistance with the mathematical computations and the writing of this manuscript. The author retains sole responsibility for
the correctness of the final results.

\setcounter{tocdepth}{1}
\tableofcontents

\section{Preliminaries}\label{sec:prelim}

\subsection{Notation}\label{ssec:notation}
Throughout, $F$ is a $p$-adic field with ring of integers $\Oo$, uniformizer $\varpi$,
residue cardinality $q$, valuation $\val$ and absolute value normalized by
$\abs\varpi=q^{-1}$. For $x=(x_1,\dots,x_m)\in(F^\times)^m$ we write $\val(x)=(\val(x_1),\dots,\val(x_m))\in\bZ^m$. Except in 
\S~\ref{sec:global}, all algebraic groups in this work are defined over $F$. 
By convention, for such a group $H$ we denote its $F$-points also by $H$.

Let $\Mat_{l\times m}$ denote the space of $l \times m$ matrices over $F$, and set $\Mat_l:=\Mat_{l\times l}$.

Fix $k,c\ge1$ and put $n=kc$, $G=\GL_n(F)$, $K=\GL_n(\Oo)$, and
$K(r)=I_n+\varpi^{r}\Mat_n(\Oo)$ for $r\ge1$; each $K(r)$ is compact and open, and a normal subgroup of $K$. Fix in $G$ the Borel
subgroup $B$ of upper triangular invertible matrices and write $B=T\ltimes N$ where
$T$ is the diagonal torus and $N$ is the group of upper unitriangular matrices. Then we have the Iwasawa decomposition
$G=BK$. Let $\bN\subset G$ be the subgroup of lower unitriangular matrices. For a composition $\underline{n}=(n_1,\ldots,n_l)$ of $n$,
let $P_{\underline{n}}=M_{\underline{n}}\ltimes V_{\underline{n}}$ be the standard parabolic subgroup attached to $\underline{n}$, with $V_{\underline{n}}$ the unipotent
radical. The image in $M_{\underline{n}}$ of an $l$-tuple $(g_1,\dots,g_l)$, $g_i\in\GL_{n_i}$, is the block-diagonal matrix
$\diag(g_1,\ldots,g_l)$. The modulus character of $P_{\underline{n}}$ is denoted $\delta_{P_{\underline{n}}}$. Denote by $\overline{P}_{\underline{n}}=M_{\underline{n}}\ltimes \overline{V}_{\underline{n}}$ the
opposite parabolic subgroup relative to $M_{\underline{n}}$.

When we consider several general linear groups simultaneously, we use the notation $\GL_l$, then $B_l=T_l\ltimes N_l$ is the fixed Borel (as above) and $K_l=\GL_l(\Oo)$.

Let $E_{ij}$ be the matrix unit, and let $e_1,\dots,e_n$ be the standard basis of the space
of row vectors, on which $G$ acts on the right.

All representations in this work are smooth complex representations of $l$-groups. For $H$ as above, denote the
category of smooth complex representations of finite length by $\Rep(H)$ and recall that when $H$ is reductive (as an algebraic group), such representations are in particular admissible. Let $\Irr(H)$ denote the subset of irreducible objects in $\Rep(H)$.

We fix an arbitrary nontrivial additive character $\psi$ of $F$, and let $e_\psi\in\bZ$ be its conductor exponent:
$\psi$ is trivial on $\varpi^{e_\psi}\Oo$ and nontrivial on $\varpi^{e_\psi-1}\Oo$. Thus, for $x\in F$
and $r\in\bZ$,
\begin{equation}\label{eq:conductor}
\psi(x\varpi^{r}\Oo)=1\quad\Longleftrightarrow\quad \abs x\le q^{r-e_\psi}.
\end{equation}
The character $\psi$ is unramified if $e_\psi=0$.

Haar measures on $F$ and on its finite-dimensional vector spaces are the self-dual ones
with respect to $\psi$, so that $\vol(\Oo)=q^{e_\psi/2}$.

Let $\mathcal{S}(\Mat_{l\times m})$ denote the space of Schwartz--Bruhat functions on $\Mat_{l\times m}$.

A vector $\xi$ in the space of a representation of $G$ has level $r$ if $K(r)$ fixes it; then it also has level $r'$ for every $r'\ge r$, and throughout we take levels $r$ with $r>e_\psi$ (recall $r\ge1$).

We use the normalized induction functor $\Ind$ (see \cite[1.8]{BZ77}). For example, if $H$ is an $l$-group, $V\subset H$ is a closed unimodular subgroup (in this
work, a unipotent subgroup) and $\psi_V$ is a character of $V$, then $\Ind_V^H\psi_V$ is the
space of smooth functions $f\colon H\to\bC$ such that $f(vh)=\psi_V(v)f(h)$ for all $v\in V$
and $h\in H$, with $H$ acting by right translation which we denote by $\cdot$.
We also use the Bernstein--Zelevinsky product functor  $\times$ (see \cite[4.1]{BZ77}).

Jacquet modules are unnormalized. For a unipotent group $U$, a character $\chi$ of $U$ and a
representation $(\pi,\cV)$ of a group containing $U$, $J_{U,\chi}(\pi)$ is the quotient of
$\cV$ by the span of the vectors $\pi(u)\xi-\chi(u)\xi$, and $J_U(\pi)=J_{U,1}(\pi)$. The
functors $J_{U,\chi}$ are exact \cite[1.9]{BZ77}. If $U$ is the unipotent radical of a parabolic subgroup $P$ of an algebraic group $H$, and $\pi\in\Rep(H)$, we call $J_{U}(\pi)$ the Jacquet module of $\pi$ along $P$. In this case, if $H$ is reductive, then $J_U(\pi)$ has finite length.

For a split reductive group $H$, if $\pi\in\Rep(H)$ admits a central character, we denote it by $\omega_\pi$. For a parabolic subgroup $P$ of $H$, 
let $M_P$ denote a Levi part of $P$ and $Z_{M_P}$ denote the identity component of the center of $M_P$. The set of cuspidal exponents of $\pi$ is the set 
of characters by which $Z_{M_P}$ acts on the irreducible subquotients of the Jacquet module of $\pi$ along
$P$, where $P$ varies over all standard parabolic subgroups of $H$.

Let $\pi\in\Rep(G)$ and let $\mathfrak g$ be the Lie algebra of $G$. The character
of $\pi$ defines a distribution on the space of smooth compactly supported functions on $G$.
By Howe \cite{How74} and Harish-Chandra \cite{HC78}, the transport of this distribution to a
small neighborhood of $0$ in $\mathfrak g$ (through the exponential map) is a finite linear
combination $\sum_{\mathcal O}c_{\mathcal O}(\pi)\widehat{\mu_{\mathcal O}}$
of the Fourier transforms of suitably normalized invariant measures $\mu_{\mathcal O}$ on
the nilpotent coadjoint orbits $\mathcal O$ of $G$. Following \cite[\S1.4]{GK23}, the
wave front set $\WF(\pi)$ is the set of orbits $\mathcal O$ with $c_{\mathcal O}(\pi)\ne0$
such that $c_{\mathcal O'}(\pi)=0$ for every orbit $\mathcal O'\ne\mathcal O$ whose closure
contains $\mathcal O$. The nilpotent orbits of $G$ are indexed by the partitions of $n$,
through the sizes of the Jordan blocks, and the closure order on orbits is the dominance
order on partitions.

\subsection{The height}\label{ssec:height}
For $g\in G$ put
\begin{equation}\label{eq:height}
\norm g\ :=\ \max_{i,j}\big(\abs{g_{ij}},\abs{(g^{-1})_{ij}}\big).
\end{equation}
Then $\norm g\ge1$, and we use the following standard properties (see, e.g., \cite[\S I.1]{Wal03}).
\begin{enumerate}[leftmargin=*,label=(\arabic*),ref=(\arabic*)]
\item\label{it:h1} $\norm{\kappa g\kappa'}=\norm g$ for $\kappa,\kappa'\in K$.
\item\label{it:h2} $\norm{gh}\le\norm g\,\norm h$.
\item\label{it:h3} If $g=ua$ with $u\in V_{\underline{n}}$ and $a\in M_{\underline{n}}$, then $\norm g\ge\norm a$,
since the diagonal blocks of $ua$ and of $(ua)^{-1}=a^{-1}u^{-1}$ are those of $a$ and of $a^{-1}$.
In particular $\norm{ut}\ge\norm t$ for $u\in N$ and $t\in T$.
\item\label{it:h4} For $t=\diag(t_1,\dots,t_n)\in T$, $\norm t=\max_i\abs{t_i}^{\pm1}$, and
$\abs{\val t_i}\le\log_q\norm t$ for every $i$.
\item\label{it:h5} If $\bu\in\bN$ has all of its entries of absolute value at most $R\ge1$,
then $\norm\bu\le R^{n-1}$, because $\bu^{-1}=\sum_{j=0}^{n-1}(-x)^j$ for $\bu=I_n+x$ with $x$ nilpotent.
\end{enumerate}
For a matrix $g$ with entries in $F$ and an integer $r\ge0$, let $\extn{r}{g}$ be the maximum of the absolute values of the $r\times r$ minors of $g$, with $\extn{0}{g}=1$.
The $r\times r$ minors of $g$ are the coordinates of the matrix of the exterior power $\wedge^r(g)$ with respect to the standard bases, which explains the notation.
By the Cauchy--Binet formula and the ultrametric inequality,
\begin{equation}\label{eq:GLCB}
\extn{r}{g_1g_2}\le\extn{r}{g_1}\extn{r}{g_2},
\end{equation}
for matrices $g_1$ and $g_2$ such that $g_1g_2$ is defined, 
and by Jacobi's identity for the minors of the inverse matrix, for $g\in\GL_l$ and $0\le r\le l$,
\begin{equation}\label{eq:GLjacobi}
\extn{r}{g}=\abs{\det g}\,\extn{l-r}{g^{-1}}.
\end{equation}
For a matrix $R$ with $s$ rows and $m\ge s$ columns, put $\ct(R):=\extn{s}{R}$, the maximum of
the absolute values of its $s\times s$ minors; we call $\ct(R)$ the content of $R$. For a
row vector $v$ (the case $s=1$), $\ct(v)$ is the maximum of the absolute values of its
entries. By \eqref{eq:GLCB}, for $x\in G$,
\begin{equation}\label{eq:cauchybinet}
\ct(Rx)\ \le\ \ct(R)\,\extn{s}{x}.
\end{equation}
Since each $s\times s$ minor of $g^{-1}$
is a sum of products of $s$ entries of $g^{-1}$, \eqref{eq:cauchybinet} applied to
$Rg$ and $x=g^{-1}$ gives
\begin{equation}\label{eq:mug}
\ct(R)=\ct\big((Rg)g^{-1}\big)\ \le\ \ct(Rg)\,\norm g^{s},\qquad g\in G .
\end{equation}
For $\kappa\in K$, all minors of $\kappa$ lie in $\Oo$, so \eqref{eq:cauchybinet} gives
$\ct(R\kappa)\le\ct(R)$, and \eqref{eq:mug} with $g=\kappa$ gives the reverse inequality,
because $\norm\kappa=1$; hence
\begin{equation}\label{eq:mukappa}
\ct(R\kappa)=\ct(R),\qquad\kappa\in K .
\end{equation}

\subsection{Iwasawa coordinates of unipotent elements}\label{ssec:iwasawa}
We formulate here a result of a preliminary nature, which will be used only in \S~\ref{sec:applications} and in Appendix~\ref{sec:GLdoubling}; the reader may skip this subsection on
first reading.

For a matrix $x$ with entries in $F$ put
\begin{equation}\label{eq:Nx}
\vartheta(x):=\max\big(1,\max_{i,j}\abs{x_{ij}}\big),
\end{equation}
so that $\vartheta(g)\le\norm g$ for an invertible $g$, and $\norm\bu\le\vartheta(\bu)^{n-1}$ for
$\bu\in\bN$ by property~\ref{it:h5}.

Let $\underline{n}=(n_1,\dots,n_d)$ be a composition of $n$ with $d\ge2$.
The subgroup $\bV_{\underline n}$ is isomorphic as a variety to the affine space of dimension
\begin{equation}\label{eq:dimVbar}
\fd_{\underline n}:=\sum_{i<j}n_in_j .
\end{equation}
For $\bv\in\bV_{\underline n}$ and $1\le l\le d$, let $\mathcal{R}_l(\bv)$ be the
matrix formed by the rows of $\bv$ in the blocks $l,\dots,d$, and put
$\ct_l(\bv):=\ct(\mathcal{R}_l(\bv))$ and $\ct_{d+1}(\bv):=1$.
The following lemma is the $p$-adic, block form of a classical observation: the
Iwasawa coordinates of a unipotent element are computed by the exterior powers 
of the submatrices formed by its last rows. See, e.g., \cite[Lemma~5.1]{Jac09} over archimedean fields for the Borel subgroup, and
\cite[\S11.15, Lemmas~1 and~2]{Sou93}.
\begin{lemma}\label{lem:iwasawaunip}
Let $\bv\in\bV_{\underline n}$ and write $\bv=um\kappa$ with $u\in V_{\underline n}$,
$m=\diag(m_1,\dots,m_d)\in M_{\underline n}$ and $\kappa\in K$ (the Iwasawa
decomposition of $G$ with respect to $P_{\underline n}$). Then:
\begin{enumerate}[leftmargin=*,label=(\roman*),ref=(\roman*)]
\item\label{it:iw1} $\abs{\det m_l}=\ct_l(\bv)/\ct_{l+1}(\bv)$ for $1\le l\le d$, and $\ct_1(\bv)=1$.
\item\label{it:iw2} $1\le\ct_l(\bv)\le \vartheta(\bv)^{n}$ for $1\le l\le d$.
\item\label{it:iw3} $\vartheta(\bv)\le\max_{2\le l\le d}\ct_l(\bv)$.
\item\label{it:iw4} $\norm m\le\norm\bv\le \vartheta(\bv)^{n-1}$.
\end{enumerate}
\end{lemma}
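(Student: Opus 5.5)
The plan is to read off the Iwasawa data from the row space of $\bv$. The key invariant is the content of the bottom rows. For $1\le l\le d$ put $s_l:=n_l+\dots+n_d$, the number of rows in $\mathcal R_l(\bv)$.

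Write $\bv=um\kappa$, so $u^{-1}\bv=m\kappa$. Since $u\in V_{\underline n}$ is block upper unitriangular, left multiplication by $u^{-1}$ adds to the rows of block $i$ combinations of rows of blocks $j>i$. Hence the rows of blocks $l,\dots,d$ of $u^{-1}\bv$ are obtained from $\mathcal R_l(\bv)$ by left multiplication by a block upper unitriangular matrix of size $s_l$. That matrix has determinant $1$, so by Cauchy--Binet all $s_l\times s_l$ minors are unchanged. The corresponding rows of $m\kappa$ are $\diag(m_l,\dots,m_d)\,\kappa_{[l]}$, where $\kappa_{[l]}$ denotes the last $s_l$ rows of $\kappa$.

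The $s_l\times s_l$ minors of $\diag(m_l,\dots,m_d)\kappa_{[l]}$ are $\det(m_l)\cdots\det(m_d)$ times those of $\kappa_{[l]}$. Since $\kappa\in K$, the content satisfies $\ct(\kappa_{[l]})=1$: by Laplace expansion along these rows, $\det\kappa$ is a unit, and all minors are integral. Therefore
\[\ct_l(\bv)=\prod_{i\ge l}\abs{\det m_i}.\]
Taking the quotient of consecutive values gives \ref{it:iw1}. For $l=1$, we have $\mathcal R_1(\bv)=\bv$ with $\det\bv=1$, so $\ct_1(\bv)=1$.

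For \ref{it:iw2}, first take the lower bound. $\mathcal R_l(\bv)$ contains the $s_l\times s_l$ minor on its last $s_l$ columns, which is lower unitriangular and hence equals $1$. So $\ct_l(\bv)\ge1$. For the upper bound, each minor is a sum of products of $s_l\le n$ entries, each of absolute value at most $\vartheta(\bv)$. The ultrametric inequality then gives $\ct_l(\bv)\le\vartheta(\bv)^{s_l}\le\vartheta(\bv)^n$.

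For \ref{it:iw3}, let $x$ be an entry of $\bv$ in row block $l\ge2$, column $j$. The row containing $x$ has a $1$ on the diagonal at position $p>j$. Consider the $s_l\times s_l$ minor of $\mathcal R_l(\bv)$ on the columns $\{p'\ge n-s_l+1\}\setminus\{p\}\cup\{j\}$. Every other row of $\mathcal R_l(\bv)$ also has its diagonal $1$ among the last $s_l$ columns, and the matrix is lower unitriangular there. A careful choice is needed to make the minor equal $\pm x$ exactly. If the lower rows also have nonzero entries in column $j$, they contribute. This is handled by the following triangularity argument: order the rows so that the chosen submatrix is lower triangular after moving column $j$ to position $p$. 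Column $j$ lies to the left of all columns $\ge n-s_l+1$. After reordering, the submatrix is upper triangular in the rows above row $p$ and lower triangular in the rows below it, which forces the determinant to be $\pm x$ times $1$. This is the step I expect to need the most care; if it fails, I would instead take the minor on columns $\{j\}\cup\{n-s_l+1,\dots\}\setminus\{p\}$ restricted to a suitable triangular shape, using induction on $l$ from the bottom. Entries in block $1$ are zero off the diagonal, and the diagonal entries equal $1$. Hence $\vartheta(\bv)\le\max_{l\ge2}\ct_l(\bv)$; if $d$ blocks contain only trivial entries, the maximum is $\ge1=\vartheta$.

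Finally, \ref{it:iw4} follows from the height properties. By \ref{it:h1}, $\norm{\bv}=\norm{um}$, and by \ref{it:h3}, $\norm{um}\ge\norm m$. Property \ref{it:h5} then gives $\norm\bv\le\vartheta(\bv)^{n-1}$.
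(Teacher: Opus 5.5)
Your arguments for (i), (ii) and (iv) are correct and essentially the paper's. In (i) you move $u$ to the left and use that a square unimodular left factor does not change maximal minors, together with $\ct(\kappa_{[l]})=1$. The paper instead writes the bottom rows as $(0\mid L_l)\kappa$ and invokes \eqref{eq:mukappa}; the two amount to the same thing.

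For (iii) you pick exactly the paper's minor, on the columns $\{j\}\cup(\{n-s_l+1,\dots,n\}\setminus\{p\})$. The gap is that what you write does not show this minor equals $\pm x$. The shape you describe (rows above $p$ supported at or to the right of their diagonal, rows below $p$ lower triangular) does not by itself determine the determinant. For example, $\left(\begin{smallmatrix}x&y\\ z&1\end{smallmatrix}\right)$ has this shape and determinant $x-yz$. What is needed is information about row $p$ itself. Your worry that entries of other rows in column $j$ ``contribute'' shows this point was missed.

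The missing observation is that the row $p$ containing $x$ lies in block $l$, the top block of $\mathcal{R}_l(\bv)$. In the columns of block $l$ its entries are those of the diagonal block $I_{n_l}$. In the columns of blocks $>l$ they lie above the diagonal blocks, hence vanish. So on the columns $n-s_l+1,\dots,n$ this row is the standard vector $e_p$. Therefore, in your minor, row $p$ has a single nonzero entry, namely $x$ in column $j$. Expanding along row $p$ gives $\pm x$ times the determinant of the submatrix of $\bv$ on the rows and columns $n-s_l+1,\dots,n$ with row and column $p$ deleted. That submatrix is lower unitriangular, so the minor is $\pm x$ and $\abs{x}\le\ct_l(\bv)$. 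The entries of the other rows in column $j$ never enter, and no fallback or induction is needed. With this line added, your proof coincides with the paper's.
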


\begin{proof}
Throughout, $u_{ij}\in\Mat_{n_{i}\times n_{j}}$ denotes the $(i,j)$ block of $u$, so
that $u_{ii}=I_{n_{i}}$ and $u_{ij}=0$ for $i>j$, and similarly for $\bv$, whose
blocks satisfy $\bv_{ii}=I_{n_{i}}$ and $\bv_{ij}=0$ for $i<j$. For $1\le l\le d$ put
$n_{\geq l}:=n_l+\dots+n_d$, and let $\bv_{[l]}\in\Mat_{n_{\geq l}}$ be the submatrix of $\bv$ formed by the
rows and the columns in the blocks $l,\dots,d$, that is, $\bv_{[l]}=(\bv_{ij})_{l\le i,j\le d}$;
it is block lower unitriangular, in particular lower unitriangular, so $\det\bv_{[l]}=1$.

For \ref{it:iw1}, the $(i,j)$ block of $um$ is $u_{ij}m_{j}$. Fix $l$ and consider the
rows of $um$ in the blocks $i$, $i\ge l$. In these rows the blocks with $j<l$ vanish, since
$j<l\le i$, so these rows form the matrix $(0\mid L_l)$ with
\[
L_l=(u_{ij}m_{j})_{l\le i,j\le d}\in\Mat_{n_{\geq l}},
\]
a block upper triangular matrix with diagonal blocks $u_{ii}m_{i}=m_{i}$; hence
$\abs{\det L_l}=\prod_{j\ge l}\abs{\det m_j}$. Since $\bv=(um)\kappa$, the rows of $\bv$ in
the blocks $l,\dots,d$ are $\mathcal{R}_l(\bv)=(0\mid L_l)\kappa$, and by \eqref{eq:mukappa}
$\ct_l(\bv)=\ct((0\mid L_l))$. The only nonzero maximal minor of $(0\mid L_l)$ is
$\det L_l$, so $\ct_l(\bv)=\prod_{j\ge l}\abs{\det m_j}$. Dividing the identities for $l$
and $l+1$ gives $\abs{\det m_l}=\ct_l(\bv)/\ct_{l+1}(\bv)$, and for $l=1$ the identity reads
$\ct_1(\bv)=\abs{\det m}=\abs{\det\bv}=1$.

For \ref{it:iw2}, the columns of $\mathcal{R}_l(\bv)$ in the blocks $l,\dots,d$ form the matrix
$\bv_{[l]}$, whose determinant is $1$; so $\ct_l(\bv)\ge1$. Every maximal minor
of $\mathcal{R}_l(\bv)$ is a sum of products of at most $n$ entries of $\bv$, so it has absolute
value at most $\vartheta(\bv)^{n}$.

For \ref{it:iw3}, the entries of $\bv$ in the diagonal blocks are $0$ or $1$, those above
the diagonal blocks are $0$, and the remaining entries lie in rows of the blocks $l\ge2$
(the rows of the first block contain no entry outside the diagonal block). Hence it
suffices to show that $\abs{x}\le\ct_l(\bv)$ for every entry $x$ of $\bv$ lying in a row $r_l$
of a block $l\ge2$ and in a column $b$ of a block $j<l$. We exhibit a maximal minor of
$\mathcal{R}_l(\bv)$ equal to $\pm x$. Let $S\in\Mat_{n_{\geq l}}$ be the square submatrix of $\mathcal{R}_l(\bv)$ formed by the
column $b$ and by the columns $r$, for the rows $r\ne r_l$ of $\mathcal{R}_l(\bv)$, that is, the
columns of the diagonal positions of the rows other than $r_l$. Ordering the rows of $S$ so that $r_l$ comes first, and
the columns as $b$ first and then the columns $r$ in increasing order,
\[
S=\begin{pmatrix} x & 0\\ \ast & \bv'\end{pmatrix},\qquad \bv'\in\Mat_{n_{\geq l}-1},
\]
where $\bv'$ is obtained from $\bv_{[l]}$ by deleting the row and the column of $r_l$. The row
$r_l$ of $S$ vanishes outside the column $b$, because for $r\ne r_l$ in a block $\ge l$ the $(r_l,r)$ entry
of $\bv$ is either in the diagonal block $I_{n_l}$ off
its diagonal, or above the diagonal blocks. Expanding along the first row,
$\det S=x\det\bv'$, and $\bv'$ is lower unitriangular, since deleting a row and the column
with the same index from a lower unitriangular matrix leaves a lower unitriangular
matrix. Thus $\abs{\det S}=\abs x$, and $\abs x\le\ct_l(\bv)$ by the definition of $\ct$.
Together with $\ct_l(\bv)\ge1$, this gives $\vartheta(\bv)\le\max_{2\le l\le d}\ct_l(\bv)$.

For \ref{it:iw4}, $\norm\bv=\norm{um}\ge\norm m$ by properties~\ref{it:h1}
and \ref{it:h3}, and $\norm\bv\le\vartheta(\bv)^{n-1}$ by property~\ref{it:h5}, as
$\bV_{\underline n}\subset\bN$.
\end{proof}

\begin{corollary}\label{cor:iwasawaunip}
Let $\bv\in\bV_{\underline n}$ and write $\bv=um\kappa$ with $u\in V_{\underline n}$,
$m=\diag(m_1,\dots,m_d)\in M_{\underline n}$ and $\kappa\in K$. Then:
\begin{enumerate}[leftmargin=*,label=(\alph*),ref=(\alph*)]
\item\label{it:ci1} $\norm m\le\vartheta(\bv)^{n-1}$.
\item\label{it:ci2} $\prod_{j\ge l}\abs{\det m_j}\ge1$ for every $1\le l\le d$, and $\vartheta(\bv)\le\max_{2\le l\le d}\prod_{j\ge l}\abs{\det m_j}$.
\item\label{it:ci3} Let $x_1,\dots,x_d$ be real numbers and $r>0$ such that $x_i-x_{i+1}\ge r$ for all $1\le i<d$. Then
$\prod_{i=1}^{d}\abs{\det m_i}^{x_i}\le\vartheta(\bv)^{-r}$.
\end{enumerate}
\end{corollary}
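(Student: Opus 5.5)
The plan is to derive the three parts directly from Lemma~\ref{lem:iwasawaunip}.

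Part~\ref{it:ci1} is immediate: it is the chain of inequalities in Lemma~\ref{lem:iwasawaunip}\ref{it:iw4}.

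For part~\ref{it:ci2}, recall from the proof of Lemma~\ref{lem:iwasawaunip}\ref{it:iw1} the identity
\[\ct_l(\bv)=\prod_{j\ge l}\abs{\det m_j}.\]
It can also be recovered by telescoping the ratio formula in \ref{it:iw1}, using $\ct_{d+1}(\bv)=1$. Combining this identity with the lower bound $\ct_l(\bv)\ge1$ of \ref{it:iw2} gives the first assertion. Substituting it into \ref{it:iw3} gives the second.

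For part~\ref{it:ci3}, the key step is summation by parts. Set $C_l:=\prod_{j\ge l}\abs{\det m_j}$ for $1\le l\le d$ and $C_{d+1}:=1$. Then $\abs{\det m_i}=C_i/C_{i+1}$, and
\[
\prod_{i=1}^d\abs{\det m_i}^{x_i}=\prod_{i=1}^d C_i^{x_i}C_{i+1}^{-x_i}=C_1^{x_1}\prod_{l=2}^{d}C_l^{x_l-x_{l-1}}.
\]
Since $C_1=\ct_1(\bv)=1$, this reduces to
\[
\prod_{i=1}^d\abs{\det m_i}^{x_i}=\prod_{l=2}^{d}C_l^{-(x_{l-1}-x_l)}.
\]
We now bound each factor. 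Each exponent satisfies $x_{l-1}-x_l\ge r>0$, and each $C_l\ge1$ by part~\ref{it:ci2}. Hence every factor satisfies $C_l^{-(x_{l-1}-x_l)}\le C_l^{-r}\le1$.

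The product is therefore bounded by any single factor, and in particular by $\min_{2\le l\le d}C_l^{-r}=(\max_{2\le l\le d}C_l)^{-r}$. By part~\ref{it:ci2}, $\max_{2\le l\le d}C_l\ge\vartheta(\bv)$, and this gives the bound $\vartheta(\bv)^{-r}$.

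No step is a real obstacle. The only point needing care is the bookkeeping of the summation by parts, specifically the index shift and the role of $C_1=1$. One must also use the inequality $C_l\ge1$ to discard all factors except the one achieving the maximum, which is exactly where the hypothesis $r>0$ enters.
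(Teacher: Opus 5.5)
Your proposal is correct and follows essentially the same route as the paper: (a) is Lemma~\ref{lem:iwasawaunip}\ref{it:iw4}, (b) combines the telescoping identity $\ct_l(\bv)=\prod_{j\ge l}\abs{\det m_j}$ with \ref{it:iw2} and \ref{it:iw3}, and (c) rewrites the product as $\prod_{l=2}^{d}\ct_l^{-(x_{l-1}-x_l)}$ using $\ct_1=\ct_{d+1}=1$. You then bound this by the single factor with the largest $\ct_l$, just as the paper does.
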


\begin{proof}
Put $\ct_l:=\ct_l(\bv)$. Assertion \ref{it:ci1} is Lemma~\ref{lem:iwasawaunip}\ref{it:iw4}. By Lemma~\ref{lem:iwasawaunip}\ref{it:iw1},
$\prod_{j\ge l}\abs{\det m_j}=\ct_l$ (a telescoping product, using $\ct_{d+1}=1$), so \ref{it:ci2} is Lemma~\ref{lem:iwasawaunip}\ref{it:iw2} and \ref{it:iw3}.
For \ref{it:ci3}, by Lemma~\ref{lem:iwasawaunip}\ref{it:iw1} and $\ct_1=\ct_{d+1}=1$,
\[
\prod_{i=1}^{d}\abs{\det m_i}^{x_i}=\prod_{i=1}^{d}\Big(\frac{\ct_i}{\ct_{i+1}}\Big)^{x_i}=\prod_{i=1}^{d-1}\ct_{i+1}^{-(x_{i}-x_{i+1})}\le\prod_{i=2}^{d}\ct_i^{-r}\le\ct_{l_0}^{-r}\le\vartheta(\bv)^{-r},
\]
where $l_0\ge2$ is an index with $\ct_{l_0}=\max_{2\le l\le d}\ct_l$; the inequalities use $\ct_i\ge1$, $r>0$ and \ref{it:iw3}.
\end{proof}

\begin{remark}\label{rem:iwasawaH}
Let $H$ be a subgroup of $G$ and $\bv\in H\cap\bV_{\underline n}$. If we can write $\bv=um\kappa$ 
such that $u\in H\cap V_{\underline n}$, $m\in H\cap M_{\underline n}$ and $\kappa\in H\cap K$, then we can apply 
Lemma~\ref{lem:iwasawaunip} and Corollary~\ref{cor:iwasawaunip} to $\bv$ and $m$. 
The lemmas of Soudry \cite[\S11.15, Lemmas~1 and~2]{Sou93} are statements of this kind for orthogonal groups but with the Iwasawa decomposition always 
taken with respect to the Borel subgroup, in which case the torus coordinates are bounded both ways
by explicit powers of $\vartheta(\bv)$. 
\end{remark}
\begin{example}\label{ex:twoblock}
In \S~\ref{ssec:lambdaconv} and \S~\ref{ssec:doubling} we will use the following bounds. Let 
$X\in\Mat_{n_1}$ and $\bv=\left(\begin{smallmatrix}I_{n_1}&\\X&I_{n_1}\end{smallmatrix}\right)\in\GL_{2n_1}$. 
Then $\bv=um\kappa$ with $u\in V_{(n_1,n_1)}$, $m=\diag(m_1,m_2)\in M_{(n_1,n_1)}$ and $\kappa\in K_{2n_1}$. By Lemma~\ref{lem:iwasawaunip}\ref{it:iw1}
$\abs{\det m_2}=\ct((X\mid I_{n_1}))$, and Corollary~\ref{cor:iwasawaunip}\ref{it:ci1} and \ref{it:ci2} read
\begin{equation}\label{eq:twoblockbounds}
\abs{\det m_2}\ \ge\ \vartheta(X),\qquad \norm m\ \le\ \vartheta(X)^{2n_1-1}.
\end{equation}
Better bounds are available: by comparing the bottom rows $(X\mid I_{n_1})=(0\mid m_2)\kappa$ we see that the entries of $m_2$ are bounded by
$\vartheta(X)$ and those of $m_2^{-1}$ lie in $\Oo$, so $\norm{m_2}\le\vartheta(X)$; such improvements are not important for our applications. 
\end{example}

\subsection{Representations of type \texorpdfstring{$(k,c)$}{(k,c)}}\label{ssec:kcdata}
Let $U=V_{(c^k)}$ and define the character $\psi_k$ of $U$ by
\begin{equation}\label{eq:psikdef}
\psi_k(u)=\psi\Big(\sum_{i=1}^{k-1}\tr u_{i,i+1}\Big),\qquad u=(u_{i,j})\in U,\ u_{i,j}\in\Mat_c .
\end{equation}
($\tr$ denotes the trace map.) 
Let $M=M_{(c^k)}$. For $g\in\GL_c$ let $g^{\triangle}=\diag(g,\dots,g)\in M$; the stabilizer of $\psi_k$ in $M$ is $\GL_c^{\triangle}=\{g^{\triangle}:g\in\GL_c\}$.

\begin{definition}[{\cite[\S1.4]{GK23}}]\label{def:kc}
A representation $\rho\in\Rep(G)$ is a $(k,c)$ representation if $(k^c)$ is
the unique maximal orbit in $\WF(\rho)$ and $\dim\Hom_U(\rho,\psi_k)=1$.
\end{definition}
For $k=1$ the group $U$ and the character $\psi_1$ are trivial; the condition becomes $\dim\Hom(\rho,1)=1$ hence
$\dim \cV_\rho^*=1$, thus $\rho$ is a character $\eta\circ\det$ of $\GL_c$ where $\eta$ is a quasi-character of $F^\times$.

Let $\rho$ be a $(k,c)$ representation and let $\cV_\rho$ denote its space. A nonzero
$\lambda\in\Hom_U(\rho,\psi_k)$ is called a $(k,c)$ functional on $\rho$; it is unique up
to a scalar, and we fix one. By definition
\begin{equation}\label{eq:equivariance}
\lambda\big(\rho(u)\xi\big)=\psi_k(u)\,\lambda(\xi),\qquad
u\in U,\ \xi\in\cV_{\rho}.
\end{equation}
The $(k,c)$ model $W_\psi(\rho)$ of $\rho$ is the space of functions $W_\xi\colon G\to\bC$, defined for $\xi\in\cV_\rho$ by
\begin{equation}\label{eq:kcmodel}
W_\xi(g)=\lambda(\rho(g)\xi).
\end{equation}
The group $G$ acts on $W_\psi(\rho)$ by right translation (denoted by $\cdot$); this space does not depend on the choice of $\lambda$.
The map $\xi\mapsto W_\xi$ is a nonzero $G$-equivariant map from $\rho$ to $\Ind_U^G\psi_k$. In particular
\begin{equation}\label{eq:equivarianceKC}
W_\xi(ug)=\psi_k(u)W_\xi(g),\qquad \forall u\in U, g\in G.
\end{equation}

Since $\GL_c^{\triangle}$ stabilizes $\psi_k$, it acts on the one-dimensional space
$J_{U,\psi_k}(\rho)$, hence by a character, which we denote by $\eta_\rho$:
\begin{equation}\label{eq:etarho}
\lambda\big(\rho(g^{\triangle})\xi\big)=\eta_\rho(g)\,\lambda(\xi),\qquad g\in\GL_c,\ \xi\in\cV_\rho .
\end{equation}
In particular, for
$g=\diag(g_1,\dots,g_k)\in M$, writing
$g=g_k^{\triangle}\,\diag(g_k^{-1}g_1,\dots,g_k^{-1}g_{k-1},I_c)$,
\begin{equation}\label{eq:extractcenter}
W_\xi(g)=\eta_\rho(g_k)\,W_\xi\big(\diag(g_k^{-1}g_1,\dots,g_k^{-1}g_{k-1},I_c)\big).
\end{equation}
Clearly, $\eta_\rho(g)$ depends only on $\det g$,
and $\abs{\eta_\rho(\kappa)}=1$ for $\kappa\in K_c$. 

\begin{remark}\label{rem:genericchar}
The group $M$ acts on the set of characters of $U$ by conjugation, and a character of $U$ is called generic if it belongs to the $M$-orbit of $\psi_k$.
Let $\psi_U={}^{m^{-1}}\psi_k$ be a generic character, where $m\in M$ and ${}^{m^{-1}}\psi_k(u)=\psi_k(mum^{-1})$. Then $\lambda\mapsto\lambda\circ\rho(m)$ is an isomorphism
of $\Hom_U(\rho,\psi_k)$ onto $\Hom_U(\rho,\psi_U)$, so that $\dim\Hom_U(\rho,\psi_U)=1$; in particular Definition~\ref{def:kc} does not depend on the choice of the generic character.
Note that the stabilizer of $\psi_U$ in $M$ is $m^{-1}\GL_c^{\triangle}m$, and $\lambda\circ\rho(m)$ transforms on the left under $m^{-1}g^{\triangle}m$ by $\eta_\rho(g)$.
The functions $g\mapsto\lambda(\rho(mg)\xi)=W_\xi(mg)$, $\xi\in\cV_\rho$, form the $(k,c)$ model of $\rho$ with respect to $\psi_U$, so that
every bound proved below for the functions $W_\xi$ transfers to this model by the substitution $g\mapsto mg$ (note that $\norm{mg}\le\norm m\norm g$ by property~\ref{it:h2} of \S~\ref{ssec:height}), with the gauge estimate
expressed with respect to the stabilizer $m^{-1}\GL_c^{\triangle}m$. For example, if we replace the character $\psi$ of $F$ with $\psi'(x)=\psi(\varepsilon x)$, $\varepsilon\in F^\times$, then the character $\psi'_k$
defined by \eqref{eq:psikdef} with $\psi'$ in place of $\psi$ is ${}^{t_\varepsilon^{-1}}\psi_k$ with
\begin{equation*}
t_\varepsilon=\diag(\varepsilon^{k-1}I_c,\varepsilon^{k-2}I_c,\dots,\varepsilon I_c,I_c)\in T.
\end{equation*}
Since the blocks of $t_\varepsilon$ are scalar, $t_\varepsilon$ commutes with $\GL_c^{\triangle}$; hence in this case the stabilizer and the character $\eta_\rho$ are unchanged.
\end{remark}

\begin{remark}\label{rem:c1}
For $c=1$ the character $\psi_k$ is the standard generic character of $N$, and a
$(k,1)$ representation is precisely a representation from $\Rep(G)$ with a unique
Whittaker model (note that the orbit $(k)$ is the unique maximal nilpotent orbit).
\end{remark}

\begin{remark}\label{rem:rhoc}
The $(k,c)$ representations which occur in the generalized doubling method are constructed as follows
(\cite[\S2.2]{CFGK23}, \cite[\S1.4]{GK23}). For a generic $\tau\in\Irr(\GL_k)$ we define a representation $\rho_c(\tau)\in\Rep(G)$.
First assume that $\tau$ is unitary. Then $\rho_c(\tau)$ is the associated generalized Speh representation, namely
the unique irreducible subrepresentation of
\begin{equation*}
\tau|\det|^{(1-c)/2}\times\tau|\det|^{(3-c)/2}\times\dots\times\tau|\det|^{(c-1)/2}.
\end{equation*}
In general write $\tau=\tau_1|\det|^{a_1}\times\dots\times\tau_d|\det|^{a_d}$
with $\tau_i\in\Irr(\GL_{k_i})$ tempered (in particular, unitary) for each $i$, $k_1+\dots+k_d=k$ and
$a_1>\dots>a_d$. Then we define
\begin{equation}\label{eq:rho}
\rho_c(\tau)=\rho_c(\tau_1)|\det|^{a_1}\times\dots\times\rho_c(\tau_d)|\det|^{a_d}.
\end{equation}
In all cases $\rho_c(\tau)$ is a $(k,c)$ representation by \cite[Theorem~4]{CFGK23}; in the case of \eqref{eq:rho} it may be reducible. By \cite[Lemma~12]{CFGK23},
\begin{equation}\label{eq:etarhoc}
\eta_{\rho_c(\tau)}(g)=\tau\big((\det g)I_k\big)=\omega_\tau\big((\det g)I_k\big),\qquad g\in\GL_c
\end{equation}
($\omega_\tau$ is the central character of $\tau$).
\end{remark}

\subsection{The Zelevinsky data}\label{ssec:zeldata}
The Zelevinsky models are defined by a character of $N$. We define them in detail in \S~\ref{sec:zelevinsky}. Here we introduce the character $\Psn$ of $N$ relevant for our work, and relate the pairs $(N,\Psn)$ and $(U,\psi_k)$.

Let $Q=P_{(k^c)}$, $L=M_{(k^c)}$ and $V=V_{(k^c)}$, so that $Q=L\ltimes V$. Put $N_L=N\cap L=\diag(N_k,\ldots,N_k)$ ($N_k=V_{(1^k)}$); then $N=N_L\ltimes V$. Define a character $\Psn$ of $N$ by
\begin{equation*}
\Psn(u)=\psi\Big(\sum_{1\le i<n,\ k\,\nmid \,i}u_{i,i+1}\Big),\qquad u\in N .
\end{equation*}
It is trivial on $V$ and its restriction $\psi_L$ to $N_L$ is the standard generic
character of $N_L$, the product of the standard generic characters of the $c$ factors of $N_k$. For any $\pi\in\Rep(G)$,
\begin{equation}\label{eq:homfactor}
\Hom_N(\pi,\Psn)\ =\ \Hom_{N_L}\big(J_{V}(\pi),\psi_L\big).
\end{equation}

The $(k,c)$ data and the Zelevinsky data are related by a permutation. Let $w_0\in K$ be
the permutation matrix with
\begin{equation*}
w_0\,e_{(i-1)c+j}^{\,t}\ =\ e_{(j-1)k+i}^{\,t},\qquad 1\le i\le k,\ 1\le j\le c,
\end{equation*}
where $e^{\,t}$ denotes the column vector. Conjugation by $w_0$ moves the entry in
position $((i-1)c+j,(i'-1)c+j')$ to the position $((j-1)k+i,(j'-1)k+i')$. Consequently
$w_0Mw_0^{-1}$ is the subgroup of $L$-block matrices in which the $i$-th factor
$g_i\in\GL_c$ acts on the coordinates $(j-1)k+i$, $j=1,\dots,c$, and
\begin{equation*}
U':=w_0Uw_0^{-1}=\{u'\in G:\ u'_{jj}\in N_k,\ u'_{jj'}\ \text{strictly upper triangular for }j\ne j'\},
\end{equation*}
where $u'_{jj'}\in\Mat_k$ denotes the $(j,j')$ block of $u'$.
Note that $U'$ is contained in the subgroup
\begin{equation}\label{eq:D}
D:=\{g\in G:\ e_{jk}\,g=e_{jk}\ \text{for }j=1,\dots,c\}
\end{equation}
of matrices whose rows $k,2k,\dots,ck$ are the corresponding standard vectors: the last
row of the block $u'_{jj}$ is $(0,\dots,0,1)$ and the last row of $u'_{jj'}$, $j\ne j'$, is
zero.

Define
$\Psu(u')=\psi(\sum_{k\,\nmid \,i}u'_{i,i+1})$ for $u'\in U'$; this is a character of $U'$ and
\begin{equation}\label{eq:psik}
\psi_k(v)=\Psu(w_0vw_0^{-1}),\qquad v\in U,
\end{equation}
because the $(l,l+1)$ entries of $w_0vw_0^{-1}$ with $l=(j-1)k+i$, $1\leq i<k$, are the diagonal
entries $(v_{i,i+1})_{jj}$, whose sum over $j$ is $\tr v_{i,i+1}$.

The pair $(U',\Psu)$ is
the ``Shalika'' datum of \cite[\S3.1]{LM20} (for $(m,n)=(c,k)$), and $(N,\Psn)$ is the
Zelevinsky datum there.

Finally,
\begin{equation*}
U'\cap\bN=\{\bu\in\bN:\ \bu_{jj}=I_k,\ \bu_{jj'}\ \text{strictly upper triangular for }j>j',\ \bu_{jj'}=0\ \text{for }j<j'\},
\end{equation*}
a unipotent group, isomorphic as a variety to the affine space of dimension
\begin{equation}\label{eq:delta}
\fd_{U'\cap\bN}:=\binom c2\binom k2,
\end{equation}
with Haar measure the product of the Haar measures of $F$ on each entry. For $R\in q^{\bZ}$
(an integer power of $q$), the set of $\bu\in U'\cap\bN$ all of whose entries have absolute
value at most $R$ has volume $q^{e_\psi\fd_{U'\cap\bN}/2}R^{\fd_{U'\cap\bN}}$.

\section{Moderate growth of Whittaker functions}\label{sec:whittaker}

In this section we obtain the moderate growth of Whittaker functions on $L$.
The height $\norm\cdot$ of \eqref{eq:height} restricts to $L\subset G$. Let $B_L=L\cap B$ and
$K_L=L\cap K$. Let $\Delta$ denote the set of $c(k-1)$ simple roots of $T$ in $L$. We use the set $\Delta$ to parametrize the
direct product $(F^\times)^{|\Delta|}$. Define the map $\ell:T\to(F^\times)^{|\Delta|}$ by $\ell(t)_\alpha=\alpha(t)$. Since $Z_L$ is the kernel of $\ell$,
we have the split exact sequence
\[
1\to Z_L\to T\xrightarrow{\ell}(F^\times)^{|\Delta|}\to1.
\]
Let $\theta:(F^\times)^{|\Delta|}\to T$ be a splitting of $\ell$, that is, a homomorphism with $\ell\circ\theta=\mathrm{id}$.
Thus we can write $t=z\theta(a)$ with $z\in Z_L$ and $\ell\circ\theta(a)=a$.
Let $\nu\colon Z_L\to(F^\times)^c$ be the isomorphism $\nu(\diag(z_1I_k,\dots,z_cI_k))=(z_1,\dots,z_c)$, and put
$\val_\theta(t):=(\val(\nu(z)),\val(\ell(t)))\in \bZ^{kc}$; note that $z=t\,\theta(\ell(t))^{-1}$, so that $\val_\theta(t)$ depends on $\theta$.

Let $P^{\alpha}=M^\alpha\ltimes U^\alpha$ be the standard maximal parabolic subgroup of $L$ whose Levi part $M^\alpha$ is associated with $\Delta-\{\alpha\}$.
Define $H^\alpha$ to be the image under $\theta$ of the $\alpha$-th coordinate subgroup $F^\times$. 
Then $H^\alpha$ is a subgroup of the center of $M^{\alpha}$ and $\theta((F^\times)^{|\Delta|})=\prod_{\alpha}H^\alpha$.

For simplicity, we fix $\theta$ by requiring that $\theta((F^\times)^{|\Delta|})$ be the subgroup of elements of $T$ whose last coordinate in each $\GL_k$-block is $1$.
Then for $t=\diag(t_1,\dots,t_{kc})\in T$, the element $z=t\,\theta(\ell(t))^{-1}$ is $\diag(t_kI_k,t_{2k}I_k,\dots,t_{kc}I_k)$; in particular $\norm z\le\norm t$.

We use the following asymptotic expansion for Whittaker functions on $T$, which in our setting is straightforward to derive from
Lapid and Mao \cite{LM09} (see Remark~\ref{rem:LM09canbeUsed} below); for earlier results on asymptotic expansion see \cite{JPSS79,JPSS83,CS80}. The setup of \cite{LM09} was more general in that it applied to
all split connected reductive groups. We provide more details in our special case, using results of Cogdell and Matringe \cite{CM15}, to make this argument available to central extensions of $G$ (see also \cite[Remark~3.9]{LM09}).
\begin{lemma}[asymptotic expansion]\label{lem:expansion}
Let $\cW\subset\Ind_{N_L}^{L}\psi_L$ be a subrepresentation of finite length. There
is a finite set $\Gamma$ of finite functions on $T$ depending only on $\cW$, such that for every $W\in\cW$, there are $\phi_\gamma\in\mathcal{S}(F^{|\Delta|})$ for each
$\gamma\in\Gamma$, such that for all $t\in T$,
\begin{equation}\label{eq:expansion}
W(t)=\sum_{\gamma\in\Gamma}\gamma(t)\,\phi_\gamma(\ell(t)).
\end{equation}
Each $\gamma\in\Gamma$ is of the following form:
\begin{equation*}
\gamma(t)=\gamma(z\theta(a))=\omega(z)\mathcal{Q}(\val_\theta(t))\prod_{\alpha\in\Delta}\chi_{\alpha}(a_\alpha),
\end{equation*}
where
\begin{enumerate}[leftmargin=*,label=(\roman*),ref=(\roman*)]
\item\label{it:expi} $\omega$ is a character of $Z_L$ (not necessarily unitary) which is the central character of
an irreducible subquotient of $\cW$;
\item\label{it:expii} the degree of $\mathcal{Q}$ in the first $c$ variables (in which we substitute $\val(\nu(z))$) is less than the length of $\cW$; 
in each of the last $c(k-1)$ variables we substitute $\val(\ell(t)_{\alpha})$ for some $\alpha\in\Delta$, and the degree of that variable 
is less than the length of $J_{U^\alpha}(\cW)$;

\item\label{it:expiii} $\chi_\alpha$ is the restriction to $H^\alpha$ of the central character of an irreducible subquotient of $J_{U^\alpha}(\cW)$, or $\chi_\alpha$ is trivial if $J_{U^\alpha}(\cW)=0$.
\end{enumerate}
\end{lemma}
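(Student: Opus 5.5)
We follow the classical approach of Jacquet--Piatetski-Shapiro--Shalika and Lapid--Mao, specialized to $L=\GL_k\times\dots\times\GL_k$ ($c$ factors), where Cogdell--Matringe \cite{CM15} describe the germs of Whittaker functions explicitly. The argument proceeds by induction on the set of simple roots along which $t$ moves towards infinity (equivalently $\ell(t)\to 0$ in some coordinates). First we establish the support property. Since $W(ut)=\psi_L(u)W(t)$ for $u\in N_L$, and $W$ is right invariant under $K_L(r)$ for the level $r$ of $W$, the relation $W(t)=W(t\,t^{-1}xt)=\psi_L(txt^{-1})W(t)$ for $x\in N_L\cap K_L(r)$ forces $W(t)=0$ unless $\abs{\alpha(t)}\le q^{r-e_\psi}$ for all $\alpha\in\Delta$. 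Hence $W(z\theta(a))$ vanishes for $a$ outside a set that is bounded in each coordinate. By smoothness, $W(z\theta(a))$ is locally constant in $a$ near any point with all $a_\alpha\ne0$, uniformly in $z$ up to the $Z_L$-action.

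Next we treat the behavior as a subset $S\subset\Delta$ of coordinates $a_\alpha$ tends to $0$. Let $P_S=M_S\ltimes U_S$ be the standard parabolic of $L$ with Levi attached to $\Delta-S$. The key input is the standard lemma (Jacquet's or Casselman's) that there is $\epsilon>0$ (depending on $W$) such that if $\abs{\alpha(t)}<\epsilon$ for $\alpha\in S$, then $W(t)=W'(t)$, where $W'$ depends only on the image of $W$ in $J_{U_S}(\cW)$. This follows from $J_{U_S}$ being computed by integrating over a large compact open subgroup of $U_S$ together with conjugation by $t$ contracting $U_S$. The center $Z_{M_S}\supset\theta(\prod_{\alpha\in S}H^\alpha)\cdot Z_L$ acts locally finitely on the finite-length module $J_{U_S}(\cW)$. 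Decomposing into generalized eigenspaces gives finite functions in the $H^\alpha$-coordinates: characters $\chi_\alpha$ from central characters of subquotients, times polynomials in $\val(a_\alpha)$ whose degree is bounded by the length of the Jordan blocks. For the $Z_L$ variable, we use that $Z_L$ acts locally finitely on $\cW$ itself, with characters $\omega$ equal to central characters of irreducible subquotients and polynomial degree less than the length of $\cW$. For $\alpha$ separately, the relevant module is $J_{U^\alpha}(\cW)$, which gives the degree bound in \ref{it:expii}. If $J_{U^\alpha}(\cW)=0$, then $W(t)$ vanishes for small $\abs{\alpha(t)}$ and we take $\chi_\alpha$ trivial.

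Finally we patch. We cover $F^{|\Delta|}$, with coordinates $a$ bounded, by the regions $\{\abs{a_\alpha}<\epsilon \iff \alpha\in S\}$. On each region, the function equals a finite sum of (finite function in the $S$-coordinates and $z$) times (a function in the complementary coordinates that is locally constant and compactly supported in $F^\times$). These assemble into Schwartz functions $\phi_\gamma(\ell(t))$, using the Iwahori/characteristic function decomposition. The functions in $\Gamma$ then depend only on $\cW$, since they arise from the exponents of the finitely many Jacquet modules.

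The main obstacle is making the exponents in distinct directions compatible, i.e., a product over $\alpha$ rather than a joint expansion over each $S$. For $\abs{S}>1$ one must check that the exponents of $J_{U_S}(\cW)$ restricted to $H^\alpha$ are among those of $J_{U^\alpha}(\cW)$. This holds by transitivity of Jacquet modules: $J_{U_S}=J_{U_S\cap M^\alpha}\circ J_{U^\alpha}$, and $H^\alpha$ is central in $M^\alpha$. Thus the generalized eigencharacters of $H^\alpha$ on $J_{U_S}(\cW)$ occur on $J_{U^\alpha}(\cW)$, with Jordan length bounded by its length. Our first step on this point is to make this transitivity argument precise using \cite{CM15}'s explicit germ description in the $\GL_k$ factors.
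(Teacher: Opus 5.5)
Your outline is correct, but it follows a different route from the paper's. The paper never uses the non-maximal parabolic subgroups of $L$. First, it removes $Z_L$ by the Jordan decomposition of $Z_L$ on the $Z_L$-span of $W$, which reduces the problem to $f_W(a)=W(\theta(a))$. Second, it checks the hypotheses of \cite[Lemma~3.4]{CM15} one direction $\alpha$ at a time: functions modulo those vanishing for $\abs{a_\alpha}$ small and $\abs{a_{\alpha'}}\le1$ form an $H^\alpha$-quotient of $J_{U^\alpha}(\cW)$. The multivariable patching is taken from that lemma as a black box. Since the lemma gives the expansion only on the unit polydisc, the paper finishes with the support bound and a rescaling by $\theta(\underline{\varpi}^{-r'})$, checking that $\Gamma$ stays independent of $W$.

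You instead reprove the content of that lemma: you build corner-by-corner expansions from $J_{U_S}(\cW)$ for every $S\subset\Delta$, and recover (ii) and (iii) by transitivity. That step is fine and needs nothing from \cite{CM15}. For $\alpha\in S$, $J_{U_S}(\cW)=J_{U_S\cap M^\alpha}(J_{U^\alpha}(\cW))$ is an $H^\alpha$-equivariant quotient of $J_{U^\alpha}(\cW)$, because $H^\alpha$ is central in $M^\alpha$. Hence every polynomial in $h\in H^\alpha$ that annihilates $J_{U^\alpha}(\cW)$ also annihilates $J_{U_S}(\cW)$. Because you work directly on the bounded support, you need no rescaling, and your argument explains why only maximal-parabolic exponents appear. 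The paper's version is shorter, and its one-direction input is also what makes the adaptation to covering groups easy.

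What your sketch leaves implicit is exactly the analytic core that the paper outsources, and two points must be written out.

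First, patching with a single $\epsilon$ over the regions $\{\abs{a_\alpha}<\epsilon\iff\alpha\in S\}$ requires that elements of $\ker(\cW\to J_{U_S}(\cW))$ vanish on $\{\abs{a_\alpha}<\epsilon,\ \alpha\in S\}$ uniformly in the remaining coordinates. The reason is not that $t$ contracts $U_S$. It is the identity $(u\cdot W_0-W_0)(\theta(a))=\big(\psi(\sum_{\alpha\in S}a_\alpha u_\alpha)-1\big)W_0(\theta(a))$ for $u\in U_S$, which does not involve the other coordinates.

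Second, passing from generalized eigenspaces to an expansion of $W$ itself requires a recurrence argument. Let $P_\alpha(h)=\prod_\chi(h-\chi(h))$, the product over the $H^\alpha$-characters of a composition series of $J_{U^\alpha}(\cW)$. Then $P_\alpha(h)\cdot W$ lies in that kernel. So on $\{\abs{a_\alpha}<\epsilon',\ \alpha\in S\}$, for each $\alpha\in S$ and every value of the other coordinates, $f_W$ satisfies a linear recurrence in $\val(a_\alpha)$ with nonzero roots. Solving these commuting recurrences gives $f_W=\sum_\gamma\gamma(a_S)G_\gamma(a_{\Delta-S})$ there, with $G_\gamma$ locally constant. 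This is precisely the form you need on each region before the pieces assemble into Schwartz--Bruhat functions.
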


\begin{proof}
Let $W\in\cW$. The center $Z_L$ preserves the finite-dimensional space of vectors of
$\cW$ fixed by a compact open subgroup of $L$ fixing $W$, so the $Z_L$-span of $W$ is
finite-dimensional, and the Jordan decomposition of the commuting operators $W'\mapsto z\cdot W'$,
$z\in Z_L$, on it gives $z\cdot W=\sum_{l=1}^{r}c_l(z)W_l$ with $W_1,\dots,W_r\in\cW$ and
$c_l(z)=\sum_\omega\omega(z)\mathcal Q_{l,\omega}(\val(\nu(z)))$, where $\omega$ runs over the
generalized eigencharacters of $Z_L$ on $\cW$, which are the central characters of its
irreducible subquotients (finitely many, since $\cW$ has finite length), and $\deg \mathcal Q_{l,\omega}$ is less than the multiplicity of
$\omega$. Hence $W(z\theta(a))=\sum_lc_l(z)W_l(\theta(a))$, and it remains to expand the
functions $f_W:(F^\times)^{|\Delta|}\to\bC$ defined by $f_W(a)=W(\theta(a))$, $W\in\cW$.

We proceed by applying \cite[Lemma~3.4]{CM15} to the space $\mathcal{F}=\{f_{W}:W\in\cW\}$ of
functions on $(F^\times)^{|\Delta|}=\prod_{\alpha\in\Delta}H^\alpha$. Its hypotheses are verified as in
\cite[\S3]{CM15}: $\mathcal{F}$ is stable under translations, since $f_{W}(aa')=f_{\theta(a')\cdot W}(a)$ because $\theta$ is a homomorphism;
the functions $f_W$ are locally constant; and for each $\alpha$, let $\mathcal{F}^{\alpha}\subset\mathcal{F}$ be the subspace of functions $f_W$ for which there is a constant
$c_W>0$ such that $f_W$ vanishes on
\[
\{a:|a_{\alpha'}|\le1\ \forall\alpha'\ne\alpha,\, |a_\alpha|<c_W\}.
\]
Then, $\mathcal{F}/\mathcal{F}^\alpha$ is a quotient of $J_{U^\alpha}(\cW)$ as an $H^\alpha$-module, because for
$u\in U^\alpha$ and for $a$ such that $|a_{\alpha'}|\leq1$ for all $\alpha'\ne\alpha$,
\[
f_{u\cdot W-W}(a)=\big(\psi_L(\theta(a)u\theta(a)^{-1})-1\big)W(\theta(a))=\big(\psi(a_\alpha u_{\alpha})-1\big)W(\theta(a)),
\]
which vanishes for $|a_\alpha|$ small; here $u_{\alpha}$ is the projection of $u$ into the root subgroup $N^\alpha$ of $N$ associated with $\alpha$. The
representation $J_{U^\alpha}(\cW)$ of $M^\alpha$ has finite length and $H^\alpha$ acts on
each of its irreducible subquotients by a character, so $\mathcal{F}/\mathcal{F}^\alpha$ has a finite
filtration by $H^\alpha$-submodules, of length at most the length of $J_{U^\alpha}(\cW)$,
on whose successive quotients $H^\alpha$ acts by the characters described in part \ref{it:expiii} of the
statement.

Then \cite[Lemma~3.4]{CM15} gives, on the region $|a_\alpha|\le1$ for all
$\alpha\in\Delta$, an expansion \[f_W(a)=\sum_{\gamma\in\Gamma}\gamma(a)\phi_\gamma(a).\]
Here $\phi_\gamma\in\mathcal{S}(F^{|\Delta|})$ vanishes outside $\Oo^{|\Delta|}$; and $\Gamma$ is a finite set of finite functions $\gamma$ on $(F^\times)^{|\Delta|}$. Moreover, each $\gamma$ is of the form
\[
\gamma(a)=\prod_{\alpha\in\Delta}\chi_\alpha(a_\alpha)\val(a_\alpha)^{m_\alpha},
\]
with $\chi_\alpha$ among the characters of the filtration and $m_\alpha\geq0$ integers; in addition, $m_\alpha$ is less than the length of
the filtration, since for every $h\in H^\alpha$ the finite product of $(h-\chi(h))$ over the characters $\chi$ of the filtration, each taken with its multiplicity, 
annihilates $\mathcal{F}/\mathcal{F}^\alpha$.

Next we have $W(\theta(a))=0$ unless $|a_\alpha|\le q^{r'}$ for all $\alpha$, where $r\ge1$ is such that $W$ is fixed by
$N^\alpha\cap K(r)$ for all $\alpha$, and $r':=r-e_\psi$: indeed, for $x\in\varpi^r\Oo$, the element $u\in N^\alpha\cap K(r)$ with $u_\alpha=x$ satisfies
$\theta(a)u\theta(a)^{-1}\in N^{\alpha}$ with coordinate $a_{\alpha}x$, hence
$W(\theta(a))=W(\theta(a)u)=\psi(a_\alpha x)W(\theta(a))$, and if $|a_\alpha|>q^{r'}$ we can choose $x$ with $\psi(a_\alpha x)\ne1$, by \eqref{eq:conductor}.

Finally, for $y\in F^\times$ let $\underline{y}\in(F^\times)^{|\Delta|}$ be such that $\underline{y}_\alpha=y$ for all $\alpha\in\Delta$.
Denote $W'=\theta(\underline{\varpi}^{-r'})\cdot W\in\cW$. Since $\theta$ is a homomorphism,
$f_{W'}(a)=W(\theta(a)\theta(\underline{\varpi}^{-r'}))=f_W(\underline{\varpi}^{-r'}a)$, so that $f_{W'}(a)=0$ if $|a_{\alpha}|>1$ for some $\alpha$.
Applying the above expansion to $f_{W'}$, we obtain for $b\in(F^\times)^{|\Delta|}$ such that $|b_{\alpha}|\leq q^{r'}$ for all $\alpha$,
\[
f_W(b)=f_{W'}(\underline{\varpi}^{r'}b)=\sum_{\gamma\in\Gamma}\gamma(\underline{\varpi}^{r'}b)\phi_\gamma(\underline{\varpi}^{r'}b),\qquad
\gamma(\underline{\varpi}^{r'}b)=\prod_{\alpha\in\Delta}\chi_\alpha(\varpi^{r'})\chi_\alpha(b_\alpha)(r'+\val(b_\alpha))^{m_\alpha}.
\]
The functions $b\mapsto \phi_\gamma(\underline{\varpi}^{r'}b)$ are Schwartz--Bruhat functions on $F^{|\Delta|}$, supported in
$\varpi^{-r'}\Oo^{|\Delta|}$, and $\gamma(\underline{\varpi}^{r'}b)$ expands into a linear combination, with coefficients depending on $r'$, of the finite functions
\[
\prod_{\alpha\in\Delta}\chi_\alpha(b_\alpha)\val(b_\alpha)^{m'_\alpha}
\]
with $0\leq m'_{\alpha}\leq m_{\alpha}$, which are independent of $r'$ and which we may assume belong to $\Gamma$. Both sides of the resulting expansion of $f_W(b)$ vanish when $|b_{\alpha}|>q^{r'}$ for some $\alpha$, the left side by the vanishing established above and the right side because of the support of the rescaled Schwartz--Bruhat functions. Hence the expansion of $f_W$ holds on all of $(F^\times)^{|\Delta|}$, with the finite set $\Gamma$ depending only on $\cW$. This completes the proof.
\end{proof}

\begin{corollary}\label{cor:whittakermg}
Let $\cW$ and $\Gamma$ be as in Lemma~\ref{lem:expansion}, and put
\begin{equation}\label{eq:DW}
D_\cW:=\max_{\gamma\in\Gamma}\Big(\abs{\Re \omega}+2\sum_{\alpha\in\Delta}\abs{\Re \chi_{\alpha}}+\deg \mathcal{Q}\Big),
\end{equation}
where $\abs{\Re \omega}=\sum_j\abs{\Re \omega_j}$ for
$\abs{\omega(z)}=\prod_j\abs{z_j}^{\Re \omega_j}$, and
$\abs{\chi_{\alpha}(x)}=\abs x^{\Re \chi_{\alpha}}$. For every $W\in\cW$ there is
$C_W\ge0$ with
\[
\abs{W(b)}\ \le\ C_W\,\norm b^{D_\cW}\qquad\forall b\in L .
\]
\end{corollary}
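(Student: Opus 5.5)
We reduce to the torus via the Iwasawa decomposition of $L$ and then bound each term of the expansion in Lemma~\ref{lem:expansion} separately. Write $b=u t\kappa$ with $u\in N_L$, $t\in T$, $\kappa\in K_L$. Then $\abs{W(b)}=\abs{W(t\kappa)}=\abs{(\kappa\cdot W)(t)}$. By properties \ref{it:h1} and \ref{it:h3}, $\norm t\le\norm{ut}=\norm b$.

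\textbf{Reduction to finitely many vectors.} Since $W$ is smooth, $\kappa\cdot W$ ranges over a finite set $\{W_1,\dots,W_s\}\subset\cW$ as $\kappa$ runs over the compact group $K_L$. It therefore suffices to prove $\abs{W_i(t)}\le C_i\norm t^{D_\cW}$ for each $i$ and all $t\in T$, and then take $C_W=\max_i C_i$.

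\textbf{Bounding one term of the expansion.} Fix $W'\in\cW$ and apply Lemma~\ref{lem:expansion}:
\[
W'(t)=\sum_{\gamma\in\Gamma}\gamma(t)\phi_\gamma(\ell(t)).
\]
Each $\phi_\gamma$ is bounded, so it is enough to show $\abs{\gamma(t)}\le C\norm t^{D_\cW}$ whenever $\phi_\gamma(\ell(t))\neq0$. Write $t=z\theta(a)$ with $a=\ell(t)$. The three factors of $\gamma$ are handled as follows.
\begin{enumerate}[leftmargin=*,label=(\alph*)]
\item The central factor. We have $\abs{\omega(z)}=\prod_j\abs{z_j}^{\Re\omega_j}\le\prod_j\norm z^{\abs{\Re\omega_j}}$, since $\abs{z_j}^{\pm1}\le\norm z$ by property~\ref{it:h4}. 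With $\norm z\le\norm t$ (the choice of $\theta$), this gives $\abs{\omega(z)}\le\norm t^{\abs{\Re\omega}}$.
\item The root factors. Each $a_\alpha=t_i/t_{i+1}$ satisfies $\abs{a_\alpha}^{\pm1}\le\norm t^2$. Hence $\abs{\chi_\alpha(a_\alpha)}\le\norm t^{2\abs{\Re\chi_\alpha}}$, which explains the factor $2$ in \eqref{eq:DW}.
\item The polynomial factor. Its entries are the valuations $\val(z_j)$ and $\val(a_\alpha)$, and by property~\ref{it:h4} these are bounded in absolute value by $2\log_q\norm t$. Therefore $\abs{\mathcal Q(\val_\theta(t))}\le C(1+\log_q\norm t)^{\deg\mathcal Q}$.
\end{enumerate}

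\textbf{The logarithmic factor.} This is the only subtle point. Factor (c) is polylogarithmic, not a genuine power of $\norm t$, and $(1+\log_q x)^d\le C_d\,x^{d}$ for $x\ge1$. The exponent $D_\cW$ includes $+\deg\mathcal Q$ precisely to absorb this loss, and this crude estimate suffices. The factor $(1+\log_q\norm t)^{\deg\mathcal Q}$ is therefore absorbed into $\norm t^{\deg\mathcal Q}$.

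Multiplying (a), (b) and (c), using $\norm t\ge1$, and summing over the finite set $\Gamma$ gives $\abs{W'(t)}\le C\norm t^{D_\cW}$. Combined with the reduction to finitely many vectors and $\norm t\le\norm b$, this yields $\abs{W(b)}\le C_W\norm b^{D_\cW}$ for all $b\in L$.
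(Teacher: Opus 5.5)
Your proposal is correct and follows the paper's proof essentially step for step. Both arguments use the Iwasawa decomposition $b=ut\kappa$, the finitely many translates $\kappa\cdot W$, and $\norm t\le\norm b$. Both then bound the expansion \eqref{eq:expansion} term by term via $\norm z\le\norm t$ and $\abs{a_\alpha}^{\pm1}\le\norm t^{2}$, absorbing the valuation polynomial into $\norm t^{\deg\mathcal Q}$; the paper does this last step directly through $2\log_q\norm t\le 2\norm t$.
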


\begin{proof}
Let $t=z\theta(a)\in T$. By property~\ref{it:h4} of \S~\ref{ssec:height}, $\norm z\le\norm t$,
$\abs{a_\alpha}^{\pm1}\le\norm t^{2}$ and $\abs{\val_\theta(t)_i}\le2\log_q\norm t\le2\norm t$ for every coordinate $i$ of $\val_\theta(t)$,
hence each $\gamma\in\Gamma$ satisfies
\[
|\gamma(t)|\le C_\gamma\norm t^{\abs{\Re \omega}+2\sum\abs{\Re \chi_{\alpha}}+\deg \mathcal Q},
\]
and \eqref{eq:expansion} gives $\abs{W(t)}\le C\norm t^{D_\cW}$, with $C$ depending on
$W$ through the bounded functions $\phi_\gamma$.

For $b\in L$ write $b=ut\kappa$ with
$u\in N_L$, $t\in T$ and $\kappa\in K_L$. Then $\abs{W(b)}=|(\kappa\cdot W)(t)|$. Since $W$ is smooth,
$\kappa\cdot W$ ranges over a finite subset of $\cW$ as $\kappa$ ranges over $K_L$. In addition $\norm t\le\norm{ut}=\norm{ut\kappa}=\norm b$ by
properties~\ref{it:h3} and \ref{it:h1} of \S~\ref{ssec:height}. The result follows.
\end{proof}

\begin{remark}\label{rem:LM09canbeUsed}
The expansion \eqref{eq:expansion}, together with parts \ref{it:expi} and \ref{it:expiii} of the lemma, follows from \cite[Theorem~3.1]{LM09} applied to $\cW$. 
Note that Lapid and Mao \cite{LM09} did not assume uniqueness of the Whittaker model or irreducibility.
A slightly weaker form of part \ref{it:expii}, which suffices for this work, is that 
$\deg\mathcal Q$ is bounded by the maximum over the lengths of the Jacquet modules of $\cW$ along the standard parabolic subgroups of $L$. This version follows from the proof in \cite{LM09} because the polynomials arise there from the nilpotent part of the action of
the center of a standard Levi subgroup on the generalized eigenspaces
of the Jacquet modules, and the index of nilpotency of this action is at most the
multiplicity of the eigencharacter. 
\end{remark}

\section{The transition map of Lapid and Mao}\label{sec:transition}

We recall the construction of \cite[\S3]{LM20}, in the case $(m,n)=(c,k)$ of that paper,
keeping only what we use. Let $\Psi(g)=\psi(\sum_{k\,\nmid\, i}g_{i,i+1})$ for $g\in G$. For
$i=0,\dots,(c-1)k$, \cite[\S3.1]{LM20} defined a cocharacter $\phi_i$ of $T$ ``of type
$(c,k)$'' and the unipotent radical $U_i$ of the semistandard parabolic subgroup
$P_i=M_iU_i$ attached to it, in such a way that $U_0=N$, $U_{(c-1)k}=U'$, and the
restriction $\Psi_{U_i}$ of $\Psi$ to $U_i$ is a character for every $i$; thus
$\Psi_{U_0}=\Psn$ and $\Psi_{U_{(c-1)k}}=\Psu$. By \cite[Lemma~3.3]{LM20}, for
$0\le i<(c-1)k$ the groups $U_i$ and $U_{i+1}$ normalize each other, the quotients
$A_i:=U_{i+1}/(U_i\cap U_{i+1})$ and $B_i:=U_i/(U_i\cap U_{i+1})$ are abelian, the
commutator $[u',u]=u'uu'^{-1}u^{-1}$ of $u'\in U_{i+1}$ and $u\in U_i$ lies in $U_i\cap U_{i+1}$, and the
map
\begin{equation*}
c_i\colon B_i\to\PD(A_i),\qquad c_i(u)(u')=\Psi([u',u]),
\end{equation*}
into the Pontryagin dual of $A_i$ is a well-defined surjective homomorphism. (The other
convention for the commutator, $u'^{-1}u^{-1}u'u$, gives the element $u'^{-1}u^{-1}[u',u]uu'$, which
differs from $[u',u]$ by conjugations by elements of $U_i$ and of $U_{i+1}$; since the
restriction of $\Psi$ to $U_i\cap U_{i+1}$ is the restriction of the character $\Psi_{U_i}$
of $U_i$ and of the character $\Psi_{U_{i+1}}$ of $U_{i+1}$, it is invariant under both
conjugations, and $c_i$ does not depend on the convention.)

Let $M^\Psi_{i}$ be the stabilizer of $\Psi_{U_{i}}$ in $M_{i}$ and $P^\Psi_{i}=M^\Psi_{i}\ltimes U_{i}$. By \cite[(6)]{LM20},
\[U_i\cap P^\Psi_{i+1}=(U_i\cap M^\Psi_{i+1})\ltimes(U_i\cap U_{i+1}).\]
By \cite[Lemma~3.3(ii)]{LM20}, the kernel of $c_i$ is the image of
$U_i\cap P^\Psi_{i+1}$ in $B_i$.

For $W\in\Ind_{U_i}^G\Psi_{U_i}$ and $g\in G$, the
function $\varphi_g(u'):=\Psi_{U_{i+1}}(u')^{-1}W(u'g)$ on $U_{i+1}$ factors through $A_i$ and is
compactly supported and locally constant there: this is proved in
\cite[proof of Proposition~3.5]{LM20} for an arbitrary $W$ in the induced space, using only
the smoothness of $W$ and the surjectivity of $c_i$ (and the open mapping theorem). The map
\[\cT_i\colon\Ind_{U_i}^G\Psi_{U_i}\to\Ind_{U_{i+1}}^G\Psi_{U_{i+1}}\]
given by
\[\cT_iW(g)=\int_{A_i}\varphi_g(u')\,du'=\int_{U_i\cap U_{i+1}\backslash U_{i+1}}\!\!\!W(u'g)\,\Psi_{U_{i+1}}(u')^{-1}du'\]
is well defined and $G$-equivariant, and by \cite[(10)]{LM20},
\begin{equation}\label{eq:fourier}
\Psi_{U_i}(u)^{-1}\,\cT_iW(ug)\ =\ \widehat{\varphi_g}\big(c_i(u)\big),\qquad u\in U_i,
\end{equation}
where $\widehat{\varphi}(\chi)=\int_{A_i}\varphi(x)\chi(x)\,dx$ is the Fourier transform of a function $\varphi$ on
$A_i$: indeed, for any $u''\in U_{i+1}$ we have
\[
\Psi_{U_{i+1}}(uu''u^{-1})=\Psi([u,u''])\Psi_{U_{i+1}}(u'')=\Psi([u'',u])^{-1}\Psi_{U_{i+1}}(u'');
\]
writing $u'u=u(u^{-1}u'u)$ we have $u''=u^{-1}u'u\in U_{i+1}$, and using the invariance of the Haar measure of $A_i$ under conjugation by $u$,
\[
\cT_iW(ug)=\int_{A_i}W(uu''g)\,\Psi_{U_{i+1}}(uu''u^{-1})^{-1}du''
=\Psi_{U_i}(u)\int_{A_i}\varphi_g(u'')\,c_i(u)(u'')\,du'' .
\]
Put
\begin{equation}\label{eq:T}
\cT:=\cT_{(c-1)k-1}\circ\dots\circ \cT_0\colon\ \Ind_N^G\Psn\to\Ind_{U'}^G\Psu .
\end{equation}

\begin{proposition}[{\cite[Lemma~3.8]{LM20}}]\label{prop:LM}
For every $W\in\Ind_N^G\Psn$ and $g\in G$ the function $\bu\mapsto W(\bu g)$ on
$U'\cap\bN$ is compactly supported, and
\begin{equation}\label{eq:Tformula}
\cT W(g)=\int_{U'\cap\bN}W(\bu g)\,d\bu .
\end{equation}
\end{proposition}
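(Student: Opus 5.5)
We prove Proposition~\ref{prop:LM} by induction on the number of steps, composing the one-step maps $\cT_i$ and identifying each integration domain $A_i$ with a subgroup of $U'\cap\bN$. The first step is structural. From the explicit cocharacters of \cite[\S3.1]{LM20}, we check that for each $0\le i<(c-1)k$ the quotient $A_i=U_{i+1}/(U_i\cap U_{i+1})$ admits a set of representatives $X_i:=U_{i+1}\cap\bN$. This is a unipotent subgroup, isomorphic as a variety to $A_i$ via the projection. We then verify that the product map
\[
X_{(c-1)k-1}\times\dots\times X_1\times X_0\to U'\cap\bN,\qquad (x_{(c-1)k-1},\dots,x_0)\mapsto x_{(c-1)k-1}\cdots x_0,
\]
is a measure-preserving bijection. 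Here the measures are products of the self-dual measures on the root coordinates. The point is that each root of $U'\cap\bN$ becomes a root of some $U_{i+1}$ exactly once as $i$ increases, and never leaves afterwards: $U'\cap\bN$ consists of negative roots of $U'=U_{(c-1)k}$, none of which lie in $U_0=N$. The measure statement then holds because the map is, in suitable coordinates, triangular with unipotent Jacobian.

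With this in hand, we unfold the composition. By definition,
\[
\cT W(g)=\int_{X_{(c-1)k-1}}\cdots\int_{X_0}W(x_0x_1\cdots x_{(c-1)k-1}g)\prod_i\Psi_{U_{i+1}}(x_i)^{-1}\,dx_0\cdots dx_{(c-1)k-1}.
\]
Each inner integral converges because $\varphi_g$ is compactly supported on $A_i$, as recalled above from \cite[proof of Proposition~3.5]{LM20}. We need the characters to be trivial: $\Psi$ involves only the entries $(j,j+1)$ with $k\nmid j$, and these are above the diagonal, so $\Psi_{U_{i+1}}(x_i)=1$ for $x_i\in X_i\subset\bN$. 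We also have to reorder the product $x_0\cdots x_{(c-1)k-1}$ into the order $x_{(c-1)k-1}\cdots x_0$ used in the first step. This costs only a change of variables on the unipotent group $U'\cap\bN$, which preserves Haar measure.

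The remaining issue is to justify iterating the integrals. The argument above gives \eqref{eq:Tformula} as an iterated integral, so we must show that $\bu\mapsto W(\bu g)$ is compactly supported on $U'\cap\bN$; Fubini then applies. I expect this to be the main obstacle. We prove it by induction on $i$, and the inductive claim is as follows. Let $V_i:=X_{i-1}\cdots X_0$. We claim that for each $W$ and each compact $C\subset G$, the function $(v,h)\mapsto W(vh)$ on $V_i\times C$ is supported in $\Omega\times C$ for some compact $\Omega\subset V_i$. For the inductive step we apply the Fourier identity \eqref{eq:fourier} with $u$ ranging over $U_i$, together with surjectivity of $c_i$. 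The key input is that the support bound for $\varphi_h$ in the proof of \cite[Proposition~3.5]{LM20} depends only on a level $r$ fixing $W$. It is therefore uniform for $h$ in a compact set, since $W$ is fixed by a fixed open subgroup conjugated by elements of $C$.

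It then remains to absorb $X_i$ into $h$. Since $\Omega$ is compact, the set $\Omega\cdot C$ is again compact, which gives compact support in the new variable. I expect the bookkeeping of this uniformity, and the root-combinatorial claim in the first step about which roots enter at each stage, to be where most of the work lies. Both follow the structure of \cite[Lemma~3.3]{LM20}.
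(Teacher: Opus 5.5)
Your unfolding of $\cT$ into an iterated integral and the reduction to joint compact support via Fubini are fine. The gap is in the step you yourself flag: the compact support of $\bu\mapsto W(\bu g)$ on $U'\cap\bN$, which is the real content of the statement. Your induction does not prove it.

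The support bound for $\varphi_h$ in the proof of \cite[Proposition~3.5]{LM20} relies on the left $(U_i,\Psi_{U_i})$-equivariance of the function being integrated. A small $u\in U_i$ on the right is moved to the left through $u'\in U_{i+1}$, at the cost of a commutator in $U_i\cap U_{i+1}$ on which $\Psi$ gives the pairing $c_i$. For $i\ge1$ the function with this equivariance is $\cT_{i-1}\circ\dots\circ\cT_0W$, not $W$. The function $W$ is equivariant only under $(N,\Psn)$. Conjugating small elements of $U_i$ by $v_i\in V_i\subset U_i$ keeps them in $U_i\not\subset N$, so you cannot push them into $N$ past $v_i$.

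If you instead apply the argument to $\cT_{i-1}\circ\dots\circ\cT_0W$, you only learn that $x_i$ is bounded where $\int_{V_i}W(v_ix_ih)\,dv_i\ne0$. This says nothing about where $W(v_ix_ih)$ itself is nonzero, and excluding exactly this kind of cancellation is what Fubini needs. The step ``absorb $X_i$ into $h$'' has two further problems. It presupposes a bound on $x_i$ that is uniform in the unbounded variable $v_i$, which is what is missing. Also, with $V_{i+1}=X_iV_i$, the factor $x_i$ sits on the wrong side of $v_i$ to be absorbed. Finally, your argument never uses anything specific to $U'\cap\bN$ beyond the abstract properties of the chain.

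The missing idea is a direct support argument on all of $U'\cap\bN$, as in Lemma~\ref{lem:keysupport}. That lemma is valid for any $W\in\Ind_N^G\Psn$ fixed by some $K(r)$, and for $g=I_n$ it is the support statement of \cite[Lemma~3.8]{LM20}; see Remark~\ref{rem:keysupport}. The argument runs as follows.
\begin{enumerate}
\item Write $\bu g=n_ha\kappa$. Smoothness and the $(N,\Psn)$-equivariance give $\abs{a_i}\le q^{r-e_\psi}\abs{a_{i+1}}$ for $k\nmid i$.
\item The inclusion $U'\subset D$ makes row $jk$ of $\bu g$ equal to row $jk$ of $g$, so $\abs{a_{jk}}\le\norm g$. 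Hence all $\abs{a_i}$ are bounded.
\item Then the contents $\ct(\mathcal R_ig)=\abs{a_i\cdots a_n}$ of the bottom rows are bounded. Each entry $\bu_{il}$ is a maximal minor of $\mathcal R_i$, so it is bounded too.
\end{enumerate}
This gives compact support, uniformly for $g$ in compact sets, and your unfolding then yields \eqref{eq:Tformula}.

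A smaller correction: $X_i:=U_{i+1}\cap\bN$ is not a set of representatives for $A_i$. By your own principle that entering roots never leave, it also contains the negative roots already in $U_i\cap U_{i+1}$, so your product map would overcount dimensions. Take instead the subgroup generated by the root subgroups of the roots of $U_{i+1}$ that are not in $U_i$.
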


Proposition~3.5 of \cite{LM20} asserted that $\cT_i$ is an isomorphism between the models of an
irreducible representation, and constructed a left inverse of $\cT_i$ by integration over
$U_i\cap P^\Psi_{i+1}\backslash U_i$, which is Fourier inversion on
$\PD(A_i)\cong B_i/\ker c_i$; the following lemma proves the injectivity of $\cT$ on $\Ind_N^G\Psn$, which is all we
need.

\begin{lemma}\label{lem:injective}
Each $\cT_i$, and hence $\cT$, is injective.
\end{lemma}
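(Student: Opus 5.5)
Since $\cT$ is a composition of the maps $\cT_i$, it suffices to prove that each $\cT_i$ is injective. Fix $i$ and suppose $W\in\Ind_{U_i}^G\Psi_{U_i}$ satisfies $\cT_iW=0$. We must show $W(g)=0$ for every $g\in G$. The tool is the Fourier identity \eqref{eq:fourier}. For fixed $g$, its left side vanishes for all $u\in U_i$, so $\widehat{\varphi_g}(c_i(u))=0$ for all $u\in U_i$. Since $c_i\colon B_i\to\PD(A_i)$ is surjective, $\widehat{\varphi_g}$ vanishes identically on $\PD(A_i)$.

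Next apply Fourier inversion on the locally compact abelian group $A_i$, which is a finite-dimensional $F$-vector space. The function $\varphi_g$ is locally constant and compactly supported on $A_i$, as recalled above from the proof of \cite[Proposition~3.5]{LM20}. Hence $\varphi_g\in\mathcal S(A_i)$, and its Fourier transform $\widehat{\varphi_g}$ is again Schwartz--Bruhat. Inversion therefore applies without any convergence issues, and $\widehat{\varphi_g}\equiv0$ forces $\varphi_g\equiv0$. In particular $\varphi_g(1)=W(g)=0$. Since $g$ is arbitrary, $W=0$, so $\cT_i$ is injective, and hence so is $\cT$.

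The only step needing care is the use of \eqref{eq:fourier}. The identity must hold for every $W$ in the induced space, not only for $W$ in the model of an irreducible representation. The derivation recalled above uses only the $U_i$-equivariance of $W$, the commutator relation between $U_i$ and $U_{i+1}$, and the invariance of the Haar measure on $A_i$ under conjugation by $U_i$; none of these depend on irreducibility. The compact support of $\varphi_g$ was likewise proved for arbitrary smooth $W$. So I expect no genuine obstacle beyond checking that these ingredients hold in this generality.
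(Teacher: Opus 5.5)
Your proposal is correct and is essentially the paper's own proof: both use \eqref{eq:fourier} to show that $\widehat{\varphi_g}$ vanishes on $c_i(B_i)=\PD(A_i)$ (by surjectivity of $c_i$), then apply Fourier inversion to the compactly supported locally constant $\varphi_g$ to get $W(g)=\varphi_g(I_n)=0$. Your remark that \eqref{eq:fourier} and the compact support of $\varphi_g$ hold for arbitrary $W$ in the induced space, not only in the model of an irreducible representation, is exactly the point the paper relies on.
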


\begin{proof}
Let $W\in\Ind_{U_i}^G\Psi_{U_i}$ with $\cT_iW=0$. Fix $g\in G$. By \eqref{eq:fourier},
$\widehat{\varphi_g}$ vanishes on $c_i(B_i)$, which is all of $\PD(A_i)$ by the surjectivity of $c_i$. Since $\varphi_g$ is a compactly supported locally constant function on the
abelian group $A_i$, Fourier inversion gives $\varphi_g=0$, and in particular
$W(g)=\varphi_g(I_n)=0$.
\end{proof}

\begin{remark}\label{rem:Tprevious}
The maps $\cT_i$ were also considered in \cite[\S2.4]{CFK22}, as noted in \cite[\S3.2]{LM20}.
The operator $\cT$, from the Zelevinsky model to the $(k,c)$ model,
can be obtained by successively applying \cite[Lemma~9]{CFGK23} along the compositions $(1,c-1)$, $(1,1,c-2)$, \dots, $(1^c)$ of $c$.
This was described in \cite{CFK22,CFGK23} when $\rho=\rho_c(\tau)$ with $\tau$ essentially unitary.
\end{remark}

\section{The Zelevinsky functional}\label{sec:zelevinsky}

Throughout this section $\rho$ is a $(k,c)$ representation of $G$, with space $\cV_\rho$
and $(k,c)$ functional $\lambda$. We show that $\dim\Hom_N(\rho,\Psn)=1$, that the Zelevinsky model
functions have moderate growth, and that the transition map of \S~\ref{sec:transition}
carries the Zelevinsky model onto the $(k,c)$ model.

Recall that by M\oe glin and Waldspurger \cite[II.2]{MW87} (for odd
residual characteristic; Varma \cite{Var14} removed this restriction), any $\pi\in\Irr(G)$ has a unique maximal orbit in its wave front set, and by
\cite[Theorems~E and F]{GGS17} this orbit corresponds to the partition $\beta=(\beta_1,\beta_2,\dots,\beta_l)$ in
which $\beta_1$ is the order of the highest derivative of $\pi$, $\beta_2$ that of
the highest derivative of $\pi^{(\beta_1)}$, and so on, in the sense of Bernstein and
Zelevinsky \cite{BZ77}; in terms of the Zelevinsky classification, if $\pi$ corresponds to
the multisegment $(\Delta_1,\dots,\Delta_d)$ with $\Delta_i$ a segment of length $m_i$ of
irreducible supercuspidal representations of $\GL_{r_i}$, this is the orbit induced from the orbits
$(r_i^{m_i})$ of $\GL_{m_ir_i}$, $1\le i\le d$ \cite[II.2, (4)]{MW87}. Zelevinsky
\cite[8.3, Corollary]{Zel80} proved that $\pi$ embeds, in a unique way up to a scalar,
into $\Ind_N^G\psi_\beta$ where $\psi_\beta$ is the character of $N$ defined by
\[
\psi_\beta(u)=\psi\Big(\sum_{\{1\le i<n:\ i\ne n-\sum_{j'=1}^{j}\beta_{j'},\ 1\leq j<l\}}u_{i,i+1}\Big).
\]
The image of this embedding is what we call the Zelevinsky model of $\pi$.

\begin{lemma}\label{lem:zelexists}
$\Hom_N(\rho,\Psn)\ne0$.
\end{lemma}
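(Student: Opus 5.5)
The plan is to reduce to an irreducible subquotient of $\rho$ and use Zelevinsky's model there. The key observation is that for the partition $\beta=(k^c)$ the character $\psi_\beta$ defined above is exactly $\Psn$. Indeed, the excluded indices $i=n-\sum_{j'\le j}\beta_{j'}=n-jk$, $1\le j<c$, are precisely the multiples of $k$ strictly between $0$ and $n$. So it suffices to find an irreducible subquotient $\pi$ of $\rho$ whose maximal wave front orbit is $(k^c)$, and then to transfer the nonvanishing from $\pi$ to $\rho$.

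\emph{Step 1: the wave front set of the constituents.} Let $\pi_1,\dots,\pi_m$ be the irreducible subquotients of $\rho$, counted with multiplicity. Characters are additive in short exact sequences, so $c_{\mathcal O}(\rho)=\sum_i c_{\mathcal O}(\pi_i)$ for every nilpotent orbit $\mathcal O$.
\begin{itemize}[leftmargin=*]
\item By M\oe glin--Waldspurger \cite{MW87}, for $\mathcal O$ maximal in $\WF(\pi_i)$ the coefficient $c_{\mathcal O}(\pi_i)$ is the dimension of a degenerate Whittaker space, hence positive.
\item Argue by induction downward along the dominance order. Suppose $\mathcal O$ is maximal among the orbits occurring in $\WF(\pi_i)$ for some $i$. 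Then $\mathcal O$ is maximal in each $\WF(\pi_j)$ in which it occurs, so all nonzero contributions to $c_{\mathcal O}(\rho)$ are positive. Moreover $c_{\mathcal O'}(\rho)=0$ for $\mathcal O'>\mathcal O$, since no $\pi_j$ has an orbit above $\mathcal O$ in its wave front set.
\item Hence $\mathcal O$ lies in $\WF(\rho)$, so $\mathcal O\le(k^c)$, because $(k^c)$ is the unique maximal orbit of $\WF(\rho)$.
\item Conversely, $c_{(k^c)}(\rho)\ne0$ forces some $\pi=\pi_i$ with $c_{(k^c)}(\pi)\ne0$. Every orbit in the expansion of $\pi$ lies below some maximal one, and all maximal ones are $\le(k^c)$. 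Therefore the unique maximal orbit of $\WF(\pi)$, which exists by \cite{MW87,Var14}, is exactly $(k^c)$.
\end{itemize}

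\emph{Step 2: apply Zelevinsky's embedding.} By \cite[Theorems~E and F]{GGS17} and \cite[8.3, Corollary]{Zel80}, $\pi$ embeds into $\Ind_N^G\psi_\beta$ with $\beta=(k^c)$, that is, into $\Ind_N^G\Psn$. Evaluation at the identity then gives a nonzero element of $\Hom_N(\pi,\Psn)$. Equivalently, $J_{N,\Psn}(\pi)\ne0$.

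\emph{Step 3: pass from $\pi$ to $\rho$.} The functor $J_{N,\Psn}$ is exact \cite[1.9]{BZ77}. Since $\pi$ is a subquotient of $\rho$, $J_{N,\Psn}(\pi)$ is a subquotient of $J_{N,\Psn}(\rho)$, so $J_{N,\Psn}(\rho)\ne0$. Any nonzero linear functional on $J_{N,\Psn}(\rho)$ pulls back to a nonzero element of $\Hom_N(\rho,\Psn)$.

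The main obstacle is Step 1. It relies on the positivity of the leading coefficients for irreducible representations, which is what prevents cancellation in $c_{\mathcal O}(\rho)$, together with the downward induction that propagates the bound $\le(k^c)$ to every constituent. Steps 2 and 3 are formal once the combinatorial identification $\psi_{(k^c)}=\Psn$ is checked.
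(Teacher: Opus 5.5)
Your proof is correct. Steps~2 and~3 coincide with the paper's argument: Zelevinsky's embedding for an irreducible constituent $\pi$ with $\WF(\pi)=\{(k^c)\}$, the identification $\psi_{(k^c)}=\Psn$, and exactness of $J_{N,\Psn}$.

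The difference is in how you produce $\pi$. The paper never works with the coefficients $c_{\mathcal O}$. It applies \cite[Theorem~E]{GGS17} directly to the finite-length $\rho$: the generalized Whittaker model of $\rho$ attached to $(k^c)$ is nonzero, while the models attached to orbits $\mathcal O>(k^c)$ vanish. Since these models are duals of twisted Jacquet modules \cite[Lemma~2.8]{GS19}, they are exact, and both properties pass to some irreducible subquotient.

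You instead stay with the definition of $\WF$ through the character expansion. Additivity of characters gives $c_{\mathcal O}(\rho)=\sum_i c_{\mathcal O}(\pi_i)$. Positivity of leading coefficients (the M\oe glin--Waldspurger identity of $c_{\mathcal O}(\pi_i)$ with the dimension of a degenerate Whittaker space for $\mathcal O\in\WF(\pi_i)$) rules out cancellation.

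What each route buys:
\begin{itemize}
\item Your route invokes the leading-coefficient theorem only for irreducible representations and needs no model-theoretic characterization of $\WF$ for reducible ones. The cost is a small order-theoretic argument.
\item The paper's route is shorter, because Theorem~E already covers finite-length representations and exactness does the bookkeeping. Theorem~E itself rests on the same M\oe glin--Waldspurger input.
\end{itemize}

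Two minor points of precision:
\begin{itemize}
\item ``Maximal among the orbits occurring in $\WF(\pi_i)$ for some $i$'' should read ``maximal in $\bigcup_i\WF(\pi_i)$''. What you actually use is that every $j$ with $c_{\mathcal O}(\pi_j)\ne0$ has $\mathcal O\in\WF(\pi_j)$. This holds because each orbit in the support of $c(\pi_j)$ lies below an element of $\WF(\pi_j)$.
\item For residual characteristic $2$ the positivity needs \cite{Var14} in addition to \cite{MW87}.
\end{itemize}
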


\begin{proof}
By \cite[Theorem~E]{GGS17}, for $\pi\in\Rep(G)$ and a nilpotent orbit $\mathcal O$, the generalized Whittaker model of $\pi$ with respect to $\mathcal O$ is nonzero if and only if
$\mathcal O$ is contained in the closure of the union of the orbits of $\WF(\pi)$. By \cite[Lemma~2.8]{GS19}, this model is the dual of a twisted Jacquet module of $\pi$ with respect to a unipotent subgroup and a character of it, both depending only on $\mathcal O$; hence the functor taking $\pi$ to this model is exact.
Since $\WF(\rho)=\{(k^c)\}$, the model of $\rho$ with respect to $(k^c)$ is nonzero, and the model with respect to any orbit $\mathcal O>(k^c)$ vanishes. By exactness, some irreducible
subquotient $\pi$ of $\rho$ has a nonzero model with respect to $(k^c)$, and all the models of $\pi$ with respect to orbits $\mathcal O>(k^c)$ vanish. Hence $\WF(\pi)=\{(k^c)\}$.
Thus, in the above notation, $\beta=(k,\dots,k)$ for $\pi$, so that $l=c$ and $\{n-\sum_{j'=1}^{j}\beta_{j'}\}_{1\leq j<c}=\{jk\}_{1\le j<c}$, whence
$\psi_{(k^c)}=\Psn$ and $\dim\Hom_N(\pi,\Psn)=1$, equivalently $\dim J_{N,\Psn}(\pi)=1$. Since the functor $J_{N,\Psn}$ is exact,
we deduce that $J_{N,\Psn}(\rho)\ne0$ and thus $\Hom_N(\rho,\Psn)\ne0$.
\end{proof}

\begin{remark}\label{rem:zelexistsrhoc}
When $\rho=\rho_c(\tau)$ (see Remark~\ref{rem:rhoc}), Lemma~\ref{lem:zelexists} can also be seen through Frobenius reciprocity when $\tau$ is unitary, and in general through the 
geometric lemma of Bernstein and Zelevinsky \cite{BZ77} applied to \eqref{eq:rho} and the parabolic subgroup $Q$. In both cases one of the
subquotients of $J_V(\rho_c(\tau))$ is induced from the tensor product over $i$ of the 
quotients $\tau_i|\det|^{(1-c)/2}\otimes\dots\otimes\tau_i|\det|^{(c-1)/2}$ of the Jacquet
modules of the $\rho_c(\tau_i)$, hence admits a Whittaker functional by the heredity theorem of Rodier \cite{Rod73}.
\end{remark}

Before proving the uniqueness, we note that the space $\Hom_N(\rho,\Psn)$ is
finite-dimensional a priori. Indeed, $J_{N,\Psn}=J_{N_L,\psi_L}\circ J_V$, $J_V(\rho)\in\Rep(L)$, and for 
$\sigma\in\Irr(L)$, $\dim J_{N_L,\psi_L}(\sigma)\leq1$. Since $J_{N_L,\psi_L}$ is exact, $\dim\Hom_N(\rho,\Psn)$ is the number 
of generic irreducible subquotients of $J_V(\rho)$ counted with multiplicity. The following lemma shows that for a $(k,c)$ representation this dimension is exactly one.

\begin{lemma}\label{lem:zele1dim}
For $\lambda_{\mathcal Z}\in\Hom_N(\rho,\Psn)$ and $\xi\in\cV_\rho$ let $W^{\mathcal Z}_\xi(g)=\lambda_{\mathcal Z}(\rho(g)\xi)$, and put
$\Lambda_{\lambda_{\mathcal Z}}(\xi)=(\cT W^{\mathcal Z}_\xi)(w_0)$, with $\cT$ the transition map \eqref{eq:T}. Then
$\lambda_{\mathcal Z}\mapsto\Lambda_{\lambda_{\mathcal Z}}$ is an injective linear map from $\Hom_N(\rho,\Psn)$ to $\Hom_U(\rho,\psi_k)$.
In particular, $\dim\Hom_N(\rho,\Psn)=1$.
\end{lemma}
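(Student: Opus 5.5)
The plan has three parts: show that $\Lambda_{\lambda_{\mathcal Z}}$ is a $(k,c)$ functional, show that the map $\lambda_{\mathcal Z}\mapsto\Lambda_{\lambda_{\mathcal Z}}$ is injective, and then compare dimensions.

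\emph{$\Lambda_{\lambda_{\mathcal Z}}$ lies in $\Hom_U(\rho,\psi_k)$.} Since $\lambda_{\mathcal Z}\in\Hom_N(\rho,\Psn)$ and $\xi\mapsto W^{\mathcal Z}_\xi$ is $G$-equivariant, the map $\xi\mapsto W^{\mathcal Z}_\xi$ lands in $\Ind_N^G\Psn$. By \eqref{eq:T} and the $G$-equivariance of each $\cT_i$, the map $\xi\mapsto\cT W^{\mathcal Z}_\xi$ is a $G$-equivariant map $\rho\to\Ind_{U'}^G\Psu$, with $\cT W^{\mathcal Z}_{\rho(g)\xi}=g\cdot\cT W^{\mathcal Z}_\xi$. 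Linearity of $\Lambda_{\lambda_{\mathcal Z}}$ in $\xi$, and of the assignment in $\lambda_{\mathcal Z}$, is clear. For $v\in U$ we compute, using $w_0vw_0^{-1}\in U'$, left $(U',\Psu)$-equivariance and \eqref{eq:psik}:
\[
\Lambda_{\lambda_{\mathcal Z}}(\rho(v)\xi)=(\cT W^{\mathcal Z}_\xi)(w_0v)=(\cT W^{\mathcal Z}_\xi)\big((w_0vw_0^{-1})w_0\big)=\Psu(w_0vw_0^{-1})\,\Lambda_{\lambda_{\mathcal Z}}(\xi)=\psi_k(v)\,\Lambda_{\lambda_{\mathcal Z}}(\xi).
\]
Hence $\Lambda_{\lambda_{\mathcal Z}}\in\Hom_U(\rho,\psi_k)$.

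\emph{Injectivity.} Suppose $\Lambda_{\lambda_{\mathcal Z}}=0$. Then for every $g\in G$ and $\xi\in\cV_\rho$,
\[
(\cT W^{\mathcal Z}_\xi)(g)=(\cT W^{\mathcal Z}_{\rho(w_0^{-1}g)\xi})(w_0)=\Lambda_{\lambda_{\mathcal Z}}(\rho(w_0^{-1}g)\xi)=0 .
\]
So $\cT W^{\mathcal Z}_\xi=0$. By Lemma~\ref{lem:injective}, $\cT$ is injective, so $W^{\mathcal Z}_\xi=0$. Evaluating at the identity gives $\lambda_{\mathcal Z}(\xi)=0$ for all $\xi$, i.e.\ $\lambda_{\mathcal Z}=0$.

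\emph{Dimension count.} The injection gives $\dim\Hom_N(\rho,\Psn)\le\dim\Hom_U(\rho,\psi_k)=1$, the equality being Definition~\ref{def:kc}. Lemma~\ref{lem:zelexists} gives $\dim\Hom_N(\rho,\Psn)\ge1$, so the dimension is exactly $1$.

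There is no real obstacle. The only points needing care are the convergence and well-definedness of $\cT$, which are supplied by Proposition~\ref{prop:LM} and \S~\ref{sec:transition}, and the conjugation identity \eqref{eq:psik} relating $(U,\psi_k)$ to $(U',\Psu)$.
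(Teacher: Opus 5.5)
Your proposal is correct and follows essentially the same route as the paper: $G$-equivariance of $\xi\mapsto\cT W^{\mathcal Z}_\xi$ together with \eqref{eq:psik} places $\Lambda_{\lambda_{\mathcal Z}}$ in $\Hom_U(\rho,\psi_k)$, injectivity of $\cT$ (Lemma~\ref{lem:injective}) gives injectivity of $\lambda_{\mathcal Z}\mapsto\Lambda_{\lambda_{\mathcal Z}}$, and Lemma~\ref{lem:zelexists} supplies the lower bound. The only cosmetic difference is that the paper first forms $\Lambda(\xi)=(\cT W^{\mathcal Z}_\xi)(I_n)\in\Hom_{U'}(\rho,\Psu)$ and then writes $\Lambda_{\lambda_{\mathcal Z}}=\Lambda\circ\rho(w_0)$, whereas you verify the $(U,\psi_k)$-equivariance and the vanishing of $\cT W^{\mathcal Z}_\xi$ directly.
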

\begin{proof}
For $c=1$, we have $N=U$, $\Psi_N=\psi_k$ and the assertions follow immediately from Definition~\ref{def:kc}. We proceed for all $c\geq1$.
The map $\xi\mapsto W^{\mathcal Z}_\xi$ is a $G$-equivariant map $\rho\to\Ind_N^G\Psn$, which vanishes if and only if $\lambda_{\mathcal Z}=0$.
Put $\Lambda(\xi)=(\cT W^{\mathcal Z}_\xi)(I_n)$. Since $\cT$ and $\xi\mapsto W^{\mathcal Z}_\xi$ are
$G$-equivariant, $\Lambda(\rho(g)\xi)=(\cT W^{\mathcal Z}_\xi)(g)$, and since $\cT W^{\mathcal Z}_\xi\in\Ind_{U'}^G\Psu$,
$\Lambda\in\Hom_{U'}(\rho,\Psu)$. By \eqref{eq:psik}, $\Lambda_{\lambda_{\mathcal Z}}=\Lambda\circ\rho(w_0)\in\Hom_U(\rho,\psi_k)$.
The map $\lambda_{\mathcal Z}\mapsto\Lambda_{\lambda_{\mathcal Z}}$ is linear, and it is injective: if $\Lambda_{\lambda_{\mathcal Z}}=0$, then $\Lambda=0$,
so that $\cT W^{\mathcal Z}_\xi=0$ for all $\xi$, hence $W^{\mathcal Z}_\xi=0$ for all $\xi$ by Lemma~\ref{lem:injective}, and $\lambda_{\mathcal Z}=0$.
Since $\Hom_U(\rho,\psi_k)$ is one-dimensional by Definition~\ref{def:kc}, we obtain $\dim\Hom_N(\rho,\Psn)\le1$, and equality holds by Lemma~\ref{lem:zelexists}.
\end{proof}

\begin{remark}\label{rem:GGS}
The upper bound $\dim\Hom_N(\rho,\Psn)\le1$ also follows from the comparison of degenerate Whittaker models of
Gomez, Gourevitch and Sahi \cite{GGS17}. By \cite[\S3.1]{LM20}, $J_{N,\Psn}(\rho)$ and
$J_{U',\Psu}(\rho)$ are degenerate Whittaker models in the sense of M\oe glin and Waldspurger
\cite[\S II.2]{MW87}. In the terminology of \cite[\S2.5]{GGS17}, $J_{N,\Psn}(\rho)$ is defined by
a Whittaker pair and $J_{U',\Psu}(\rho)$ is defined by a neutral Whittaker pair with the same nilpotent element, and
\cite[Theorem~A]{GGS17} provided a $G$-equivariant epimorphism from the neutral model
$\operatorname{ind}_{U'}^G\Psu$ onto $\operatorname{ind}_N^G\Psn$ (both pairs have
$\mathfrak g_1=0$ there, so that no oscillator representation intervenes). Applying
$\Hom_G(\,\cdot\,,\rho^{\vee})$ and Frobenius reciprocity for compact induction
(\cite[2.29]{BZ76}) gives an injection of $\Hom_N(\rho,\Psn^{-1})$ into
$\Hom_{U'}(\rho,\Psu^{-1})$, which is the bound (for $\psi^{-1}$, hence for $\psi$). For
$c>1$ the pair defining $J_{N,\Psn}(\rho)$ is not neutral, which is why the multiplicity one theorem of
\cite{MW87} for neutral pairs does not apply to $(N,\Psn)$ directly. The proof given above
replaces the epimorphism of \cite{GGS17} by the injectivity of $\cT$.
\end{remark}

\begin{remark}\label{rem:zelirred}
When $\rho$ is irreducible, Lemma~\ref{lem:zele1dim} follows directly from \cite[8.3]{Zel80} together with the identification of
$\beta$ with $(k^c)$ explained above.
\end{remark}

We fix a nonzero $\lambda_{\mathcal Z}\in\Hom_N(\rho,\Psn)$, and put
\begin{equation}\label{eq:WZ}
W^{\mathcal Z}_\xi(g):=\lambda_{\mathcal Z}(\rho(g)\xi),\qquad \xi\in\cV_\rho,\ g\in G .
\end{equation}
Then $\xi\mapsto W^{\mathcal Z}_\xi$ is a nonzero $G$-equivariant map from $\rho$ to
$\Ind_N^G\Psn$, and the definitions imply that
\begin{alignat}{2}\label{eq:WZequivN}
W^{\mathcal Z}_\xi(ug)&=\Psn(u)W^{\mathcal Z}_\xi(g),&\qquad& u\in N,\ g\in G,\\
\label{eq:WZequivG}
W^{\mathcal Z}_{\rho(h)\xi}(g)&=W^{\mathcal Z}_\xi(gh),&& g,h\in G .
\end{alignat}
We call the space $\{W^{\mathcal Z}_\xi:\xi\in\cV_\rho\}$ the Zelevinsky model of $\rho$; by
Lemma~\ref{lem:zele1dim} it does not depend on the choice of $\lambda_{\mathcal Z}$.

\begin{lemma}\label{lem:zelmg}
There is $D_{\mathcal Z}\ge0$, depending only on $\rho$, such that for every $\xi\in\cV_\rho$ there is
$C_{\mathcal Z}(\xi)\ge0$ with
\[
\abs{W^{\mathcal Z}_\xi(g)}\ \le\ C_{\mathcal Z}(\xi)\,\norm g^{D_{\mathcal Z}}\qquad\forall g\in G .
\]
\end{lemma}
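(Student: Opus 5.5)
The plan is to reduce to Whittaker functions on $L$ and apply Corollary~\ref{cor:whittakermg}. Since $\Psn$ is trivial on $V$, each $W^{\mathcal Z}_\xi$ is left $V$-invariant. Hence its restriction to $L$ factors through the Jacquet module $J_V(\rho)$. Concretely, $\lambda_{\mathcal Z}$ factors as $\lambda_{\mathcal Z}=\lambda_L\circ p$, where $p\colon\cV_\rho\to J_V(\rho)$ is the projection and $\lambda_L\in\Hom_{N_L}(J_V(\rho),\psi_L)$, by \eqref{eq:homfactor}. Put $\cW:=\{b\mapsto\lambda_L(J_V(\rho)(b)\bar\xi):\bar\xi\in J_V(\rho)\}$. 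This is a quotient of $J_V(\rho)$, and $J_V(\rho)\in\Rep(L)$ has finite length, so $\cW\subset\Ind_{N_L}^L\psi_L$ is a subrepresentation of finite length. Moreover, $W^{\mathcal Z}_\xi|_L$ is the function in $\cW$ attached to $p(\xi)$. Corollary~\ref{cor:whittakermg} then gives $\abs{W^{\mathcal Z}_\xi(b)}\le C\norm b^{D_\cW}$ for $b\in L$, and $D_\cW$ depends only on $\cW$, hence only on $\rho$. There is one point to handle: $\lambda_L$ is a functional on $J_V(\rho)$, and the smoothness of the resulting functions follows because $p$ is $L$-equivariant and the vectors are smooth. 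If one wants to be careful about normalization, the modulus character does not enter, since Jacquet modules are unnormalized here.

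Next I extend the bound to all of $G$ via the Iwasawa decomposition $G=QK$. I write $g=vb\kappa$ with $v\in V$, $b\in L$ and $\kappa\in K$. Then
\[W^{\mathcal Z}_\xi(g)=W^{\mathcal Z}_{\rho(\kappa)\xi}(b)\]
by \eqref{eq:WZequivN} and \eqref{eq:WZequivG}, together with the triviality of $\Psn$ on $V$. By smoothness, $\rho(\kappa)\xi$ ranges over a finite set as $\kappa$ ranges over $K$. So I take $C_{\mathcal Z}(\xi)$ to be the maximum of the constants $C$ above over this finite set. Properties \ref{it:h1} and \ref{it:h3} of \S~\ref{ssec:height} give $\norm b\le\norm{vb}=\norm g$, which yields the claim with $D_{\mathcal Z}=D_\cW$.

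This argument is essentially the same as the one already used at the end of the proof of Corollary~\ref{cor:whittakermg}, one level up. The only step needing care is the identification of $W^{\mathcal Z}_\xi|_L$ with elements of a \emph{finite-length} subrepresentation $\cW$ of $\Ind_{N_L}^L\psi_L$ that depends only on $\rho$, so that the exponent is uniform in $\xi$. This is what forces passing through $J_V(\rho)$, rather than working with the $L$-span of each function separately, although that span would also have finite length. I expect no real obstacle beyond this bookkeeping. If one also wants $D_{\mathcal Z}$ in terms of cuspidal exponents, one reads off the characters $\omega$ and $\chi_\alpha$ of Lemma~\ref{lem:expansion} from the Jacquet modules of $J_V(\rho)$, which are Jacquet modules of $\rho$ by transitivity.
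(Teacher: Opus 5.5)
Your proposal is correct and follows the paper's proof essentially step for step: factor $\lambda_{\mathcal Z}$ through $J_V(\rho)$ via \eqref{eq:homfactor}, apply Corollary~\ref{cor:whittakermg} to the finite-length space $\cW$, and extend to $G$ using $g=vb\kappa$, the finiteness of $\{\rho(\kappa)\xi:\kappa\in K\}$, and $\norm b\le\norm g$. The paper adds one justification that your ``hence only on $\rho$'' uses implicitly: $\cW$ is a priori determined by the choice of $\lambda_{\mathcal Z}$, and the paper invokes Lemma~\ref{lem:zele1dim} (uniqueness up to scalar) to conclude that $\cW$, and with it $D_{\mathcal Z}$, depends only on $\rho$.
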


\begin{proof}
Let $\cX$ be the space of $J_{V}(\rho)$, a representation of $L$ of finite length, and let
$\xi\mapsto\bar\xi$ be the $Q$-equivariant projection $\cV_\rho\to\cX$. By
\eqref{eq:homfactor}, $\lambda_{\mathcal Z}=\bar\lambda\circ(\xi\mapsto\bar\xi)$ for a unique
$\bar\lambda\in\Hom_{N_L}(\cX,\psi_L)$. For $\eta\in\cX$ put $W^\cX_\eta(b)=\bar\lambda(J_V(\rho)(b)\eta)$,
$b\in L$. The space $\cW=\{W^\cX_\eta:\eta\in\cX\}$ is an $L$-invariant subspace of
$\Ind_{N_L}^L\psi_L$ and a quotient of $\cX$, hence of finite length, and
Corollary~\ref{cor:whittakermg} provides $D_{\mathcal Z}:=D_\cW$ and constants $C_\eta$. The
space $\cW$ is determined by $\bar\lambda$, hence by $\lambda_{\mathcal Z}$, and by
Lemma~\ref{lem:zele1dim} it does not change when $\lambda_{\mathcal Z}$ is multiplied by a
nonzero scalar; thus $\cW$, and with it $D_{\mathcal Z}$, depends only on $\rho$.

Let $\xi\in\cV_\rho$. The set $\{\rho(\kappa)\xi:\kappa\in K\}$ is finite, since $\xi$ is
smooth and $K$ is compact; let $\eta_1,\dots,\eta_l\in\cX$ be the images of its elements
and $C_{\mathcal Z}(\xi):=\max_mC_{\eta_m}$. Let $g\in G$ and write $g=vb\kappa$ with $v\in V$,
$b\in L$, $\kappa\in K$. Then, by \eqref{eq:WZequivN}, \eqref{eq:WZequivG}, the triviality of $\Psn$ on $V$ and
the $L$-equivariance of the projection,
\[
W^{\mathcal Z}_\xi(g)=\lambda_{\mathcal Z}\big(\rho(v)\rho(b)\rho(\kappa)\xi\big)
=\bar\lambda\big(J_V(\rho)(b)\overline{\rho(\kappa)\xi}\big)=W^\cX_{\eta_m}(b)
\]
for the index $m$ with $\eta_m=\overline{\rho(\kappa)\xi}$. Hence
$\abs{W^{\mathcal Z}_\xi(g)}\le C_{\mathcal Z}(\xi)\norm b^{D_{\mathcal Z}}\le C_{\mathcal Z}(\xi)\norm g^{D_{\mathcal Z}}$, using
$\norm b\le\norm{vb}=\norm{vb\kappa}$ by properties~\ref{it:h3} and \ref{it:h1} of \S~\ref{ssec:height}.
\end{proof}

\begin{corollary}\label{cor:identification}
Let $\lambda_{\mathcal Z}$ be as in \eqref{eq:WZ}. There is $c_0\in\bC^\times$ such that
\begin{equation}\label{eq:identification}
W_\xi(g)\ =\ c_0^{-1}\int_{U'\cap\bN}W^{\mathcal Z}_\xi(\bu\,w_0g)\,d\bu
\qquad\forall \xi\in \cV_\rho,\ g\in G,
\end{equation}
the integrand being compactly supported.
\end{corollary}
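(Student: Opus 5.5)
The plan is to realize $\lambda$ through the Zelevinsky functional using Lemma~\ref{lem:zele1dim}, and then unwind the definition of $\cT$ via Proposition~\ref{prop:LM}. Let $\lambda_{\mathcal Z}$ be the fixed nonzero element of $\Hom_N(\rho,\Psn)$. By Lemma~\ref{lem:zele1dim}, the functional $\Lambda_{\lambda_{\mathcal Z}}(\xi)=(\cT W^{\mathcal Z}_\xi)(w_0)$ is a nonzero element of $\Hom_U(\rho,\psi_k)$, because the map $\lambda_{\mathcal Z}\mapsto\Lambda_{\lambda_{\mathcal Z}}$ is injective. Since $\dim\Hom_U(\rho,\psi_k)=1$ by Definition~\ref{def:kc}, there is $c_0\in\bC^\times$ with $\Lambda_{\lambda_{\mathcal Z}}=c_0\lambda$.

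Next, we evaluate both functionals at $\rho(g)\xi$. On the left we use \eqref{eq:WZequivG} together with the $G$-equivariance of $\cT$, which gives $\cT W^{\mathcal Z}_{\rho(g)\xi}=g\cdot \cT W^{\mathcal Z}_\xi$. We obtain
\[
c_0W_\xi(g)=c_0\lambda(\rho(g)\xi)=(\cT W^{\mathcal Z}_{\rho(g)\xi})(w_0)=(\cT W^{\mathcal Z}_\xi)(w_0g).
\]

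Finally, we apply Proposition~\ref{prop:LM} to $W=W^{\mathcal Z}_\xi\in\Ind_N^G\Psn$ at the point $w_0g$. This gives
\[
(\cT W^{\mathcal Z}_\xi)(w_0g)=\int_{U'\cap\bN}W^{\mathcal Z}_\xi(\bu w_0g)\,d\bu,
\]
and the same proposition shows that the integrand is compactly supported. Dividing by $c_0$ yields \eqref{eq:identification}.

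There is no real obstacle here. The only point to check is that $c_0$ is independent of $\xi$ and $g$. This holds because it is the single scalar comparing two nonzero elements of a one-dimensional space. It does depend on the choice of $\lambda_{\mathcal Z}$, which matches the statement's phrasing ``Let $\lambda_{\mathcal Z}$ be as in \eqref{eq:WZ}''.
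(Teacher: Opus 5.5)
Your proposal is correct and follows the paper's own proof essentially step for step. You get $\Lambda_{\lambda_{\mathcal Z}}=c_0\lambda$ from Lemma~\ref{lem:zele1dim} and $\dim\Hom_U(\rho,\psi_k)=1$, rewrite $\lambda(\rho(g)\xi)$ as $c_0^{-1}(\cT W^{\mathcal Z}_\xi)(w_0g)$ using $G$-equivariance, and finish with \eqref{eq:Tformula}.
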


\begin{proof}
By Lemma~\ref{lem:zele1dim}, $\Lambda_{\lambda_{\mathcal Z}}$ is a nonzero element of $\Hom_U(\rho,\psi_k)$, since $\lambda_{\mathcal Z}\ne0$. Hence
$\Lambda_{\lambda_{\mathcal Z}}=c_0\lambda$ with $c_0\ne0$, because $\dim\Hom_U(\rho,\psi_k)=1$ by Definition~\ref{def:kc}, and
$W_\xi(g)=\lambda(\rho(g)\xi)=c_0^{-1}\Lambda_{\lambda_{\mathcal Z}}(\rho(g)\xi)=c_0^{-1}(\cT W^{\mathcal Z}_\xi)(w_0g)$,
and \eqref{eq:Tformula} gives \eqref{eq:identification}.
\end{proof}

\section{Moderate growth of \texorpdfstring{$(k,c)$}{(k,c)} model functions}\label{sec:mg}

Recall that $n=kc$, and recall the level of a vector together with the convention that levels are greater than $e_\psi$ (\S~\ref{ssec:notation}). 
Throughout this section $\rho$ is a $(k,c)$ representation of $G$, with space $\cV_\rho$ and $(k,c)$ functional $\lambda$.
We also use the map $\xi\mapsto W^{\mathcal Z}_\xi$ of \S~\ref{sec:zelevinsky}, a nonzero $G$-equivariant map from $\rho$ to $\Ind_N^G\Psi_N$.

We start with a bound on the support of $g\cdot W^{\mathcal Z}_\xi$, with explicit dependence on $g$.
\begin{lemma}\label{lem:keysupport}
Let $\xi\in \cV_\rho$ have level $r$, let $g\in G$, and let $\bu\in U'\cap\bN$ satisfy
$W^{\mathcal Z}_\xi(\bu g)\ne0$. Then every entry of $\bu$ has absolute value at most
\begin{equation}\label{eq:R}
R(g):=\big(q^{(k-1)(r-e_\psi)}\,\norm g^{2}\big)^{n}.
\end{equation}
\end{lemma}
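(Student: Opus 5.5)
The plan is to turn the nonvanishing of $W^{\mathcal Z}_\xi(\bu g)$ into bounds on the torus part of an Iwasawa decomposition of $x:=\bu g$, and then to turn those into bounds on the entries of $\bu$ through contents, as in Lemma~\ref{lem:iwasawaunip}.

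\emph{Step 1 (support condition from smoothness).} Write $x=\bu g=n_xt\kappa$ with $n_x\in N$, $t=\diag(t_1,\dots,t_n)\in T$ and $\kappa\in K$. Let $y\in N\cap xK(r)x^{-1}$. By \eqref{eq:WZequivN}, \eqref{eq:WZequivG} and the fact that $\xi$ has level $r$,
\[
\Psn(y)W^{\mathcal Z}_\xi(x)=W^{\mathcal Z}_\xi(yx)=W^{\mathcal Z}_\xi(x).
\]
Since $W^{\mathcal Z}_\xi(x)\ne0$, the character $\Psn$ is trivial on $N\cap xK(r)x^{-1}$. Because $K(r)$ is normal in $K$, we have $xK(r)x^{-1}=n_x\,tK(r)t^{-1}\,n_x^{-1}$. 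Moreover $\Psn$ is invariant under conjugation by $N$. Hence $\Psn$ is trivial on $N\cap tK(r)t^{-1}$. Testing against root elements $I_n+t_i\varpi^{r}xt_{i+1}^{-1}E_{i,i+1}$ for $k\nmid i$, and using \eqref{eq:conductor}, gives
\[
\abs{t_i/t_{i+1}}\le q^{r-e_\psi}\qquad\text{for }k\nmid i.
\]
Inside the $j$-th block of size $k$ this yields $\abs{t_l}\le q^{(k-1)(r-e_\psi)}\abs{t_{jk}}$ for every $l$ in that block. This is where the factor $q^{(k-1)(r-e_\psi)}$ in $R(g)$ comes from.

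\emph{Step 2 (entries of $\bu$ via contents).} As in the proof of Lemma~\ref{lem:iwasawaunip}, for each $i$ the content of the rows $i,\dots,n$ of $x$ equals $\prod_{l\ge i}\abs{t_l}$, by \eqref{eq:mukappa} and the triangularity of $n_xt$. By \eqref{eq:mug}, the content of the rows $i,\dots,n$ of $\bu$ is at most this quantity times $\norm g^{n}$. Next, the argument of Lemma~\ref{lem:iwasawaunip}\ref{it:iw3} applies to $\bu\in\bN$ with blocks of size $1$: it exhibits each entry of $\bu$, up to sign, as a maximal minor of some tail block of rows of $\bu$, each such minor having a lower-unitriangular complementary part. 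So every entry of $\bu$ is bounded by $\max_i\prod_{l\ge i}\abs{t_l}\cdot\norm g^{n}$. It therefore remains to show
\[
\prod_{l\ge i}\abs{t_l}\le\big(q^{(k-1)(r-e_\psi)}\norm g\big)^{n}\qquad\text{for all }i .
\]

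\emph{Step 3 (bounding the torus coordinates; the main obstacle).} Here I would use that $\bu\in U'\subset D$ (see \eqref{eq:D}). Thus row $jk$ of $x$ equals row $jk$ of $g$, and more generally the rows $jk$, $j=1,\dots,c$, of $\bu$ are standard vectors. My first attempt is to bound $\abs{t_{jk}}$ by $\norm g$. Consider the rows of $x$ indexed by the set $\{jk,(j+1)k,\dots,ck\}$. These are rows of $g$, so their content is at most $\norm g^{c-j+1}$. On the other hand this content should be comparable to $\prod_{j'\ge j}\abs{t_{j'k}}$, and I would try to establish this comparison using the block structure of $n_xt$ together with the within-block ratio bounds from Step 1. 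Taking successive quotients, with the lower bound $\ge1$ analogous to Lemma~\ref{lem:iwasawaunip}\ref{it:iw2}, should then give $\abs{t_{jk}}\lesssim\norm g^{2}$. Combined with Step 1, this gives $\abs{t_l}\le q^{(k-1)(r-e_\psi)}\norm g^{2}$ for every $l$, and hence the bound on the tail products in Step 2. The exponent $n$ in \eqref{eq:R} absorbs the products over at most $n$ indices.

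I expect the delicate point to be precisely this comparison between contents of the selected rows $jk$ and the diagonal entries $t_{jk}$. If it fails, the fallback is to combine lower bounds on tail contents (which are $\ge\norm g^{-n}$) with $\prod_l\abs{t_l}=\abs{\det g}\le\norm g^{n}$ to get upper bounds on head products, and to close the argument by induction on the block index $j$.
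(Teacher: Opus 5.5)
Your Steps 1 and 2 are correct and coincide with the first and third steps of the paper's proof. Your conjugation argument in Step 1 is equivalent to the paper's use of $W^{\mathcal Z}_\xi(h)=\Psn(n_h)W^{\mathcal Z}_{\xi'}(a)$ with $\xi'=\rho(\kappa)\xi$ of level $r$. The gap is Step 3: you leave it open, and the route you sketch for it does not work.

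\textbf{Why the multi-row route fails.}
\begin{itemize}
\item The content of the rows $\{jk,\dots,ck\}$ of $n_xt$ is only bounded \emph{below} by $\prod_{j'\ge j}\abs{t_{j'k}}$, namely by the minor on the columns $jk,\dots,ck$. It is not comparable to this product in general, because these rows do not form a tail block. For example, with $k=c=2$, the rows $2,4$ of $n_xt$ have the $2\times2$ minor $(n_x)_{23}t_3t_4$ on the columns $3,4$. The entry $(n_x)_{23}$ is not controlled by Step 1, which only compares entries inside a block.
\item There is also no lower bound $\ge1$ of the kind in Lemma~\ref{lem:iwasawaunip}\ref{it:iw2}. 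That bound came from the unit diagonal of an element of $\bN$, whereas the rows in question are rows of $g$. So taking successive quotients does not isolate $\abs{t_{jk}}$.
\item Even the bound $\abs{t_{jk}}\lesssim\norm g^{2}$ you aim for would be too weak. It gives $\abs{t_l}\le q^{(k-1)(r-e_\psi)}\norm g^{2}$, hence tail products up to $(q^{(k-1)(r-e_\psi)}\norm g^{2})^{s}$. After \eqref{eq:mug} this puts exponent $3n$ on $\norm g$ instead of the $2n$ in $R(g)$, which is inconsistent with the target you set at the end of Step 2.
\end{itemize}

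\textbf{Why the fallback fails.} It yields only upper bounds on head products. To extract an upper bound on $\abs{t_{jk}}$ from such a product you would need lower bounds on the other factors. Step 1 instead bounds the other entries of the block from above by $\abs{t_{jk}}$, which is the wrong direction. So, as sketched, the fallback does not close the gap either.

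\textbf{The missing idea: use a single row.}
\begin{itemize}
\item Since $e_{jk}\bu=e_{jk}$, the row $jk$ of $x$ is the row $jk$ of $g$, so $\ct(e_{jk}x)\le\norm g$.
\item On the other hand, $e_{jk}x=(e_{jk}n_xt)\kappa$, and $e_{jk}n_xt=t_{jk}e_{jk}+\sum_{l>jk}(n_x)_{jk,l}t_le_l$ has an entry equal to $t_{jk}$.
\item By \eqref{eq:mukappa}, $\abs{t_{jk}}\le\ct(e_{jk}n_xt)=\ct(e_{jk}x)\le\norm g$.
\end{itemize}
Combined with Step 1 this gives $\abs{t_l}\le q^{(k-1)(r-e_\psi)}\norm g$ for every $l$, which is exactly what your Step 2 requires. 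This is how the paper proceeds.
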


\begin{proof}
Put $h=\bu g$ and write $h=n_ha\kappa$ with $n_h\in N$, $a=\diag(a_1,\dots,a_n)\in T$ and
$\kappa\in K$.

\underline{The torus variable inside the blocks.} By \eqref{eq:WZequivN} and \eqref{eq:WZequivG},
$W^{\mathcal Z}_\xi(h)=\Psn(n_h)W^{\mathcal Z}_{\xi'}(a)$ with $\xi'=\rho(\kappa)\xi$, which has level $r$ since
$K(r)$ is normal in $K$. Let $1\le i<n$ with $k\nmid i$ and $x\in\varpi^r\Oo$. Then
$I_n+xE_{i,i+1}\in K(r)$ fixes $\xi'$, and $a(I_n+xE_{i,i+1})a^{-1}=I_n+a_ia_{i+1}^{-1}xE_{i,i+1}$,
so \eqref{eq:WZequivG} gives
\[
W^{\mathcal Z}_{\xi'}(a)=W^{\mathcal Z}_{\xi'}\big(a(I_n+xE_{i,i+1})\big)=\psi(a_ia_{i+1}^{-1}x)\,W^{\mathcal Z}_{\xi'}(a).
\]
Since $W^{\mathcal Z}_{\xi'}(a)\ne0$, $\psi$ is trivial on $a_ia_{i+1}^{-1}\varpi^r\Oo$, and
\eqref{eq:conductor} gives $\abs{a_i}\le q^{r-e_\psi}\abs{a_{i+1}}$. Iterating inside the block
$\{(j-1)k+1,\dots,jk\}$,
\begin{equation}\label{eq:inblock}
\abs{a_{(j-1)k+l}}\ \le\ q^{(k-l)(r-e_\psi)}\abs{a_{jk}},\qquad 1\le l<k,\ 1\le j\le c .
\end{equation}

\underline{The rows $jk$.} Since $U'\subset D$ (see \eqref{eq:D}), $e_{jk}\bu=e_{jk}$, so the
row $jk$ of $h$ is $e_{jk}g$, the row $jk$ of $g$, and $\ct(e_{jk}h)\le\norm g$. On the
other hand $e_{jk}h=(e_{jk}n_ha)\kappa$ and
$e_{jk}n_ha=a_{jk}e_{jk}+\sum_{l>jk}(n_h)_{jk,l}\,a_l\,e_l$, so that
$\ct(e_{jk}n_ha)\ge\abs{a_{jk}}$; by the $K$-invariance of $\ct$, $\abs{a_{jk}}\le\norm g$. With
\eqref{eq:inblock},
\begin{equation}\label{eq:allai}
\abs{a_i}\ \le\ q^{(k-1)(r-e_\psi)}\,\norm g\qquad\forall 1\le i\le n .
\end{equation}

\underline{The minors.} For $1\le i\le n$ let $s=n-i+1$ and let $\mathcal{R}_i$ be the $s\times n$ matrix
formed by the rows $i,\dots,n$ of $\bu$. The rows $i,\dots,n$ of $h$ form the matrix $\mathcal{R}_ig$, and they are also the rows of
$(n_ha)\kappa$. The rows $i,\dots,n$ of $n_ha$ form the matrix $(0\mid B)$ with $B$
upper triangular of size $s$ with diagonal $a_i,\dots,a_n$; its only nonzero maximal minor
is $\det B=a_i\cdots a_n$. By the $K$-invariance of $\ct$ and \eqref{eq:allai},
\[
\ct(\mathcal{R}_ig)=\abs{a_i\cdots a_n}\ \le\ \big(q^{(k-1)(r-e_\psi)}\norm g\big)^{s},
\]
and by \eqref{eq:mug},
$\ct(\mathcal{R}_i)\le\ct(\mathcal{R}_ig)\norm g^{s}\le(q^{(k-1)(r-e_\psi)}\norm g^{2})^{s}\le R(g)$.
Finally let $l<i$. The minor of $\mathcal{R}_i$ on the columns $l,i+1,\dots,n$ equals $\bu_{il}$:
its first row is $(\bu_{il},0,\dots,0)$, because $\bu$ is lower triangular, and its
remaining rows and columns form the lower unitriangular matrix of the rows and columns
$i+1,\dots,n$ of $\bu$. Hence $\abs{\bu_{il}}\le\ct(\mathcal{R}_i)\le R(g)$. The entries of $\bu$
above the diagonal vanish and those on the diagonal equal $1\le R(g)$.
\end{proof}

\begin{remark}\label{rem:keysupport}
Taking $g=I_n$ in Lemma~\ref{lem:keysupport} recovers the compact support of $W^{\mathcal Z}_\xi$ on
$U'\cap\bN$ (\cite[Lemma~3.8]{LM20}, restricted from $D\cap\bN$). The point of the lemma
is the dependence on $g$: the exponent $2n$ of $\norm g$ in \eqref{eq:R} is universal, and
the level $r$ of $\xi$ enters only the constant $q^{(k-1)(r-e_\psi)n}$. Both features are needed
in Theorem~\ref{thm:mgpsi}, where the exponent must not depend on $\xi$.
\end{remark}

For brevity, we put $\fd':=\fd_{U'\cap\bN}=\binom c2\binom k2$ (see \eqref{eq:delta}).
Let $D_{\mathcal Z}$ be the exponent of Lemma~\ref{lem:zelmg} and put
\begin{equation}\label{eq:Dformula}
D:=D_{\mathcal Z}+2n\big(\fd'+(n-1)D_{\mathcal Z}\big).
\end{equation}
The exponent $D$ depends only on $\rho$. We first record its uniformity for the representations $\rho_c(\tau)$.

\begin{lemma}\label{lem:boundednessD}
Let $\tau\in\Irr(\GL_k)$ be generic and $\rho=\rho_c(\tau)$ as defined in Remark~\ref{rem:rhoc}. Assume that the real parts of the cuspidal exponents of $\tau$ are bounded in absolute value
by $A_\tau\ge0$. Then the constant $D$ of \eqref{eq:Dformula} can be taken so that it is bounded by a function of $A_\tau$, $k$ and $c$.
\end{lemma}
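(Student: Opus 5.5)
Since $D=D_{\mathcal Z}+2n(\fd'+(n-1)D_{\mathcal Z})$ with $\fd'$ depending only on $k,c$, it suffices to bound $D_{\mathcal Z}$ in terms of $A_\tau$, $k$, $c$. By the proof of Lemma~\ref{lem:zelmg}, $D_{\mathcal Z}=D_\cW$ from \eqref{eq:DW}, where $\cW$ is the Whittaker model of the finite length representation $J_V(\rho)$ of $L$ (more precisely, of its quotient). So I would bound separately the three ingredients of $D_\cW$. First, $\abs{\Re\omega}$ and $\abs{\Re\chi_\alpha}$: $\omega$ is a central character of an irreducible subquotient of $\cW$, hence of $J_V(\rho)$. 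Also $\chi_\alpha$ is the restriction of a central character of an irreducible subquotient of $J_{U^\alpha}(\cW)$, which is a subquotient of $J_{U^\alpha}J_V(\rho)=J_{U^\alpha V}(\rho)$. So both are restrictions of cuspidal exponents of $\rho$ (to subtori of centers of standard Levi subgroups of $G$). Second, $\deg\mathcal Q$ is bounded by lengths of Jacquet modules of $\cW$, hence of $\rho$, along standard parabolics (Remark~\ref{rem:LM09canbeUsed} suffices here).

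The main step is to control the cuspidal exponents and Jacquet module lengths of $\rho=\rho_c(\tau)$ through those of $\tau$. Write $\tau=\tau_1|\det|^{a_1}\times\dots\times\tau_d|\det|^{a_d}$ as in Remark~\ref{rem:rhoc}. Each tempered $\tau_i$ is a subrepresentation of a product of discrete series, and these are Steinberg-type segments of supercuspidals $\sigma$. Then $\rho_c(\tau_i)$ is a subquotient of a product of the $\tau_i|\det|^{(c+1-2j)/2}$, and $\rho$ is a subquotient of $\times_{i,j}\tau_i|\det|^{a_i+(c+1-2j)/2}$. Thus the cuspidal support of $\rho$ consists of the supercuspidals $\sigma|\cdot|^{x}$ from the support of $\tau$, shifted by half integers of absolute value at most $(c-1)/2$. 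By the geometric lemma of \cite{BZ77}, the Jacquet modules of $\rho$ along any standard parabolic have semisimplifications made of representations with these cuspidal supports. Hence every cuspidal exponent of $\rho$ has real part a sum, over at most $n$ entries, of exponents of absolute value at most $A'_\tau+(c-1)/2$. Here $A'_\tau$ bounds the real parts of the supercuspidal support of $\tau$, which is controlled by $A_\tau$ and $k$ since cuspidal exponents of $\tau$ along minimal parabolics of its support determine it. Converting "real part of a character of $Z_{M_P}$" to the norms $\abs{\Re\omega}$, $\abs{\Re\chi_\alpha}$ only costs factors depending on $k,c$. This gives the bound on the first two terms.

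For $\deg\mathcal Q$, the geometric lemma also bounds the length of any Jacquet module of the full induced representation by the number of $W$-double-coset terms times the lengths of the Jacquet modules of the inducing data. This yields a bound depending only on $n$ (for example $n!$ times a bound on lengths of Jacquet modules of the $\tau_i$). Since $\rho$ is a subquotient, the same bound holds for it by exactness. I expect the main obstacle to be the bookkeeping in the previous paragraph: relating the cuspidal exponents of $\tau$, as hypothesized, to the real parts of the supercuspidal support of $\tau$ and then of $\rho$, uniformly in $\tau$. I would handle it by passing everything to the minimal standard parabolic containing the cuspidal support and using that real parts are additive under restriction.
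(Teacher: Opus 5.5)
Your proposal is correct and follows essentially the same route as the paper. You bound the ingredients of $D_\cW$ by cuspidal exponents and Jacquet-module lengths of $\rho$, realize $\rho_c(\tau)$ as a subquotient of a product of unitary supercuspidals twisted by exponents bounded by $A_\tau$ plus a function of $k$ and $c$, and apply the geometric lemma. The paper embeds $\tau$ directly into such a product via Frobenius reciprocity rather than passing through the tempered and discrete-series decomposition, and it obtains the explicit length bound $(n!)^2$.

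Two bookkeeping points are harmless but worth fixing, since both costs depend only on $n$:
\begin{itemize}
\item The paper's Jacquet modules are unnormalized, so the exponents you read off from cuspidal supports must be corrected by a power of $\delta_{P_1}$.
\item Your length bound (roughly $n!$ times the Jacquet-module lengths of the $\tau_i$) undercounts. The subquotients produced by the geometric lemma are themselves parabolically induced representations on $M_1$, whose lengths must also be bounded. This is cleanest when you induce from the supercuspidal data, as the paper does.
\end{itemize}
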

\begin{proof}
The constant $D_{\mathcal Z}$ is taken in Lemma~\ref{lem:zelmg} to be $D_\cW$, where $\cW$ is a quotient of $J_V(\rho_c(\tau))$.
Looking at the definition \eqref{eq:Dformula} of $D$, it is enough to show that $D_\cW$ is bounded by a function of $A_\tau$, $k$ and $c$.

There are unitary supercuspidal representations $\sigma'_i\in\Irr(\GL_{n_i})$, $1\leq i\leq l'$, and $d_i\in\bR$ such that
$\tau$ is a subrepresentation of $\sigma'_1|\det|^{d_1}\times\dots\times\sigma'_{l'}|\det|^{d_{l'}}$.
The assumption on $\tau$ implies that $|d_i|$ is bounded by $A_\tau$ plus a function of $k$, for all $i$ (the Jacquet modules being unnormalized).
By the definitions $\rho_c(\tau)$ is a subquotient of $\Pi=\sigma_1|\det|^{h_1}\times\dots\times\sigma_l|\det|^{h_l}$
where each $\sigma_i$ is $\sigma'_{j_i}$ for some $1\le j_i\le l'$, $l\geq l'$, and $h_i=d_{j_i}+r_i\in\bR$ with $r_i$ half-integers depending only on $k$ and $c$.
By the geometric lemma of Bernstein and Zelevinsky \cite{BZ77}, or its coordinate form \cite[1.6]{Zel80}, and \cite[2.8, Theorem]{BZ77},
for any parabolic subgroup $P_1=M_1\ltimes U_1$ of $G$ with unipotent radical $U_1$, the length of $J_{U_1}(\Pi)$, a fortiori of $\rho_c(\tau)$ and of
$J_V(\rho_c(\tau))$, is at most $(n!)^2$. This means that the degree of the polynomials $\mathcal Q$ of Lemma~\ref{lem:expansion} is bounded by a function of $k$ and $c$.
Again by \cite[1.6]{Zel80}, we deduce that the characters of the center of $M_1$ acting on the constituents of $J_{U_1}(\Pi)$ are of the form $t\mapsto\prod_i\omega_{\sigma_i}(t_i)|\det t_i|^{h_i}$ up to $\delta_{P_1}^{-1/2}$ ($\delta_{P_1}$ is the modulus character of $P_1$). Thus the real parts of the cuspidal exponents of $\Pi$, a fortiori those of
$\rho_c(\tau)$ and of $J_V(\rho_c(\tau))$, are bounded in absolute value by $A_\tau$ plus a function of $k$ and $c$.
We conclude that the constant $D_\cW$ of Corollary~\ref{cor:whittakermg} is bounded by a function of $A_\tau$, $k$ and $c$ alone, as required.
\end{proof}

\begin{remark}\label{rem:Dbound}
When $\rho$ is a ladder representation, for example when $\rho=\rho_c(\tau)$ for a square-integrable $\tau$, the Jacquet modules of $\rho$ were computed by
Kret and Lapid \cite{KL12}, so that its cuspidal exponents, and with them the constant $D_{\mathcal Z}$, can in principle be obtained.
\end{remark}

We can now state the first main result of this work.
\begin{theorem}\label{thm:mgpsi}
Let $\rho$ be a $(k,c)$ representation of $G$ on the space $\cV_\rho$, and let $D$ be given by \eqref{eq:Dformula}.
\begin{enumerate}[leftmargin=*,label=(\arabic*),ref=(\arabic*)]
\item\label{it:mgpsi1} For every $\xi\in \cV_\rho$ there is $C(\xi)\ge0$ such that
\[
\abs{W_\xi(g)}\ \le\ C(\xi)\,\norm g^{D}\qquad\forall g\in G .
\]
Explicitly, if $\xi$ has level $r$, one may take
$C(\xi)=\abs{c_0}^{-1}C_{\mathcal Z}(\xi)\,q^{e_\psi\fd'/2+(k-1)(r-e_\psi)n(\fd'+(n-1)D_{\mathcal Z})}$, with $c_0$ as in
\eqref{eq:identification} and $C_{\mathcal Z}(\xi)$ as in Lemma~\ref{lem:zelmg}.
\item\label{it:mgpsi2} Assume that $\rho=\rho_c(\tau)$, where $\tau\in\Irr(\GL_k)$ is generic, and that the real parts of the cuspidal exponents
of $\tau$ are bounded in absolute value by $A_\tau\ge0$. Then $D$ may be taken so that it is bounded by a function of $A_\tau$, $k$ and $c$.
\end{enumerate}
\end{theorem}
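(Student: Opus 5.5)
For part \ref{it:mgpsi1}, the plan is to combine the identification of Corollary~\ref{cor:identification} with the support bound of Lemma~\ref{lem:keysupport} and the moderate growth of Zelevinsky functions from Lemma~\ref{lem:zelmg}. Fix $\xi$ of level $r$ and $g\in G$, and set $g'=w_0g$. Since $w_0\in K$, property~\ref{it:h1} gives $\norm{g'}=\norm g$. By \eqref{eq:identification},
\[
\abs{W_\xi(g)}\le\abs{c_0}^{-1}\int_{U'\cap\bN}\abs{W^{\mathcal Z}_\xi(\bu g')}\,d\bu .
\]
By Lemma~\ref{lem:keysupport} applied to $g'$, the integrand vanishes unless every entry of $\bu$ is bounded by $R(g)=(q^{(k-1)(r-e_\psi)}\norm g^{2})^{n}$. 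We may enlarge $R(g)$ harmlessly, since it is already an integer power of $q$ up to the real power $\norm g^{2n}$; and $\norm g$ itself is a power of $q$, so $R(g)\in q^{\bZ}$. Therefore the domain has volume $q^{e_\psi\fd'/2}R(g)^{\fd'}$ by the remark after \eqref{eq:delta}.

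On that domain I bound the integrand using Lemma~\ref{lem:zelmg}:
\[
\abs{W^{\mathcal Z}_\xi(\bu g')}\le C_{\mathcal Z}(\xi)\norm{\bu}^{D_{\mathcal Z}}\norm g^{D_{\mathcal Z}},
\]
using property~\ref{it:h2}. Property~\ref{it:h5} gives $\norm\bu\le R(g)^{n-1}$, since $R(g)\ge1$. Multiplying the volume by the supremum yields
\[
\abs{W_\xi(g)}\le\abs{c_0}^{-1}C_{\mathcal Z}(\xi)\,q^{e_\psi\fd'/2}\,R(g)^{\fd'+(n-1)D_{\mathcal Z}}\,\norm g^{D_{\mathcal Z}} .
\]
Expanding $R(g)$ gives exactly the constant $q^{e_\psi\fd'/2+(k-1)(r-e_\psi)n(\fd'+(n-1)D_{\mathcal Z})}$ and the exponent $D_{\mathcal Z}+2n(\fd'+(n-1)D_{\mathcal Z})=D$ of \eqref{eq:Dformula}. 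The essential point, already isolated in Remark~\ref{rem:keysupport}, is that the level $r$ enters only the constant and not the exponent. The real difficulty, the explicit $g$-dependence of the support, was handled in Lemma~\ref{lem:keysupport}, so I expect no obstacle here beyond bookkeeping.

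Part \ref{it:mgpsi2} is immediate from Lemma~\ref{lem:boundednessD}: $D$ in \eqref{eq:Dformula} is an explicit function of $D_{\mathcal Z}$, $k$ and $c$, and that lemma bounds $D_{\mathcal Z}=D_\cW$ in terms of $A_\tau$, $k$ and $c$.
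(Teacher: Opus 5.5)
Your proposal is correct and follows the paper's proof essentially step for step. For part (1) you combine \eqref{eq:identification}, Lemma~\ref{lem:keysupport} applied to $w_0g$ (using $\norm{w_0g}=\norm g$ and $R(g)\in q^{\bZ}$), the box volume from \eqref{eq:delta}, properties~\ref{it:h5}, \ref{it:h2} and \ref{it:h1}, and Lemma~\ref{lem:zelmg}. For part (2) you cite Lemma~\ref{lem:boundednessD}, exactly as the paper does.
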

\begin{proof}
Assertion \ref{it:mgpsi2} is Lemma~\ref{lem:boundednessD}. We prove \ref{it:mgpsi1}.
Let $\xi$ have level $r$ and let $g\in G$. By \eqref{eq:identification},
\[
\abs{W_\xi(g)}\ \le\ \abs{c_0}^{-1}\vol(S)\,\sup_{\bu\in S}\abs{W^{\mathcal Z}_\xi(\bu w_0g)},
\qquad S:=\{\bu\in U'\cap\bN:\ W^{\mathcal Z}_\xi(\bu w_0g)\ne0\}.
\]
By Lemma~\ref{lem:keysupport}, applied to $w_0g$, and by property~\ref{it:h1} of \S~\ref{ssec:height}, every entry of every
$\bu\in S$ has absolute value at most $R:=R(w_0g)=R(g)$, a power of $q$; hence
$\vol(S)\le q^{e_\psi\fd'/2}R^{\fd'}$ by \eqref{eq:delta}. For $\bu\in S$, property~\ref{it:h5} gives
$\norm\bu\le R^{n-1}$, and properties~\ref{it:h2} and \ref{it:h1} give
$\norm{\bu w_0g}\le R^{n-1}\norm g$. By Lemma~\ref{lem:zelmg},
\[
\abs{W_\xi(g)}\ \le\ \abs{c_0}^{-1}q^{e_\psi\fd'/2}R^{\fd'}\,C_{\mathcal Z}(\xi)\big(R^{n-1}\norm g\big)^{D_{\mathcal Z}}
=\abs{c_0}^{-1}q^{e_\psi\fd'/2}C_{\mathcal Z}(\xi)\,R^{\fd'+(n-1)D_{\mathcal Z}}\,\norm g^{D_{\mathcal Z}}.
\]
Substituting $R=q^{(k-1)(r-e_\psi)n}\norm g^{2n}$ gives the assertion.
\end{proof}
Theorem~\ref{thm:intromg} is assertion \ref{it:mgpsi1} of Theorem~\ref{thm:mgpsi}. Since $\norm g\ge1$ for all $g\in G$, the bound \ref{it:mgpsi1} remains valid when $D$ is replaced by any larger number. Note that $D\ge0$ by \eqref{eq:Dformula}.

\section{The gauge estimate}\label{sec:gauge}

Let $\rho$ be a $(k,c)$ representation of $G$, on the space $\cV_\rho$, and fix a $(k,c)$ functional $\lambda$ on $\rho$.
In this section we use the results of \S~\ref{sec:mg} to derive the gauge estimate on the $(k,c)$ model of $\rho$.
Recall that $M=M_{(c^k)}$ and $U=V_{(c^k)}$ (see \S~\ref{ssec:kcdata}),  the model functions are defined by \eqref{eq:kcmodel},
$W_\xi(g)=\lambda(\rho(g)\xi)$ for $\xi\in\cV_\rho$, and the character $\eta_\rho$ of $\GL_c$ is defined by \eqref{eq:etarho}.

By \eqref{eq:extractcenter}, it is enough to bound $W_\xi$ on $y\in M_{(c^k)}$ whose $k$-th block is $I_c$.
\begin{lemma}\label{lem:support}
Let $\xi$ have level $r$, let $y=\diag(y_1,\dots,y_{k-1},I_c)\in M$ and suppose $W_\xi(y)\ne0$. Then, with
$y_k:=I_c$,
\begin{equation}\label{eq:support}
y_{i+1}^{-1}y_i\in\varpi^{-(r-e_\psi)}\Mat_c(\Oo)\qquad(1\le i\le k-1).
\end{equation}
Consequently every $y_i$ lies in $\varpi^{-(k-1)(r-e_\psi)}\Mat_c(\Oo)$, and
$\abs{\det y_i}\le q^{c(k-1)(r-e_\psi)}$.
\end{lemma}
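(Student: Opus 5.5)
We follow the standard Whittaker-type support argument (as in the proof of Lemma~\ref{lem:expansion} and the first step of Lemma~\ref{lem:keysupport}): use the right invariance of $\xi$ under $K(r)\cap U$ together with the left $(U,\psi_k)$-equivariance \eqref{eq:equivarianceKC}. Fix $1\le i\le k-1$ and $X\in\varpi^{r}\Mat_c(\Oo)$. Let $u_X\in U$ be the element whose only nonzero off-diagonal block is $X$ in position $(i,i+1)$; then $u_X\in K(r)$, so $\rho(u_X)\xi=\xi$.

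Conjugating, $y u_X y^{-1}\in U$ has $(i,i+1)$ block $y_iXy_{i+1}^{-1}$, and its other off-diagonal blocks vanish. Therefore
\[
W_\xi(y)=W_\xi(yu_X)=W_\xi\big((yu_Xy^{-1})y\big)=\psi\big(\tr(y_iXy_{i+1}^{-1})\big)W_\xi(y)=\psi\big(\tr(y_{i+1}^{-1}y_iX)\big)W_\xi(y).
\]
Since $W_\xi(y)\ne0$, the character $X\mapsto\psi(\tr(ZX))$ with $Z=y_{i+1}^{-1}y_i$ is trivial on $\varpi^r\Mat_c(\Oo)$. Taking $X=xE_{ba}$ with $x\in\varpi^r\Oo$ gives $\tr(ZX)=Z_{ab}x$. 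So $\psi$ is trivial on $Z_{ab}\varpi^r\Oo$, and \eqref{eq:conductor} yields $\abs{Z_{ab}}\le q^{r-e_\psi}$. This is exactly \eqref{eq:support}.

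For the consequences, write $y_i=(y_iy_{i+1}^{-1})\cdots(y_{k-1}y_k^{-1})$ using $y_k=I_c$; note that $y_iy_{i+1}^{-1}$ is not what we bounded. Instead use the telescoping product
\[
y_i=y_k\,(y_k^{-1}y_{k-1})\cdots(y_{i+1}^{-1}y_i)=(y_{k}^{-1}y_{k-1})(y_{k-1}^{-1}y_{k-2})\cdots(y_{i+1}^{-1}y_i),
\]
which has $k-i\le k-1$ factors, each in $\varpi^{-(r-e_\psi)}\Mat_c(\Oo)$. Since $\Mat_c(\Oo)$ is a ring, the product lies in $\varpi^{-(k-i)(r-e_\psi)}\Mat_c(\Oo)\subset\varpi^{-(k-1)(r-e_\psi)}\Mat_c(\Oo)$, where the inclusion uses $r>e_\psi$. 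The determinant bound then follows because $\det$ of a matrix in $\varpi^{-m}\Mat_c(\Oo)$ has absolute value at most $q^{cm}$.

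The only point requiring care is the trace computation, namely identifying which entry of $Z$ is detected by each matrix unit. There is no real obstacle.
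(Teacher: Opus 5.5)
Your proof is correct and follows the paper's argument essentially verbatim: you use invariance of $\xi$ under the block elements $E_{i,i+1}(X)\in U\cap K(r)$, the conjugation $yE_{i,i+1}(X)y^{-1}=E_{i,i+1}(y_iXy_{i+1}^{-1})$ together with the $(U,\psi_k)$-equivariance, then \eqref{eq:conductor} entry by entry (your matrix-unit computation $\tr(ZxE_{ba})=Z_{ab}x$ makes explicit what the paper leaves implicit), and finally the telescoping product $y_i=(y_k^{-1}y_{k-1})\cdots(y_{i+1}^{-1}y_i)$. The aborted decomposition via $y_iy_{i+1}^{-1}$ at the start of your last paragraph is harmless but should simply be deleted.
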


\begin{proof}
Let $E_{i,i+1}(X)\in U$ be the identity matrix with $X\in\Mat_c$ in the block
$(i,i+1)$, let $X\in\varpi^r\Mat_c(\Oo)$, so that $\rho(E_{i,i+1}(X))\xi=\xi$. Then $yE_{i,i+1}(X)y^{-1}=E_{i,i+1}(y_iXy_{i+1}^{-1})$
and \eqref{eq:equivariance} gives
\[
W_\xi(y)=\psi\big(\tr(y_iXy_{i+1}^{-1})\big)W_\xi(y)=\psi\big(\tr(y_{i+1}^{-1}y_iX)\big)W_\xi(y).
\]
Nonvanishing forces $\psi(\tr(y_{i+1}^{-1}y_iX))=1$ for all such $X$, and
\eqref{eq:conductor} applied entry by entry gives \eqref{eq:support}. The telescoping
product $y_i=(y_k^{-1}y_{k-1})(y_{k-1}^{-1}y_{k-2})\cdots(y_{i+1}^{-1}y_i)$ has at most
$k-1$ factors with entries of valuation at least $-(r-e_\psi)$, so the entries of $y_i$ have
valuation at least $-(k-1)(r-e_\psi)$, and $\abs{\det y_i}\le q^{c(k-1)(r-e_\psi)}$.
\end{proof}

\begin{lemma}\label{lem:cramer}
Let $h\in\GL_c\cap\varpi^{-l}\Mat_c(\Oo)$ with $l\ge0$. Then
$\norm h\le q^{(c+1)l}\abs{\det h}^{-1}$. Consequently, for
$y=\diag(y_1,\dots,y_{k-1},I_c)\in M$ with all $y_i\in\varpi^{-l}\Mat_c(\Oo)$,
\begin{equation}\label{eq:heighttodet}
\norm y\ \le\ q^{C(k,c)\,l}\prod_{i=1}^{k-1}\abs{\det y_i}^{-1},\qquad C(k,c):=(c+1)+c(k-2).
\end{equation}
\end{lemma}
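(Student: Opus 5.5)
The plan is to bound the entries of $h$ and of $h^{-1}$ separately and then combine over the blocks of $y$.

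\textbf{Entries of $h$ and $h^{-1}$.} Since $h\in\varpi^{-l}\Mat_c(\Oo)$, every entry of $h$ satisfies $\abs{h_{ij}}\le q^{l}$. For $h^{-1}$ we use Cramer's rule, $h^{-1}=(\det h)^{-1}\operatorname{adj}(h)$. Each entry of the adjugate is a $(c-1)\times(c-1)$ minor of $h$, so by the ultrametric inequality it has absolute value at most $q^{(c-1)l}$. Hence
\[
\abs{(h^{-1})_{ij}}\le q^{(c-1)l}\abs{\det h}^{-1}.
\]

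\textbf{Combining.} To compare this with the bound for $h$, note that $\abs{\det h}\le q^{cl}$, since $\det h$ is a sum of products of $c$ entries. Therefore $\abs{\det h}^{-1}\ge q^{-cl}$, and
\[
q^{l}\le q^{(c+1)l}\abs{\det h}^{-1}.
\]
Together with $q^{(c-1)l}\abs{\det h}^{-1}\le q^{(c+1)l}\abs{\det h}^{-1}$, which holds because $l\ge0$, both kinds of entries are bounded by $q^{(c+1)l}\abs{\det h}^{-1}$. This gives $\norm h\le q^{(c+1)l}\abs{\det h}^{-1}$.

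\textbf{Block-diagonal case.} For $y=\diag(y_1,\dots,y_{k-1},I_c)$, the height is the maximum over the blocks: $\norm y=\max(1,\max_i\norm{y_i})$, because $y^{-1}$ is also block diagonal. Each block satisfies $\norm{y_i}\le q^{(c+1)l}\abs{\det y_i}^{-1}$.

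To pass from one block to the product $\prod_{j}\abs{\det y_j}^{-1}$, we bound the missing factors. For $j\ne i$ we have $\abs{\det y_j}\le q^{cl}$, so $\abs{\det y_j}^{-1}\ge q^{-cl}$. There are $k-2$ such indices, hence
\[
\abs{\det y_i}^{-1}\le q^{c(k-2)l}\prod_{j=1}^{k-1}\abs{\det y_j}^{-1}.
\]
This yields the exponent $C(k,c)=(c+1)+c(k-2)$.

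It remains to handle the value $1$ in the maximum. Each $\abs{\det y_j}^{-1}\ge q^{-cl}$, so the product is at least $q^{-c(k-1)l}$. Consequently $q^{C(k,c)l}\prod_j\abs{\det y_j}^{-1}\ge q^{(c+1-c)l}=q^{l}\ge1$, and the trivial part of the maximum is also covered.

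The only point requiring care is this bookkeeping of exponents; the rest is routine.
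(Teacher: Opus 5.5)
Your proof is correct and follows essentially the same route as the paper: bound the entries of $h$ by $q^{l}$ and those of $h^{-1}$ via cofactors by $q^{(c-1)l}\abs{\det h}^{-1}$, use $\abs{\det h}\le q^{cl}$ to bring both under $q^{(c+1)l}\abs{\det h}^{-1}$, then take the maximizing block and multiply by the missing factors $q^{cl}\abs{\det y_j}^{-1}\ge1$. Your separate check of the trivial block $I_c$ (which also covers the degenerate case $k=1$) is a harmless addition that the paper leaves implicit.
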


\begin{proof}
The entries of $h$ have valuation at least $-l$, so $\abs{\det h}\le q^{cl}$; each entry
of $h^{-1}$ is a cofactor divided by $\det h$, and a cofactor has absolute value at most
$q^{(c-1)l}$. Hence $\abs{h_{ij}}\le q^l\le q^{(c+1)l}\abs{\det h}^{-1}$ and
$\abs{(h^{-1})_{ij}}\le q^{(c-1)l}\abs{\det h}^{-1}\le q^{(c+1)l}\abs{\det h}^{-1}$.

For the second claim, $\norm y=\max_i\norm{y_i}$, and multiplying the bound for the maximizing
index by $\prod_{j\ne i}\abs{\det y_j}^{-1}q^{c(k-2)l}\ge 1$ gives \eqref{eq:heighttodet}.
\end{proof}

We can now state the second main result of this work.
\begin{theorem}\label{thm:gaugepsi}
Let $D$ be the exponent of Theorem~\ref{thm:mgpsi}.
\begin{enumerate}[leftmargin=*,label=(\arabic*),ref=(\arabic*)]
\item\label{it:gaugepsi1} For any $\xi\in \cV_{\rho}$ there is a constant $\gamma(\xi)\geq0$ such that for all $y=\diag(y_1,\ldots,y_k)\in M$, $u\in U$ and $\kappa\in K$,
\begin{align}\label{eq:thegauge1psi}
\abs{W_\xi(uy\kappa)}\leq\gamma(\xi)\abs{\eta_\rho(y_k)}\cdot\prod_{i=1}^{k-1}\abs{\det (y_k^{-1}y_i)}^{-D}.
\end{align}
\item\label{it:gaugepsi2} For any $\xi\in\cV_\rho$ there are $\phi_1,\ldots,\phi_{k-1}\in\mathcal{S}(\Mat_c)$ such that for all $y,u$ and $\kappa$ as in \ref{it:gaugepsi1},
\begin{align}\label{eq:thegauge2psi}
\abs{W_\xi(uy\kappa)}\leq\abs{\eta_\rho(y_k)}\cdot\prod_{i=1}^{k-1}\abs{\det (y_k^{-1}y_i)}^{-D}\phi_i(y_{i+1}^{-1}y_i).
\end{align}
\item\label{it:gaugepsi3} Assume that $\rho=\rho_c(\tau)$, where $\tau\in\Irr(\GL_k)$ is generic, and that the real parts of the cuspidal exponents
of $\tau$ are bounded in absolute value by $A_\tau\ge0$. Then the constant $D$ above may be taken so that it is bounded by a function of $A_\tau$, $k$ and $c$.
\end{enumerate}
\end{theorem}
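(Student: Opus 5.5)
We first reduce to $u=I_n$ and to $y$ with last block $I_c$. By \eqref{eq:equivarianceKC}, $\abs{W_\xi(uy\kappa)}=\abs{W_\xi(y\kappa)}=\abs{(\kappa\cdot W_\xi)(y)}=\abs{W_{\rho(\kappa)\xi}(y)}$. Since $\xi$ is smooth, $\{\rho(\kappa)\xi:\kappa\in K\}$ is a finite set $\xi_1,\dots,\xi_m$, all of the same level $r$ as $\xi$ ($K(r)$ is normal in $K$). It therefore suffices to prove both estimates for each $\xi_j$ with $\kappa=I_n$ and take the maximum of the constants (respectively, the sum of the Schwartz--Bruhat functions). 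Next, \eqref{eq:extractcenter} gives $W_{\xi_j}(y)=\eta_\rho(y_k)W_{\xi_j}(y')$ with $y'=\diag(y'_1,\dots,y'_{k-1},I_c)$, $y'_i=y_k^{-1}y_i$. Note that $y_{i+1}'^{-1}y'_i=y_{i+1}^{-1}y_i$ and $\det y'_i=\det(y_k^{-1}y_i)$. So the whole problem is to bound $W_{\xi_j}$ on elements $y$ whose $k$-th block is $I_c$.

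For such $y$ with $W_{\xi_j}(y)\neq0$, Lemma~\ref{lem:support} gives $y_{i+1}^{-1}y_i\in\varpi^{-(r-e_\psi)}\Mat_c(\Oo)$ and $y_i\in\varpi^{-l}\Mat_c(\Oo)$ with $l=(k-1)(r-e_\psi)$. Lemma~\ref{lem:cramer} then yields $\norm y\le q^{C(k,c)l}\prod_i\abs{\det y_i}^{-1}$. We combine this with Theorem~\ref{thm:mgpsi}\ref{it:mgpsi1}. Since $D\ge0$,
\[
\abs{W_{\xi_j}(y)}\le C(\xi_j)\norm y^{D}\le C(\xi_j)q^{C(k,c)lD}\prod_{i=1}^{k-1}\abs{\det y_i}^{-D},
\]
which is part~\ref{it:gaugepsi1} with $\gamma(\xi)=\max_j C(\xi_j)q^{C(k,c)(k-1)(r-e_\psi)D}$. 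When $W_{\xi_j}(y)=0$ there is nothing to prove.

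For part~\ref{it:gaugepsi2}, let $\phi_0$ be $\gamma(\xi)^{1/(k-1)}$ times the characteristic function of $\varpi^{-(r-e_\psi)}\Mat_c(\Oo)$, and take $\phi_i=\phi_0$ for every $i$. Lemma~\ref{lem:support} shows that the product $\prod_i\phi_i(y_{i+1}^{-1}y_i)$ equals $\gamma(\xi)$ exactly where $W_{\xi_j}$ can be nonzero, and vanishes elsewhere. This turns the bound of part~\ref{it:gaugepsi1} into \eqref{eq:thegauge2psi}. For $k=1$ the product is empty and $W_\xi(y\kappa)$ is a multiple of $\eta_\rho(y_1)$, so that case is trivial. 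Part~\ref{it:gaugepsi3} is immediate, since $D$ is the same exponent as in Theorem~\ref{thm:mgpsi}, and its uniformity is Theorem~\ref{thm:mgpsi}\ref{it:mgpsi2}.

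The only substantive step is controlling $\norm y$ by $\prod\abs{\det y_i}^{-1}$. Since $\norm y$ also involves $y_i^{-1}$, this requires the support bound from Lemma~\ref{lem:support} together with the Cramer-type estimate of Lemma~\ref{lem:cramer}. Both are already available, so we expect no real obstacle beyond bookkeeping: keeping $\gamma(\xi)$ independent of $y$, $u$ and $\kappa$ by passing to the finite $K$-orbit of $\xi$.
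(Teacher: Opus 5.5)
Your proposal is correct and is essentially the paper's proof. You reduce to $u=\kappa=I_n$ via \eqref{eq:equivarianceKC} and the finite $K$-orbit of $\xi$, all of whose vectors have level $r$. You pass to elements with last block $I_c$ via \eqref{eq:extractcenter}, combine Lemma~\ref{lem:support}, Lemma~\ref{lem:cramer} and Theorem~\ref{thm:mgpsi}, and use scaled characteristic functions for part (2). Your lattice $\varpi^{-(r-e_\psi)}\Mat_c(\Oo)$, taken from \eqref{eq:support}, is sharper than the paper's $\varpi^{-(k-1)(r-e_\psi)}\Mat_c(\Oo)$, but the difference is immaterial.

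One small inaccuracy, which the paper shares: for $k=1$, part (2) is not trivial as you claim. There is no $\phi_i$ to absorb the constant in $\abs{W_\xi(y\kappa)}=\abs{\lambda(\xi)}\abs{\eta_\rho(y_1)}$, so that case holds only up to the factor $\abs{\lambda(\xi)}$.
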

\begin{proof}
Assertion \ref{it:gaugepsi3} follows from Theorem~\ref{thm:mgpsi}\ref{it:mgpsi2}.
For \ref{it:gaugepsi1}, by \eqref{eq:equivarianceKC} it suffices to prove \eqref{eq:thegauge1psi} with $u=I_n$; moreover $W_\xi(y\kappa)=W_{\rho(\kappa)\xi}(y)$ and the vector
$\rho(\kappa)\xi$ has the same level as $\xi$ (the group $K(r)$ is normal in $K$), so it suffices to prove \eqref{eq:thegauge1psi} with $u=\kappa=I_n$, with a constant $\gamma(\xi)$ depending on $\xi$ only through its level and
the constant $C(\xi)$ of Theorem~\ref{thm:mgpsi} (which we then replace by its maximum over the finitely many vectors $\rho(\kappa)\xi$, $\kappa\in K$).
Let $\xi$ have level $r$ and consider $y=\diag(y_1,\dots,y_{k-1},I_c)\in M$ such that $W_\xi(y)\ne0$. By
Lemma~\ref{lem:support}, $y_i\in\varpi^{-l}\Mat_c(\Oo)$ with $l=(k-1)(r-e_\psi)\ge0$ and
$\abs{\det y_i}\le q^{cl}$, for each $1\leq i<k$. By Theorem~\ref{thm:mgpsi} and Lemma~\ref{lem:cramer},
\[
\abs{W_\xi(y)}\ \le\ C(\xi)\,\norm y^{D}\ \le\ C(\xi)\,q^{D\,C(k,c)l}\prod_{i=1}^{k-1}\abs{\det y_i}^{-D}.
\]
This, with $\gamma(\xi)=C(\xi)q^{D\,C(k,c)l}$, together with \eqref{eq:extractcenter}, implies \eqref{eq:thegauge1psi}.
The second bound \eqref{eq:thegauge2psi} also follows, since we can take $\phi_i$ to be the characteristic function of
$\varpi^{-l}\Mat_c(\Oo)$, with one of these functions scaled by $\gamma(\xi)$.
\end{proof}

Theorem~\ref{thm:introgauge} consists of assertions \ref{it:gaugepsi1} and \ref{it:gaugepsi2} of Theorem~\ref{thm:gaugepsi}.

\section{The global argument for local components of cuspidal representations}\label{sec:global}

In this section we present the global argument which proves Theorem~\ref{thm:intromg}
for the representations $\rho_c(\tau_v)$, where $\tau_v$ is the component at a
non-archimedean place $v$ of an irreducible cuspidal automorphic representation $\tau$.

\subsection{Notation}\label{ssec:globalnotation}
Let $\bF$ be a number field with ring of adeles $\bA$. Let $n=kc$ as in \S~\ref{ssec:notation}.
We fix a maximal compact subgroup $K_{\bA}=\prod_vK_v$ of $\GL_n(\bA)$ as in \cite[I.1.1]{MW95} (at the finite places this is the group $K$ of \S~\ref{ssec:notation}).

Define the height on $\GL_n(\bA)$ following \cite[I.2.2]{MW95}:
\begin{equation*}
\norm g_{\bA}:=\prod_v\norm{g_v}_v,\qquad
\norm{g_v}_v:=\max_{i,j}\big(\abs{g_{v,ij}}_v,\abs{(g_v^{-1})_{ij}}_v\big).
\end{equation*}
Here at the infinite places $|\cdot|_v$ is the standard absolute value when $\bF_v=\bR$, and the square of the standard absolute value when $\bF_v=\bC$.
The height is bounded from below: at the infinite places $\norm g_v\ge1/n$ hence $\norm g_{\bA}\ge n^{-[\bF:\mathbb Q]}$ for all $g$.
The global analog of the submultiplicativity at the finite places (\S~\ref{ssec:height}, \ref{it:h2}) is
\begin{equation}\label{eq:adelicsubmult}
\norm{gh}_{\bA} \le n^{[\bF:\mathbb Q]}\,\norm g_{\bA}\,\norm h_{\bA},\qquad g,h\in\GL_n(\bA).
\end{equation}

Fix a nontrivial character $\psi=\otimes'_v\psi_v$
of $\bF\backslash\bA$ and let $\psi_k$ be the character of $U(\bA)$ defined by
\eqref{eq:psikdef} with $\psi$; then $(\psi_v)_k$ is the character \eqref{eq:psikdef} of $U(\bF_v)$ defined by $\psi_v$.

A function $\varphi$ on $\GL_n(\bA)$ has moderate growth, in the sense of
\cite[I.2.3]{MW95}, if there are $C_\varphi\ge0$ and $D_\varphi\in\bR$ with
\begin{equation}\label{eq:adelicmg}
\abs{\varphi(g)}\ \le\ C_\varphi\,\norm g_{\bA}^{D_\varphi}\qquad\forall g\in\GL_n(\bA),
\end{equation}
and every automorphic form on $\GL_n(\bF)\backslash\GL_n(\bA)$ has moderate growth (see \cite[I.2.17]{MW95}).
Since $\norm g_{\bA}$ is bounded from below, we take $D_\varphi\ge0$ (at the cost of possibly increasing $C_\varphi$, depending on $D_\varphi$).

\subsection{The global argument}\label{ssec:globalargument}
Let $\tau$ be an irreducible automorphic unitary cuspidal representation of $\GL_k(\bA)$.
Write $\tau$ as a restricted tensor product, $\tau=\otimes_v'\tau_v$.
Let $\rho_c(\tau)$ be the generalized Speh representation of $\GL_n(\bA)$ associated with $\tau$ (\cite[\S 2.4]{Jac84}).
The representation $\rho_c(\tau_v)$ in this case is also the Langlands quotient
of \[\tau_v|\det|_v^{(c-1)/2}\times\dots\times\tau_v|\det|_v^{(1-c)/2}\] (see \cite[\S2.2]{CFGK23}) and by \cite[\S 2.4]{Jac84} and \cite[I.2, I.11, II.3]{MW89},
it is the component at $v$ of $\rho_c(\tau)$, that is, $\rho_c(\tau)=\otimes_v'\rho_c(\tau_v)$. Let $\cV$ denote the abstract space of $\rho_c(\tau)$ and
$\cV_v$ the space of $\rho_c(\tau_v)$, so that $\cV=\otimes'_v\cV_v$. For $\xi\in\cV$ denote the corresponding automorphic form by $\varphi_{\xi}$.

We define the global $(k,c)$ coefficient on $\rho_c(\tau)$ by
\begin{equation}\label{eq:globalcoefficient}
\mathcal{L}(\varphi_{\xi}):=\int_{U(\bF)\backslash U(\bA)}\varphi_{\xi}(u)\,\psi_k^{-1}(u)\,du .
\end{equation}
(This is the coefficient \cite[(0.1)]{CFK22}.)
By definition, $\mathcal{L}(u\cdot \varphi_{\xi})=\psi_k(u)\mathcal{L}(\varphi_{\xi})$ for all $\xi\in\cV$ and $u\in U(\bA)$.

The coefficient \eqref{eq:globalcoefficient} is not identically zero by the results of Jiang and Liu \cite[Theorem 5.6(1)]{JL13}.
Thus, there is a pure tensor $\xi=\otimes'_v\xi_v\in\cV$ such that $\mathcal{L}(\varphi_\xi)\ne0$.

For each place $v$ of $\bF$ let $0\ne\lambda_v\in\Hom_{U(\bF_v)}(\rho_c(\tau_v),(\psi_v)_k)$. 

\begin{theorem}\label{thm:globalgauge}
There is a constant $D$ depending only on $\tau$, $c$ and the choice of $\xi$ above, such that for each finite place $v$ of $\bF$, 
for each $\xi'_v\in\cV_v$, there is a constant $C(\xi'_v)\geq0$ such that
for all $g_v\in\GL_n(\bF_v)$,
\begin{equation}\label{eq:boundLclGbl}
\abs{\lambda_v(\rho_c(\tau_v)(g_v)\xi'_v)}\leq C(\xi'_v){\norm g_v}^D.
\end{equation}
Consequently, the assertions \eqref{eq:thegauge1psi} and \eqref{eq:thegauge2psi} of Theorem~\ref{thm:gaugepsi} hold
for $\rho_c(\tau_v)$, with the constant $D$ of that theorem uniform across all places $v$.
\end{theorem}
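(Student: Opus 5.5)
The plan is to deduce the local bound at every finite place $v$ from the moderate growth of the single automorphic form $\varphi_\xi$, where $\xi=\otimes'_v\xi_v$ is the fixed pure tensor with $\mathcal L(\varphi_\xi)\ne0$.

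\textbf{Step 1: a global bound.} Let $\varphi_\xi$ satisfy \eqref{eq:adelicmg} with constants $C_\varphi$ and $D_\varphi\ge0$. For $g\in\GL_n(\bA)$ consider
\[
\mathcal L(g\cdot\varphi_\xi)=\int_{U(\bF)\backslash U(\bA)}\varphi_\xi(ug)\psi_k^{-1}(u)\,du .
\]
Since $U(\bF)\backslash U(\bA)$ is compact, we may take $u$ in a fixed compact fundamental domain $\Omega\subset U(\bA)$. Then \eqref{eq:adelicsubmult} gives $\norm{ug}_{\bA}\le n^{[\bF:\mathbb Q]}\sup_\Omega\norm u_{\bA}\,\norm g_{\bA}$. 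Hence
\[
\abs{\mathcal L(g\cdot\varphi_\xi)}\le C'\norm g_{\bA}^{D_\varphi}.
\]
We set $D:=D_\varphi$; it depends only on $\tau$, $c$ and $\xi$.

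\textbf{Step 2: factorization and specialization at $v$.} The functional $\xi'\mapsto\mathcal L(\varphi_{\xi'})$ on $\cV$ lies in $\Hom_{U(\bA)}(\rho_c(\tau),\psi_k)$. By local uniqueness (each $\rho_c(\tau_v)$ is a $(k,c)$ representation, by \cite[Theorem~4]{CFGK23} at finite places and \cite{CFK-func} at archimedean ones), restricting to pure tensors and varying one factor at a time shows that this functional equals $\prod_w\lambda_w(\xi'_w)$, suitably normalized, up to a scalar. More carefully, we avoid infinite products: fix $v$ and define $\ell_v(\xi'_v):=\mathcal L(\varphi_{\xi'_v\otimes\xi^v})$. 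This $\ell_v$ lies in $\Hom_{U(\bF_v)}(\rho_c(\tau_v),(\psi_v)_k)$, and it is nonzero because $\ell_v(\xi_v)\ne0$. By uniqueness, $\ell_v=a_v\lambda_v$ with $a_v\ne0$. Taking $g=g_v$ embedded at $v$ gives $\norm g_{\bA}=\norm{g_v}_v$, and therefore
\[
\abs{\lambda_v(\rho_c(\tau_v)(g_v)\xi_v)}\le\abs{a_v}^{-1}C'\norm{g_v}^{D}.
\]
This proves \eqref{eq:boundLclGbl} for the single vector $\xi'_v=\xi_v$.

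\textbf{Step 3: passage to all $\xi'_v$.} This is the main obstacle, since $\varphi_{\xi'_v\otimes\xi^v}$ has its own growth exponent, which a priori depends on $\xi'_v$. The approach is to use irreducibility of $\rho_c(\tau_v)$ (as $\tau$ is unitary cuspidal, $\tau_v$ is unitary generic, so $\rho_c(\tau_v)$ is irreducible). Then $\xi'_v=\sum_j\rho_c(\tau_v)(h_j)\xi_v$ is a finite sum with $h_j\in\GL_n(\bF_v)$. By property~\ref{it:h2}, $\norm{g_vh_j}\le\norm{h_j}\norm{g_v}$, so
\[
\abs{\lambda_v(\rho_c(\tau_v)(g_v)\xi'_v)}\le\sum_j\abs{a_v}^{-1}C'\norm{h_j}^D\norm{g_v}^D .
\]
This gives $C(\xi'_v)$ with the same exponent $D$. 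Alternatively, the same conclusion follows without irreducibility by noting that $D_\varphi$ can be taken uniform over the finite-dimensional $K_{\bA}$-type spaces obtained by translating $\xi$. The irreducibility route is the cleaner one.

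\textbf{Step 4: the consequence.} The gauge estimates follow by rerunning the proof of Theorem~\ref{thm:gaugepsi} with Theorem~\ref{thm:mgpsi} replaced by \eqref{eq:boundLclGbl}. That proof used only the moderate-growth bound, together with Lemmas~\ref{lem:support} and \ref{lem:cramer}, which are independent of $D$. Hence \eqref{eq:thegauge1psi} and \eqref{eq:thegauge2psi} hold for $\rho_c(\tau_v)$ with this $D$, uniformly in $v$.
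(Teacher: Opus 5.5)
Your proof is correct and is essentially the paper's argument: the nonzero functional $\xi'_v\mapsto\mathcal L(\varphi_{\xi'_v\otimes\xi^v})$ is identified with $\lambda_v$ by local uniqueness, the single exponent $D=D_{\varphi_\xi}$ comes from the moderate growth of $\varphi_\xi$ integrated over a compact set surjecting onto $U(\bF)\backslash U(\bA)$, irreducibility of $\rho_c(\tau_v)$ reaches every $\xi'_v$, and the gauge estimates follow by rerunning the proof of Theorem~\ref{thm:gaugepsi}. The one difference is the order of steps: you bound $\mathcal L(g\cdot\varphi_\xi)$ once and absorb the translates $h_j$ through submultiplicativity at $v$, whereas the paper first bounds $\varphi_{\xi'}$ globally and then integrates; also write $\xi'_v=\sum_jc_j\rho_c(\tau_v)(h_j)\xi_v$ with scalars $c_j$, and note that your archimedean factorization aside and the ``without irreducibility'' alternative are not needed.
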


\begin{proof}
First observe that \eqref{eq:boundLclGbl} implies \eqref{eq:thegauge1psi} and \eqref{eq:thegauge2psi} for $\rho_c(\tau_v)$. Indeed \eqref{eq:boundLclGbl} is the bound of Theorem~\ref{thm:mgpsi}\ref{it:mgpsi1}, and the proof of Theorem~\ref{thm:gaugepsi} applies with this $D$.

Let $v$ be a finite place of $\bF$. We have an embedding $\rho_c(\tau_v) \hookrightarrow \rho_c(\tau)$ given by \[\xi'_v\mapsto \xi':=\xi'_v\otimes(\otimes_{w\ne v} \xi_w).\]
Composing this embedding with $\mathcal{L}$ we obtain a $(k,c)$ functional $\mathcal{L}_v$ on $\rho_c(\tau_v)$, specifically $\mathcal{L}_v:\cV_v\to\bC$ is defined by
\[\mathcal{L}_v(\xi'_v)=\mathcal{L}(\varphi_{\xi'});\] $\mathcal{L}_v$ is not identically zero because $\mathcal{L}_v(\xi_v)=\mathcal{L}(\varphi_{\xi})$.
Since $\dim \Hom_{U(\bF_v)}(\rho_c(\tau_v),(\psi_v)_k)=1$, it is enough to prove the theorem for $\mathcal{L}_v$.

Let $\xi'_v\in\cV_v$. Since $\rho_c(\tau_v)$ is irreducible, there are
$h_1,\ldots,h_l\in\GL_n(\bF_v)$ and $c_1,\ldots ,c_l\in\bC$ such that
\[\xi'_v=\sum_{i=1}^lc_i(\rho_c(\tau_v)(h_i))\xi_v.\]
Then $\varphi_{\xi'}=\sum_{i=1}^lc_i(h_i\cdot\varphi_{\xi})$.

Let $C:=C_{\varphi_\xi}$ and $D:=D_{\varphi_{\xi}}$ be the constants given by \eqref{eq:adelicmg}, i.e.,
\begin{equation*}
\abs{\varphi_{\xi}(g)}\ \le\ C\,\norm g_{\bA}^{D}\qquad\forall g\in\GL_n(\bA).
\end{equation*}

Let
\[C(\xi'_v):=\sum_i\abs{c_i}\,C\big(n^{[\bF:\mathbb Q]}\norm{h_i}_{v}\big)^{D}.\]
By \eqref{eq:adelicmg} and \eqref{eq:adelicsubmult}, and since $\norm{h_i}_{v}=\norm{h_i}_{\bA}$ ($h_i\in\GL_n(\bF_v)$),
for all $g\in\GL_n(\bA)$,
\begin{equation}\label{eq:varphiprime}
\abs{\varphi_{\xi'}(g)}\le\sum_i\abs{c_i}\abs{\varphi_{\xi}(gh_i)}
\le\sum_i\abs{c_i}\,C\big(n^{[\bF:\mathbb Q]}\norm g_{\bA}\norm{h_i}_{\bA}\big)^{D}=C(\xi'_v){\norm g_{\bA}}^D.
\end{equation}

Since $U(\bF)\backslash U(\bA)$ is compact, there is a compact subset $\mathcal C\subset U(\bA)$ which surjects onto it. By \eqref{eq:varphiprime} and \eqref{eq:adelicsubmult},
\begin{equation}\label{eq:suponcoefficient}
\sup_{u\in\mathcal C}\abs{\varphi_{\xi'}(ug)} \le C(\xi'_v)\sup_{u\in\mathcal C}\norm{ug}_{\bA}^D  \le C(\xi'_v)n^{[\bF:\mathbb Q]D}(\sup_{u\in\mathcal C}\norm{u}_{\bA}^{D}){\norm g_{\bA}}^D.
\end{equation}
Since $\mathcal C$ is compact and the norm is continuous, $\sup_{u\in\mathcal C}\norm{u}_{\bA}^{D}$ is finite. Now we can bound
\eqref{eq:globalcoefficient} for $g\cdot\varphi_{\xi'}$ using \eqref{eq:suponcoefficient} and obtain
\[
\abs{\mathcal{L}(g\cdot\varphi_{\xi'})}\leq\vol\big(U(\bF)\backslash U(\bA)\big)\,\sup_{u\in\mathcal C}\abs{\varphi_{\xi'}(ug)}\le C_0 C(\xi'_v)\norm{g}_{\bA}^{D},
\]
where $C_0:=\vol\big(U(\bF)\backslash U(\bA)\big)n^{[\bF:\mathbb Q]D}\sup_{u\in\mathcal C}\norm{u}_{\bA}^{D}$.

We conclude that
\[
\abs{\mathcal{L}_v(\rho_c(\tau_v)(g_v)\xi'_v)}\leq C_0 C(\xi'_v)\norm{g_v}_v^{D},\qquad \forall g_v\in\GL_n(\bF_v).\]
This completes the proof of \eqref{eq:boundLclGbl} and with it the theorem.
\end{proof}
\begin{remark}\label{rem:nonunitary}
Let $v$ be a finite place of $\bF$.
By the definitions, for any $b\in\bC$, $\rho_c(|\det|_v^b\tau_v)=|\det|_v^b\rho_c(\tau_v)$. Since
$|\det g_v|^{\pm1}_v\leq {\norm {g_v}}_v^n$ for all $g_v\in\GL_n(\bF_v)$, the results of Theorem~\ref{thm:globalgauge} hold also when $\tau$ is (cuspidal) nonunitary, i.e.,
$|\det|_{\bA}^{-b}\tau$ is unitary.
\end{remark}

\section{Central extensions}\label{sec:coverings}
Theorems~\ref{thm:intromg} and \ref{thm:introgauge} apply to certain central extensions of general linear groups. In this section we formulate the results in this setting
and describe the modifications of the proofs. 

Let $m\geq1$ be an integer. Assume that $F$ contains a primitive $m$-th root of unity, and let $\mu_m\subset F^\times$ be the group of $m$-th roots of unity.
Define $F^{\times m}:=\{x^m:x\in F^\times\}$. As in the previous sections, $n=kc$ and $G=\GL_n$. Let $\boldsymbol{G}$ be a topological central extension of $G$ by $\mu_m$, so that 
\[
1\to\mu_m\to\boldsymbol{G}\xrightarrow{\ p\ }G(F)\to1.
\]
For a subset $X\subset G$ put $\boldsymbol X=p^{-1}(X)$. A $p$-section of $X$ is a map $\sigma:X\to\boldsymbol{G}$ such that $p\circ\sigma=\mathrm{id}_X$; $\sigma$ is called $Y$-equivariant, for a subgroup $Y$ of $G$, if $y\sigma(x)y^{-1}=\sigma(p(y)xp(y)^{-1})$ for every $x\in X$ and $y\in\boldsymbol Y$. For a subgroup $X$ of $G$, a splitting of $X$ is a continuous homomorphism $\sigma:X\to\boldsymbol{G}$ which is a $p$-section, and we say that $\boldsymbol{G}$ splits over $X$ if such a homomorphism exists. Fix a faithful character of $\mu_m$; a representation of $\boldsymbol X$ is called genuine if $\mu_m$ acts by this character.

The height is defined on $\boldsymbol{G}$ by composing with $p$.

Let $B=T\ltimes N$ denote the Borel subgroup of $G$ as in \S~\ref{ssec:notation}.
We use the following properties of $\boldsymbol{G}$, see \cite{KaplanSzpruch} and the references therein. Let $\underline{n}$ be a composition of $n$.
\begin{enumerate}[leftmargin=*,label=(C\arabic*),ref=(C\arabic*)]
\item\label{it:cov1unipotents} The group $\boldsymbol{G}$ splits uniquely over the group of $F$-points of every unipotent subgroup $X$ of $G$. Thus there is a well-defined $G$-equivariant $p$-section $x\mapsto x^\star$ on the set of unipotent elements of $G$, whose restriction to each unipotent subgroup $X$ is the splitting of $X$.
\item\label{it:cov0identity2} There are $j_0\geq1$ and a splitting $\varsigma:K(j_0)\to\boldsymbol{G}$ such that for all $j\geq j_0$ and $\underline{n}$, $\varsigma|_{K(j)}$ is $K$-equivariant and moreover $\varsigma$ agrees with the map $x\mapsto x^\star$ on $K(j)\cap V_{\underline{n}}$ and on $K(j)\cap\overline{V}_{\underline{n}}$ (\cite[Lemma 2.2]{KaplanSzpruch}). In particular $\boldsymbol{G}$ is an $l$-group.
   
\item\label{it:cov2Ak} There is an open subgroup $A$ of finite index in $T$ such that $\boldsymbol{A}$ is abelian and belongs to the center of $\boldsymbol{T}$.
For example, \[A=T^m:=\{t^m:t\in T\}=\{t=\diag(t_1,\dots,t_n)\in T:t_i\in F^{\times m}\ \text{for all }i\}\] has these properties. 
Note that if $a\in A$ belongs to the center of $M_{\underline{n}}$, then every $a'\in p^{-1}(a)$ belongs to the center of $\boldsymbol{M_{\underline{n}}}$: 
indeed $a'$ commutes with $X^\star$ for each unipotent subgroup $X$ of $M_{\underline{n}}$ by \ref{it:cov1unipotents}, because $a$ commutes with $X$, and 
together with $\boldsymbol{T}$ these generate $\boldsymbol{M_{\underline{n}}}$.
\item\label{it:finLengthAdm} Genuine finite-length representations of $\boldsymbol{M_{\underline{n}}}$ and in particular of $\boldsymbol{G}$ are admissible.
\item\label{it:cov5Jacquet} For a genuine finite-length representation $\pi$ of $\boldsymbol{M_{\underline{n}}}$ and the unipotent radical $U_1$ of a parabolic subgroup $P_1=M_1\ltimes U_1$ of $M_{\underline{n}}$, $J_{U_1}(\pi)$ is a genuine finite-length representation of $\boldsymbol{M_1}$. 
\end{enumerate}
We add the following assumption on the subgroup $A$ of \ref{it:cov2Ak}.
\begin{enumerate}[leftmargin=*,label=(C\arabic*),ref=(C\arabic*),start=6]
\item\label{it:cov6GLtriangle} $\boldsymbol{A}\cong A\times\mu_m$ (a trivial central extension).
\end{enumerate}
For $A=T^m$ this holds whenever the restriction of the $2$-cocycle of $\boldsymbol{G}$ to $T\times T$ is a product of $m$-th order Hilbert symbols $(t_i,t'_j)_m$, which are trivial on $m$-th powers.
Note also that \ref{it:cov2Ak} and \ref{it:cov6GLtriangle} are inherited by any subgroup of finite index in $A$ (any such subgroup is automatically open by Hensel's lemma), and we will use this freedom to shrink $A$ in the proof of Theorem~\ref{thm:covmain} below.

By \ref{it:cov1unipotents} we identify unipotent subgroups $X$ of $G$ with their images in $\boldsymbol{G}$ under their unique splitting, and thus we may view 
a representation $\pi$ of $\boldsymbol{X}$ also as a representation of $X$.

The groups $P$, $M$, $U=V_{(c^{k})}$ and the character $\psi_k$ of $U$ are those of \S~\ref{ssec:kcdata}, and $Q$, $L$, $V$, $N_L$, $\Psn$, $\psi_L$, $w_0$, $U'$ and $\Psu$ are those of \S~\ref{ssec:zeldata}. Recall that the stabilizer of $\psi_{k}$ in $M$ is $\GL_c^{\triangle}$. 

Let $\rho$ be a genuine finite-length representation of $\boldsymbol{G}$ on a space $\cV_\rho$. If $\dim\Hom_{U}(\rho,\psi_{k})=1$, then we let $0\ne\lambda\in \Hom_{U}(\rho,\psi_{k})$ and define, as in the linear case, the homomorphism 
$\cV_{\rho}\to\Ind_U^{\boldsymbol{G}}\psi_k$ by $\xi\mapsto  W_\xi$, where $W_\xi(g)=\lambda(\rho(g)\xi)$ for all $g\in\boldsymbol{G}$. Since $\dim J_{U,\psi_{k}}(\rho)=1$, 
$\boldsymbol{\GL}_c^{\triangle}$ acts on $J_{U,\psi_k}(\rho)$ by a genuine character $\eta_{\rho}$ and 
$|\eta_{\rho}|$ can be regarded as a character of $\GL_c$ (it is trivial on $\mu_m$). Thus 
for $g\in \boldsymbol{\GL}_c^{\triangle}$ such that $p(g)=h^{\triangle}$ and $g'\in\boldsymbol{G}$, 
$\abs{W_\xi(gg')}=\abs{\eta_\rho(h)}\cdot\abs{W_\xi(g')}$.

The following is the analog of Theorems~\ref{thm:intromg} and \ref{thm:introgauge} for $\boldsymbol{G}$.

\begin{theorem}\label{thm:covmain}
Let $\rho$ be a genuine finite-length representation of $\boldsymbol{G}$ on a space $\cV_\rho$ such that
\begin{equation}\label{eq:covuniq}
\dim\Hom_{U}(\rho,\psi_{k})=1.
\end{equation}
Assume that
\begin{equation}\label{eq:covzel}
\Hom_N(\rho,\Psn)\ne0.
\end{equation}
\begin{enumerate}[leftmargin=*,label=(\arabic*),ref=(\arabic*)]
\item\label{it:covmain1} There is a constant $D\ge0$ such that for each $\xi\in\cV_\rho$ there is a constant $C(\xi)$ such that $\abs{W_\xi(g)}\le C(\xi)\norm{g}^{D}$ for all $g\in\boldsymbol{G}$.
\item\label{it:covmain2} For each $\xi\in\cV_\rho$ there are a constant $\gamma(\xi)$ and $\phi_1,\dots,\phi_{k-1}\in\mathcal S(\Mat_c)$ such that for all
$y\in\boldsymbol{M}$, $u\in U$ and $\kappa\in\boldsymbol K$, with $p(y)=\diag(y_1,\dots,y_{k})\in M$,
\begin{align*}
\abs{W_\xi(u^{\star}y\kappa)}&\le\gamma(\xi)\,\abs{\eta_\rho(y_{k})}\prod_{i=1}^{k-1}\abs{\det(y_{k}^{-1}y_i)}^{-D},\\
\abs{W_\xi(u^{\star}y\kappa)}&\le\abs{\eta_\rho(y_{k})}\prod_{i=1}^{k-1}\abs{\det(y_{k}^{-1}y_i)}^{-D}\,\phi_i(y_{i+1}^{-1}y_i).
\end{align*}
\end{enumerate}
Moreover $\dim\Hom_N(\rho,\Psn)=1$.
\end{theorem}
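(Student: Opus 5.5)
The plan is to rerun the linear argument of \S\S~\ref{sec:whittaker}--\ref{sec:gauge} on $\boldsymbol G$, keeping track of where the covering intervenes. The wave front set hypothesis is replaced by \eqref{eq:covzel}, and all unipotent subgroups are identified with their images under $x\mapsto x^\star$ via \ref{it:cov1unipotents}. First I check that the transition maps $\cT_i$ of \S~\ref{sec:transition} make sense on $\Ind_{U_i}^{\boldsymbol G}\Psi_{U_i}$. Their definition and the identity \eqref{eq:fourier} involve only the unipotent groups $U_i,U_{i+1}$, commutators inside their common splitting, and smoothness of $W$. Lemma~\ref{lem:injective} uses only Fourier inversion on $A_i$. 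Hence $\cT$ is injective, and Proposition~\ref{prop:LM} holds verbatim, with $w_0$ replaced by any preimage in $\boldsymbol K$. The proof of Lemma~\ref{lem:zele1dim} then gives an injection $\Hom_N(\rho,\Psn)\hookrightarrow\Hom_U(\rho,\psi_k)$. With \eqref{eq:covuniq} and \eqref{eq:covzel} this yields $\dim\Hom_N(\rho,\Psn)=1$ and the analog of Corollary~\ref{cor:identification}.

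Next I prove the analog of Lemma~\ref{lem:zelmg}. By \ref{it:cov5Jacquet}, $J_V(\rho)$ is a genuine finite-length representation of $\boldsymbol L$, and its Whittaker model $\cW$ is a finite-length quotient. For Lemma~\ref{lem:expansion} I replace $T$ by $A$, shrinking $A$ if necessary so that $\ell$ and $\theta$ restrict well, i.e.\ $A=(A\cap Z_L)\theta(\ell(A))$. Using \ref{it:cov2Ak} and \ref{it:cov6GLtriangle}, I pick a homomorphic section $A\to\boldsymbol A$ whose image is central in $\boldsymbol T$ and, by the remark in \ref{it:cov2Ak}, in the relevant $\boldsymbol{M^\alpha}$. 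Then the argument via \cite[Lemma~3.4]{CM15} goes through on the lattice-like group $\ell(A)$ in place of $(F^\times)^{|\Delta|}$. Finitely many coset representatives of $\boldsymbol T/\boldsymbol A$, absorbed into the vector $W$ as the $K_L$ translates were, give Corollary~\ref{cor:whittakermg}. Iwasawa decomposition $\boldsymbol L=N_L\boldsymbol T\boldsymbol K_L$ then gives moderate growth on $\boldsymbol G$ as before.

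The main step is the analog of Lemma~\ref{lem:keysupport}. Write $h=\bu^\star g=n_h^\star a\kappa$ with $a\in\boldsymbol T$ and $\kappa\in\boldsymbol K$. The conjugation argument needs $I_n+xE_{i,i+1}$ with $x\in\varpi^r\Oo$ to fix $\rho(\kappa)\xi$. I take the level $r\ge j_0$ and use \ref{it:cov0identity2}: $\varsigma$ on $K(r)\cap N$ agrees with $\star$ and is $K$-equivariant, so $\rho(\kappa)\xi$ has level $r$. Conjugation by $a$ in $\boldsymbol G$ of a unipotent $x^\star$ equals $(p(a)xp(a)^{-1})^\star$ by \ref{it:cov1unipotents}. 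All the remaining steps are purely linear-algebraic in $p(h)$, using the contents $\ct$ and the invariance under $p(\kappa)\in K$, so they are unchanged. The proof of Theorem~\ref{thm:mgpsi} then gives assertion \ref{it:covmain1}.

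For assertion \ref{it:covmain2}, Lemma~\ref{lem:support} uses only $U$-equivariance and the level, which is valid by the same \ref{it:cov0identity2} remark. Writing $y=g\,y'$ with $p(g)=y_k^\triangle$, I use $\abs{W_\xi(gg')}=\abs{\eta_\rho(y_k)}\abs{W_\xi(g')}$ in place of \eqref{eq:extractcenter}. Lemma~\ref{lem:cramer} and the proof of Theorem~\ref{thm:gaugepsi} then apply to $p(y')$, since the height is pulled back by $p$. I expect the only real obstacle to be the asymptotic expansion on $\boldsymbol T$: ensuring that the torus subgroups $H^\alpha\cap A$ lift centrally in $\boldsymbol{M^\alpha}$ and act by characters on $J_{U^\alpha}(\cW)$, so that the finite-function exponents are controlled. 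If the splitting via $\theta$ is awkward, I would fall back on a weaker bound, only polynomial in $\norm t$, derived directly from local finiteness of $\boldsymbol A$ on Jacquet modules, which suffices here.
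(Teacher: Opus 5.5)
Your proposal is correct and follows essentially the same route as the paper. Like the paper, you transfer \S~\ref{sec:whittaker}--\S~\ref{sec:gauge} to $\boldsymbol G$ using the unipotent splitting \ref{it:cov1unipotents}, the level defined through $\varsigma$ of \ref{it:cov0identity2}, and a shrunken $A$ with $\boldsymbol A\cong A\times\mu_m$ in the asymptotic expansion; \eqref{eq:covzel} replaces the wave front set argument, and the injectivity of $\cT$ together with \eqref{eq:covuniq} gives multiplicity one. Your homomorphic section $A\to\boldsymbol A$ with central image is a reformulation of the paper's use of $\prod_\alpha(\boldsymbol{H^\alpha}\cap\boldsymbol A)\cong\prod_\alpha(H^\alpha\cap A)\times\mu_m$ acting by genuine characters, and the shrinking you need is achieved by taking $A=(Z_L\cap A)\,\theta\big(\prod_\alpha(H^\alpha\cap A)\big)$, exactly as the paper does.
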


\begin{proof}
The result follows by adjusting the arguments of \S~\ref{sec:whittaker}--\S~\ref{sec:gauge}. We describe the modifications.

\underline{Section~\ref{sec:whittaker}:} We use the splitting $\theta$ fixed in \S~\ref{sec:whittaker}. Since by \ref{it:cov2Ak} the subgroup 
\[(Z_L\cap A)\,\theta\big(\prod_{\alpha\in\Delta}(H^\alpha\cap A)\big)\] 
of $A$ is again open and of finite index in $T$, and satisfies \ref{it:cov2Ak} and \ref{it:cov6GLtriangle}, we may assume that
\[A=(Z_L\cap A)\,\theta\big(\prod_{\alpha\in\Delta}(H^\alpha\cap A)\big);\] 
for $A=T^m$ this already holds because $\ell(t^m)=\ell(t)^m$. Hence for $t\in A$ the decomposition $t=z\theta(a)$ of \S~\ref{sec:whittaker} has $z\in Z_L\cap A$ and $\theta(a)\in A$.  

In the statement of Lemma~\ref{lem:expansion} we replace $T$ with $\boldsymbol A$; accordingly, $H^\alpha$ and $Z_L$ are replaced with the intersections of their preimages with 
$\boldsymbol A$. 
By \ref{it:finLengthAdm}, for $W\in\cW$ the $(\boldsymbol{Z_L}\cap \boldsymbol{A})$-span of $W$ is finite-dimensional. 

For $u\in U^\alpha$, $a^{-1}u^{\star}a=(a^{-1}ua)^{\star}$ for $a\in\boldsymbol{A}$ by \ref{it:cov1unipotents}. The space 
$\mathcal{F}/\mathcal{F}^\alpha$ is a quotient of $J_{U^\alpha}(\cW)$ as a module over $\boldsymbol{H^\alpha}\cap \boldsymbol{A}$. Since by \ref{it:cov2Ak} 
$\boldsymbol{H^\alpha}\cap \boldsymbol{A}$ is central in $\boldsymbol{M^{\alpha}}$, and by \ref{it:cov5Jacquet} the representation $J_{U^\alpha}(\cW)$ is of finite length, we deduce that $\boldsymbol{H^\alpha}\cap \boldsymbol{A}$ acts on its irreducible subquotients by genuine characters.

We apply \cite[Lemma~3.4]{CM15} with the torus replaced with $\prod_\alpha(\boldsymbol{H^\alpha}\cap\boldsymbol{A})$, which by \ref{it:cov6GLtriangle} is $\prod_\alpha(H^\alpha\cap A)\times\mu_m$. Each $H^\alpha\cap A$ is an open subgroup of finite index in $H^\alpha\cong F^\times$, hence of the form $\varpi_\alpha^{\bZ}\times\Oo_\alpha$ with $\varpi_\alpha\in F^\times$ such that $\val(\varpi_\alpha)>0$ and $\Oo_\alpha$ a compact open subgroup of $\Oo^\times$, and the proof of \textit{loc. cit.}, which uses only this structure of $F^\times$, applies to such groups verbatim. 

Corollary~\ref{cor:whittakermg} is valid with $L$ replaced by $\boldsymbol{L}$. In detail, by \ref{it:cov2Ak} 
the index of $\boldsymbol{A}$ in $\boldsymbol{T}$ is finite and $\boldsymbol A$ is central in $\boldsymbol T$, and also 
$t\cdot W\in\cW$ for each $t\in\boldsymbol T$, hence the expansion on $\boldsymbol A$ for finitely many translates of $W$ bounds $W$ on $\boldsymbol T$.

\underline{Section~\ref{sec:transition}:} We consider the spaces $\Ind_{U_i}^{\boldsymbol{G}}\Psi_{U_i}$ of smooth functions on $\boldsymbol{G}$. The result 
\cite[Lemma~3.3]{LM20} and the proof of \cite[Proposition~3.5]{LM20} involve only integrations over unipotent subgroups, conjugations of unipotent elements, the smoothness of the functions, and Fourier inversion on the abelian groups $A_i$ and $B_i$. These results are applicable here in $\boldsymbol{G}$ by \ref{it:cov1unipotents}. 
The argument on the support in the proof of \cite[Lemma~3.8]{LM20} also follows here using \ref{it:cov1unipotents}.
Thus \eqref{eq:fourier} and Proposition~\ref{prop:LM} hold, and the proof
of Lemma~\ref{lem:injective} applies verbatim, so that $\cT$ is injective.

\underline{Section~\ref{sec:zelevinsky}:} The assumption \eqref{eq:covzel} replaces Lemma~\ref{lem:zelexists}. 
In Lemma~\ref{lem:zele1dim} we rely on \eqref{eq:covuniq}. For the conjugation we take some element $w'\in p^{-1}(w_0)$, and use \ref{it:cov1unipotents}; note 
that $\Lambda\circ\rho(w')$ depends on the choice of $w'$ but only up to a scalar in $\mu_m$.
For Lemma~\ref{lem:zelmg}, $J_V(\rho)$ is a genuine representation of $\boldsymbol{L}$ of finite length by \ref{it:cov5Jacquet}, and Corollary~\ref{cor:identification} holds with the same proof.

\underline{Section~\ref{sec:mg}:} The level of a vector $\xi\in\cV_\rho$ is defined using \ref{it:cov0identity2}, namely $\xi$ has level $j$, for $j\geq j_0$, if $\rho(\varsigma(x))\xi=\xi$ for every $x\in K(j)$. Every $\xi$ has a level because $\{\varsigma(K(j))\}_{j\geq j_0}$ is a basis of neighborhoods of the identity in $\boldsymbol G$. Since $K(j)$ is normal in $K$ and $\varsigma|_{K(j)}$ is $K$-equivariant, the vectors 
$\rho(\kappa)\xi$ where $\kappa$ varies in $\boldsymbol{K}$ all have the same level. 
In the proofs of Lemmas~\ref{lem:keysupport} and \ref{lem:support} the level of $\xi$ is used only through the invariance of $\xi$ under $x^\star$ for $x\in N\cap K(j)$, respectively $x\in U\cap K(j)$, and $x^\star=\varsigma(x)$ for such $x$ by \ref{it:cov0identity2}.
For Lemma~\ref{lem:keysupport} we use \ref{it:cov1unipotents}: for every $t\in\boldsymbol{T}$ such that $p(t)=\diag(t_1,\ldots,t_n)$, $t(I_n+xE_{i,i+1})^{\star}t^{-1}
=(I_n+t_it_{i+1}^{-1}xE_{i,i+1})^{\star}$. So the first step of the proof applies, with the modified notion of the level.
The remaining steps concern a computation in $G$ and they apply without change.
The proof of Theorem~\ref{thm:mgpsi} then applies verbatim.

\underline{Section~\ref{sec:gauge}:} 
Lemma~\ref{lem:support} and Theorem~\ref{thm:gaugepsi} hold with justifications similar to the above. Note that the analog of \eqref{eq:extractcenter} holds 
since we are taking the absolute value of $W_\xi$. 
This completes the proof of Theorem~\ref{thm:covmain}.
\end{proof}

For the rest of this section let $\boldsymbol{G}$ be the following extension. Let $\Sp_{2n}^{(m)}$ be the $m$-fold covering group of $\Sp_{2n}$ of Matsumoto \cite{Mat69}, defined with respect to the Steinberg symbol $(\cdot,\cdot)_m^{-1}$. For $g\in G$, we have an embedding $g\mapsto \diag(g,g^*)\in\Sp_{2n}$ where $g^*$ is defined by the symplectic form. Let $\boldsymbol{G}$ denote the central extension of $G$ obtained by restriction of $\Sp_{2n}^{(m)}$ to this image of $G$.
In this case it is easy to see from the formulas of \cite{BLS} (see \cite[\S2.7]{Kaplan2025})
that \ref{it:cov6GLtriangle} holds for $A=T^m$.

For $l\leq n$, let $\boldsymbol{G}_l$ denote the central extension of $\GL_l$ obtained by restriction from $\boldsymbol{G}$ with respect to
the embedding $\GL_l\to G$ in the top left corner (any embedding as a direct factor of a Levi subgroup would work); $\boldsymbol{G}_l$ is also the central extension obtained using $\Sp_{2l}^{(m)}$.

Put $r=m$ if $m$ is odd, and $r=m/2$ otherwise. Following \cite[\S3.1]{GK23}, we assume that $r$ divides $k$ and write $k=rk_0$ with an integer $k_0\ge1$; thus $n=rk_0c$. Note that the integer $k$ of \cite{Kaplan2023,Kaplan2025} is $k_0$ here (cf. \cite[Remark~3.4]{GK23}): the representations considered in \cite{Kaplan2023,Kaplan2025} are attached to genuine representations of $\boldsymbol{G}_{k_0}$.

In \cite[Definition~11]{Kaplan2023}, a genuine finite-length representation $\rho$ of $\boldsymbol{G}$ was called an $(rk_0,c)$ representation, i.e., in our notation a $(k,c)$ representation, if
$\Hom_{V(\beta')}(\rho,\psi')=0$ for every partition $\beta'$ of $n$ which is greater than or not comparable with $(k^c)$ and every generic character $\psi'$ of the unipotent subgroup $V(\beta')$ attached to $\beta'$, and
\begin{equation*}
\dim\Hom_{U}(\rho,\psi_{k})=1.
\end{equation*}
See \cite[\S2.1]{Kaplan2023} for the details (cf. Patel \cite{Patel2015} for the wave front set of representations of $\boldsymbol{G}$). The vanishing condition is not assumed in Theorem~\ref{thm:covmain}
(in the linear case this condition is relevant for the Zelevinsky model); only the last condition, which is \eqref{eq:covuniq}, is.
The difficulty is to construct such representations, i.e., to define an analog of the map $\tau\mapsto\rho_c(\tau)$ for genuine irreducible generic representations $\tau$ of 
$\boldsymbol{G}_{k_0}$; this was carried out in \cite{Kaplan2023} for $k_0=1$. We verify \eqref{eq:covzel} for these representations below.

The case $m=1$ is the linear case ($\boldsymbol{G}=G$).

When $m=2$ (then $r=1$ and $k=k_0$) the genuine representations of $\boldsymbol{G}$ are in a (noncanonical) bijection with the representations of $G$ \cite[\S1.7]{Kaplan2023}, which preserves the models we consider up to
multiplication by a complex-valued function of absolute value $1$ (a Weil factor). The notion of a $(k,c)$ representation is then the same as in the linear case; hence 
\eqref{eq:covzel} holds and Theorem~\ref{thm:covmain} is applicable (Theorems~\ref{thm:intromg} and \ref{thm:introgauge} can also be applied, through the bijection). In the rest of this section we assume $r>1$.

Assume in addition that $F$ contains a primitive $2m$-th root of unity, as in \cite{Kaplan2023}.

Let $k_0=1$, so that $k=r$ and $n=rc$. We recall the construction of \cite[\S2.2]{Kaplan2023}. Let $\tau$ be a genuine irreducible representation of $\boldsymbol{G}_1$; it is constructed by Stone--von Neumann theory from a genuine character of the center of $\boldsymbol{G}_1$,
and this character defines a quasi-character $\chi$ of $F^\times$ uniquely when $r=m$ and noncanonically by choosing a Weil factor
when $r=m/2$. For $1\le d\le n$ let $\Theta_{d,m}$ be the exceptional genuine irreducible representation of $\boldsymbol{G}_d$ of \cite[\S2.2]{Kaplan2023}; the exceptional representations depend on an auxiliary parameter (denoted $\vartheta$ in \textit{loc. cit.}) which we fix once and for all and omit from the notation. 
Then
\begin{equation}\label{eq:covrhoc}
\rho_c(\tau)=(\chi\circ\det\circ p)\Theta_{rc,m},
\end{equation}
and $\rho_c(\tau)$ is an $(r,c)$ representation of $\boldsymbol{G}=\boldsymbol{G}_{rc}$ by \cite[Proposition~15 and Theorem~17]{Kaplan2023}. In particular $\Theta_{r,m}$ is an $(r,1)$ representation, so that it admits a unique Whittaker model.

\begin{theorem}\label{thm:covk1}
Let $\rho_c(\tau)$ be the representation \eqref{eq:covrhoc}. Then $\dim\Hom_N(\rho_c(\tau),\Psn)=1$. Consequently Theorem~\ref{thm:covmain} holds for $\rho_c(\tau)$.
\end{theorem}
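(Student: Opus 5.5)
Theorem~\ref{thm:covmain} gives $\dim\Hom_N(\rho,\Psn)=1$ as soon as \eqref{eq:covuniq} and \eqref{eq:covzel} hold; see the proof of Lemma~\ref{lem:zele1dim}, which transfers to $\boldsymbol{G}$ via the injectivity of $\cT$. Hypothesis \eqref{eq:covuniq} is \cite[Theorem~17]{Kaplan2023} for $\rho_c(\tau)$. So the whole task is to prove \eqref{eq:covzel}, namely $\Hom_N(\rho_c(\tau),\Psn)\neq0$.

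By \eqref{eq:homfactor}, whose proof only uses unipotent subgroups and hence holds in $\boldsymbol{G}$ by \ref{it:cov1unipotents}, it is enough to find a subquotient of $J_V(\rho_c(\tau))$ with a nonzero Whittaker functional for $(N_L,\psi_L)$. Here $V=V_{(r^c)}$ and $\boldsymbol{L}$ is the preimage of $\GL_r\times\dots\times\GL_r$. Since $\chi\circ\det\circ p$ is a character, twisting does not affect this, and we may take $\rho_c(\tau)=\Theta_{rc,m}$.

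The plan is to use the known Jacquet modules of exceptional representations, as in Kazhdan--Patterson, recalled in \cite[\S2.2]{Kaplan2023}. The Jacquet module of $\Theta_{rc,m}$ along the parabolic with Levi $M_{(r^c)}$ is, up to a twist by a modulus character, the ``metaplectic tensor product'' of copies of $\Theta_{r,m}$. More precisely, it is a genuine representation of $\boldsymbol{L}$ induced, if necessary, from the finite-index subgroup where the blocks commute, whose restriction to each block is a sum of twists $\Theta_{r,m}|\det|^{s_j}$. I will invoke this computation, e.g.\ \cite[Proposition~15]{Kaplan2023} or the periodicity and Jacquet module results for exceptional representations cited there. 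Its key output is that $J_V(\Theta_{rc,m})$ is nonzero and that its irreducible constituents are built from $\Theta_{r,m}$ on each factor.

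Next, I show that any such constituent $\sigma$ has a nonzero $(N_L,\psi_L)$-functional. The paper notes that $\Theta_{r,m}$ is an $(r,1)$ representation and so has a unique Whittaker model. The group $N_L$ splits uniquely and the $c$ blocks of $N_L$ commute. Restricting $\sigma$ to the product of the unipotent parts therefore reduces $J_{N_L,\psi_L}(\sigma)$ to a tensor product of the one-dimensional spaces $J_{N_r,\psi}(\Theta_{r,m}|\det|^{s_j})$, up to the multiplicity coming from the metaplectic tensor construction. In particular it is nonzero. Exactness of $J_{N_L,\psi_L}$ then gives $J_{N,\Psn}(\Theta_{rc,m})\neq0$, which is \eqref{eq:covzel}.

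The main obstacle is the structure of the metaplectic tensor product. The blocks of $\boldsymbol{L}$ do not commute in the cover, so $\sigma$ is not literally a tensor product, and one must check that the Whittaker functional on each $\Theta_{r,m}$ assembles into a functional on $\sigma$. I would first try to avoid the full structure: restrict $\sigma$ to the subgroup generated by the $\boldsymbol{G}_r$-blocks' unipotent parts and $\boldsymbol{A}$, which is central by \ref{it:cov2Ak}, and use that $N_L$ commutes across blocks. This reduces the question to the nonvanishing of the twisted Jacquet module of the restriction to one block, which is a sum of copies of $\Theta_{r,m}$.

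Alternatively, one can use the vanishing of models for orbits greater than $(r^c)$ from \cite[Definition~11]{Kaplan2023}, together with a covering-group version of the comparison of Whittaker models in \cite{GGS17}. This route is less self-contained, so I would use it only as a fallback.
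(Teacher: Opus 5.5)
Your proposal is correct in substance and uses the same key input as the paper. Iterating \cite[(2.10)]{Kaplan2023} along $V=V_{(r^c)}$, $J_V(\Theta_{rc,m})$ is a modulus-character twist of $\Theta_{r,m}\otimes\dots\otimes\Theta_{r,m}$, and each $\Theta_{r,m}$ has a one-dimensional Whittaker space. Two points differ.

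First, your ``main obstacle'' does not exist here, and your claim that the blocks of $\boldsymbol{L}$ do not commute is false for this cover. For this particular $\boldsymbol{G}$ (restricted from $\Sp_{2n}^{(m)}$), the preimages of the direct factors of $M_{\underline{n}}$ commute (\cite[\S2.7]{Kaplan2025}). So the tensor product in \cite[(2.10)]{Kaplan2023} is the ordinary one. No metaplectic tensor product, restriction to a finite-index subgroup, or induction is needed.

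Second, because the Jacquet module is an honest tensor product, the paper gets the dimension directly:
\[
J_{N,\Psn}(\Theta_{rc,m})=J_{N_L,\psi_L}\big(J_V(\Theta_{rc,m})\big)=\delta\bigotimes_{i=1}^{c}J_{N_r,\psi_r}(\Theta_{r,m})
\]
is one-dimensional. This yields $\dim\Hom_N=1$ without using \eqref{eq:covuniq}, after which Theorem~\ref{thm:covmain} applies. You instead prove only \eqref{eq:covzel} and take the upper bound from the ``moreover'' clause of Theorem~\ref{thm:covmain} (injectivity of $\cT$ plus \eqref{eq:covuniq}).

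That is logically valid. It would be the more robust route for a cover whose blocks do not commute, or when only the semisimplification of the Jacquet module is known, since you need just one generic constituent. But in the present case it is a detour. In such a general setting, your step ``up to the multiplicity coming from the metaplectic tensor construction'' would still need an actual argument.
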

\begin{proof}
Here $L=M_{(r^c)}$ and $V=V_{(r^c)}$. For the proof we can assume that $\chi=1$. By \cite[(2.10)]{Kaplan2023} (where the unnormalized Jacquet functor was used, as we do here), for $0\le l\le d$,
\begin{equation}\label{eq:covjacquet}
J_{V_{(d-l,l)}}(\Theta_{d,m})=\delta_{P_{(d-l,l)}}^{(r-1)/(2r)}\big(\Theta_{d-l,m}\otimes\Theta_{l,m}\big),
\end{equation}
where the tensor product is given by the standard construction (see, e.g., \cite{BZ76}) because for $\boldsymbol{G}$,
the preimages of the direct factors of $M_{\underline{n}}$ commute in $\boldsymbol{M_{\underline{n}}}$
(see \cite[\S2.7]{Kaplan2025}). Repeatedly applying \eqref{eq:covjacquet} we obtain
\begin{equation*}
J_{V}(\Theta_{rc,m})=\delta\cdot\big(\Theta_{r,m}\otimes\dots\otimes\Theta_{r,m}\big),
\end{equation*}
with $c$ factors, where $\delta$ is a product of the modulus characters appearing in \eqref{eq:covjacquet}. Hence
\[
J_{N,\Psn}(\Theta_{rc,m})=J_{N_L,\psi_L}\circ J_V(\Theta_{rc,m})=\delta\bigotimes_{i=1}^{c}J_{N_r,\psi_r}(\Theta_{r,m}),
\]
where $\psi_r$ is the standard generic character of $N_r$ (see \S~\ref{ssec:zeldata}). Each factor on the right-hand side is one-dimensional, therefore $\dim\Hom_N(\rho_c(\tau),\Psn)=1$ (which is stronger than \eqref{eq:covzel}).
\end{proof}

\begin{remark}\label{rem:covboundednessD}
For the representations $\rho_c(\tau)=\chi\Theta_{rc,m}$ above ($k_0=1$) the analog of Lemma~\ref{lem:boundednessD} holds, because the only dependence on $\tau$ is through the quasi-character $\chi$ and we can take the bound $A_\tau=\abs{\Re(\chi)}$. 
When $\chi$ varies over the local components of a quasi-character of $\bF^\times\backslash\bA^\times$, the constant $D$ for $\rho_c(\tau)$ is bounded uniformly over the finite places.
\end{remark}

\begin{remark}\label{rem:covglobal}
The global argument of \S~\ref{sec:global} also extends, as follows. Automorphic forms on finite central extensions are of moderate growth (\cite[I.2.17]{MW95} included these).
The proof of Theorem~\ref{thm:globalgauge} then applies to the component at a finite place $v$ of a genuine irreducible automorphic representation $\mathcal E$ of a central extension of $\GL_{kc}(\bA)$ by $\mu_m$, granted that
the global $(k,c)$ coefficient \eqref{eq:globalcoefficient} does not vanish on $\mathcal E$, and that the component $\mathcal E_v$ satisfies \eqref{eq:covuniq}. The argument does not require \eqref{eq:covzel}. When $\bF$ contains a primitive $2m$-th root of unity, representations $\mathcal E$ with these properties were constructed for $k_0=1$, with
\eqref{eq:covuniq} holding at all places (\cite[Proposition~52]{Kaplan2025}, \cite[\S2.2, Proposition~15 and Theorem~17]{Kaplan2023}).
\end{remark}

\section{Applications: absolute convergence of integrals of \texorpdfstring{$(k,c)$}{(k,c)} model functions}\label{sec:applications}

In this section we apply Theorems~\ref{thm:mgpsi} and \ref{thm:gaugepsi} to prove the absolute convergence, in
explicit cones and half-planes, of four integrals whose integrands are values of $(k,c)$
model functions. The first application, in \S~\ref{ssec:twomodels}, concerns a new family of integrals which extends the integrals of
\cite{LM20} (and \cite{JPSS83}), but as opposed to those works we merely prove convergence.

In the following subsections and in Appendix~\ref{sec:GLdoubling} we revisit several integrals
from \cite{CFGK2,CFK22,CFGK23} that are fundamental to the generalized doubling method, and prove their convergence in detail. It is straightforward to adapt the proofs of
this section and Appendix~\ref{sec:GLdoubling}, using \S~\ref{sec:coverings} and Theorem~\ref{thm:covk1}, to the generalized doubling method for covering groups (\cite{Kaplan2025}),
in the cases where the construction there is unconditional\footnote{One can require the conjectural $(k,c)=(rk_0,c)$ representations to have Zelevinsky models; with
this additional requirement convergence will again follow as it does here.}; indeed, the absolute value of a genuine function factors through the projection $p$. 

While our focus is $k>1$, all $k\geq1$ are treated, and uniformly so except in the appendix, where the case $k=1$ requires a separate argument. For $k=1$ the convergence of the
doubling integrals was proved by Lapid and Rallis \cite{LR} using multiplicativity arguments. Our proof is more elementary in nature.

The scheme of most of the proofs is the one Soudry used for the unipotent integrations appearing in the local Rankin--Selberg integrals for
$\SO_{2l+1}\times\GL_m$ \cite[Lemma~4.5]{Sou93}. It has three steps: a gauge bounds the model function on the Levi
part, uniformly in the unipotent part and in the maximal compact subgroup; the Iwasawa
decomposition expresses the Levi part of a unipotent element through its coordinates; and
the two combine into an integral of a power of those coordinates. The first step is
supplied by \eqref{eq:thegauge2psi} for the integrals of \S~\ref{ssec:twomodels}, and by Theorem~\ref{thm:mgpsi} for
the remaining integrals, in the same role as in \cite[Lemma~4.4]{Sou93}.

Throughout, $\rho$ is an arbitrary $(k,c)$ representation of $\GL_{kc}$ (in \cite{CFGK2,CFK22,CFGK23,CFK-func} only the case $\rho=\rho_c(\tau)$ was relevant);
for a $(k,c)$ representation $\rho$ we denote its model by $W_{\psi}(\rho)$ (see \S~\ref{ssec:kcdata}).
This means, in particular, that the character $\psi_k$ is defined by \eqref{eq:psikdef} with respect to $\psi$.
We write $D(\rho)$ for the constant $D$ of Theorems~\ref{thm:mgpsi} and \ref{thm:gaugepsi}.

Recall the character $\eta_\rho$ of $\GL_c$ attached to a
$(k,c)$ representation $\rho$ (see \eqref{eq:etarho}), and let $e_\rho\in\bR$ be the real part of $\eta_\rho$, i.e., $\abs{\eta_\rho(g)}=\abs{\det g}^{e_\rho}$ for $g\in\GL_c$.

For $k=1$, we have $\rho=\eta\circ\det=\eta_\rho$ for a quasi-character $\eta$ of $F^\times$ (see \S~\ref{ssec:kcdata}) and we can take
$D(\eta\circ\det):=c\abs{e_\rho}$, because $\abs{\det g}^{\pm1}\le\norm g^{c}$.

Recall the function $\vartheta$ of \eqref{eq:Nx}. The unipotent groups over which we integrate are identified with affine spaces, and
we denote their dimensions by the letter $\fd$, with a subscript indicating the group. For an integer $\fd\ge1$ let $F^{\fd}=F\times\dots\times F$ ($\fd$ times).
We use the elementary integral
\begin{equation}\label{eq:Iea}
I_{\fd}(a):=\int_{F^{\fd}}\vartheta(x)^{-a}\,dx\ <\ \infty,\qquad a>\fd ,
\end{equation}
where $\vartheta(x)=\max(1,\abs{x_1},\dots,\abs{x_{\fd}})$ for $x=(x_1,\dots,x_{\fd})$: indeed
$\vartheta(x)^{-a}\le\prod_{j=1}^{\fd}\max(1,\abs{x_j})^{-a/\fd}$, and
$\int_F\max(1,\abs x)^{-b}\,dx=q^{e_\psi/2}\big(1+(1-q^{-1})\sum_{m\ge1}q^{m(1-b)}\big)$ is finite for $b>1$.
For fixed $\fd$, $I_{\fd}(a)$ decreases as $a$ increases, since $\vartheta\ge1$. When $\fd=0$ we formally define $I_{\fd}(a)$ to be the constant $1$.

\begin{remark}\label{rem:uniformity}
In the applications which motivate this section, $\rho=\rho_c(\tau)$ for $\tau\in\Irr(\GL_k)$ generic,
and in the local--global setting $\tau$ is a local component of an irreducible cuspidal automorphic representation.
In this case the real parts of the cuspidal exponents of $\tau$ are bounded, in absolute value, by a constant $A_\tau$ which does not depend on the place, and
by Theorem~\ref{thm:mgpsi}\ref{it:mgpsi2} and Theorem~\ref{thm:gaugepsi}\ref{it:gaugepsi3}, the constant $D(\rho)$ may be taken to be bounded by a function of $A_\tau$, $k$ and $c$.
Moreover $\eta_\rho(g)=\omega_\tau((\det g)I_k)$ by \eqref{eq:etarhoc}, and $\omega_\tau$ is a cuspidal exponent of $\tau$, so that $\abs{e_\rho}\le A_\tau$. We make no attempt to render the constants optimal: all that we retain of them is that the cone, or half-plane, of convergence
depends on $\rho$ only through $k$, $c$, $D(\rho)$ and $e_\rho$, and, in the
case of the doubling integral, also on an irreducible representation $\pi$ but only through a bound for the real parts of its cuspidal exponents (see Lemma~\ref{lem:mcbound}).
This is important for the local--global arguments of \cite{CFGK2,CFK22} involving an Euler product of infinitely many local integrals.
\end{remark}
In this section and in the appendix, $G$ and $H$ denote split reductive groups (over $F$), defined explicitly at the beginning of each subsection.

\subsection{Rankin--Selberg integrals for \texorpdfstring{$(k,c)$}{(k,c)} representations}\label{ssec:twomodels}
Let $k'\geq k\geq 1$ and $c\geq1$, $G:=\GL_{kc}$, $K_G=K_{kc}$, $M=M_{(c^k)}$, $U=V_{(c^k)}$, and $H:=\GL_{k'c}$, $K_H=K_{k'c}$.
Let $\rho$ be a $(k,c)$ representation of $G$ and let $\rho'$
be a $(k',c)$ representation of $H$. We let $\mathfrak{e}:G\hookrightarrow  H$ be the embedding
\[
\mathfrak{e}(g)=\diag\big(g,I_{(k'-k)c}\big).
\]
Let $\pr(g)\in\Mat_{c\times kc}$ denote the last $c$ rows of
$g\in G$.

We define the following families of integrals.

If $k<k'$, then for any $W\in W_{\psi}(\rho)$ and $W'\in W_{\psi^{-1}}(\rho')$,
\begin{equation}\label{eq:twomodelnophi}
Z(s,W,W')=\int_{U\backslash G}W(g)\,W'(\mathfrak{e}(g))\,\abs{\det g}^{s}\,dg.
\end{equation}
When $k=k'$, for any $W$ and $W'$ as above and also $\Phi\in\mathcal{S}(\Mat_{c\times kc})$,
\begin{equation}\label{eq:twomodel}
Z(s,W,W',\Phi)=\int_{U\backslash G}W(g)\,W'(\mathfrak{e}(g))\,\Phi(\pr(g))\,\abs{\det g}^{s}\,dg.
\end{equation}

The integrals are well defined: since for $u\in U$, $\pr(u)=(0,\dots,0,I_c)$, $\pr(ug)=\pr(g)$; moreover $\psi_{k'}^{-1}(\mathfrak{e}(u))=\psi_k^{-1}(u)$ by \eqref{eq:psikdef},
because the blocks of $\mathfrak{e}(u)$ in the positions $(i,i+1)$ for $i\geq k$ vanish, and thus
\[W(ug)W'(\mathfrak{e}(ug))=\psi_k(u)\psi_{k'}^{-1}(\mathfrak{e}(u))W(g)W'(\mathfrak{e}(g)) =W(g)W'(\mathfrak{e}(g)).\]

For $c=1$ these are the Rankin--Selberg integrals of Jacquet, Piatetski-Shapiro and Shalika
\cite{JPSS83}, and for $c>1$ and $k=k'$  they are the integrals of Lapid and Mao \cite[\S5]{LM20}.

\begin{theorem}\label{thm:twomodelconv}
There is a real number $R$, depending only on the representations $\rho$, $\rho'$ and on $c$, such that for
every $s$ with $\Re(s)>R$ and every $W$, $W'$ and $\Phi$, the integrals \eqref{eq:twomodelnophi} and \eqref{eq:twomodel} are absolutely convergent.
\end{theorem}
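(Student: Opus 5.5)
We will use the Iwasawa decomposition $G=PK_G$, $P=M\ltimes U$, to write the integral over $U\backslash G$ as an integral over $M\times K_G$ with measure $\delta_P^{-1}(m)\,dm\,d\kappa$. Since $W$, $W'$ and $\Phi$ are smooth and $K_G$ is compact, the translates $\kappa\cdot W$, $\mathfrak e(\kappa)\cdot W'$ and $\kappa\cdot\Phi$ range over finite sets. It therefore suffices to bound
\[
\int_M\abs{W(m)}\,\abs{W'(\mathfrak e(m))}\,\abs{\Phi'(\pr(m))}\,\abs{\det m}^{\Re s}\delta_P^{-1}(m)\,dm
\]
uniformly over finitely many data. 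Here $\Phi'$ is dropped, i.e. replaced by $1$, when $k<k'$.

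Write $m=\diag(m_1,\dots,m_k)$. For $W$ we apply Theorem~\ref{thm:gaugepsi}\ref{it:gaugepsi2}, which gives
$\abs{\eta_\rho(m_k)}\prod_{i<k}\abs{\det(m_k^{-1}m_i)}^{-D(\rho)}\phi_i(m_{i+1}^{-1}m_i)$.
For $W'$ we apply the same theorem to $\rho'$ at $\mathfrak e(m)=\diag(m_1,\dots,m_k,I_c,\dots,I_c)$.

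When $k<k'$, the last block of $\mathfrak e(m)$ is $I_c$. This gives a factor $\prod_{i\le k}\abs{\det m_i}^{-D(\rho')}\phi'_i(\cdot)$, and in particular a Schwartz factor $\phi'_k(m_k)$ in the variable $m_k$ itself, since $m_{k+1}=I_c$. Combined with the factors $\phi'_i(m_{i+1}^{-1}m_i)$, this forces every $m_i$ to lie in a fixed compact subset of $\Mat_c$. The remaining integrand is then bounded by a product of powers of $\abs{\det m_i}$ with exponent roughly $\Re s-C$, where $C$ depends only on $D(\rho)$, $D(\rho')$, $e_\rho$ and $c$. To see this we write $\abs{\eta_\rho(m_k)}=\abs{\det m_k}^{e_\rho}$, we note that $\delta_P^{-1}(m)$ is a product of $\abs{\det m_i}$ to bounded integral powers, and we use $\abs{\det(m_k^{-1}m_i)}^{-D}=\abs{\det m_k}^{D}\abs{\det m_i}^{-D}$.

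When $k=k'$ the role of $\phi'_k$ is played by $\Phi'(\pr(m))=\Phi'(0,\dots,0,m_k)$, which is Schwartz in $m_k$. The $\eta_{\rho'}(m_k)$ factor is handled in the same way. So in both cases we reduce to the finiteness of
\[
\int_{\Mat_c(\Oo)'\cap\GL_c}\abs{\det x}^{\sigma}\,dx
\]
over a compact lattice $\varpi^{-l}\Mat_c(\Oo)$, for $\sigma$ large. To get there we change variables $x_i=m_i$ one block at a time, with $dm_i=\abs{\det m_i}^{-c}dx_i$ relating Haar measure on $\GL_c$ to additive measure.

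The main point to check is that all the negative powers $\abs{\det m_i}^{-D}$ are absorbed by a single $\abs{\det m}^{\Re s}=\prod_i\abs{\det m_i}^{\Re s}$. Each $m_i$ has bounded entries, so $\abs{\det m_i}$ is bounded above, but it can tend to $0$. Hence a large positive exponent on each $\abs{\det m_i}$ is exactly what is needed. The only subtlety is the positive powers of $\abs{\det m_k}$ coming from $\abs{\det(m_k^{-1}m_i)}^{-D}$. These are harmless, since $\abs{\det m_k}$ is bounded above on the support.

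After this, the integral factors as a product of $k$ integrals of the form $\int_{\varpi^{-l}\Mat_c(\Oo)}\abs{\det x}^{a_i}dx$. These are finite once every $a_i>-1$, which holds if $\Re s>R$ with $R$ an explicit affine expression in $D(\rho)$, $D(\rho')$, $e_\rho$, $e_{\rho'}$, $k$, $k'$ and $c$. Since the bound is uniform in $\kappa$ after taking the maximum over the finite set of translates, this proves absolute convergence for both \eqref{eq:twomodelnophi} and \eqref{eq:twomodel}.
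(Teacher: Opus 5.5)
Your proposal is correct and follows essentially the same route as the paper. In both, the integral over $U\backslash G$ is rewritten via the Iwasawa decomposition, the gauge bound \eqref{eq:thegauge2psi} is applied to both $W$ and $W'$, and the factor $\phi'_k(m_k)$ (resp.\ $\Phi((0,\dots,0,m_k)\kappa)$ when $k=k'$) confines every $m_i$ to $\varpi^{-l}\Mat_c(\Oo)$, reducing to a product of integrals of $\abs{\det m_j}^{s+A_j}$. The differences are cosmetic: you use finiteness of $K_G$-translates where the paper uses the uniformity in $\kappa$ of the gauge, and you settle the final integrals by the additive-measure criterion (exponent $>-1$) where the paper cites Godement--Jacquet.
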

\begin{proof}
We use the following integration formula, which follows from the Iwasawa decomposition $G=UMK_G$:
for every nonnegative measurable function $\varphi$ on $U\backslash G$,
\begin{equation*}
\int_{U\backslash G}\varphi(g)\,dg=\int_{K_G}\int_M\delta_P^{-1}(m)\,\varphi(m\kappa)\,dm\,d\kappa.
\end{equation*}

Write $m=\diag(m_1,\dots,m_k)$, then $\delta_P^{-1}(m)=\prod_{i=1}^{k}\abs{\det m_i}^{c(2i-k-1)}$ and $\mathfrak{e}(m)=\diag(m,I_{(k'-k)c})$. By \eqref{eq:thegauge2psi} applied to $W$ and $W'$,
with $D=D(\rho)$ and $D'=D(\rho')$, there are $\phi_1,\dots,\phi_{k-1}\in\mathcal{S}(\Mat_c)$ and $\phi'_1,\dots,\phi'_{k-1}\in\mathcal{S}(\Mat_c)$ such that
\begin{equation}\label{eq:tm1}
\abs{W(m\kappa)} \le \abs{\det m_k}^{e_\rho}\prod_{i=1}^{k-1}\abs{\det(m_k^{-1}m_i)}^{-D}\,\phi_i(m_{i+1}^{-1}m_i),
\end{equation}
and, noticing that $\mathfrak{e}(\kappa)\in K_H$,
\begin{equation}\label{eq:tm2}
\abs{W'(\mathfrak{e}(m\kappa))}\le\begin{dcases}
\prod_{i=1}^{k}\abs{\det m_i}^{-D'}\,
\prod_{i=1}^{k-1}\phi'_i(m_{i+1}^{-1}m_i)\,\phi'_k(m_k)&k<k',\\
\abs{\det m_k}^{e_{\rho'}}\prod_{i=1}^{k-1}\abs{\det(m_k^{-1}m_i)}^{-D'}\,\phi'_i(m_{i+1}^{-1}m_i)
&k=k'.\end{dcases}
\end{equation}
In \eqref{eq:tm2}, when $k'>k+1$, the remaining $k'-k-1$ functions $\phi'_i$ are evaluated at $I_c$ and contribute a constant, which we absorb into $\phi_1'$.

When $k<k'$ it is now clear that the integrand vanishes unless $m$ belongs to a bounded subset $M\cap\varpi^{-l}\Mat_{kc}(\Oo)$ for some $l>0$.
If $k=k'$, then since $\Phi(\pr(m\kappa))=\Phi((0,\dots,0,m_k)\kappa)$, we first deduce that
$(0,\dots,0,m_k)\kappa\in\varpi^{-l_0}\Mat_{c\times kc}(\Oo)$, and because $\kappa\in K$, obtain $(0,\dots,0,m_k)\in\varpi^{-l_0}\Mat_{c\times kc}(\Oo)$, hence $m_k\in\varpi^{-l_0}\Mat_c(\Oo)$. Then again by \eqref{eq:tm1}
$m\in M\cap\varpi^{-l}\Mat_{kc}(\Oo)$ for some $l>0$.

Therefore, the integral is bounded by a product of integrals
\begin{equation*}
\prod_{j=1}^{k}\,\int\limits_{m_j\in\GL_c\cap \varpi^{-l}\Mat_c(\Oo)}\abs{\det m_j}^{s+A_j}\,dm_j,
\end{equation*}
where $dm_j$ is the Haar measure of $\GL_c$,
\begin{align*}
&A_j=-D-D'+c(2j-k-1)\qquad j<k,\\
&A_k=\begin{cases}e_{\rho}+D(k-1)-D'+c(k-1)&k<k',\\e_{\rho}+e_{\rho'}+(D+D')(k-1)+c(k-1)&k=k'.\end{cases}
\end{align*}
By a result of Godement and Jacquet \cite[Proposition 1.1]{GJ} each integral in this product is convergent when
$\Re(s)>R$ where $R$ depends only on $\rho$ through $k,D$ and $e_{\rho}$, on $\rho'$ through $k',D'$ and $e_{\rho'}$ (when $k=k'$), and on $c$.
\end{proof}

\begin{remark}\label{rem:LMsharper}
For $k=k'$, Lapid and Mao obtained a sharper half-plane, together with the meromorphic continuation of the integral
\cite[Theorem~5.1]{LM20}. Their idea was to replace the $(k,c)$ functions with Zelevinsky model functions, then restrict to the Levi
part where these functions are Whittaker functions. The main point is that within the integral, the transition map $\mathcal{T}$ can be absorbed,
see \cite[Proposition 5.2]{LM20}.
\end{remark}

\subsection{Jacquet type integrals for \texorpdfstring{$(k,c)$}{(k,c)} functionals}\label{ssec:jacquetintegral}
Cai \textit{et al.} \cite[\S3.1]{CFGK23} described a Jacquet type integral realization for a $(k,c)$ functional, when the representation $\rho$
is induced from a family of $(k_i,c)$ representations $\rho_i$, $k_1+\dots+k_d=k$. (The discussion in \textit{loc. cit.} was limited to representations $\rho_c(\tau)$.)
In this subsection we prove the absolute convergence of this integral.

Assume that $\rho=\rho_1\times\dots\times\rho_d$, parabolic induction from $P_{\underline{k}c}$, $\underline{k}c=(k_1c,\dots,k_dc)$, to $\GL_{kc}$, where $\rho_i$ is a $(k_i,c)$ representation of $\GL_{k_ic}$. The representation
$W_{\psi}(\rho_1)\times\dots\times W_{\psi}(\rho_d)$, induced from the models, is a quotient of
$\rho$. As observed in \textit{loc. cit.}, in order to ensure convergence of the Jacquet integral we need to incorporate a twist in the inducing data.
Let $\zeta\in\bC^d$ and consider the representation $\Pi(\zeta)=\abs{\det}^{\zeta_1}W_{\psi}(\rho_1)\times\dots\times \abs{\det}^{\zeta_d}W_{\psi}(\rho_d)$. Then $\Pi(0)$ is a quotient of $\rho$.
Denote the space of $\Pi(\zeta)$ by $\cV_\zeta$. A vector $\xi\in \cV_\zeta$ is regarded as a complex-valued function by evaluating at the identity.

Let $\underline{k}'=(k_d,k_{d-1},\dots,k_1)$ and let $w\in\GL_{kc}$ denote the permutation matrix such that $wM_{\underline{k}'c}w^{-1}=M_{\underline{k}c}$ and $w(N\cap M_{\underline{k}'c})w^{-1}=N\cap M_{\underline{k}c}$.
The Jacquet integral (3.1) of \cite{CFGK23} is defined for each $\xi\in \cV_{\zeta}$ by
\begin{equation}\label{eq:jacquetintegral}
\int_{V_{\underline{k}'c}}\xi(wv)\,\psi_k^{-1}(v)\,dv.
\end{equation}
Formally, it is a $(k,c)$ functional on $\Pi(\zeta)$, and
\cite{CFGK23} asserted its absolute convergence for $\Re(\zeta_i-\zeta_{i+1})\gg0$, with a
reference to \cite[Lemma~2.1]{Sou00}. In this subsection we prove this result.

\begin{theorem}\label{thm:jacquetconv}
There is a real number $R$, depending only on $\rho_1,\ldots,\rho_d$ and $c$, such that if $\Re(\zeta_i-\zeta_{i+1})>R$ for all $1\le i< d$, then
the integral \eqref{eq:jacquetintegral} converges absolutely for every $\xi\in \cV_{\zeta}$.
\end{theorem}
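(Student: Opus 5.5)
Following the scheme of Soudry recalled at the beginning of this section, we bound the integrand with the gauge, then use the Iwasawa decomposition of the unipotent variable. First reduce to $v\in\bV$: conjugating by $w$ identifies $wV_{\underline{k}'c}w^{-1}$ with the opposite unipotent radical $\overline{V}_{\underline{k}c}$, so the integral becomes $\int_{\overline{V}_{\underline{k}c}}\xi(\bv w)\,\psi_k^{-1}(w^{-1}\bv w)\,d\bv$. Since $\abs{\psi_k}=1$, it suffices to bound $\int_{\overline{V}_{\underline{k}c}}\abs{\xi(\bv w)}\,d\bv$. Replacing $\xi$ by $w\cdot\xi$, it suffices to treat $\int\abs{\xi(\bv)}d\bv$ for an arbitrary $\xi\in\cV_\zeta$.

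Next apply Lemma~\ref{lem:iwasawaunip} with the composition $\underline{k}c$: write $\bv=um\kappa$ with $u\in V_{\underline{k}c}$, $m=\diag(m_1,\dots,m_d)$, $m_i\in\GL_{k_ic}$, $\kappa\in K$. By the definition of the induced representation,
\[
\abs{\xi(\bv)}=\delta_{P_{\underline{k}c}}^{1/2}(m)\prod_{i=1}^d\abs{\det m_i}^{\Re\zeta_i}\,\abs{(\kappa\cdot\xi)_{m}},
\]
where $(\kappa\cdot\xi)_m$ denotes the value at $m$ of the section in $W_\psi(\rho_1)\otimes\dots\otimes W_\psi(\rho_d)$, that is, a finite sum of products $\prod_i W_i(m_i)$ with $W_i\in W_\psi(\rho_i)$. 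Since $\kappa\cdot\xi$ ranges over a finite set, Theorem~\ref{thm:mgpsi} yields
\[
\abs{(\kappa\cdot\xi)_m}\le C\prod_i\norm{m_i}^{D(\rho_i)}\le C\norm m^{D_0},\qquad D_0=\textstyle\sum_iD(\rho_i).
\]
Corollary~\ref{cor:iwasawaunip}\ref{it:ci1} then gives $\norm m^{D_0}\le\vartheta(\bv)^{(n-1)D_0}$. Writing $\delta^{1/2}_{P_{\underline{k}c}}(m)=\prod_i\abs{\det m_i}^{b_i}$ with $b_i$ depending only on $\underline{k}c$, the remaining factor is $\prod_i\abs{\det m_i}^{x_i}$ with $x_i=\Re\zeta_i+b_i$.

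Now invoke Corollary~\ref{cor:iwasawaunip}\ref{it:ci3}. If $\Re(\zeta_i-\zeta_{i+1})>R$, with $R$ chosen so that $x_i-x_{i+1}\ge r:=(n-1)D_0+\fd_{\underline{k}c}+1$, then $\prod_i\abs{\det m_i}^{x_i}\le\vartheta(\bv)^{-r}$. The integrand is therefore bounded by $C\,\vartheta(\bv)^{-\fd_{\underline{k}c}-1}$, and this is integrable by \eqref{eq:Iea}, with $\fd=\fd_{\underline{k}c}$.

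The threshold $R$ depends only on $D(\rho_i)$, $k_i$ and $c$. The only delicate point is the first reduction: one must check that $w^{-1}\overline{V}_{\underline{k}c}w$ is exactly $V_{\underline{k}'c}$, and that the measures match, which holds since $w$ is a permutation matrix. One must also check that using only the moderate growth of Theorem~\ref{thm:mgpsi}, rather than the gauge, loses merely a power of $\vartheta(\bv)$, which is then absorbed by the large gap in the exponents $x_i$.
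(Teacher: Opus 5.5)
Your proposal is correct and follows essentially the same route as the paper. The paper also conjugates by $w$ to reduce to $\int_{\bV_{\underline{k}c}}\abs{\xi(\bv)}\,d\bv$, bounds the finitely many $K$-translates with Theorem~\ref{thm:mgpsi} (the moderate growth bound, not the gauge), and applies Corollary~\ref{cor:iwasawaunip}\ref{it:ci1} and \ref{it:ci3} to reach $\vartheta(\bv)^{-a}$ with $a>\fd$, which is integrable by \eqref{eq:Iea}. The one difference is cosmetic: you fix the gap as $r=(n-1)D_0+\fd+1$, whereas the paper takes $r=\min_i(x_i-x_{i+1})$.
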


\begin{proof}
By the definition of $w$ we have $wV_{\underline{k}'c}w^{-1}=\bV_{\underline{k} c}$. Thus
it suffices to prove that
\[\int\limits_{\bV_{\underline{k}c}}\abs{\xi(\bv)}\,d\bv<\infty,\qquad\forall \xi\in \cV_{\zeta}.
\]
Since $\xi$ is smooth, the functions $M_{\underline{k}c}\to\bC$ defined by
\[m\mapsto\delta_{P_{\underline{k}c}}^{-1/2}(m)\prod_i\abs{\det m_i}^{-\zeta_i}\xi(m\kappa)\] form a finite subset of
$\otimes_{i=1}^dW_{\psi}(\rho_i)$ when $\kappa$ varies in $K_{kc}$. Each such function is a sum of functions $m\mapsto\prod_{i=1}^dW_i(m)$ with $W_i\in W_{\psi}(\rho_i)$.
Let $D=D(\rho_1)+\dots+D(\rho_d)$. Applying Theorem~\ref{thm:mgpsi} to each of the finitely many model functions $W_i$ which occur,
and using $\norm{m_i}\le\norm m$ for $m=\diag(m_1,\dots,m_d)$, there is a constant $C_\xi\ge0$ such that
\begin{equation}\label{eq:Wkappabound}
\abs{\delta_{P_{\underline{k}c}}^{-1/2}(m)\prod_i\abs{\det m_i}^{-\zeta_i}\xi(m\kappa)}\ \le\ C_\xi\,\norm m^{D}\qquad\forall\kappa\in K,\ m\in M_{\underline{k}c}.
\end{equation}

Let $\bv\in\bV_{\underline{k}c}$ and write $\bv=um\kappa$ with $u\in V_{\underline{k}c}$, $m=\diag(m_1,\dots,m_d)\in M_{\underline{k}c}$ and $\kappa\in K$.
By the definition of the induced representation $\Pi(\zeta)$, the computation of
$\delta_{P_{\underline{k}c}}^{1/2}$ and \eqref{eq:Wkappabound},
\[
\abs{\xi(\bv)}\ \le\ C_\xi\,\norm m^{D}\prod_{i=1}^{d}\abs{\det m_i}^{x_i},
\qquad x_i:=\Re\zeta_i+\tfrac c2\Big(\sum_{j>i}k_j-\sum_{j<i}k_j\Big),
\]
and $x_i-x_{i+1}=\Re(\zeta_{i}-\zeta_{i+1})+\tfrac c2(k_i+k_{i+1})$.
Let $\fd:=\fd_{\underline{k}c}=c^2\sum_{i<j}k_ik_j$, as in \eqref{eq:dimVbar}, and
\[R:=\fd+(kc-1)D.\]
Put $r:=\min_{1\le i<d}(x_i-x_{i+1})$. Since $k_{i}+k_{i+1}>0$, the assumption $\Re(\zeta_i-\zeta_{i+1})>R$ gives $r>R$,
so by Corollary~\ref{cor:iwasawaunip}\ref{it:ci1} and \ref{it:ci3},
\[
\abs{\xi(\bv)}\ \le\ C_\xi\,\vartheta(\bv)^{(kc-1)D}\,\vartheta(\bv)^{-r}=C_\xi\,\vartheta(\bv)^{-a},\qquad a:=r-(kc-1)D>\fd .
\]
Hence
$\int_{\bV_{\underline{k}c}}\abs{\xi(\bv)}\,d\bv\le C_\xi\,I_{\fd}(a)$, which is finite by
\eqref{eq:Iea}, because $\fd<a$.
\end{proof}

\subsection{The functional \texorpdfstring{$\lambda(s,c,\tau,\psi)$}{lambda(s,c,tau,psi)}}\label{ssec:lambdaconv}
We use the notation of \cite[\S2--\S3]{CFK22}, which we recall to the extent needed. Let
$H$ be $\Sp_{2kc}$ or $\SO_{2kc}$, realized in $\GL_{2kc}$ with respect to the
standard antidiagonal forms of \cite{CFK22}, and let $\epsilon=1$ for $H=\Sp_{2kc}$ and
$\epsilon=-1$ for $H=\SO_{2kc}$. Fix the Borel subgroup $B_H=H\cap B_{2kc}$. Let $P=M_P\ltimes U_P$ be the Siegel parabolic subgroup
of $H$, with
\begin{gather*}
M_P=\{m(a)=\diag(a,a^*):a\in\GL_{kc}\},\qquad a^*=J_{kc}\,{}^ta^{-1}J_{kc},\\
U_P=\Big\{u(Y)=\begin{pmatrix}I_{kc}&Y\\&I_{kc}\end{pmatrix}\in H\Big\},\qquad
\overline{U}_P=\Big\{v(X)=\begin{pmatrix}I_{kc}&\\X&I_{kc}\end{pmatrix}\in H\Big\},
\end{gather*}
where for any $l\geq1$, $J_{l}\in\GL_l$ is the permutation matrix such that $(J_l)_{i,l-i+1}=1$ for each $i$, ${}^ta$ denotes the transpose of $a$, and $X$ ranges over the
$F$-subspace $\fX$ of $\Mat_{kc}$ of dimension
\[
\fd_P:=\dim U_P=kc(kc+\epsilon)/2
\]
defined by the condition that $J_{kc}X$ be symmetric ($\epsilon=1$) or antisymmetric ($\epsilon=-1$).
A set of $\fd_P$ entries of $X$, for instance the entries on and above the antidiagonal (above it only, when $\epsilon=-1$),
forms a system of linear coordinates on $\fX$, and each of the remaining entries is $\pm$ one of these coordinates ($0$ on the antidiagonal when $\epsilon=-1$); thus
$\vartheta(X)$ of \eqref{eq:Nx}, the maximum of $1$ and of the absolute values of all the entries of $X$, is also the maximum of $1$ and of the absolute values of the coordinates.
The Haar measure $dv$ of $\overline{U}_P$ is the product of the measures $dx$ over the coordinates.

Let $K_H=H\cap K_{2kc}$; then $H=U_PM_PK_H$ is the Iwasawa decomposition of $H$ with respect to $P$.
We apply Corollary~\ref{cor:iwasawaunip} to $H$, as in Remark~\ref{rem:iwasawaH}. Let
$v=v(X)\in \overline{U}_P$ and write $v=u\,m(a)\,\kappa$ with $u\in U_P$, $a\in\GL_{kc}$ and
$\kappa\in K_H$. Since $U_P\subset V_{(kc,kc)}$, $m(a)=\diag(a,a^*)\in M_{(kc,kc)}$ and
$K_H\subset K_{2kc}$, this is the case of Example~\ref{ex:twoblock} with $n_1=kc$, $m_1=a$ and $m_2=a^*$,
and $\abs{\det a^*}=\abs{\det a}^{-1}$, $\norm{a^*}=\norm a$ (\S~\ref{ssec:height}, property~\ref{it:h1}). Thus \eqref{eq:twoblockbounds} reads
\begin{equation}\label{eq:siegelbounds}
\abs{\det a}\le \vartheta(X)^{-1},\qquad \norm a\le \vartheta(X)^{2kc-1}.
\end{equation}

Let $\rho$ be a $(k,c)$ representation of $\GL_{kc}$. For $s\in\bC$, consider the induced representation $\Ind_P^H(W_{\psi}(\rho)|\det|^{s-1/2})$. Functions in this space take values in $W_{\psi}(\rho)$, and after evaluating at $I_{kc}$ we obtain\footnote{The map $\varphi\mapsto (h\mapsto \varphi(h)(I_{kc}))$ is an isomorphism from the induced space onto $V(s,\rho)$:
for $f\in V(s,\rho)$ the function $W_h$ in \eqref{eq:fIwasawa} is unique and the inverse map sends $f$ to the function $\varphi\colon H\to W_\psi(\rho)$, $\varphi(h)=W_h$.}
the space $V(s,\rho)$ of the smooth functions $f\colon H\to\bC$ such that for each $h\in H$, there is $W_h\in W_\psi(\rho)$ such that
\begin{equation}\label{eq:fIwasawa}
f(s,um(a)h)=\delta_P^{1/2}(m(a))\,\abs{\det a}^{s-1/2}W_h(a),\qquad a\in\GL_{kc},\ u\in U_P .
\end{equation}
Note that $\delta_P(m(a))=\abs{\det a}^{kc+\epsilon}$.

\begin{remark}\label{rem:sections}
The two-variable notation is suggestive: we call a function $f:\bC\times H\to \bC$ an entire section of $V(\rho)$ if for each $s\in\bC$, $f(s,\cdot)\in V(s,\rho)$,
and for each $h\in H$, the function $s\mapsto f(s,h)$ is a Laurent polynomial in $q^{-s}$. The definition implies that there is a compact open subgroup $K_f$ of $H$ such that $f(s,h\kappa)=f(s,h)$ for every $(s,h)$ and $\kappa\in K_f$. Moreover, for an entire section $f$ and $h\in H$, the functions $W_{\kappa h}$ of \eqref{eq:fIwasawa}, $\kappa\in K_H$, vary in a finite-dimensional subspace of $W_\psi(\rho)$ independent of $s$, and their coefficients with respect to a basis of this subspace are Laurent polynomials in $q^{-s}$. An entire section $f$ such that $f(s,K_H)$ is independent of $s$ is called a standard section. See \cite{CFK22}.
\end{remark}

Let $P'$ be the conjugate of $P$ by $\diag(I_{kc-1},\left(\begin{smallmatrix}&1\\1\end{smallmatrix}\right),I_{kc-1})^{kc}$, a parabolic subgroup, and let $U_{P'}$ denote its unipotent radical; when $kc$ is even, $P'=P$ and $U_{P'}=U_P$.
Let $\delta_0\in H$ be a representative for a Weyl group element such that
\begin{equation}\label{eq:delta0}
{}^{\delta_0}U_{P'}=\overline{U}_P.
\end{equation}
Let $\psi_{U_{P'}}$ be the character of $U_{P'}$ defined in \cite[(3.3)]{CFK22}.
We fix the Haar measure $du$ on $U_{P'}$ so that the isomorphism $u\mapsto{}^{\delta_0}u$ of \eqref{eq:delta0}
carries $du$ to $dv$. For $f\in V(s,\rho)$ put
\begin{equation}\label{eq:lambda}
\lambda(s,c,\rho,\psi)f=\int_{U_{P'}}f(s,\delta_0u)\,\psi_{U_{P'}}^{-1}(u)\,du .
\end{equation}
Similar constructions for $H=\GSpin_{2kc}$ and $H=\GL_{kc}$ were given in \cite[(3.3)]{CFK22}.
For $\rho=\rho_c(\tau)$ this is the functional $\lambda(s,c,\tau,\psi)$ of \textit{ibid.}

\begin{theorem}\label{thm:lambdaconv}
Let $\rho$ be a $(k,c)$ representation of $\GL_{kc}$ and $D=D(\rho)$. There is a real number $R$,
depending only on $k$, $c$, $H$ and $D$, such that for every $s$ with $\Re(s)>R$ and every
$f\in V(s,\rho)$, the integral $\int_{U_{P'}}\abs{f(s,\delta_0u)}\,du$ is
finite. In particular \eqref{eq:lambda} converges absolutely for $\Re(s)>R$.
\end{theorem}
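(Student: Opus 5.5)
We follow the three-step scheme of Soudry described at the beginning of \S~\ref{sec:applications}. First substitute $u\mapsto{}^{\delta_0}u$: by \eqref{eq:delta0} and the choice of measures,
\[
\int_{U_{P'}}\abs{f(s,\delta_0u)}\,du=\int_{\overline{U}_P}\abs{f(s,v\delta_0)}\,dv .
\]
Since $f(s,\cdot)$ is right invariant under a compact open subgroup, the function $h\mapsto f(s,h\delta_0)$ lies in $V(s,\rho)$. So it suffices to bound $\int_{\fX}\abs{f(s,v(X))}\,dX$ for an arbitrary $f\in V(s,\rho)$, and we rename $f(s,\cdot\,\delta_0)$ as $f$.

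For $X\in\fX$ write $v(X)=u\,m(a)\,\kappa$ with $u\in U_P$, $a\in\GL_{kc}$ and $\kappa\in K_H$. By \eqref{eq:fIwasawa} we have $f(s,v(X))=\delta_P^{1/2}(m(a))\abs{\det a}^{s-1/2}W_\kappa(a)$. As $\kappa$ ranges over $K_H$, the smoothness of $f$ means the functions $W_\kappa$ range over a finite set $\{W_1,\dots,W_l\}\subset W_\psi(\rho)$. (Equivalently, $f$ restricted to $K_H$ takes finitely many values and $W_{\kappa}$ is determined by $f(\cdot\,\kappa)$.) Apply Theorem~\ref{thm:mgpsi} to each $W_j$ and let $C=\max_jC(\xi_j)$, where $\xi_j$ is a vector with $W_j=W_{\xi_j}$. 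This gives
\[
\abs{f(s,v(X))}\le C\,\abs{\det a}^{\Re(s)-1/2+(kc+\epsilon)/2}\,\norm a^{D}.
\]

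Now insert the bounds \eqref{eq:siegelbounds}: $\abs{\det a}\le\vartheta(X)^{-1}$ and $\norm a\le\vartheta(X)^{2kc-1}$. Put $\sigma:=\Re(s)-1/2+(kc+\epsilon)/2$. If $\sigma\ge0$ we get
\[
\abs{f(s,v(X))}\le C\,\vartheta(X)^{-\sigma+(2kc-1)D}.
\]
The coordinates of $\fX$ are $\fd_P$ entries of $X$, and $\vartheta(X)$ is the max of $1$ and their absolute values. So the integral is bounded by $C\,I_{\fd_P}(a_0)$ with $a_0=\sigma-(2kc-1)D$, and \eqref{eq:Iea} makes this finite once $a_0>\fd_P$. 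Hence we may take
\[
R=\fd_P+(2kc-1)D+\tfrac12-\tfrac{kc+\epsilon}{2},
\]
which depends only on $k,c,H$ (through $\epsilon$) and $D$. For $\Re(s)>R$ this also forces $\sigma>0$, since $\fd_P\ge0$ and $D\ge0$.

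The step needing most care is the uniformity in $\kappa$. We need the constant $C$ independent of $X$ and $D$ independent of $f$. Both follow because Theorem~\ref{thm:mgpsi} gives an exponent $D$ depending only on $\rho$, and only finitely many $W_\kappa$ occur. One should also check that the identification $f(\cdot)=\varphi(\cdot)(I_{kc})$ gives $W_{u m(a)\kappa}$ compatible with $W_\kappa$, that is, $\varphi(m(a)\kappa)=\delta_P^{1/2}\abs{\det a}^{s-1/2}\,a\cdot\varphi(\kappa)$. This is the footnote's isomorphism, and it is what justifies the displayed formula with $W_\kappa(a)$.
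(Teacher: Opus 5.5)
Your proposal is correct and follows essentially the same route as the paper's proof. Both arguments transfer the integral to $\overline{U}_P$ via \eqref{eq:delta0}, use the finitely many $W_\kappa$ together with Theorem~\ref{thm:mgpsi}, and insert the bounds \eqref{eq:siegelbounds} to reduce to $I_{\fd_P}$; you even arrive at the same constant $R$ as in \eqref{eq:R0}.
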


\begin{proof}
We prove the assertion with
\begin{equation}\label{eq:R0}
R:=\tfrac12-\tfrac{kc+\epsilon}2+(2kc-1)D+\fd_P ,
\end{equation}
and in the more precise form
\begin{equation}\label{eq:lambdabound}
\int_{U_{P'}}\abs{f(s,\delta_0u)}\,du\ \le\ C_f\,I_{\fd_P}\big(\Re(s)-R+\fd_P\big),
\end{equation}
where $C_f$ is a constant depending on $f(s,\cdot)|_{K_H\delta_0}$ (the restriction of $f(s,\cdot)$ to $K_H\delta_0$)
only, and $I_{\fd_P}$ is the integral \eqref{eq:Iea}.

By \eqref{eq:delta0},
it suffices to prove the bound for $\int_{\overline{U}_P}\abs{f(s,v)}\,dv$ (every $f$) with $C_f$ depending on $f(s,\cdot)|_{K_H}$.

Let $v=v(X)\in \overline{U}_P$ and write $v=u\,m(a)\kappa$ as in \eqref{eq:siegelbounds}. By
\eqref{eq:fIwasawa},
\[
f(s,v)=\delta_P^{1/2}(m(a))\,\abs{\det a}^{s-1/2}W_{\kappa}(a).
\]
Since $f(s,\cdot)$ is smooth (independently of $s$ when $f$ is an entire section), the set $\{W_{\kappa}:\kappa\in K_H\}$ 
with $W_\kappa$ as in \eqref{eq:fIwasawa} is a finite subset of
$W_{\psi}(\rho)$, hence by Theorem~\ref{thm:mgpsi} there is $C_f\ge0$ with
$\abs{W_{\kappa}(a)}\le C_f\norm a^{D}$ for all $\kappa\in K_H$ and $a\in\GL_{kc}$. Put
\begin{equation}\label{eq:sigmas}
\sigma_s:=\Re(s)-\tfrac12+\tfrac{kc+\epsilon}2,
\end{equation}
so that $\delta_P^{1/2}(m(a))\abs{\det a}^{\Re(s)-1/2}=\abs{\det a}^{\sigma_s}$; the hypothesis
$\Re(s)>R$ gives $\sigma_s>(2kc-1)D+\fd_P>0$. By \eqref{eq:siegelbounds},
\[
\abs{f(s,v(X))}\ \le\ C_f\,\norm a^{D}\,\abs{\det a}^{\sigma_s}\ \le\ C_f\,\vartheta(X)^{(2kc-1)D}\,\vartheta(X)^{-\sigma_s}
=C_f\,\vartheta(X)^{-(\sigma_s-(2kc-1)D)} ,
\]
and integrating over the $\fd_P$ coordinates of $X$ gives \eqref{eq:lambdabound}, where $I_{\fd_P}(\Re(s)-R+\fd_P)$ is finite by \eqref{eq:Iea} because
$\Re(s)-R+\fd_P=\sigma_s-(2kc-1)D>\fd_P$.
\end{proof}

\begin{remark}\label{rem:lambdaremarks}
The following complements are immediate from the proof.
\begin{enumerate}[leftmargin=*,label=(\arabic*),ref=(\arabic*)]
\item\label{it:lr1} The intertwining operator \cite[(3.2)]{CFK22} was defined by an integral similar to
\eqref{eq:lambda}, without the character $\psi_{U_{P'}}$, and with $\delta_0$ replaced by another representative for a Weyl group element,
which still satisfies \eqref{eq:delta0}. The proof implies that this intertwining operator is also absolutely convergent in $\Re(s)>R$.

\item\label{it:lr2} Let $f$ be an entire section of $V(\rho)$ (see Remark~\ref{rem:sections}).
Then $s\mapsto\lambda(s,c,\rho,\psi)f$ is holomorphic on $\Re(s)>R$. Indeed, the constant $C_f$ of \eqref{eq:lambdabound} is bounded on compact subsets of $\bC$,
so by \eqref{eq:lambdabound} the integral $\int_{U_{P'}}\abs{f(s,\delta_0u)}\,du$ is bounded on compact subsets of $\Re(s)>R$,
and the holomorphy is now well known, see, e.g., \cite{Mat01}.
\item\label{it:lr3} For $H=\GSpin_{2kc}$, the model $W_{\psi}(\rho)$ is twisted by a
quasi-character $\chi$ of $F^\times$ to form a representation of $M_P$; this multiplies the
integrand by $\abs{\chi}$ evaluated at the $\GL_1$-part, which is bounded by a power of
$\norm a$, and shifts $R$ accordingly.
\end{enumerate}
\end{remark}
\begin{remark}\label{rem:lambdaGL}
Let $k_1,k_2\geq1$ and $k=k_1+k_2$.
For $H=\GL_{kc}$ and $P=P_{(k_1c,k_2c)}$, the case
(3) of \cite[\S3]{CFK22}, the representation is induced from a tensor product of $(k_1,c)$ and $(k_2,c)$ models $W_{\psi}(\rho_1)$ and $W_{\psi}(\rho_2)$, and
$\overline{U}_P=\bV_{(k_1c,k_2c)}$; in this case the argument of the proof of Theorem~\ref{thm:jacquetconv} with $d=2$ immediately implies
the absolute convergence of the functional \eqref{eq:lambda} and the intertwining operator \cite[(3.2)]{CFK22} in a half-plane in $s$.
\end{remark}

\subsection{The doubling integral}\label{ssec:doubling}
We turn to the (local) integral $Z(s,\omega,f)$ of the generalized doubling method
\cite[\S2.4]{CFK22}, and fill in the details of its absolute convergence, based on the sketch of \cite[Proposition~2.5]{CFK22}.

\subsubsection{The setting}\label{sssec:doublingsetting}
We follow \cite[\S3.3]{CFK-func}, to which we refer for the general definitions. 
The groups $G$ of the generalized doubling method are $\Sp_c$, $\SO_c$, $\GSpin_c$ and $\GL_c$ (\cite[\S~2]{CFK22}). We
carry out the computations for $G=\Sp_c$ with $c=2\ell$, and $k\ge1$. For $k>1$ this is the case described in
coordinates in \cite[Example~3.1]{CFK-func}; for $k=1$ the group $U$ below is trivial and $\psi_U=1$
(see \cite[\S3.3]{CFK-func}), and the formulas below remain valid with $(k-1)c=0$; the groups $\SO_c$ differ only in
the coordinates of the objects below, which \cite[\S2]{CFK22} listed. For $\GSpin_c$ the proof is similar, see Remark~\ref{rem:Gspin}; the case of $\GL_c$ is a bit more complicated and deferred to the appendix.

Let $\jmath_{2l}=\left(\begin{smallmatrix}&J_l\\-J_l&\end{smallmatrix}\right)$ with $J_l$ as defined in \S~\ref{ssec:lambdaconv}.
Let $G=\Sp_c$, realized in $\GL_c$ as the group of elements $g\in\GL_c$ such that ${}^tg\jmath_cg=\jmath_c$. 
Let $H=\Sp_{2kc}$, similarly realized in $\GL_{2kc}$ using $\jmath_{2kc}$.

Fix $B_G=G\cap B_c$, $K_G=G\cap K_c$, $B_H=H\cap B_{2kc}$ and $K_H=H\cap K_{2kc}$.

We define an embedding of $G\times G$ in $H$ in the stabilizer of a ``generic'' character.
Let $Q=M_Q\ltimes U_Q$ be the standard parabolic subgroup of $H$ with $M_Q\cong\GL_c^{k-1}\times H_0$, $H_0=\Sp_{2c}$
occupying the middle $2c\times 2c$ block, and $U=U_Q$ (from here on, $U$ no longer denotes the group $V_{(c^k)}$ of \S~\ref{ssec:kcdata}), with its character $\psi_U$ of
\cite[Example~3.1]{CFK-func}. The group $G\times G$ is embedded in $H$ in the stabilizer of $\psi_U$ in $M_Q$.
The embedding $\mathfrak e\colon G\times G\to H$ is given by
\begin{equation}\label{eq:embedding}
\mathfrak e(g_1,g_2)=\diag\Big(g_1,\dots,g_1,\begin{pmatrix}g_{1,1}&&g_{1,2}\\&g_2&\\g_{1,3}&&g_{1,4}\end{pmatrix},g_1^*,\dots,g_1^*\Big),
\qquad g_1=\begin{pmatrix}g_{1,1}&g_{1,2}\\g_{1,3}&g_{1,4}\end{pmatrix},
\end{equation}
with $g_{1,i}\in\Mat_\ell$, $k-1$ copies of $g_1$ and of $g_1^*$; we put $\mathfrak e_1(g)=\mathfrak e(g,I_c)$ and
$\mathfrak e_2(g)=\mathfrak e(I_c,g)$, so that $\mathfrak e_1(G)$ and $\mathfrak e_2(G)$ commute. Let $\iota$ be the involution of $G$ given by conjugation by $\left(\begin{smallmatrix}&I_{\ell}\\I_{\ell}\end{smallmatrix}\right)$.

As in \S~\ref{ssec:lambdaconv} let $P=M_P\ltimes U_P$ be the Siegel parabolic subgroup of $H$,
with $M_P=\{m(a):a\in\GL_{kc}\}$, and define the space $V(s,\rho)$.

Let
\begin{equation}\label{eq:delta01}
\delta_0=\begin{pmatrix}&I_{kc}\\-I_{kc}&\end{pmatrix}\in H,\qquad
\delta_1=\diag\Big(I_{(k-1)c},\begin{pmatrix}I_c&I_c\\&I_c\end{pmatrix},I_{(k-1)c}\Big)\in U_P\cap H_0,\qquad
\delta=\delta_0\delta_1.
\end{equation}
Note that ${}^{\delta_0}U_P=\overline{U}_P$. In addition for all $g\in G$ we have ${}^{\delta}\mathfrak e(g,{}^{\iota}g)\in M_P$,
and for $f\in V(s,\rho)$ and $h\in H$,
\begin{equation}\label{eq:diaginvariance}
f\big(s,{}^{\delta}\mathfrak e(g,{}^{\iota}g)\,h\big)=f(s,h).
\end{equation}

Let $U_0=U\cap U_P$. By \cite[\S3.3]{CFK-func} $U={}^{\delta^{-1}}m(V_{(c^k)})\rtimes U_0$, and $\psi_U({}^{\delta^{-1}}m(u))=\psi_k^{-1}(u)$ for
$u\in V_{(c^k)}$.

Let $\pi\in\Irr(G)$, let $\omega(g)=\langle\varphi,\pi^\vee(g)\varphi^\vee\rangle$
be a matrix coefficient of $\pi^\vee$ (the contragredient of $\pi$), with $\varphi$ in the space of $\pi$ and
$\varphi^\vee$ in that of $\pi^\vee$, and let $f\in V(s,\rho)$. The local integral is
\begin{equation}\label{eq:Zdef}
Z(s,\omega,f)=\int_G\int_{U_0}\omega(g)\,f\big(s,\delta u_0\,\mathfrak e_2({}^{\iota}g)\big)\,\psi_U(u_0)\,du_0\,dg .
\end{equation}
We will show that it is absolutely convergent, namely
\begin{equation*}
Z^{\mathrm{abs}}(s,\omega,f):=\int_G\int_{U_0}\abs{\omega(g)}\,\abs{f\big(s,\delta u_0\,\mathfrak e_2({}^{\iota}g)\big)}\,du_0\,dg <\infty.
\end{equation*}

\subsubsection{The passage from \texorpdfstring{$U_0$}{U0} to \texorpdfstring{$U_P$}{UP}}\label{sssec:U0toUP}
In this subsection we consider the inner $du_0$ integration of \eqref{eq:Zdef}. Recall from \S~\ref{ssec:lambdaconv} that $U_P=\{u(Y)\in H\}$, and as explained there for
$\overline{U}_P$, here also $Y\in\Mat_{kc}$ ranges over a $\fd_P$-dimensional $F$-subspace $\fY$, and $\vartheta(Y)$ is the maximum of $1$ and of the absolute values of the coordinates
on $\fY$. The subgroup $U_0$ of $U$ is defined by the elements $u(Y_0)\in H$, for $Y_0\in\Mat_{kc}$ whose bottom left $c\times c$ block is $0$;
$Y_0$ is defined by $\dim U_0$ coordinates,
\[\fd_{0}:=\dim U_0=\fd_P-\tfrac{c(c+1)}2.\]

Let $P_0=P\cap H_0$ be the Siegel parabolic subgroup of $H_0$; its unipotent radical is $U_1=U_P\cap M_Q=U_P\cap H_0$,
the subgroup of $u(Y_1)\in H_0$, with $Y_1\in\Mat_{kc}$
whose nonzero entries can only appear in the bottom left $c\times c$ block; $\dim U_1=\tfrac{c(c+1)}2$. Note that $\delta_1\in U_1$, in fact $\delta_1=u(\widehat{I}_c)$ where
$\widehat{I}_c=\left(\begin{smallmatrix}0&0\\I_c&0\end{smallmatrix}\right)$. Since $U_P$ is abelian, we have $U_P=U_0\times U_1$.

For an element $h\in H$ which normalizes $U_0$, let $\mathrm{mod}_{U_0}(h)$ denote the modular character of the automorphism $u_0\mapsto hu_0h^{-1}$ of $U_0$ defined by
\[\int\limits_{U_0}\varphi(h^{-1}u_0h)\,du_0=\mathrm{mod}_{U_0}(h)\int\limits_{U_0}\varphi(u_0)\,du_0.\]
Similarly define $\mathrm{mod}_{U}$; by definition $\delta_P(h)=\mathrm{mod}_{U_P}(h)$ for $h\in P$.

We start with the following convergence result.
\begin{lemma}\label{lem:U0Alone}
Let $\rho$ be a $(k,c)$ representation of $\GL_{kc}$ and $D=D(\rho)$. There is a real number $R_0$,
depending only on $k$, $c$ and $D$, such that for every $s$ with $\Re(s)>R_0$ and every
$f\in V(s,\rho)$,
\begin{equation*}
\int_{U_0}\abs{f(s,\delta u_0)}\,du_0\ <\ \infty .
\end{equation*}
In particular, the inner integral of \eqref{eq:Zdef} converges absolutely for $\Re(s)>R_0$ and every $g\in G$.
\end{lemma}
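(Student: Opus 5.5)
The plan is to reduce to the Siegel-unipotent estimate already proved in Theorem~\ref{thm:lambdaconv}. The key observation is that $\delta=\delta_0\delta_1$ with $\delta_1\in U_1\subset U_P$, and $U_P=U_0\times U_1$ is abelian. Hence for $u_0=u(Y_0)\in U_0$ we have $\delta u_0=\delta_0\,u(Y_0+\widehat{I}_c)$. Conjugating by $\delta_0$ gives
\[
f(s,\delta u_0)=f\big(s,\,{}^{\delta_0}u(Y_0+\widehat I_c)\,\delta_0\big)=f\big(s,\,v(X)\,\delta_0\big),
\]
where $X$ is the image of $Y_0+\widehat I_c$ under the linear isomorphism $\fY\to\fX$ induced by ${}^{\delta_0}$. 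For the explicit $\delta_0$ of \eqref{eq:delta01} this is $X=\pm Y_0'$ with $Y_0'$ a rearrangement of $Y_0+\widehat I_c$. So the integral becomes an integral over an affine subspace of $\overline U_P$: the translate by the fixed element coming from $\widehat I_c$ of the $\fd_0$-dimensional linear subspace coming from $U_0$.

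Next, I repeat the pointwise estimate from the proof of Theorem~\ref{thm:lambdaconv}. Write $v(X)=u\,m(a)\,\kappa$ via the Iwasawa decomposition. By \eqref{eq:fIwasawa} and Theorem~\ref{thm:mgpsi}, applied to the finite set of functions $W_{\kappa\delta_0}$, $\kappa\in K_H$, we get
\[
\abs{f(s,v(X)\delta_0)}\le C_f\norm a^{D}\abs{\det a}^{\sigma_s},
\]
with $\sigma_s$ as in \eqref{eq:sigmas}. The bounds \eqref{eq:siegelbounds} then give
\[
\abs{f(s,v(X)\delta_0)}\le C_f\,\vartheta(X)^{-(\sigma_s-(2kc-1)D)}.
\]

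Finally, I integrate over $Y_0$. Since the coordinates of $X$ are, up to sign, the coordinates of $Y_0$ together with the fixed entries $0$ or $\pm1$ contributed by $\widehat I_c$, we have $\vartheta(X)=\vartheta(Y_0)$ (both are at least $1$, and the extra entries have absolute value at most $1$). The integral is therefore bounded by $C_f\,I_{\fd_0}(\sigma_s-(2kc-1)D)$. By \eqref{eq:Iea} this is finite once $\sigma_s-(2kc-1)D>\fd_0$, so we may take
\[
R_0:=\tfrac12-\tfrac{kc+\epsilon}2+(2kc-1)D+\fd_0,
\]
which depends only on $k$, $c$ and $D$.

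For the last assertion, take $g\in G$. Then $f(s,\cdot\,\mathfrak e_2({}^{\iota}g))$ is again an element of $V(s,\rho)$, being a right translate. Applying the result to it gives the absolute convergence of the inner integral of \eqref{eq:Zdef} for each $g$.

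The main point to check is the first step: that ${}^{\delta_0}$ maps the translate $U_0\delta_1$ onto an affine subspace of $\overline U_P$ on which $\vartheta$ is comparable to $\vartheta(Y_0)$, and that the Haar measures match up to a constant. Both follow from the explicit forms of $\delta_0$ and $\delta_1$ and the linearity of the coordinates. Alternatively, one can bypass the comparison $\vartheta(X)=\vartheta(Y_0)$ by using the bound $\vartheta(X)\ge\vartheta(Y_0)$, which is all the argument requires.
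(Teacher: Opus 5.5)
Your proposal is correct and follows the paper's proof essentially step for step. It rewrites $\delta u_0=v(X)\delta_0$ with $X=-(\widehat{I}_c+Y_0)$, applies Theorem~\ref{thm:mgpsi} together with \eqref{eq:siegelbounds}, uses $\vartheta(X)=\vartheta(Y_0)$, and integrates over the $\fd_0$ coordinates of $Y_0$ with the same $R_0$; note that the hypothesis $\Re(s)>R_0$ also gives $\sigma_s>0$, which is needed to apply $\abs{\det a}\le\vartheta(X)^{-1}$.
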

\begin{proof}
For $k=1$ the group $U_0$ is trivial and there is nothing to prove, so assume $k>1$. Then $\fd_{0}\ge1$. We prove the assertion with
\begin{equation}\label{eq:RU0}
R_0:=\tfrac12-\tfrac{kc+1}2+(2kc-1)D+\fd_{0},
\end{equation}
which is smaller by $\dim U_1$ than the constant $R$ of \eqref{eq:R0} (here $\epsilon=1$).

Let $u_0=u(Y_0)\in U_0$. By \eqref{eq:delta01},
\[\delta u_0=\delta_0\,u(\widehat{I}_c+Y_0)=v(X)\,\delta_0,\qquad X:=-(\widehat{I}_c+Y_0),\]
with $v(X)\in\overline{U}_P$. The entries of $X$ are, up to a sign, the entries of $Y_0$ outside the bottom left $c\times c$ block, and the entries of $I_c$ inside it. Therefore
\begin{equation}\label{eq:nuXnuy}
\vartheta(X)=\vartheta(Y_0),
\end{equation}
where $\vartheta(Y_0)$ is the maximum of $1$ and of the absolute values of the $\fd_{0}$ coordinates of $Y_0$, and
\[
f(s,\delta u_0)=f(s,v(X)\delta_0).
\]

Let $\sigma_s$ be given by \eqref{eq:sigmas} (with $\epsilon=1$). The assumption $\Re(s)>R_0$ implies $\sigma_s>(2kc-1)D+\fd_{0}>0$.
We write $v(X)=u\,m(a)\kappa$ as in \eqref{eq:siegelbounds}, apply \eqref{eq:fIwasawa} to $\delta_0\cdot f$, then proceed as in the proof of Theorem~\ref{thm:lambdaconv}:
apply Theorem~\ref{thm:mgpsi} and obtain, by \eqref{eq:siegelbounds} and \eqref{eq:nuXnuy} and since $\sigma_s>0$,
\[
\abs{f(s,\delta u_0)}\ \le\ C\,\norm a^{D}\,\abs{\det a}^{\sigma_s}\ \le\ C\,\vartheta(X)^{(2kc-1)D}\,\vartheta(X)^{-\sigma_s}=C\,\vartheta(Y_0)^{-(\sigma_s-(2kc-1)D)} .
\]

Integrating over the $\fd_{0}$ coordinates of $Y_0$, we obtain
\begin{equation*}
\int_{U_0}\abs{f(s,\delta u_0)}\,du_0\ \le\ C\,I_{\fd_{0}}\big(\Re(s)-R_0+\fd_{0}\big)<\infty ,
\end{equation*}
where the finiteness follows from \eqref{eq:Iea} because $\Re(s)-R_0+\fd_{0}=\sigma_s-(2kc-1)D>\fd_{0}$. The last assertion follows.
\end{proof}

With this result at hand we can consider the $du_0$ integral separately from \eqref{eq:Zdef}; it becomes a functional on $V(s,\rho)$ with the following
equivariance property \cite[Corollary~2.3]{CFK22}: for $g_0\in G$ and $h\in H$,
\[
\int_{U_0}f\big(s,\delta u_0\,\mathfrak e(g_0,{}^{\iota}g_0)\,h\big)\psi_U(u_0)\,du_0=\int_{U_0}f(s,\delta u_0h)\,\psi_U(u_0)\,du_0.
\]
This follows from the formal properties of the construction, and the same result holds when $f$ is replaced with its absolute value in the integral:
\begin{lemma}\label{lem:abscor23}
For every $f\in V(s,\rho)$, $g_0\in G$ and $h\in H$,
\begin{equation}\label{eq:abscor23}
\int_{U_0}\abs{f\big(s,\delta u_0\,\mathfrak e(g_0,{}^{\iota}g_0)\,h\big)}\,du_0
=\int_{U_0}\abs{f(s,\delta u_0h)}\,du_0 .
\end{equation}
\end{lemma}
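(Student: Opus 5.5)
The idea is to move $\mathfrak e(g_0,{}^{\iota}g_0)$ to the left, past $u_0$ and $\delta$, and then use \eqref{eq:diaginvariance} together with a change of variables in $U_0$. Put $e:=\mathfrak e(g_0,{}^{\iota}g_0)$. First I will check that $e$ normalizes $U_0$. Indeed $e\in M_Q$, since the image of $\mathfrak e$ lies in the stabilizer of $\psi_U$ in $M_Q$, and $M_Q$ normalizes $U=U_Q$. Moreover $e$ lies in $M_Q\cap P$: the blocks $g_1$ sit in $\GL_c^{k-1}$, and the middle block lies in $H_0$. For the middle block I need it to lie in $P_0$, or at least to normalize $U_P\cap U$; I will check this directly from the shape of \eqref{eq:embedding}. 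Granting that $e$ normalizes $U_0=U\cap U_P$, we can write
\[
\delta u_0 e=\delta e\,(e^{-1}u_0e)={}^{\delta}e\cdot\delta\,(e^{-1}u_0e).
\]
Then \eqref{eq:diaginvariance} gives $\abs{f(s,\delta u_0eh)}=\abs{f(s,\delta (e^{-1}u_0e)h)}$.

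Changing variables $u_0\mapsto eu_0e^{-1}$ in $U_0$ introduces the factor $\mathrm{mod}_{U_0}(e)^{\pm1}$. So the identity \eqref{eq:abscor23} reduces to showing $\mathrm{mod}_{U_0}(e)=1$. Note that no character $\psi_U$ appears here, so its invariance under $e$ is not even needed; this is why the absolute value version holds.

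For the modulus, I will argue by unimodularity. The map $g_0\mapsto\mathrm{mod}_{U_0}(\mathfrak e(g_0,{}^{\iota}g_0))$ is a continuous homomorphism from $G$ to $\bR_{>0}$. For $G=\Sp_c$, $\SO_c$, which are equal to their own derived groups up to a compact quotient, every such character is trivial. More robustly, this holds because $G$ is generated by unipotent elements together with a compact part in the $\Sp$ case, and unipotent elements act on $U_0$ with modulus $1$. For $\GSpin$, one can instead compute the modulus directly: the $g_1$ and $g_1^*$ blocks contribute reciprocal determinants, and the similitude contributions cancel in the same way.

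The main obstacle is the normalization claim. In $\delta u_0e$ the element $e$ must normalize $U_0$ itself, not merely $U$. I would verify this by noting that $e\in M_Q$ normalizes both $U_Q$ and $U_P$. For $U_P$, note that $e$ preserves the flag defining $P$ up to the middle $H_0$ block, where $\diag(g_{1,1},g_2,g_{1,4})$ with off-diagonal $g_{1,2},g_{1,3}$ need not lie in $P_0$. If it fails to normalize $U_P$, I would instead use $U_0=\{u\in U:\text{bottom-left }c\times c\text{ block of }Y\text{ is }0\}$ and verify directly that conjugation by $e$ preserves this block condition modulo $U$. Alternatively, I would fall back on the formal argument of \cite[Corollary~2.3]{CFK22}, applied verbatim with $\abs{f}$, since that argument only uses a change of variables on $U$ and \eqref{eq:diaginvariance}.
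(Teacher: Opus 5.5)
There is a genuine gap. Your whole argument rests on the claim that $e=\mathfrak e(g_0,{}^{\iota}g_0)$ normalizes $U_0$, and this is false once $k\ge2$. (For $k=1$ the group $U_0$ is trivial and the lemma is just \eqref{eq:diaginvariance}.)

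Your doubt about the middle block is justified. Write $g_0=\left(\begin{smallmatrix}g_{0,1}&g_{0,2}\\ g_{0,3}&g_{0,4}\end{smallmatrix}\right)$ as in \eqref{eq:embedding}. The lower left $c\times c$ block of the middle block $X$ of $e$ is $\left(\begin{smallmatrix}0&g_{0,2}\\ g_{0,3}&0\end{smallmatrix}\right)$, so in general $e\notin P$, and $e$ does not normalize $U_0$ either. Concretely, cut $2kc$ into $2k$ blocks of size $c$, so that $X$ occupies the blocks $k,k+1$. Take $u_0\in U_0$ whose only nonzero off-diagonal blocks are an invertible $Z$ in position $(k-1,k+1)$ and the block in position $(k,k+2)$ forced by the symplectic condition. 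Then the $(k-1,k)$ block of $eu_0e^{-1}$ is $g_0Z(X^{-1})_{21}$. Here $(X^{-1})_{21}$, the lower left $c\times c$ block of $X^{-1}$, has the same shape as above, built from the off-diagonal $\ell\times\ell$ blocks of $g_0^{-1}$. So this block is nonzero for generic $g_0$, and the $(k-1,k)$ position lies in $U$ but outside $U_P$. The substitution $u_0\mapsto eu_0e^{-1}$ on $U_0$ is therefore unavailable. Checking the block condition ``modulo $U$'' cannot help, since $eu_0e^{-1}$ already lies in $U$.

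The missing idea is to integrate over a quotient of $U$ on which $e$ does act.
\begin{enumerate}[leftmargin=*,label=(\roman*)]
\item By \cite[\S3.3]{CFK-func}, $U={}^{\delta^{-1}}m(V_{(c^k)})\rtimes U_0$. For $v={}^{\delta^{-1}}m(u_v)$ with $u_v\in V_{(c^k)}$, the $(V_{(c^k)},\psi_k)$-equivariance of the $(k,c)$ model in \eqref{eq:fIwasawa} gives $f(s,\delta vuh)=f(s,m(u_v)\delta uh)=\psi_k(u_v)f(s,\delta uh)$.
\item Hence $u\mapsto\abs{f(s,\delta uh)}$ is left invariant under ${}^{\delta^{-1}}m(V_{(c^k)})$, and the $U_0$-integral is an integral over ${}^{\delta^{-1}}m(V_{(c^k)})\backslash U$.
\item The element $e$ normalizes $U$, since $e\in M_Q$. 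It also normalizes ${}^{\delta^{-1}}m(V_{(c^k)})$. Indeed, that subgroup equals $U\cap\delta^{-1}P\delta$, because $\delta U_0\delta^{-1}=\delta_0U_0\delta_0^{-1}\subset\overline{U}_P$ (as $\delta_1\in U_P$ and $U_P$ is abelian), and $e\in\delta^{-1}P\delta$, which is what underlies \eqref{eq:diaginvariance}.
\item So $e$ acts on the quotient. The change of variables there multiplies the measure by a positive character of $G$, which is trivial for $\Sp_c$; your unimodularity argument belongs here.
\item Finally, $\delta e={}^{\delta}e\,\delta$ together with \eqref{eq:diaginvariance} removes $e$.
\end{enumerate}
This is the paper's proof. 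Your last fallback points to it, but you describe the argument of \cite[Corollary~2.3]{CFK22} as ``only a change of variables on $U$''. That omits exactly the left invariance coming from the $(k,c)$-equivariance. Since the integrand is invariant under the noncompact group ${}^{\delta^{-1}}m(V_{(c^k)})$, the change of variables has to be made on the quotient, not on $U$ or on $U_0$.
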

\begin{proof}
The proof of \cite[Corollary~2.3]{CFK22} (see the proof of \cite[Proposition~2.2]{CFK22})
consists of three steps, each of which applies to $\abs f$. First, the function
$u\mapsto f(s,\delta uh)$ on $U$ satisfies $f(s,\delta vu_0h)=f(s,{}^{\delta}v\,\delta u_0h)=\psi_k(u_v)f(s,\delta u_0h)$
for $v={}^{\delta^{-1}}m(u_v)$ with $u_v\in V_{(c^k)}$, by the
$(V_{(c^k)},\psi_k)$-equivariance of the $(k,c)$ model in \eqref{eq:fIwasawa}; hence
$\abs{f(s,\delta uh)}$ is a left ${}^{\delta^{-1}}m(V_{(c^k)})$-invariant function on $U$, and the integral over
$U_0$ is the integral over ${}^{\delta^{-1}}m(V_{(c^k)})\backslash U$. Second, $\mathfrak e(g_0,{}^{\iota}g_0)$
normalizes $U$ and ${}^{\delta^{-1}}m(V_{(c^k)})$, and the change of variables $u\mapsto{}^{\mathfrak e(g_0,{}^{\iota}g_0)}u$
on ${}^{\delta^{-1}}m(V_{(c^k)})\backslash U$ multiplies the measure by a character of $G$ which is trivial for $G=\Sp_c$; third,
$\delta\,\mathfrak e(g_0,{}^{\iota}g_0)={}^{\delta}\mathfrak e(g_0,{}^{\iota}g_0)\,\delta$ and
\eqref{eq:diaginvariance} remove the element ${}^{\delta}\mathfrak e(g_0,{}^{\iota}g_0)$. Thus \eqref{eq:abscor23} follows.
\end{proof}

The proof of convergence is more convenient when we replace the subgroup $U_0$ in \eqref{eq:Zdef} by $U_P$.
\begin{lemma}\label{lem:slice}
Let $K_f$ be a compact open subgroup of $H$ such that $f(s,h\kappa)=f(s,h)$ for all
$h\in H$ and $\kappa\in K_f$, and put $K_{f,1}:=K_f\cap U_1$. Then for every $b\in H$ which normalizes $U_1$,
\begin{equation}\label{eq:slice}
\int_{U_0}\abs{f(s,\delta u_0b)}\,du_0\ \le\ \frac{\mathrm{mod}_{U_1}(b)^{-1}}{\vol(K_{f,1})}\int_{U_P}\abs{f(s,\delta_0ub)}\,du .
\end{equation}
\end{lemma}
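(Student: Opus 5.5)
The plan is to split $U_P=U_0\times U_1$, absorb $\delta_1$ into the $U_1$-variable, and then restrict the $U_1$-integration to a small neighbourhood of the identity on which right $K_f$-invariance makes the integrand independent of the $U_1$-variable. All integrands are nonnegative, so Tonelli's theorem lets us reorder integrals freely and drop parts of the domain.

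\emph{Step 1 (rewriting the right-hand side).} Since $U_P$ is abelian and $U_P=U_0\times U_1$, the Haar measure on $U_P$ is the product of the Haar measures on $U_0$ and $U_1$ (we take compatible measures, as in the coordinate description). Hence
\[
\int_{U_P}\abs{f(s,\delta_0ub)}\,du=\int_{U_1}\int_{U_0}\abs{f(s,\delta_0u_1u_0b)}\,du_0\,du_1 .
\]
Recall $\delta_1\in U_1$ by \eqref{eq:delta01}. Translating $u_1\mapsto\delta_1u_1$ leaves $du_1$ invariant, and $\delta_0\delta_1=\delta$. Using commutativity once more to write $u_1u_0=u_0u_1$, the right-hand side becomes
\[
\int_{U_1}\int_{U_0}\abs{f(s,\delta u_0u_1b)}\,du_0\,du_1 .
\]

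\emph{Step 2 (restricting to a small set).} Put $\Omega:=bK_{f,1}b^{-1}$. This is a compact open subset of $U_1$ because $b$ normalizes $U_1$. For $u_1\in\Omega$ we write $u_1b=b(b^{-1}u_1b)$ with $b^{-1}u_1b\in K_{f,1}\subset K_f$. Right $K_f$-invariance of $f(s,\cdot)$ then gives
\[
f(s,\delta u_0u_1b)=f(s,\delta u_0b)\qquad\text{for all } u_0\in U_0,\ u_1\in\Omega.
\]
Since the integrand is nonnegative, we can shrink the $U_1$-domain to $\Omega$. This yields
\[
\int_{U_P}\abs{f(s,\delta_0ub)}\,du\ \ge\ \vol(\Omega)\int_{U_0}\abs{f(s,\delta u_0b)}\,du_0 .
\]

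\emph{Step 3 (volume computation).} We compute $\vol(\Omega)=\int_{U_1}\mathbf 1_{K_{f,1}}(b^{-1}u_1b)\,du_1$. By the definition of $\mathrm{mod}_{U_1}$ (applied with $h=b$ and $\varphi=\mathbf 1_{K_{f,1}}$), this equals $\mathrm{mod}_{U_1}(b)\vol(K_{f,1})$. Dividing through gives \eqref{eq:slice}.

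The only delicate point is getting the direction of the modulus right in Step 3, so that the factor comes out as $\mathrm{mod}_{U_1}(b)^{-1}$ as stated. The statement and the argument also hold when the right-hand side is infinite, so no convergence input is needed. Everything else is formal.
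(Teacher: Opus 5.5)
Your proof is correct and is essentially the paper's argument. Both restrict the $U_1$-variable to the coset $\delta_1\,bK_{f,1}b^{-1}$ (you do it after translating by $\delta_1$) and use right $K_f$-invariance to make the integrand independent of that variable. Both then read off the volume $\mathrm{mod}_{U_1}(b)\vol(K_{f,1})$ directly from the definition of $\mathrm{mod}_{U_1}$, with the direction of the modulus handled correctly.
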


\begin{proof}
Let $u_0\in U_0$ and $\kappa_1\in K_{f,1}$. Since $U_P$ is abelian and $b$ normalizes
$U_1$,
\[
f(s,\delta u_0b)
=f\big(s,\delta u_0\,b\,\kappa_1\big)
=f\big(s,\delta u_0\,({}^{b}\kappa_1)\,b\big)
=f\big(s,\delta_0\,(\delta_1\,{}^{b}\kappa_1)\,u_0\,b\big).
\]
Integrating over $\kappa_1\in K_{f,1}$ and $u_0\in U_0$, and substituting $u_1=\delta_1\,{}^{b}\kappa_1$,
which ranges over the coset $\delta_1\,{}^{b}K_{f,1}$ of volume $\mathrm{mod}_{U_1}(b)\vol(K_{f,1})$ in $U_1$,
\[
\mathrm{mod}_{U_1}(b)\vol(K_{f,1})\int_{U_0}\abs{f(s,\delta u_0b)}\,du_0
=\int_{\delta_1{}^{b}K_{f,1}}\int_{U_0}\abs{f(s,\delta_0u_1u_0b)}\,du_0\,du_1
\le\int_{U_P}\abs{f(s,\delta_0ub)}\,du .
\]
The last inequality follows from Tonelli's theorem.
\end{proof}

\begin{remark}[The archimedean case]\label{rem:slicearch}
Over an archimedean field there is no compact open subgroup fixing $f$, but we can obtain the analog of
\eqref{eq:slice} using the theorem of Dixmier and Malliavin
\cite{DM78}, as in \cite[\S7]{Sou95} and in \cite[Corollary~3.21]{CFK-func}; when $f$ is regarded as a section we use the refinement of that theorem which applies to sections, by Gourevitch \cite[Corollary A.3]{CFK22}. We assume, as in \cite[\S~1.1]{CFK22}, that $\rho$ (and thereby $W_{\psi}(\rho)$, with the quotient topology via $\xi\mapsto W_\xi$) is smooth admissible Fr\'{e}chet of moderate growth. Thus $V(s,\rho)$ (with the action by right translation) is a smooth Fr\'echet representation of the abelian Lie group $U_1$.
By \cite{DM78} there are finitely many $f_1,\dots,f_r\in V(s,\rho)$ and $\phi_1,\dots,\phi_r\in C_c^\infty(U_1)$ such that
\[
f(s,h)=\sum_{i=1}^{r}\int_{U_1}f_i(s,hu_1)\,\phi_i(u_1)\,du_1,\qquad h\in H .
\]
For $u_0\in U_0$, since $U_P$ is abelian, and $b$ by assumption normalizes $U_1$,
\[f_i(s,\delta u_0bu_1)=f_i(s,\delta_0(\delta_1{}^{b}u_1)u_0b).\]
After a change of variables
$u_1\mapsto{}^{b}u_1$ and translation by $\delta_1\in U_1$ we obtain
\[
\abs{f(s,\delta u_0b)}\ \le\ \mathrm{mod}_{U_1}(b)^{-1}\sum_{i=1}^{r}\norm{\phi_i}_\infty\int_{U_1}\abs{f_i(s,\delta_0u_1u_0b)}\,du_1 .
\]
Integrating over $u_0\in U_0$ we obtain the analog of \eqref{eq:slice}:
\begin{equation*}
\int_{U_0}\abs{f(s,\delta u_0b)}\,du_0\ \le\ \mathrm{mod}_{U_1}(b)^{-1}\sum_{i=1}^{r}\norm{\phi_i}_\infty\int_{U_P}\abs{f_i(s,\delta_0ub)}\,du ,
\end{equation*}
with the sections $f_i$ and the constants $\norm{\phi_i}_\infty$, which depend on $f$ only, in place of $f$.
\end{remark}

Now we can formulate the lemma which bounds the integral over $U_P$ of a right translate of $f$ by $b\in M_P$ with explicit dependence on $b$.
\begin{lemma}\label{lem:UPtorus}
Let $D=D(\rho)$ and let $\sigma_s=\Re(s)-\frac12+\frac{kc+1}2$ be as in \eqref{eq:sigmas}. There is a real number $R_1$, depending only on $k$, $c$ and $D$,
such that for every $s$ with $\Re(s)>R_1$ and every $f\in V(s,\rho)$ there is a constant
$C_f\ge0$ such that the following holds.
Let $b\in M_P$ and let $a_b\in\GL_{kc}$ be defined by ${}^{\delta_0}b=m(a_b)$. Then
\begin{equation}\label{eq:UPtorusbound}
\int_{U_P}\abs{f(s,\delta_0ub)}\,du\ \le\ C_f\,I_{\fd_P}\big(\sigma_s-(2kc-1)D\big)\,\delta_P(b)\,\abs{\det a_b}^{\sigma_s}\,\norm{a_b}^{D},
\end{equation}
and the right-hand side of \eqref{eq:UPtorusbound} is finite. If $f$ is an entire section, then $C_f$ can be chosen to be bounded when $s$ varies in a compact subset of
$\{\Re(s)>R_1\}$, and $s\mapsto\int_{U_P}f(s,\delta_0ub)\,du$ is holomorphic on $\Re(s)>R_1$.
\end{lemma}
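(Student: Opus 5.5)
The plan is to reduce to the computation already made in Theorem~\ref{thm:lambdaconv}. The tools are a change of variables in $U_P$ and a conjugation of $b$ past $\delta_0$.

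\emph{Step 1: move $b$ to the left.} For $b\in M_P$ we have $ub=b(b^{-1}ub)$, and $u\mapsto b^{-1}ub$ is an automorphism of $U_P$ with modulus $\delta_P(b)$. Hence
\[
\int_{U_P}\abs{f(s,\delta_0ub)}\,du=\delta_P(b)\int_{U_P}\abs{f(s,\delta_0bu)}\,du .
\]
Write $\delta_0b={}^{\delta_0}b\,\delta_0=m(a_b)\delta_0$. Then $f(s,\delta_0bu)=f(s,m(a_b)\delta_0u)$.

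\emph{Step 2: factor out $m(a_b)$.} Put $v=v(X)={}^{\delta_0}u\in\overline{U}_P$; the measures $du$ and $dv$ correspond. Write $v=u'm(a)\kappa$ as in \eqref{eq:siegelbounds}. Then
\[
m(a_b)v=\big({}^{m(a_b)}u'\big)\,m(a_ba)\,\kappa ,
\]
and \eqref{eq:fIwasawa} gives
\[
f(s,m(a_b)v\delta_0)=\delta_P^{1/2}(m(a_ba))\abs{\det a_ba}^{s-1/2}W_{\kappa\delta_0}(a_ba).
\]
The functions $W_{\kappa\delta_0}$, $\kappa\in K_H$, form a finite subset of $W_\psi(\rho)$. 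Theorem~\ref{thm:mgpsi} together with $\norm{a_ba}\le\norm{a_b}\norm a$ then bounds the integrand by
\[
C_f\,\abs{\det a_b}^{\sigma_s}\norm{a_b}^D\cdot\abs{\det a}^{\sigma_s}\norm a^D .
\]
The constant $C_f$ depends only on $f(s,\cdot)|_{K_H\delta_0}$.

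\emph{Step 3: integrate over $v$.} By \eqref{eq:siegelbounds}, the remaining factor satisfies
\[
\abs{\det a}^{\sigma_s}\norm a^D\le\vartheta(X)^{-(\sigma_s-(2kc-1)D)} ,
\]
where we need $\sigma_s>0$. Integrating over the $\fd_P$ coordinates of $X$ yields $I_{\fd_P}(\sigma_s-(2kc-1)D)$. This is finite once $\sigma_s-(2kc-1)D>\fd_P$, which is exactly $\Re(s)>R_1$ with $R_1:=R$ of \eqref{eq:R0} for $\epsilon=1$. This proves \eqref{eq:UPtorusbound}.

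\emph{Step 4: entire sections and holomorphy.} By Remark~\ref{rem:sections}, the functions $W_{\kappa\delta_0}$ lie in a fixed finite-dimensional subspace, and their coefficients are Laurent polynomials in $q^{-s}$. So $C_f$ is bounded on compact sets of $s$. Moreover $I_{\fd_P}(\sigma_s-(2kc-1)D)$ decreases in $\Re(s)$, so it is bounded on compacts in $\Re(s)>R_1$. Local uniform absolute convergence of an integral of functions holomorphic in $s$ then gives holomorphy, by the same standard argument cited in Remark~\ref{rem:lambdaremarks}\ref{it:lr2} (e.g.\ \cite{Mat01}).

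The main point needing care is Step 2. We must check that left multiplication by $m(a_b)$ preserves the Iwasawa shape: ${}^{m(a_b)}u'\in U_P$, and the Levi part becomes $a_ba$. We also need the inequality $\norm{a_ba}\le\norm{a_b}\norm a$ rather than an equality, since this is what makes the $b$-dependence factor out cleanly. Everything else repeats the proof of Theorem~\ref{thm:lambdaconv}.
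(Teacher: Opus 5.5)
Your proposal is correct and follows essentially the same route as the paper's proof. The paper conjugates $b$ past $\delta_0$ to get the factor $\delta_P(b)$ and the Levi element $m(a_b)$, uses $\norm{a_ba}\le\norm{a_b}\norm a$ together with \eqref{eq:siegelbounds} and $\sigma_s>0$, integrates to obtain $I_{\fd_P}(\sigma_s-(2kc-1)D)$ with $R_1$ equal to \eqref{eq:R0} for $\epsilon=1$, and deduces holomorphy from local boundedness of $C_f$, exactly as in your four steps.
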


\begin{proof}
We prove the assertion with
\begin{equation}\label{eq:RUP}
R_1:=\tfrac12-\tfrac{kc+1}2+(2kc-1)D+\fd_P,
\end{equation}
which is the constant $R$ of \eqref{eq:R0} for $\epsilon=1$; thus $\Re(s)>R_1$ is equivalent to $\sigma_s-(2kc-1)D>\fd_P$.
Using $\delta_0b=m(a_b)\delta_0$ and ${}^{\delta_0}U_P=\overline{U}_P$ (here, in fact, ${}^{\delta_0}u(Y)=v(-Y)$ by \eqref{eq:delta01}),
\[
\int_{U_P}\abs{f(s,\delta_0ub)}\,du=\delta_P(b)\int_{\overline{U}_P}\abs{f(s,m(a_b)v\delta_0)}\,dv .
\]
Write $v=v(X)=u\,m(a)\kappa$ as in \eqref{eq:siegelbounds}. Since $m(a_b)$
normalizes $U_P$, $m(a_b)u\,m(a)\kappa\delta_0=u'\,m(a_ba)\,\kappa\delta_0$ with $u'\in U_P$, and
\eqref{eq:fIwasawa} gives
$\abs{f(s,m(a_b)v\delta_0)}=\abs{\det(a_ba)}^{\sigma_s}\,\abs{W_{\kappa\delta_0}(a_ba)}$ with
$W_{\kappa\delta_0}\in W_{\psi}(\rho)$. By Theorem~\ref{thm:mgpsi} and \eqref{eq:siegelbounds}, there is $C_f\ge0$ such that
\[
\abs{f(s,m(a_b)v(X)\delta_0)}\ \le\ C_f\,\abs{\det a_b}^{\sigma_s}\norm{a_b}^{D}\,\abs{\det a}^{\sigma_s}\norm a^{D}
\ \le\ C_f\,\abs{\det a_b}^{\sigma_s}\norm{a_b}^{D}\,\vartheta(X)^{-(\sigma_s-(2kc-1)D)} ,
\]
where we used $\sigma_s>0$, and integrating over the $\fd_P$ coordinates of $X$ gives \eqref{eq:UPtorusbound}, where $I_{\fd_P}(\sigma_s-(2kc-1)D)$ is finite by \eqref{eq:Iea}.
For an entire section, by Remark~\ref{rem:sections}, $C_f$ can be chosen to be bounded when $s$ varies in a compact subset of $\{\Re(s)>R_1\}$. Then the left-hand side of \eqref{eq:UPtorusbound} is bounded on compact subsets of $\{\Re(s)>R_1\}$,
and the holomorphy follows.
\end{proof}

\begin{remark}\label{rem:U0Alonetorus}
The proof of Lemma~\ref{lem:U0Alone} also provides an explicit bound for the integral over $U_0$ of a right translate of $f$, without the passage to $U_P$.
Let $b\in M_P$ and assume that $b$ normalizes $U_0$ and $U_1$. As in Lemma~\ref{lem:UPtorus} let $a_b\in\GL_{kc}$ be defined by ${}^{\delta_0}b=m(a_b)$.
Since $\delta_1\in U_1$, we have $b^{-1}\delta_1b=u(\widehat{b_0})$, where $\widehat{b_0}=\left(\begin{smallmatrix}0&0\\b_0&0\end{smallmatrix}\right)$ with $b_0\in\Mat_c$.
Substituting $u_0\mapsto{}^{b}u_0$ and using $\delta_0b=m(a_b)\delta_0$,
\[
\int_{U_0}\abs{f(s,\delta u_0b)}\,du_0=\mathrm{mod}_{U_0}(b)\int_{U_0}\abs{f\big(s,m(a_b)\,\delta_0\,u(\widehat{b_0})\,u_0\big)}\,du_0 .
\]
For $u_0=u(Y_0)$ we have $\delta_0u(\widehat{b_0})u_0=v(X)\delta_0$ with $X=-(\widehat{b_0}+Y_0)$. The entries of $X$ are, up to a sign, the entries of $Y_0$ outside the bottom left
$c\times c$ block, and the entries of $b_0$ inside it, hence $\vartheta(X)\ge\vartheta(Y_0)$, with equality when $b_0\in\Mat_c(\Oo)$ (cf. \eqref{eq:nuXnuy}).
Let $R_0$ be given by \eqref{eq:RU0} and assume $\Re(s)>R_0$, so that $\sigma_s-(2kc-1)D>\fd_0$. Writing $v(X)=u\,m(a)\kappa$ as in \eqref{eq:siegelbounds},
the argument of the proof of Lemma~\ref{lem:UPtorus} implies that there is $C_f\ge0$ such that
\begin{align*}
\abs{f\big(s,m(a_b)v(X)\delta_0\big)}&\le C_f\,\abs{\det a_b}^{\sigma_s}\norm{a_b}^{D}\,\vartheta(X)^{-(\sigma_s-(2kc-1)D)}
\\&\le C_f\,\abs{\det a_b}^{\sigma_s}\norm{a_b}^{D}\,\vartheta(Y_0)^{-(\sigma_s-(2kc-1)D)},
\end{align*}
and integrating over the $\fd_0$ coordinates of $Y_0$ we obtain
\begin{equation}\label{eq:U0torusbound}
\int_{U_0}\abs{f(s,\delta u_0b)}\,du_0\ \le\ C_f\,I_{\fd_{0}}\big(\Re(s)-R_0+\fd_{0}\big)\,\mathrm{mod}_{U_0}(b)\,\abs{\det a_b}^{\sigma_s}\,\norm{a_b}^{D},
\end{equation}
where the right-hand side is finite by \eqref{eq:Iea}.
Since $U_P=U_0\times U_1$ and $b$ normalizes both subgroups, $\mathrm{mod}_{U_0}(b)=\delta_P(b)\,\mathrm{mod}_{U_1}(b)^{-1}$. Thus
\eqref{eq:U0torusbound} is the bound which the combination of Lemmas~\ref{lem:slice} and \ref{lem:UPtorus} provides, without the factor $\vol(K_{f,1})^{-1}$ and in the larger half-plane $\Re(s)>R_0$.
\end{remark}

\subsubsection{Torus elements}\label{sssec:torus}
The proof of convergence of \eqref{eq:Zdef} will be carried out using the Cartan decomposition of $G$ with respect to a maximal torus.
We introduce the relevant notation and collect a few results that we need.

By our choices of Borel subgroups $B_H$ and $B_G$ (see \S~\ref{sssec:doublingsetting}), we have
$B_G=T_G\ltimes N_G$ with $T_G=T_c\cap G$, and $B_H=T_H\ltimes N_H$ with $T_H=T_{2kc}\cap H$.
By \eqref{eq:embedding}, for \[t=\diag(t_1,\dots,t_\ell,t_\ell^{-1},\dots,t_1^{-1})\in T_G,\]
\begin{equation*}
\mathfrak t:=\mathfrak e_2({}^{\iota}t)=\diag\big(I_{(k-1)c+\ell},t_\ell^{-1},\dots,t_1^{-1},t_1,\dots,t_\ell,I_{\ell+(k-1)c}\big)\in T_H.
\end{equation*}
Thus $\mathfrak t=m(a_{\mathfrak t})$ with $a_{\mathfrak t}=\diag(I_{(k-1)c+\ell},t_\ell^{-1},\dots,t_1^{-1})\in\GL_{kc}$, and
${}^{\delta_0}\mathfrak t=m(a_{\mathfrak t}^*)$ (see \eqref{eq:delta01}) with
\begin{equation}\label{eq:at}
a_{\mathfrak t}^*=\diag(t_1,\dots,t_\ell,I_{(k-1)c+\ell}).
\end{equation}
Define
\begin{equation}\label{eq:xtdef}
x(t):=|\det a_{\mathfrak t}^*|=\prod_{i=1}^{\ell}\abs{t_i}.
\end{equation}
Since $\mathfrak t\in T_H$, $\mathfrak t$ normalizes $U_P$, $U_0$ and $U_1$. We have
\begin{equation}\label{eq:moduli}
\delta_P(\mathfrak t)=x(t)^{-(kc+1)},\qquad\mathrm{mod}_{U_0}(\mathfrak t)=x(t)^{-(k-1)c},\qquad \mathrm{mod}_{U_1}(\mathfrak t)=x(t)^{-(c+1)}.
\end{equation}

Also by \eqref{eq:fIwasawa},
\begin{equation}\label{eq:lefttranslationtfrak}
f(s,\delta_0\mathfrak th)=f(s,m(a_{\mathfrak t}^*)\delta_0h)=x(t)^{s-1/2+(kc+1)/2}W_{\delta_0h}(a_{\mathfrak t}^*).
\end{equation}
Note that \[\norm t=\norm{\mathfrak t}=\norm{a_{\mathfrak t}^*}.\]

The cone $T_G^-$ is the set of $t\in T_G$ with $\abs{\alpha(t)}\le1$ for all simple roots $\alpha$ determined by $T_G$ and $B_G$.
The Cartan decomposition of $G$ is
$G=\bigsqcup_tK_GtK_G$, the union over the representatives $t=\diag(\varpi^{e_1},\dots,\varpi^{e_\ell},\varpi^{-e_\ell},\dots,\varpi^{-e_1})$
with $e_1\ge\dots\ge e_\ell\ge0$ of $(T_G\cap K_G)\backslash T_G^-$.
For such a representative $t$,
$\norm t=\max_i\abs{t_i}^{\pm1}=\abs{t_1}^{-1}$ (see \S~\ref{ssec:height} property~\ref{it:h4}), and
\begin{equation}\label{eq:xt}
x(t)=q^{-(e_1+\ldots+e_\ell)}\ \le\ \norm t^{-1}\ \le\ 1 .
\end{equation}

We recall the bound on the volumes of the double cosets appearing in the Cartan decomposition: there is $C_G>0$ with
\begin{equation}\label{eq:volKtK}
\vol(K_GtK_G)\ \le\ C_G\,\delta_{B_G}(t)^{-1}\ \le\ C_G\,\norm t^{2\ell^2},\qquad \forall t\in T_G^- ,
\end{equation}
by \cite[\S I.1, (5)]{Wal03} (each of the $\ell^2$ positive roots $\alpha$ contributes $\abs{\alpha(t)}^{-1}\leq\norm t^{2}$).

Finally, the integral \eqref{eq:Zdef} also involves a matrix coefficient of $\pi^\vee$, and we need a coarse bound on its restriction to $T_G^-$.

\begin{lemma}\label{lem:mcbound}
Let $\pi\in\Irr(G)$ and let $A_\pi\ge0$ bound the absolute values of the real parts of the cuspidal exponents of $\pi$. There is $E\ge0$, depending only on $c$ and $A_\pi$, such that for every matrix coefficient $\omega$ of $\pi^\vee$ there is a constant $C_\omega\ge0$ satisfying
\begin{equation}\label{eq:mcbound}
\abs{\omega(t)}\ \le\ C_\omega\,\norm t^{E},\qquad \forall t\in T_G^-.
\end{equation}
If $\pi$ is unitary, we can take $E=0$.
\end{lemma}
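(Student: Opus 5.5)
We bound matrix coefficients on $T_G^-$ through Casselman's asymptotics for matrix coefficients: on $T_G^-$ they are governed by the Jacquet modules of $\pi^\vee$ along standard parabolic subgroups, in the same way Lemma~\ref{lem:expansion} used Jacquet modules for Whittaker functions. Concretely, let $\omega(g)=\langle\varphi,\pi^\vee(g)\varphi^\vee\rangle$ and fix a compact open subgroup $K_0\subset K_G$ fixing $\varphi$ and $\varphi^\vee$. By Casselman's theorem there is $\varepsilon>0$ (depending on $K_0$) such that, for each standard parabolic $P_1=M_1\ltimes U_1$ of $G$ and each $t\in T_G^-$ that is $\varepsilon$-close to the face $A_{M_1}^-$, we have
\[\omega(t)=\langle\overline{\varphi},J_{U_1}(\pi^\vee)(t)\overline{\varphi^\vee}\rangle,\]
the canonical pairing between $J_{U_1}(\pi^\vee)$ and $J_{\overline U_1}(\pi)$, with unnormalized Jacquet modules up to the factor $\delta_{P_1}^{1/2}(t)$. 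Since $T_G^-$ is covered by finitely many translates $t_0\cdot(A_{M_1}^-\text{-regions})$ with $t_0$ in a fixed compact set modulo $T_G\cap K_G$, it suffices to bound each such piece.

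On a piece attached to $P_1$, the function $z\mapsto\langle\overline{\varphi},J_{U_1}(\pi^\vee)(zt_0)\overline{\varphi^\vee}\rangle$, with $z$ in the split center of $M_1$, is a finite function. It is a finite sum of terms $\chi(z)\mathcal Q(\val z)$, where $\chi$ ranges over central characters of irreducible subquotients of $J_{U_1}(\pi^\vee)$, i.e.\ cuspidal exponents of $\pi^\vee$ (whose real parts are those of $\pi$ up to sign, hence bounded by $A_\pi$), and $\deg\mathcal Q$ is less than the length of $J_{U_1}(\pi^\vee)$. Arguing as in Lemma~\ref{lem:boundednessD} via the geometric lemma, this length is at most a function of $c$ (e.g.\ $(c!)^2$), because $\pi$ embeds in a representation induced from a supercuspidal of a Levi subgroup. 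The modulus factor $\delta_{P_1}^{1/2}$ is bounded by a power of $\norm t$ depending only on $c$. Exactly as in Corollary~\ref{cor:whittakermg}, using property~\ref{it:h4} of \S~\ref{ssec:height}, we get $|\chi(z)|\le\norm t^{c'A_\pi}$ and $|\mathcal Q(\val z)|\le C\norm t^{\deg\mathcal Q}$. Collecting terms gives \eqref{eq:mcbound} with $E$ depending only on $c$ and $A_\pi$; the matrix coefficient only enters through $C_\omega$. The remaining compact part of $T_G^-$ (modulo $K_G$) contributes only a constant.

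For unitary $\pi$, $\pi^\vee$ is unitary as well, and $|\omega(g)|\le\|\varphi\|\,\|\varphi^\vee\|$ by Cauchy--Schwarz with respect to invariant inner products. This gives $E=0$.

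The main obstacle is obtaining uniformity of $E$: it must not depend on the choice of $\varphi,\varphi^\vee$. Casselman's $\varepsilon$ depends on $K_0$, but only the constant $C_\omega$ absorbs this dependence, since the compact region is bounded in terms of $K_0$ while the exponents and polynomial degrees are intrinsic to $\pi$.
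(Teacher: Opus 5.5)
Your argument is correct and is essentially the paper's. The paper cites Waldspurger's moderate-growth bound (Corollaire I.4.4 of \cite{Wal03}, whose proof through Proposition I.4.3 rests on the same Jacquet-module asymptotics you run). It controls the exponents of $\pi^\vee$ by those of $\pi$ via $J_P(\pi^\vee)\cong J_{\overline P}(\pi)^\vee$, which is the duality behind your canonical pairing, and it uses the same Cauchy--Schwarz argument in the unitary case.

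One small remark: your bound on $\deg\mathcal Q$ through the geometric lemma is harmless but unnecessary. Any polynomial in $\val z$ satisfies $\abs{\mathcal Q(\val z)}\le C\norm t$, with the degree affecting only $C$. Also, for $G=\Sp_c$ the relevant count would be $|W_G|^2$ rather than $(c!)^2$.
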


\begin{proof}
The norm on $G$ defined in \cite[\S I.1]{Wal03} through the embedding $G\subset\GL_c$ is the restriction of the height \eqref{eq:height}.
Waldspurger \cite[Corollaire~I.4.4]{Wal03} proved that for every matrix coefficient $\omega$ of an admissible representation of $G$ there are $E\in\bZ_{\ge0}$ and $C_\omega>0$ such that
$\abs{\omega(g)}\le C_\omega\norm g^{E}$ for all $g\in G$, and that for a representation of finite length $E$ can be chosen independently of $\omega$.
Applying this to $\pi^\vee$ we obtain \eqref{eq:mcbound}. The proof of \textit{loc. cit.}, through \cite[Proposition~I.4.3]{Wal03}, showed that $E$ is determined by $c$ and by the absolute values of the real parts of the exponents of the Jacquet modules of $\pi^\vee$ along the standard parabolic subgroups of $G$, namely of the cuspidal exponents of $\pi^\vee$;
the dependence on $c$ enters through the comparison of these characters, and of the modulus characters, with powers of $\norm{\cdot}$. Since $J_{P}(\pi^\vee)\cong J_{\overline{P}}(\pi)^\vee$ for every parabolic subgroup $P$ \cite[Corollaire~I.4.2]{Wal03}, and $\overline{P}$ is conjugate to a standard parabolic subgroup,
the cuspidal exponents of $\pi^\vee$ are the inverses of Weyl conjugates of the cuspidal exponents of $\pi$; hence their real parts are bounded in absolute value by $A_\pi$, and $E$ depends only on $c$ and $A_\pi$. If $\pi$ is
unitary, with invariant inner product $(\cdot,\cdot)$, then $\pi^\vee$ is isomorphic to the complex conjugate of $\pi$, and every matrix coefficient of $\pi^\vee$ is of the form
$g\mapsto\overline{(\pi(g)\varphi_1,\varphi_2)}$; by the Cauchy--Schwarz inequality it is bounded on $G$ by $\norm{\varphi_1}\,\norm{\varphi_2}$, hence we can take $E=0$.
\end{proof}

\subsubsection{The reduction to the torus and the proof}\label{sssec:torusreduction}
Let $\omega$ be a matrix coefficient of $\pi^{\vee}$ and $f\in V(s,\rho)$.
We finally turn to the integral $Z(s,\omega,f)$ itself and prove that it is absolutely convergent, namely
\begin{equation*}
Z^{\mathrm{abs}}(s,\omega,f)=\int_G\int_{U_0}\abs{\omega(g)}\,\abs{f\big(s,\delta u_0\,\mathfrak e_2({}^{\iota}g)\big)}\,du_0\,dg <\infty,
\end{equation*}
in a right half-plane independent of $\omega$ and $f$.

\begin{proposition}\label{prop:torusreduction}
Let $\omega$ and $f$ be as above. There are finitely many matrix coefficients
$\omega_1,\dots,\omega_{r_1}$ of $\pi^\vee$ and finitely many sections $f_1,\dots,f_{r_2}\in V(s,\rho)$, obtained from $\omega$ and $f$ by translations by elements of
$K_G$ and $K_H$, such that
\begin{equation}\label{eq:torussum}
Z^{\mathrm{abs}}(s,\omega,f)\ \le\ \sum_{t\in(T_G\cap K_G)\backslash T_G^-}\vol(K_GtK_G)\,
\max_{1\le j\le r_1}\abs{\omega_j(t)}\,\max_{1\le l\le r_2}\int_{U_0}\abs{f_l(s,\delta u_0\mathfrak t)}\,du_0 .
\end{equation}
\end{proposition}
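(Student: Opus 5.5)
Decompose the outer integral using the Cartan decomposition $G=\bigsqcup_tK_GtK_G$ and write $g=\kappa_1t\kappa_2$ with $\kappa_1,\kappa_2\in K_G$. The standard integration formula over a double coset gives
\[
\int_{K_GtK_G}\varphi(g)\,dg\le\vol(K_GtK_G)\sup_{\kappa_1,\kappa_2\in K_G}\varphi(\kappa_1t\kappa_2)
\]
for nonnegative $\varphi$. So it suffices to bound $\abs{\omega(\kappa_1t\kappa_2)}$ and the inner $U_0$-integral at $g=\kappa_1t\kappa_2$ uniformly in $\kappa_1,\kappa_2$. Then we sum over representatives $t$ of $(T_G\cap K_G)\backslash T_G^-$, using Tonelli's theorem to interchange integration and the supremum bounds.

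For the matrix coefficient, $\omega(\kappa_1t\kappa_2)=\langle\pi(\kappa_1^{-1})\varphi,\pi^\vee(t)\pi^\vee(\kappa_2)\varphi^\vee\rangle$. Since $\varphi$ and $\varphi^\vee$ are smooth, the vectors $\pi(\kappa_1^{-1})\varphi$ and $\pi^\vee(\kappa_2)\varphi^\vee$ range over finite sets as $\kappa_1,\kappa_2$ vary in $K_G$. This gives finitely many matrix coefficients $\omega_1,\dots,\omega_{r_1}$ with $\abs{\omega(\kappa_1t\kappa_2)}\le\max_j\abs{\omega_j(t)}$.

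For the section, note that $\mathfrak e_2({}^{\iota}g)=\mathfrak e_2({}^{\iota}\kappa_1)\,\mathfrak t\,\mathfrak e_2({}^{\iota}\kappa_2)$, where $\mathfrak e_2({}^\iota\kappa_i)\in K_H$. The right factor is absorbed by replacing $f$ with its right translate by $\mathfrak e_2({}^{\iota}\kappa_2)$; by smoothness, only finitely many such translates occur.

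The left factor $\mathfrak e_2({}^{\iota}\kappa_1)$ has to be moved past $u_0$ and $\delta$. This is the step I expect to be delicate, and I would handle it with Lemma~\ref{lem:abscor23}. Take $g_0={}^{\iota}\kappa_1^{-1}$. Then $\mathfrak e(g_0,{}^{\iota}g_0)=\mathfrak e_1({}^{\iota}\kappa_1^{-1})\,\mathfrak e_2(\kappa_1^{-1})$, which is not quite the inverse of $\mathfrak e_2({}^{\iota}\kappa_1)$. I would therefore choose $g_0$ so that the $\mathfrak e_2$-component cancels $\mathfrak e_2({}^{\iota}\kappa_1)$. Since $\iota$ is an involution, $g_0=\kappa_1^{-1}$ gives
\[
\mathfrak e(g_0,{}^{\iota}g_0)\,\mathfrak e_2({}^{\iota}\kappa_1)=\mathfrak e_1(\kappa_1^{-1}).
\]
Lemma~\ref{lem:abscor23} then converts
\[
\int_{U_0}\abs{f(s,\delta u_0\mathfrak e_2({}^\iota\kappa_1)h)}\,du_0\quad\text{into}\quad\int_{U_0}\abs{f(s,\delta u_0\,\mathfrak e_1(\kappa_1^{-1})h)}\,du_0.
\]
Since $\mathfrak e_1$ and $\mathfrak e_2$ commute, $\mathfrak e_1(\kappa_1^{-1})$ commutes with $\mathfrak t$ and can be moved to the right of $\mathfrak t$, where it is absorbed into the right $K_H$-translate of $f$.

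The translates of $f$ by the finitely many elements $\mathfrak e_1(\kappa_1^{-1})\mathfrak e_2({}^{\iota}\kappa_2)\in K_H$ form the finite list $f_1,\dots,f_{r_2}$. Combining the three bounds and summing over $t$ yields \eqref{eq:torussum}.
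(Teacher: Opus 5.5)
Your proposal is correct and is essentially the paper's proof. Both use the Cartan decomposition, take finitely many $K_G$-translates of $\omega$ and $K_H$-translates of $f$ by smoothness, and apply Lemma~\ref{lem:abscor23} to remove $\mathfrak e_2({}^{\iota}\kappa_1)$, leaving $\mathfrak t\,\mathfrak e(\kappa_1^{-1},{}^{\iota}\kappa_2)$ with the element of $K_H$ absorbed into $f$. The only cosmetic difference is that you apply the lemma with $g_0=\kappa_1^{-1}$, whereas the paper writes $\mathfrak e_2({}^{\iota}\kappa_1)=\mathfrak e(\kappa_1,{}^{\iota}\kappa_1)\,\mathfrak e_1(\kappa_1^{-1})$ and takes $g_0=\kappa_1$.
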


\begin{proof}
Since $\varphi$ and $\varphi^\vee$ are smooth the functions
$g\mapsto\omega(\kappa_1g\kappa_2)$, $\kappa_1,\kappa_2\in K_G$, form a finite set
$\{\omega_1,\dots,\omega_{r_1}\}$ of matrix coefficients of $\pi^\vee$. Similarly, $f$ is
smooth and the functions $f(s,\cdot\,\kappa)$, $\kappa\in K_H$, form a finite set
$\{f_1,\dots,f_{r_2}\}$. Let $g=\kappa_1t\kappa_2$ with $t\in T_G^-$ and
$\kappa_1,\kappa_2\in K_G$. Then ${}^{\iota}g={}^{\iota}\kappa_1\,{}^{\iota}t\,{}^{\iota}\kappa_2$
with ${}^{\iota}\kappa_i\in K_G$ ($\iota$ preserves $K_G$), and
\[
\mathfrak e_2({}^{\iota}g)=\mathfrak e_2({}^{\iota}\kappa_1)\,\mathfrak t\,\mathfrak e_2({}^{\iota}\kappa_2)
=\mathfrak e(\kappa_1,{}^{\iota}\kappa_1)\,\mathfrak e_1(\kappa_1^{-1})\,\mathfrak t\,\mathfrak e_2({}^{\iota}\kappa_2)
=\mathfrak e(\kappa_1,{}^{\iota}\kappa_1)\,\mathfrak t\,\mathfrak e_1(\kappa_1^{-1})\,\mathfrak e_2({}^{\iota}\kappa_2),
\]
because $\mathfrak e_1(G)$ and $\mathfrak e_2(G)$ commute. The element
$\kappa:=\mathfrak e_1(\kappa_1^{-1})\mathfrak e_2({}^{\iota}\kappa_2)=\mathfrak e(\kappa_1^{-1},{}^{\iota}\kappa_2)$
lies in $K_H$ ($\mathfrak e(K_G\times K_G)\subset K_H$). By Lemma~\ref{lem:abscor23} with $g_0=\kappa_1$ and $h=\mathfrak t\kappa$,

\[
\int_{U_0}\abs{f(s,\delta u_0\mathfrak e_2({}^{\iota}g))}\,du_0
=\int_{U_0}\abs{f(s,\delta u_0\mathfrak t\kappa)}\,du_0
\le\max_l\int_{U_0}\abs{f_l(s,\delta u_0\mathfrak t)}\,du_0 ,
\]
and $\abs{\omega(g)}=\abs{\omega(\kappa_1t\kappa_2)}\le\max_j\abs{\omega_j(t)}$. Both upper bounds depend on $g$ only through $t$; integrating over $K_GtK_G$ and summing over the
double cosets gives \eqref{eq:torussum}.
\end{proof}

\begin{theorem}\label{thm:doublingconv}
Let $G=\Sp_c$, $c=2\ell$, $k\ge1$, let $\pi\in\Irr(G)$ and let $\rho$ be
a $(k,c)$ representation of $\GL_{kc}$. Let $D=D(\rho)$ and let $E$ be an exponent as in Lemma~\ref{lem:mcbound}. There is
a real number $R$, depending only on $k$, $c$, $D$ and $E$, such that the following
holds for every $s$ with $\Re(s)>R$, every matrix coefficient $\omega$ of $\pi^\vee$ and
every $f\in V(s,\rho)$.
\begin{enumerate}[leftmargin=*,label=(\arabic*),ref=(\arabic*)]
\item\label{it:dc1} The sum over the cone with the domain $U_P$ in place of $U_0$ converges:
\begin{equation}\label{eq:strongUP}
\sum_{t\in(T_G\cap K_G)\backslash T_G^-}\vol(K_GtK_G)\,\abs{\omega(t)}\int_{U_P}\abs{f(s,\delta_0u\mathfrak t)}\,du\ <\ \infty .
\end{equation}
\item\label{it:dc2} The integral $Z(s,\omega,f)$ converges absolutely.
\end{enumerate}
For an entire section $f$, $Z(s,\omega,f)$ is holomorphic on $\Re(s)>R$.
\end{theorem}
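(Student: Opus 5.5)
The plan is to bound each term of the cone sum in \eqref{eq:strongUP} by an explicit power of $\norm t$ and then sum. Assertion \ref{it:dc2} will then follow from \ref{it:dc1} via Proposition~\ref{prop:torusreduction} and Lemma~\ref{lem:slice}.

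\textbf{Bounding the terms.} Fix $t\in T_G^-$ and put $b=\mathfrak t$. Since ${}^{\delta_0}\mathfrak t=m(a_{\mathfrak t}^*)$, Lemma~\ref{lem:UPtorus} applies with $a_b=a_{\mathfrak t}^*$. By \eqref{eq:at} and \eqref{eq:moduli} it gives, for $\Re(s)>R_1$,
\[
\int_{U_P}\abs{f(s,\delta_0u\mathfrak t)}\,du\le C_f\,I_{\fd_P}\big(\sigma_s-(2kc-1)D\big)\,x(t)^{-(kc+1)+\sigma_s}\,\norm t^{D}.
\]
By \eqref{eq:xt} we have $x(t)\le\norm t^{-1}$. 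Hence, once $\sigma_s\ge kc+1$, the factor $x(t)^{\sigma_s-(kc+1)}$ is at most $\norm t^{-(\sigma_s-kc-1)}$. We combine this with the volume bound \eqref{eq:volKtK} and with Lemma~\ref{lem:mcbound}, $\abs{\omega(t)}\le C_\omega\norm t^{E}$. The term indexed by $t$ is then bounded by a constant times $\norm t^{2\ell^2+E+D-(\sigma_s-kc-1)}$.

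\textbf{Summing over the cone.} The representatives are indexed by $e_1\ge\dots\ge e_\ell\ge0$, and $\norm t=q^{e_1}$. So the number of representatives with $\norm t=q^{m}$ is at most $(m+1)^{\ell}$. The sum therefore converges once $\sigma_s-kc-1-D-E-2\ell^2>0$; the polynomial count $(m+1)^\ell$ is harmless, so any strict excess suffices. This gives \ref{it:dc1} with
\[
R:=\max\big(R_1,\ \tfrac12-\tfrac{kc+1}{2}+kc+1+D+E+2\ell^2\big),
\]
which depends only on $k,c,D,E$. The matrix coefficient $\omega$ enters only through $C_\omega$, and $f$ only through $C_f$.

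\textbf{Deducing \ref{it:dc2}.} We apply Proposition~\ref{prop:torusreduction} to bound $Z^{\mathrm{abs}}$ by a sum over the cone involving finitely many $\omega_j$ and $f_l$. Each $\mathfrak t$ normalizes $U_1$, so Lemma~\ref{lem:slice} with $b=\mathfrak t$ replaces the $U_0$-integral by the $U_P$-integral. This costs the extra factor $\mathrm{mod}_{U_1}(\mathfrak t)^{-1}=x(t)^{c+1}\le1$ and a constant $\vol(K_{f_l,1})^{-1}$; one can use a single compact open subgroup fixing all the $f_l$. The resulting expression is a finite sum of series of the form \eqref{eq:strongUP}, each finite by \ref{it:dc1}.

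\textbf{Holomorphy.} For an entire section $f$, Lemma~\ref{lem:UPtorus} gives $C_f$ bounded on compacta of $\{\Re(s)>R\}$. In addition, $I_{\fd_P}(a)$ is decreasing in $a$, and the exponent of $\norm t$ stays below the critical value uniformly on compacta. So the bound on $Z^{\mathrm{abs}}$ is locally uniform in $s$, and holomorphy follows by the standard argument cited in Remark~\ref{rem:lambdaremarks}\ref{it:lr2}.

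\textbf{Main obstacle.} I expect the delicate point to be the bookkeeping of exponents in the first step: correctly combining $\delta_P(\mathfrak t)$ with $\abs{\det a_{\mathfrak t}^*}^{\sigma_s}$, and checking that the sign makes $x(t)$ appear with a large positive power. This is where the direction of the cone $T_G^-$, which forces $x(t)\le1$, is essential. Everything else is routine once Lemma~\ref{lem:UPtorus} is in hand.
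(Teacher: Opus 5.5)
Your proposal is correct and is essentially the paper's proof: Lemma~\ref{lem:UPtorus} with $b=\mathfrak t$, the bounds \eqref{eq:volKtK} and \eqref{eq:mcbound}, then Proposition~\ref{prop:torusreduction} and Lemma~\ref{lem:slice} with $\mathrm{mod}_{U_1}(\mathfrak t)^{-1}=x(t)^{c+1}\le1$, arriving at the same $R$. The only difference is cosmetic: you write the summand as a power of $\norm t=q^{e_1}$ and count representatives by $e_1$, whereas the paper writes it as a power of $x(t)=q^{-(e_1+\dots+e_\ell)}$ and counts by $e_1+\dots+e_\ell$.
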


\begin{proof}
We prove the assertions with
\begin{equation*}
R:=\max\big(R_1,\ \tfrac12-\tfrac{kc+1}2+D+kc+1+2\ell^2+E\big) ,
\end{equation*}
where $R_1$ is given by \eqref{eq:RUP}.
For \ref{it:dc1}, the hypothesis gives $\Re(s)>R_1$, so Lemma~\ref{lem:UPtorus} with $b=\mathfrak t$ and $a_b=a_{\mathfrak t}^*$ (see \eqref{eq:at} and \eqref{eq:xtdef}),
\eqref{eq:volKtK} and \eqref{eq:mcbound} bound the summand of \eqref{eq:strongUP} by
\[
C_GC_\omega C_f\,I_{\fd_P}\big(\sigma_s-(2kc-1)D\big)\,\norm t^{2\ell^2+E+D}\,\delta_P(\mathfrak t)\,x(t)^{\sigma_s}
\ \le\ C_GC_\omega C_f\,I_{\fd_P}\big(\sigma_s-(2kc-1)D\big)\,x(t)^{a},
\]
where $a:=\sigma_s-(kc+1)-2\ell^2-E-D$,
by \eqref{eq:moduli} and $\norm t\le x(t)^{-1}$ \eqref{eq:xt}; the hypothesis gives $a>0$.
The number of representatives $t$ with $x(t)=q^{-e}$ is the number of tuples
$e_1\ge\dots\ge e_\ell\ge0$ with sum $e$, at most $(e+1)^{\ell}$, so \eqref{eq:strongUP} is
bounded by a constant times $\sum_{e\ge0}(e+1)^{\ell}q^{-ae}<\infty$.

For \ref{it:dc2}, Proposition~\ref{prop:torusreduction} bounds $Z^{\mathrm{abs}}(s,\omega,f)$
by the sum \eqref{eq:torussum}, whose summands Lemma~\ref{lem:slice} bounds, for a compact
open subgroup $K_f$ fixing $f_1,\dots,f_{r_2}$, by
\[
\vol(K_{f,1})^{-1}\,\mathrm{mod}_{U_1}(\mathfrak t)^{-1}\,\vol(K_GtK_G)\,\max_j\abs{\omega_j(t)}\,\max_l\int_{U_P}\abs{f_l(s,\delta_0u\mathfrak t)}\,du .
\]
Since $\mathrm{mod}_{U_1}(\mathfrak t)^{-1}=x(t)^{c+1}\le1$ by \eqref{eq:moduli}, the sum is bounded by
$\vol(K_{f,1})^{-1}\sum_{j,l}$ of the sums \eqref{eq:strongUP} for $(\omega_j,f_l)$, which are
finite by \ref{it:dc1}. For an entire section $f$, the sections $f_l$ are entire as well, the constants $C_{f_l}$ of Lemma~\ref{lem:UPtorus} are bounded on compact subsets of $\{\Re(s)>R\}$, and the
subgroup $K_f$ is independent of $s$ (Remark~\ref{rem:sections}); hence $Z^{\mathrm{abs}}(s,\omega,f)$ is bounded on compact subsets of $\{\Re(s)>R\}$, and $Z(s,\omega,f)$ is holomorphic on $\Re(s)>R$ (see, e.g., \cite{Mat01}).
\end{proof}

\begin{remark}\label{rem:CFKorder}
In the proof of Theorem~\ref{thm:doublingconv}, Proposition~\ref{prop:torusreduction} is applied with the domain $U_0$, and the passage to $U_P$ (Lemma~\ref{lem:slice})
is carried out for each representative $t$ separately. This order of the two steps is the natural one, because the equivariance property of Lemma~\ref{lem:abscor23},
which underlies the reduction to the torus, holds for the domain $U_0$ and not for $U_P$. In the proof of \cite[Proposition~2.5]{CFK22} the passage to $U_P$ was
presented before the reduction to the torus (see \cite[(2.10)]{CFK22}), although the intended order was the one described here. One can also avoid the passage to $U_P$ altogether, using
Remark~\ref{rem:U0Alonetorus}.
\end{remark}
\begin{remark}\label{rem:SOeven}
One fact which simplifies the proof is that when we apply the Cartan decomposition we obtain elements $t$ with $x(t)\leq1$, and in fact $x(t)\to0$ along these representatives (see \eqref{eq:xt}).
This means that in \eqref{eq:lefttranslationtfrak} we obtain $x(t)^{s}$ decaying to $0$ fast enough when $\Re(s)$ is large. The situation is similar
when $G=\SO_{2\ell+1}$, but for $G=\SO_{c}$ with $c=2\ell$ the representatives are $\diag(\varpi^{e_1},\dots,\varpi^{e_\ell},\varpi^{-e_\ell},\dots,\varpi^{-e_1})$ with $e_1\ge\dots\ge e_{\ell-1}\ge\abs{e_\ell}$. When $\ell>2$ we have $e_1+\dots+e_{\ell-1}+e_\ell\geq e_1$ and $x(t)\leq\norm t^{-1}$ as in the symplectic case. For $\ell\leq 2$ one
can argue as in \cite{CFK-func} (after \cite[(3.45)]{CFK-func}), using the full orthogonal group $\mathrm{O}_c$, as follows.
Let
\[
w_1=\diag\Big(I_{\ell-1},\left(\begin{smallmatrix}&1\\1&\end{smallmatrix}\right),I_{\ell-1}\Big)\in\mathrm{O}_c(\Oo),\qquad\det w_1=-1.
\]
Conjugation by $w_1$ exchanges the coordinates $t_\ell$ and $t_\ell^{-1}$ of $t\in T_G$, so it maps the representative with the exponents $(e_1,\dots,e_{\ell-1},e_\ell)$ to
the element with the exponents $(e_1,\dots,e_{\ell-1},-e_\ell)$. The embedding \eqref{eq:embedding} extends to a homomorphism $\mathfrak e\colon\mathrm{O}_c\times\mathrm{O}_c\to\mathrm{O}_{2kc}$, and
$\det\mathfrak e(g_1,g_2)=(\det g_1)^{2k-1}\det g_2$; hence $\mathfrak e(g_1,g_2)\in H$ whenever $\det g_1=\det g_2$. In particular $\mathfrak e(g_0,{}^{\iota}g_0)\in H$ for all
$g_0\in\mathrm{O}_c$ (note that for $\SO_{c}$, $\iota$ is the conjugation by $\left(\begin{smallmatrix}&I_\ell\\-I_\ell\end{smallmatrix}\right)\in K_c$ and is a similitude of the form defining $\mathrm{O}_c$).
Lemma~\ref{lem:abscor23} remains valid for $g_0\in\mathrm{O}_c$: the element $\mathfrak e(g_0,{}^{\iota}g_0)$ normalizes $U$ and ${}^{\delta^{-1}}m(V_{(c^k)})$, the measure on ${}^{\delta^{-1}}m(V_{(c^k)})\backslash U$ is invariant under its action,
and ${}^{\delta}\mathfrak e(g_0,{}^{\iota}g_0)$ is still the image in $M_P$ of the diagonal embedding of $g_0$ in $\GL_c^{\triangle}$, so that \eqref{eq:diaginvariance} holds up to the factor
$\eta_\rho(g_0)$, and $\abs{\eta_\rho(g_0)}=\abs{\det g_0}^{e_\rho}=1$.

Now let $g=\kappa_1t\kappa_2$ with $\kappa_1,\kappa_2\in K_G$ and a representative $t$ such that $e_\ell<0$. Write $t=w_1^{-1}t'w_1$, so that the exponents of $t'$ are
$e_1\ge\dots\ge e_{\ell-1}\ge-e_\ell>0$, and put $\kappa_1'=\kappa_1w_1^{-1}$ and $\kappa_2'=w_1\kappa_2$, which belong to $\mathrm{O}_c(\Oo)$ and have determinant $-1$.
Let $\mathfrak t'=\mathfrak e_2({}^{\iota}t')$. The computation in the proof of Proposition~\ref{prop:torusreduction} gives
\[
\mathfrak e_2({}^{\iota}g)=\mathfrak e(\kappa_1',{}^{\iota}\kappa_1')\,\mathfrak t'\,\mathfrak e({\kappa_1'}^{-1},{}^{\iota}\kappa_2'),
\]
where $\kappa:=\mathfrak e({\kappa_1'}^{-1},{}^{\iota}\kappa_2')\in K_H$ because $\det{\kappa_1'}^{-1}=\det{}^{\iota}\kappa_2'=-1$. By Lemma~\ref{lem:abscor23} with $g_0=\kappa_1'$,
\[
\int_{U_0}\abs{f(s,\delta u_0\mathfrak e_2({}^{\iota}g))}\,du_0=\int_{U_0}\abs{f(s,\delta u_0\mathfrak t'\kappa)}\,du_0 .
\]
Thus in \eqref{eq:torussum} the inner integral at $\mathfrak t$ is replaced by the inner integral at $\mathfrak t'$, and $t'$ satisfies $\abs{t'_i}\le1$ for all $i$ and
$x(t')=q^{-(e_1+\,\dots\,+e_{\ell-1}-e_\ell)}\le\norm{t'}^{-1}=\norm t^{-1}$, as in \eqref{eq:xt}. The factors $\abs{\omega(g)}$ and $\vol(K_GtK_G)$ are treated as before, by \eqref{eq:mcbound} and \eqref{eq:volKtK}, since they involve the representative $t\in T_G^-$ and not $t'$. With this modification the proof of
Theorem~\ref{thm:doublingconv} applies. For $\ell=1$ we have $G=T_G\cong F^\times$, the representatives are $\diag(\varpi^{e},\varpi^{-e})$ with $e\in\bZ$, $w_1=\left(\begin{smallmatrix}&1\\1&\end{smallmatrix}\right)$
inverts $t$, and the argument reduces the representatives with $e<0$ to those with $e>0$. For $\ell=2$ it treats, in particular, the representatives with the exponents $(e,-e)$, $e>0$, for which
$x(t)=1$ while $\norm t=q^{e}$.
(In \cite[(3.45)]{CFK-func} the cases of $\SO_4$ and of $\GSpin_4$, which is handled similarly, were overlooked.)
\end{remark}
\begin{remark}\label{rem:Gspin}
The case of $\GSpin_c$ follows along the same lines as the proofs above, since the domain of integration is $C_G^{\bullet}\backslash G$ where $C_G^{\bullet}=F^\times$ is
the identity component of the center of $\GSpin_c$.
\end{remark}

\appendix
\section{The generalized doubling integral for \texorpdfstring{$\GL_c$}{GL(c)}}\label{sec:GLdoubling}
Let $c,k\ge1$ and $G=\GL_c$. In this appendix we prove the analog of Theorem~\ref{thm:doublingconv} for the doubling integral of $G$
\cite[\S2.4]{CFK22} (see Theorem~\ref{thm:GLdoublingconv}).
In contrast with \S~\ref{ssec:doubling}, the representatives for the Cartan decomposition of $G$ are the elements
\begin{equation}\label{eq:GLcone}
t=\diag(\varpi^{e_1},\dots,\varpi^{e_c}),\qquad e_1\ge\dots\ge e_c,
\end{equation}
where the integers $e_i$ are of arbitrary sign, and the section contributes the factor $\abs{\det t}^{\Re(s)}$, which is unbounded as $\Re(s)\to\infty$ when $e_1+\dots+e_c<0$.
We treat this factor in two ways. For the integral over $U_0$, which is the inner integral of the doubling integral, the element $\delta_1$ compensates for it, and
Theorem~\ref{thm:mgpsi} suffices, for all $k\ge1$ (Lemma~\ref{lem:GLU0}). For the integral over $U_P$ the element $\delta_1$ is absent; here we assume $k\ge2$ and
use the Schwartz--Bruhat function $\phi_1$ of \eqref{eq:thegauge2psi}, together with an identity for the Iwasawa coordinates (Lemmas~\ref{lem:GLlevi} and \ref{lem:GLUPtorus}).
For $k=1$ there is no such function, and the statement with the domain $U_P$
fails (see the proof of \cite[Proposition~2.5]{CFK22}); see Remark~\ref{rem:GLk1}.

\subsection{The setting}\label{ssec:GLsetting}
Let $H=\GL_{2kc}$. Define the Borel subgroups $B_H=B_{2kc}=T_H\ltimes N_H$ with $T_H=T_{2kc}$, and $B_{G}=B_c=T_G\ltimes N_G$, and the maximal compact subgroups
$K_H=K_{2kc}$, $K_G=K_c$ (see \S~\ref{ssec:notation}).

Let $Q=M_Q\ltimes U_Q$ be the parabolic subgroup $P_{(c^{k-1},2c,c^{k-1})}$ of $H$, so that
$M_Q\cong\GL_c^{k-1}\times H_0\times\GL_c^{k-1}$ with $H_0=\GL_{2c}$ occupying the middle $2c\times2c$ block. Let $U=U_Q$, with its character $\psi_U$ of
\cite[\S2.2]{CFK22}; for $k=1$, $Q=H$, $U$ is trivial and $\psi_U=1$. The group $G\times G$ is embedded in $H$ in the stabilizer of $\psi_U$ in $M_Q$.
The embedding $\mathfrak e\colon G\times G\to H$ is given by
\begin{equation*}
\mathfrak e(g_1,g_2)=\diag(g_1,\dots,g_1,g_2,g_1,\dots,g_1),
\end{equation*}
where $g_1$ appears $k$ times on the left of $g_2$ and $k-1$ times on the right \cite[\S2.2]{CFK22}.
As in \S~\ref{ssec:doubling}, $\mathfrak e_1(g)=\mathfrak e(g,I_c)$ and $\mathfrak e_2(g)=\mathfrak e(I_c,g)$, so that $\mathfrak e_1(G)$ and $\mathfrak e_2(G)$ commute;
here the involution $\iota$ is trivial.

Let $P=M_P\ltimes U_P$ be the parabolic subgroup $P_{(kc,kc)}$ of $H$, and write
\[
m(a_1,a_2)=\diag(a_1,a_2),\qquad u(Y)=\begin{pmatrix}I_{kc}&Y\\&I_{kc}\end{pmatrix},\qquad v(X)=\begin{pmatrix}I_{kc}&\\X&I_{kc}\end{pmatrix},
\]
for $a_1,a_2\in\GL_{kc}$ and $X,Y\in\Mat_{kc}$. Then $U_P=\{u(Y)\}$, $\overline{U}_P=\{v(X)\}$, the entries of $Y$ and $X$ are coordinates on $U_P$ and $\overline{U}_P$,
the Haar measures $du$ and $dv$ are the product measures, and $\fd_P:=\dim U_P=(kc)^2$. Also
$\delta_P(m(a_1,a_2))=\abs{\det a_1}^{kc}\abs{\det a_2}^{-kc}$ and \[m(a_1,a_2)u(Y)m(a_1,a_2)^{-1}=u(a_1Ya_2^{-1}).\]

Let $\rho$ and $\rho'$ be $(k,c)$ representations of $\GL_{kc}$. In \cite{CFK22} the authors fixed $\rho=\rho_c(\tau_0)$ and $\rho'=\chi^{-1}\rho_c(\tau_0^\vee)$ for
a generic $\tau_0\in\Irr(\GL_k)$ and a quasi-character $\chi$ of $F^\times$. For $s\in\bC$ let $V(s,\rho\otimes\rho')$ be the
space of the smooth functions $f\colon H\to\bC$ such that for each $h\in H$ there is a function $W_h$ in $W_\psi(\rho)\otimes W_\psi(\rho')$, regarded as a function on
$\GL_{kc}\times\GL_{kc}$, such that
\begin{equation}\label{eq:GLsection}
f(s,u\,m(a_1,a_2)h)=\delta_P^{1/2}(m(a_1,a_2))\,\abs{\det a_1}^{s-1/2}\abs{\det a_2}^{-s+1/2}\,W_h(a_1,a_2),\qquad u\in U_P .
\end{equation}
Put
\begin{equation}\label{eq:GLsigmas}
\sigma_s:=\Re(s)-\tfrac12+\tfrac{kc}2,
\end{equation}
so that
\begin{equation*}
\abs{f(s,u\,m(a_1,a_2)h)}=(\abs{\det a_1}/\abs{\det a_2})^{\sigma_s}\,\abs{W_h(a_1,a_2)}.
\end{equation*}

For $Z\in\Mat_c$ define $\widehat{Z}=\left(\begin{smallmatrix}0&0\\Z&0\end{smallmatrix}\right)\in\Mat_{kc}$.
Let
\begin{equation*}
\delta_0=\begin{pmatrix}&I_{kc}\\I_{kc}&\end{pmatrix},\qquad \delta_1=u(\widehat{I}_c)\in U_P\cap H_0,\qquad \delta=\delta_0\delta_1
\end{equation*}
(see \cite[\S2.4]{CFK22}). Note that ${}^{\delta_0}u(Y)=v(Y)$, in particular ${}^{\delta_0}U_P=\overline{U}_P$, and ${}^{\delta_0}m(a_1,a_2)=m(a_2,a_1)$.
In addition, for $g\in G$ we have $\mathfrak e(g,g)=m(g^{\triangle},g^{\triangle})$ with $g^{\triangle}=\diag(g,\dots,g)\in\GL_{kc}$ as in \S~\ref{ssec:kcdata}.
This element commutes with $\delta_0$, and also with $\delta_1$ because $g^{\triangle}\,\widehat{I}_c\,(g^{\triangle})^{-1}=\widehat{I}_c$.
Hence ${}^{\delta}\mathfrak e(g,g)=m(g^{\triangle},g^{\triangle})\in M_P$, and by \eqref{eq:GLsection} and \eqref{eq:etarho}, for $f\in V(s,\rho\otimes\rho')$ and $h\in H$,
\begin{equation}\label{eq:GLdiaginvariance}
f\big(s,{}^{\delta}\mathfrak e(g,g)\,h\big)=\eta_\rho(g)\eta_{\rho'}(g)\,f(s,h).
\end{equation}
This is the analog of \eqref{eq:diaginvariance}; in the setting of \cite{CFK22}, $\eta_\rho\eta_{\rho'}=\chi^{-k}\circ\det$.

Let $U_0=U\cap U_P$. This is the subgroup of the elements $u(Y_0)$ such that the bottom left $c\times c$ block of $Y_0\in\Mat_{kc}$ is $0$, and
\[\fd_0:=\dim U_0=(kc)^2-c^2.\]
As in \S~\ref{sssec:U0toUP} let $U_1=U_P\cap M_Q=U_P\cap H_0$; it is the subgroup of the elements $u(\widehat{Y}_1)$, $Y_1\in\Mat_c$. 
In particular $\delta_1\in U_1$. Since $U_P$ is abelian,
\begin{equation}\label{eq:GLUPsplit}
U_P=U_0\times U_1.
\end{equation}
For $k=1$ the group $U_0$ is trivial and $U_P=U_1$.

Let $\pi\in\Irr(G)$, $\omega$ be a matrix coefficient of $\pi^\vee$ and $f\in V(s,\rho\otimes\rho')$. The local integral is
(\cite[\S2.4]{CFK22})
\begin{equation}\label{eq:GLZdef}
Z(s,\omega,f)=\int_G\int_{U_0}\omega(g)\,f\big(s,\delta u_0\,\mathfrak e_2(g)\big)\,\psi_U(u_0)\,du_0\,dg ,
\end{equation}
where the restriction of $\psi_U$ to $U_0$ plays no role for the purpose of absolute convergence. Let $Z^{\mathrm{abs}}(s,\omega,f)$ be the integral \eqref{eq:GLZdef} with
$\omega$, $f$ and $\psi_U$ replaced by their absolute values.

The Cartan decomposition is $G=\bigsqcup_tK_GtK_G$, where $t$ varies over the elements \eqref{eq:GLcone}.
For $t=\diag(t_1,\dots,t_c)\in T_G$ put
\begin{equation}\label{eq:GLxpm}
x^{+}(t):=(\prod_{\abs{t_i}\le1}\abs{t_i})\le1,\qquad x^{-}(t):=(\prod_{\abs{t_i}>1}\abs{t_i})\ge1,\qquad x^{\mp}(t):=\frac{x^{-}(t)}{x^{+}(t)}=\prod_{i=1}^{c}\max(\abs{t_i},\abs{t_i}^{-1})\ge1,
\end{equation}
so that $\abs{\det t}=x^{+}(t)x^{-}(t)$, $\abs{\det t}^{\pm1}\le x^{\mp}(t)$ and $\norm t\le x^{\mp}(t)$ (see \S~\ref{ssec:height}, property~\ref{it:h4}).
For $t$ as in \eqref{eq:GLcone}, $x^{\mp}(t)=q^{\abs{e_1}+\dots+\abs{e_c}}$. Let
\begin{equation}\label{eq:GLfrakt}
\mathfrak t:=\mathfrak e_2(t)=m\big(I_{kc},a_t\big),\qquad a_t:=\diag(t,I_{(k-1)c}),\qquad {}^{\delta_0}\mathfrak t=m(a_t,I_{kc}) .
\end{equation}
The element $\mathfrak t\in T_H$ normalizes $U_P$, $U_0$ and $U_1$: ${}^{\mathfrak t}u(Y)=u(Ya_t^{-1})$, which multiplies the first $c$ columns of $Y$ by $t^{-1}$.
Hence, with the notation $\mathrm{mod}_{U_1}$ of \S~\ref{sssec:U0toUP},
\begin{equation}\label{eq:GLmoduli}
\delta_P(\mathfrak t)=\abs{\det t}^{-kc},\qquad \mathrm{mod}_{U_1}(\mathfrak t)=\abs{\det t}^{-c}.
\end{equation}

\subsection{The Levi coordinates}\label{ssec:GLlevi}
For the Iwasawa decomposition we will need a variant of Lemma~\ref{lem:iwasawaunip}, which we state and prove in this subsection.

Recall the notation $\extn{r}{g}$, the content $\ct$ and \eqref{eq:GLCB}--\eqref{eq:GLjacobi} of \S~\ref{ssec:height}. For $t\in T_G$, the nonzero $r\times r$ minors of $t$ are the products of $r$ of its nonzero entries, hence
\begin{equation}\label{eq:GLtminors}
\max_{0\le r\le c}\extn{r}{t}=x^{-}(t).
\end{equation}
For $X\in\Mat_{kc}$ and a subset $S\subset\{1,\dots,kc\}$ let $\ct^{*}_S(X)$ be the maximum of $1$ and of the absolute values of all the minors of $X$, of all sizes, whose
columns belong to $S$; recall that $\extn{0}{X}=1$, so that $\ct^{*}_{\emptyset}(X)=1$. Put $\ct^{*}_j(X):=\ct^{*}_{\{1,\dots,j\}}(X)$ for $0\le j\le kc$, and $\ct^{*}(X):=\ct^{*}_{kc}(X)$.
The function $\ct^{*}$ is related to the content as follows: the maximal minors of the $kc\times2kc$ matrix $(X\mid I_{kc})$ are, up to signs, the minors of $X$ of all sizes (the empty one included), hence
$\ct^{*}(X)=\ct((X\mid I_{kc}))=\max_{0\le r\le kc}\extn{r}{X}$ (cf. Example~\ref{ex:twoblock}), and similarly $\ct^{*}_S(X)=\ct((X_S\mid I_{kc}))$, where $X_S$ consists of the columns of $X$ in $S$.
By definition
\begin{equation}\label{eq:GLmumonotone}
1\le\ct^{*}_S(X)\le\ct^{*}_{S'}(X)\le\ct^{*}(X),\qquad S\subset S',\qquad\vartheta(X)\le\ct^{*}(X).
\end{equation}
Also recall that $\vartheta$ was defined for an arbitrary matrix by \eqref{eq:Nx}.
\begin{lemma}\label{lem:GLlevi}
Let $X\in\Mat_{kc}$. Write $v(X)=u\,m(a_1,a_2)\,\kappa$ with $u\in U_P$, $a_1,a_2\in\GL_{kc}$ and $\kappa\in K_H$, and $a_1=u'y\kappa'$ with
$u'\in V_{(c^k)}$, $y=\diag(y_1,\dots,y_k)\in M_{(c^k)}$ and $\kappa'\in K_{kc}$. Then:
\begin{enumerate}[leftmargin=*,label=(\roman*),ref=(\roman*)]
\item\label{it:GLl1} $\abs{\det a_2}=\abs{\det a_1}^{-1}=\ct^{*}(X)$ and $\norm{m(a_1,a_2)}\le\vartheta(X)^{2kc-1}$.
\item\label{it:GLl2} $\prod_{i=1}^{j}\abs{\det y_i}^{-1}=\ct^{*}_{jc}(X)$ for $1\le j\le k$.
\item\label{it:GLl3} $\extn{r}{y_1^{-1}}\le\ct^{*}_c(X)\le\ct^{*}(X)$ for $0\le r\le c$.
\item\label{it:GLl4} If $k\ge2$, then $\extn{r}{y_2}\le1$ for $0\le r\le c$.
\end{enumerate}
\end{lemma}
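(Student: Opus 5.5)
The plan is to compare rows of $v(X)$ with rows of $u\,m(a_1,a_2)\kappa$, as in the proof of Lemma~\ref{lem:iwasawaunip}, and to read off determinants from maximal minors using the $K$-invariance \eqref{eq:mukappa} of the content.

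For \ref{it:GLl1}, the bottom $kc$ rows of $v(X)$ form $(X\mid I_{kc})$. The bottom $kc$ rows of $u\,m(a_1,a_2)$ form $(0\mid a_2)$, so $(X\mid I_{kc})=(0\mid a_2)\kappa$. Taking contents gives
\[
\abs{\det a_2}=\ct((X\mid I_{kc}))=\ct^{*}(X),
\]
and $\det v(X)=1$ together with $\abs{\det\kappa}=1$ gives $\abs{\det a_1}=\abs{\det a_2}^{-1}$. The height bound is Example~\ref{ex:twoblock} with $n_1=kc$ (Corollary~\ref{cor:iwasawaunip}\ref{it:ci1}).

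For \ref{it:GLl2}, I will work with $v(X)^{-1}=v(-X)$ and columns rather than rows, since $a_1$ naturally sits on the column side. From $v(X)=u\,m\,\kappa$ we get $\kappa=m^{-1}u^{-1}v(X)$, hence
\[
\kappa^{-1}=v(-X)\,u\,m.
\]
The first $kc$ columns of $u\,m$ form $\left(\begin{smallmatrix}a_1\\0\end{smallmatrix}\right)$, so the first $kc$ columns of $\kappa^{-1}$ equal $\left(\begin{smallmatrix}I\\-X\end{smallmatrix}\right)a_1$. Since $\kappa^{-1}\in K_H$, these columns form an orthonormal-type frame. This is the main obstacle: I have to turn it into a content statement for $a_1^{-1}$.

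The cleaner route is to transpose. Taking the first $jc$ columns, $\left(\begin{smallmatrix}I\\-X\end{smallmatrix}\right)a_1$ restricted to those columns equals $\left(\begin{smallmatrix}I\\-X\end{smallmatrix}\right)$ times the first $jc$ columns of $a_1=u'y\kappa'$. Rather than this, I will use $a_1^{-1}=\kappa'^{-1}y^{-1}u'^{-1}$ and the relation
\[
\left(\begin{smallmatrix}I\\-X\end{smallmatrix}\right)=(\text{columns of }\kappa^{-1})\,a_1^{-1}.
\]
Transposing the first-$jc$-columns block of $a_1^{-1}$ gives ${}^t(a_1^{-1})$, whose first $jc$ columns are lower-triangular-block coordinates. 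The content of the first $jc$ columns of $\left(\begin{smallmatrix}I\\-X\end{smallmatrix}\right)$ is then the content of the first $jc$ columns of $a_1^{-1}$. Here I use \eqref{eq:mukappa} in column form, since $\kappa^{-1}$ restricted to $kc$ columns has a unimodular completion.

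Now $a_1^{-1}\kappa'^{-1}$-independence and the block upper-triangular form of $y^{-1}u'^{-1}$ show that the first $jc$ columns of $\kappa'a_1^{-1}$ have content $\prod_{i\le j}\abs{\det y_i}^{-1}$. This mirrors the $(0\mid L_l)$ argument in Lemma~\ref{lem:iwasawaunip}, with columns in place of rows. On the other side, the first $jc$ columns of $\left(\begin{smallmatrix}I\\-X\end{smallmatrix}\right)$ have maximal minors equal to the minors of $X$ with columns in $\{1,\dots,jc\}$, which gives $\ct^{*}_{jc}(X)$. I expect this index bookkeeping (rows vs. columns, and which side $K$ acts on) to be the delicate point, and I will check it first on $k=1$.

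For \ref{it:GLl3}, by \eqref{eq:GLjacobi} we have $\extn{r}{y_1^{-1}}=\abs{\det y_1}^{-1}\extn{c-r}{y_1}$. The first $c$ columns of $\kappa'a_1^{-1}$ equal $\left(\begin{smallmatrix}y_1^{-1}\\0\end{smallmatrix}\right)$, and these are bounded, via \eqref{eq:cauchybinet}, by the content of the first $c$ columns of the frame, which is $\ct^{*}_c(X)$. Every $r\times r$ minor of $y_1^{-1}$ is a minor of these columns, and it is bounded by the corresponding minor of $\left(\begin{smallmatrix}I\\-X\end{smallmatrix}\right)$ restricted to columns $1..c$, because the complementary rows carry an orthonormal factor. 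So $\extn{r}{y_1^{-1}}\le\ct^{*}_c(X)$, and \eqref{eq:GLmumonotone} gives the second inequality.

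For \ref{it:GLl4}, the key observation is that the second $c$ columns of $X$ enter only through the $I$ block. Columns $c+1,\dots,2c$ of $\left(\begin{smallmatrix}I\\-X\end{smallmatrix}\right)$ have the form $\left(\begin{smallmatrix}e\\ \ast\end{smallmatrix}\right)$, so the ratio of contents for $j=2$ and $j=1$ combined with the minors of $y_2$ should give $\extn{r}{y_2}\le1$. I would prove this by writing $y_2$, modulo $K$, as a Schur complement of an integral matrix, and if that fails, by comparing $\ct^{*}_{2c}/\ct^{*}_c$ at each exterior power $r$ via \eqref{eq:GLCB}.
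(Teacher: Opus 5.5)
Your part (i) matches the paper's argument. Your column relation $\left(\begin{smallmatrix}I\\-X\end{smallmatrix}\right)=\kappa^{-1}\left(\begin{smallmatrix}a_1^{-1}\\0\end{smallmatrix}\right)$ with $a_1^{-1}=\kappa'^{-1}y^{-1}u'^{-1}$ is a sound dual of the paper's row-content comparison. The first $jc$ columns give (ii). The first $c$ columns give (iii) directly, provided you argue correctly. $\left(\begin{smallmatrix}y_1^{-1}\\0\end{smallmatrix}\right)$ is an element of $K_H$ times the first $c$ columns $Z$ of $\left(\begin{smallmatrix}I\\-X\end{smallmatrix}\right)$. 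So \eqref{eq:GLCB}, not \eqref{eq:cauchybinet} (for $r<c$ the minors are not maximal), gives $\extn{r}{y_1^{-1}}\le\extn{r}{Z}$. Each $r\times r$ minor of $Z$ is $0$ or $\pm$ a minor of $X$ with columns in $\{1,\dots,c\}$; it is not bounded by ``the corresponding minor''.

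The gap is (iv): you give no argument there. Your ``key observation'' does not hold as stated. The columns $c+1,\dots,2c$ of $\left(\begin{smallmatrix}I\\-X\end{smallmatrix}\right)$ carry arbitrary columns of $-X$ below the unit vectors. The ratio $\ct^{*}_{2c}(X)/\ct^{*}_{c}(X)=\abs{\det y_2}^{-1}$ from (ii) only gives $\abs{\det y_2}\le1$, the case $r=c$, and says nothing about smaller minors of $y_2$. The Schur-complement and exterior-power alternatives are not carried out.

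The missing idea is integrality, and it is already visible in your own formula $\kappa^{-1}=v(-X)\,u\,m(a_1,a_2)$. Its top-left $kc\times kc$ block is exactly $a_1$, so $a_1\in\Mat_{kc}(\Oo)$. Then $u'y=a_1\kappa'^{-1}\in\Mat_{kc}(\Oo)$ is block upper triangular with diagonal blocks $y_1,\dots,y_k$. Hence every $y_l\in\Mat_c(\Oo)$ and $\extn{r}{y_l}\le1$ for all $l$ and $r$, which gives (iv). Combined with \eqref{eq:GLjacobi} and (ii) for $j=1$, this also yields (iii) in one line: $\extn{r}{y_1^{-1}}=\abs{\det y_1}^{-1}\extn{c-r}{y_1}\le\ct^{*}_c(X)$.

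The paper reaches $\extn{r}{y_l}\le1$ differently. It computes in two ways the content of the rows of $v(X)$ lying in a set $S_l$ of $r$ rows of block $l$, in the blocks $l+1,\dots,k$, and in the last block. This gives $\extn{r}{(y_l)_{S_l}}\,\ct^{*}_{lc}(X)=\ct^{*}_{S'}(X)\le\ct^{*}_{lc}(X)$ for a column set $S'\subset\{1,\dots,lc\}$, by \eqref{eq:GLmumonotone}.
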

\begin{proof}
We argue as in the proof of Lemma~\ref{lem:iwasawaunip}\ref{it:iw1}, for the composition $(c^k,kc)$ of $2kc$. Put $v=v(X)$. The decomposition
\[
v=\big(u\,m(u',I_{kc})\big)\,\diag(y_1,\dots,y_k,a_2)\,\big(m(\kappa',I_{kc})\,\kappa\big)
\]
is an Iwasawa decomposition of $v\in\overline{U}_P\subset \bV_{(c^k,kc)}$, with respect to $P_{(c^k,kc)}$. Denote $L:=u\,m(u',I_{kc})\diag(y_1,\dots,y_k,a_2)\in P_{(c^k,kc)}$;
the projection of $L$ into $M_{(c^k,kc)}$ is $\diag(y_1,\dots,y_k,a_2)$.

Fix $1\le l\le k$, $0\le r\le c$ and a set $S_l$ of $r$ rows in the block $l$ (the $l$-th block of $c$ rows of the composition $(c^k,kc)$). Let $S$ be the union of $S_l$ and of the rows of the blocks $l+1,\dots,k$, and let $v_S$ (resp., $L_S$) be the matrix formed by the rows of
$v$ (resp., $L$) in $S$ and in the last block. We compute the content $\ct(v_S)$ in two ways.

First, $\ct(v_S)=\ct(L_S)$ by \eqref{eq:mukappa}. The rows of $L_S$ vanish in the columns of the blocks $1,\dots,l-1$, and in the remaining columns $L_S$ takes the form
\[
\begin{pmatrix}(y_l)_{S_l}&\ast\\0&L'\end{pmatrix},
\]
where $(y_l)_{S_l}$ consists of the rows of $y_l$ in $S_l$, and $L'$ is a block upper triangular square matrix with the diagonal blocks $y_{l+1},\dots,y_k,a_2$.
A maximal minor of $L_S$ vanishes unless it contains all the columns of $L'$ (otherwise the rows of $L'$ are linearly dependent in it), in which case it is the product of an $r\times r$ minor of $(y_l)_{S_l}$ and
$\det L'$. Hence
\begin{equation}\label{eq:GLLS}
\ct(v_S)=\extn{r}{(y_l)_{S_l}}\,\abs{\det a_2}\prod_{j=l+1}^{k}\abs{\det y_j}.
\end{equation}

Second, the rows of $v$ in $S$ are the standard basis vectors $e_i$, $i\in S$, and the last $kc$ rows of $v$ form the matrix $(X\mid I_{kc})$. A maximal minor of $v_S$ vanishes unless it contains the columns
$i\in S$; expanding along the rows $e_i$ and then along the columns of $I_{kc}$, such a minor is, up to a sign, a minor of $X$ whose columns belong to the complement $S^c$ of $S$ in $\{1,\dots,kc\}$,
and every such minor (the empty one included) occurs. Hence
\begin{equation}\label{eq:GLcontentRS}
\ct(v_S)=\ct^{*}_{S^c}(X).
\end{equation}

For $r=0$ the set $S_l$ is empty, $\extn{0}{(y_l)_{S_l}}=1$ and $S^c$ consists of the first $lc$ indices, so \eqref{eq:GLLS} and \eqref{eq:GLcontentRS} give
\begin{equation}\label{eq:GLLR}
\abs{\det a_2}\prod_{j=l+1}^{k}\abs{\det y_j}=\ct^{*}_{lc}(X),\qquad 1\le l\le k.
\end{equation}
In addition, since $\abs{\det v}=1$ and $\abs{\det\kappa}=\abs{\det\kappa'}=1$,
\begin{equation}\label{eq:GLLR0}
\abs{\det a_2}\prod_{j=1}^{k}\abs{\det y_j}=1.
\end{equation}
The case $l=k$ of \eqref{eq:GLLR} is $\abs{\det a_2}=\ct^{*}(X)$, and together with
\eqref{eq:twoblockbounds} (see Example~\ref{ex:twoblock}) we obtain \ref{it:GLl1}. Dividing \eqref{eq:GLLR0} by the case $l=j$ of \eqref{eq:GLLR} we obtain \ref{it:GLl2}.

For the remaining assertions, let $1\le l\le k$ and $0\le r\le c$ be arbitrary. Substituting \eqref{eq:GLLR} in \eqref{eq:GLLS} and comparing with \eqref{eq:GLcontentRS}, we obtain for every set $S_l$ of $r$ rows in the block $l$,
\begin{equation}\label{eq:GLySl}
\extn{r}{(y_l)_{S_l}}\,\ct^{*}_{lc}(X)=\ct^{*}_{S^c}(X).
\end{equation}
The set $S^c$ consists of the indices of the blocks $1,\dots,l-1$ and of the indices of the block $l$ which do not belong to $S_l$; in particular $S^c\subset\{1,\dots,lc\}$, and
$\ct^{*}_{S^c}(X)\le\ct^{*}_{lc}(X)$ by \eqref{eq:GLmumonotone}. Since $\ct^{*}_{lc}(X)\ge1$, we can divide \eqref{eq:GLySl} by $\ct^{*}_{lc}(X)$ and deduce $\extn{r}{(y_l)_{S_l}}\le1$.
Every $r\times r$ minor of $y_l$ is an $r\times r$ minor of the $r\times c$ matrix $(y_l)_{S_l}$, where $S_l$ is the set of the rows of this minor. Therefore
\begin{equation}\label{eq:GLylminors}
\extn{r}{y_l}=\max_{S_l}\extn{r}{(y_l)_{S_l}}\le1,\qquad 1\le l\le k,\quad 0\le r\le c,
\end{equation}
where $S_l$ varies over the sets of $r$ rows in the block $l$. For $k\ge2$ and $l=2$ this is \ref{it:GLl4}.

Let $k\ge1$ again and let $l=1$. Then \eqref{eq:GLylminors} with $c-r$ in place of $r$ reads $\extn{c-r}{y_1}\le1$. By \eqref{eq:GLjacobi} and \ref{it:GLl2} with $j=1$,
\[
\extn{r}{y_1^{-1}}=\abs{\det y_1}^{-1}\extn{c-r}{y_1}\le\abs{\det y_1}^{-1}=\ct^{*}_c(X)\le\ct^{*}(X),
\]
which is \ref{it:GLl3}.
\end{proof}

\subsection{The integral over \texorpdfstring{$U_P$}{UP}}\label{ssec:GLUP}
The following lemma is the analog of Lemma~\ref{lem:UPtorus}. In this subsection $k\ge2$. Let $D=D(\rho)$, $D'=D(\rho')$ and $e_\rho$ be as in \S~\ref{sec:applications}, and put
\begin{equation}\label{eq:GLA}
\beta_0:=(2kc-1)\big(c\abs{e_\rho}+2c(k-1)D+D'\big).
\end{equation}

\begin{lemma}\label{lem:GLUPtorus}
Assume $k\ge2$, let $f\in V(s,\rho\otimes\rho')$ and let $\beta_0$ be given by \eqref{eq:GLA}. There is a real number $R_0$, depending only on $k$, $c$, $D$, $e_\rho$ and $D'$, such that
for every $s$ with $\Re(s)>R_0$ there are a constant $C_f\ge0$ and an integer $l_f\ge0$ such that for all $t\in T_G$,
\begin{equation}\label{eq:GLUPtorusbound}
\int_{U_P}\abs{f(s,\delta_0u\mathfrak t)}\,du\ \le\ C_{f,s}\,I_{\fd_P}(\sigma_s/2-\beta_0)\,\delta_P(\mathfrak t)\,\abs{\det t}^{-D}\,x^{+}(t)^{\sigma_s}\,x^{-}(t)^{-\sigma_s/2},
\qquad C_{f,s}:=C_f\,q^{3l_fc\,\sigma_s/2},
\end{equation}
with $I_{\fd_P}$ as in \eqref{eq:Iea}, and the right-hand side of \eqref{eq:GLUPtorusbound} is finite. If $f$ is an entire section, then $l_f$ can be chosen independently of $s$,
$C_f$ can be chosen to be bounded when $s$ varies in a compact subset of $\{\Re(s)>R_0\}$ (hence so is $C_{f,s}$), and $s\mapsto\int_{U_P}f(s,\delta_0u\mathfrak t)\,du$ is holomorphic on $\Re(s)>R_0$.
\end{lemma}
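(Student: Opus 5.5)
The plan is to follow the proof of Lemma~\ref{lem:UPtorus}, with two changes: the section now involves the pair $(a_1,a_2)$, and the factor $\abs{\det t}^{\sigma_s}$ must be controlled through the Schwartz--Bruhat function $\phi_1$ of \eqref{eq:thegauge2psi}.

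\emph{Step 1: transfer to $\overline{U}_P$.} Since ${}^{\delta_0}\mathfrak t=m(a_t,I_{kc})$ and ${}^{\delta_0}u(Y)=v(Y)$, conjugating $u$ past $\mathfrak t$ gives
\[
\int_{U_P}\abs{f(s,\delta_0u\mathfrak t)}\,du=\delta_P(\mathfrak t)\int_{\Mat_{kc}}\abs{f(s,m(a_t,I_{kc})v(X)\delta_0)}\,dX .
\]
Write $v(X)=u\,m(a_1,a_2)\kappa$ and $a_1=u'y\kappa'$ as in Lemma~\ref{lem:GLlevi}. Since $a_t\in M_{(c^k)}$, we get $a_ta_1=u''\,\diag(ty_1,y_2,\dots,y_k)\,\kappa'$ with $u''\in V_{(c^k)}$. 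By \eqref{eq:GLsection} and Lemma~\ref{lem:GLlevi}\ref{it:GLl1},
\[
\abs{f}=\abs{\det t}^{\sigma_s}\,\ct^{*}(X)^{-2\sigma_s}\,\abs{W_{\kappa\delta_0}(a_ta_1,a_2)}.
\]
As $\kappa$ varies, $W_{\kappa\delta_0}$ ranges over finitely many sums of products $W\otimes W'$. For each such product I will bound the two factors separately.
\begin{itemize}
\item For $W'\in W_\psi(\rho')$, Theorem~\ref{thm:mgpsi} together with $\norm{a_2}\le\vartheta(X)^{2kc-1}$ bounds $\abs{W'(a_2)}$ by a constant times $\vartheta(X)^{(2kc-1)D'}$.
\item For $W\in W_\psi(\rho)$, I use \eqref{eq:thegauge2psi} (this is where $k\ge2$ is needed). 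It gives the factor $\abs{\det y_k}^{e_\rho}\prod_{i<k}\abs{\det(y_k^{-1}y_i)}^{-D}$, with $y_1$ replaced by $ty_1$, times $\phi_1(y_2^{-1}ty_1)$. Let $l_f$ be such that all the $\phi_1$ that occur are bounded multiples of the characteristic function of $\varpi^{-l_f}\Mat_c(\Oo)$.
\end{itemize}
The determinant factors are powers of $\abs{\det t}^{-D}$ and of the $\abs{\det y_i}$. By Lemma~\ref{lem:GLlevi}\ref{it:GLl2} and \eqref{eq:GLmumonotone}, $\abs{\det y_i}^{\pm1}$ is bounded by $\ct^{*}(X)$. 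Since $\ct^{*}(X)\le\vartheta(X)^{kc}$ (each minor is a sum of products of entries) or via the height bound of Lemma~\ref{lem:GLlevi}\ref{it:GLl1}, all these positive powers are absorbed into $\vartheta(X)^{\beta_0}$. This accounts for the precise form of $\beta_0$ in \eqref{eq:GLA}.

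\emph{Step 2: the key support argument.} This is the main obstacle: the factor $\abs{\det t}^{\sigma_s}=x^+(t)^{\sigma_s}x^-(t)^{\sigma_s}$ is unbounded in $x^-(t)$. The idea is to use $\phi_1$. On its support, $z:=y_2^{-1}ty_1\in\varpi^{-l_f}\Mat_c(\Oo)$, so $t=y_2\,z\,y_1^{-1}$. By \eqref{eq:GLCB} and Lemma~\ref{lem:GLlevi}\ref{it:GLl3},\ref{it:GLl4},
\[
\extn{r}{t}\le\extn{r}{y_2}\,\extn{r}{z}\,\extn{r}{y_1^{-1}}\le q^{l_fc}\,\ct^{*}(X),
\]
and so, by \eqref{eq:GLtminors}, $x^-(t)\le q^{l_fc}\ct^{*}(X)$. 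I then split $\ct^{*}(X)^{-2\sigma_s}=\ct^{*}(X)^{-3\sigma_s/2}\ct^{*}(X)^{-\sigma_s/2}$ and bound
\[
\ct^{*}(X)^{-3\sigma_s/2}\le q^{3l_fc\sigma_s/2}\,x^-(t)^{-3\sigma_s/2}.
\]
Combined with $x^-(t)^{\sigma_s}$ this yields the factor $x^+(t)^{\sigma_s}x^-(t)^{-\sigma_s/2}$ and the constant $C_{f,s}$. The remaining factor satisfies $\ct^{*}(X)^{-\sigma_s/2}\le\vartheta(X)^{-\sigma_s/2}$, using $\sigma_s>0$ and $\vartheta\le\ct^{*}$.

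\emph{Step 3: integrate and conclude.} Altogether the integrand is bounded by
\[
C_{f,s}\,\abs{\det t}^{-D}\,x^+(t)^{\sigma_s}\,x^-(t)^{-\sigma_s/2}\,\vartheta(X)^{-(\sigma_s/2-\beta_0)}.
\]
Choose $R_0$ so that $\Re(s)>R_0$ is equivalent to $\sigma_s/2-\beta_0>\fd_P$. Integrating over the $(kc)^2$ entries of $X$ then gives \eqref{eq:GLUPtorusbound} with $I_{\fd_P}(\sigma_s/2-\beta_0)<\infty$ by \eqref{eq:Iea}.

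For entire sections, Remark~\ref{rem:sections} shows that the finitely many $W\otimes W'$ lie in a fixed finite-dimensional space, with coefficients that are Laurent polynomials in $q^{-s}$. Hence $l_f$ is independent of $s$ and $C_f$ is locally bounded. Holomorphy then follows as in Lemma~\ref{lem:UPtorus}.
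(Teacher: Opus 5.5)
Your proposal is correct and follows the paper's proof essentially step by step. You use the same transfer to $\overline{U}_P$ via $\delta_0$, and the same application of \eqref{eq:thegauge2psi}, with the support of $\phi_1$ and Lemma~\ref{lem:GLlevi}\ref{it:GLl3},\ref{it:GLl4} giving $x^{-}(t)\le q^{l_fc}\ct^{*}(X)$. You then split $\ct^{*}(X)^{-2\sigma_s}$ in the same way. The only cosmetic difference is that you also offer bounding $\abs{\det y_i}^{\pm1}$ by $\ct^{*}(X)\le\vartheta(X)^{kc}$; this gives an exponent no larger than the paper's $\beta_0$, which the paper obtains via $\norm y\le\norm{a_1}\le\vartheta(X)^{2kc-1}$, so it is absorbed because $\vartheta\ge1$.
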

\begin{proof}
We prove the assertion with
\begin{equation}\label{eq:GLRprime}
R_0:=\tfrac12-\tfrac{kc}2+2\big(\beta_0+\fd_P\big),
\end{equation}
so that $\Re(s)>R_0$ is equivalent to $\sigma_s/2-\beta_0>\fd_P$; in particular $\sigma_s>0$.

Using $\delta_0\mathfrak t=m(a_t,I_{kc})\delta_0$ (see \eqref{eq:GLfrakt}),
\begin{equation}\label{eq:GLsubst}
\int_{U_P}\abs{f(s,\delta_0u\mathfrak t)}\,du=\delta_P(\mathfrak t)\int_{\Mat_{kc}}\abs{f\big(s,m(a_t,I_{kc})\,v(X)\,\delta_0\big)}\,dX .
\end{equation}
Let $X\in\Mat_{kc}$ and write $v(X)=u\,m(a_1,a_2)\kappa$ and $a_1=u'y\kappa'$ as in Lemma~\ref{lem:GLlevi}.
Since $m(a_t,I_{kc})$ normalizes $U_P$, \eqref{eq:GLsection} and Lemma~\ref{lem:GLlevi}\ref{it:GLl1} imply
\begin{equation}\label{eq:GLintegrand}
\abs{f\big(s,m(a_t,I_{kc})v(X)\delta_0\big)}=\abs{\det t}^{\sigma_s}\,\ct^{*}(X)^{-2\sigma_s}\,\abs{W_{\kappa\delta_0}(a_ta_1,a_2)} .
\end{equation}
Since $f(s,\cdot)$ is smooth and $K_H$ is compact, the set $\{W_{\kappa\delta_0}:\kappa\in K_H\}$ is finite, and each of its elements is a finite sum of functions
$W\otimes W'$ with $W\in W_\psi(\rho)$ and $W'\in W_\psi(\rho')$. Fix one such function $W\otimes W'$.

By Theorem~\ref{thm:mgpsi} and Lemma~\ref{lem:GLlevi}\ref{it:GLl1}, there is $C'\ge0$, depending on $W'$, such that 
\begin{equation*}
\abs{W'(a_2)}\le C'\norm{a_2}^{D'}\le C'\vartheta(X)^{(2kc-1)D'}.
\end{equation*}
Since $a_t\in M_{(c^k)}$ normalizes $V_{(c^k)}$, $a_ta_1=({}^{a_t}u')\,\diag(ty_1,y_2,\dots,y_k)\,\kappa'$. By \eqref{eq:thegauge2psi} there are
$\phi_1,\dots,\phi_{k-1}\in\mathcal S(\Mat_c)$, depending on $W$, such that
\[
\abs{W(a_ta_1)}\le\abs{\det t}^{-D}\,\abs{\eta_\rho(y_k)}\prod_{i=1}^{k-1}\abs{\det(y_k^{-1}y_i)}^{-D}\ \phi_1(y_2^{-1}ty_1)\prod_{i=2}^{k-1}\phi_i(y_{i+1}^{-1}y_i),
\]
where we used $k\ge2$. By properties~\ref{it:h1} and \ref{it:h3} of \S~\ref{ssec:height} and by Lemma~\ref{lem:GLlevi}\ref{it:GLl1},
$\norm y\le\norm{a_1}\le\vartheta(X)^{2kc-1}$. Since $\abs{\eta_\rho(g)}=\abs{\det g}^{e_\rho}$ and $\abs{\det y_i}^{\pm1}\le\norm {y_i}^c\le\norm y^c$, we also have $|\eta_\rho(y_k)|\leq\norm y^{c|e_\rho|}$ and $\prod_{i=1}^{k-1}\abs{\det(y_k^{-1}y_i)}^{-D}\le\norm y^{2c(k-1)D}$. 
Hence, since $\phi_1,\ldots,\phi_{k-1}$ are bounded and $\phi_1$, the only one of them whose argument involves $t$, is compactly supported, there are
a constant $C$ and an integer $l\ge0$, depending on $W$ only, such that 
\begin{equation*}
\abs{W(a_ta_1)}\le C\,\abs{\det t}^{-D}\,\vartheta(X)^{(2kc-1)(c\abs{e_\rho}+2c(k-1)D)}\,\mathbf 1_{\varpi^{-l}\Mat_c(\Oo)}(y_2^{-1}ty_1).
\end{equation*}

Assume that $Z:=y_2^{-1}ty_1\in\varpi^{-l}\Mat_c(\Oo)$. Then $t=y_2Zy_1^{-1}$, $\extn{r}{Z}\le q^{lr}$, and by \eqref{eq:GLCB} and
Lemma~\ref{lem:GLlevi}\ref{it:GLl3} and \ref{it:GLl4}, $\extn{r}{t}\le q^{lr}\ct^{*}(X)$ for all $0\le r\le c$. By \eqref{eq:GLtminors},
\begin{equation}\label{eq:GLkey}
x^{-}(t)\le q^{lc}\,\ct^{*}(X).
\end{equation}
Since $\sigma_s>0$, \eqref{eq:GLkey} and $\ct^{*}(X)\ge\vartheta(X)$ (see \eqref{eq:GLmumonotone}) imply
\begin{equation}\label{eq:GLsplitmu}
\ct^{*}(X)^{-2\sigma_s}=\ct^{*}(X)^{-3\sigma_s/2}\ct^{*}(X)^{-\sigma_s/2}\le q^{3lc\,\sigma_s/2}\,x^{-}(t)^{-3\sigma_s/2}\,\vartheta(X)^{-\sigma_s/2}.
\end{equation}
Combining \eqref{eq:GLintegrand}--\eqref{eq:GLsplitmu} with $\abs{\det t}=x^{+}(t)x^{-}(t)$, and summing over the finitely many functions $W\otimes W'$, we obtain $C_f\ge0$ and an integer $l_f\ge0$ such that
\[
\abs{f\big(s,m(a_t,I_{kc})v(X)\delta_0\big)}\le C_f\,q^{3l_f\,c\,\sigma_s/2}\,\abs{\det t}^{-D}\,x^{+}(t)^{\sigma_s}\,x^{-}(t)^{-\sigma_s/2}\,\vartheta(X)^{-(\sigma_s/2-\beta_0)},
\]
where $C_f$ and $l_f$ depend on the functions $W_{\kappa\delta_0}$, $\kappa\in K_H$, only. Substituting this bound in \eqref{eq:GLsubst} and integrating over the $\fd_P$ coordinates
of $X$, we obtain \eqref{eq:GLUPtorusbound}, where $I_{\fd_P}(\sigma_s/2-\beta_0)$ is finite by \eqref{eq:Iea} because $\sigma_s/2-\beta_0>\fd_P$.
For an entire section, by Remark~\ref{rem:sections}, $l_f$ can be chosen independently of $s$, and $C_f$ bounded when $s$ varies in a compact subset of $\{\Re(s)>R_0\}$.
Then the left-hand side of \eqref{eq:GLUPtorusbound} is bounded on compact subsets of $\{\Re(s)>R_0\}$, and the holomorphy follows.
\end{proof}

\subsection{The integral over \texorpdfstring{$U_0$}{U0}}\label{ssec:GLU0}
In this subsection $k\ge1$.
Let $D=D(\rho)$ and $D'=D(\rho')$. The element $\mathfrak t$ normalizes $U_0$, and by \eqref{eq:GLmoduli} and \eqref{eq:GLUPsplit},
\begin{equation}\label{eq:GLmodU0}
\mathrm{mod}_{U_0}(\mathfrak t)=\delta_P(\mathfrak t)\,\mathrm{mod}_{U_1}(\mathfrak t)^{-1}=\abs{\det t}^{-(k-1)c}.
\end{equation}
The following lemma is the analog of Remark~\ref{rem:U0Alonetorus}.
But in the present setting, when $x^{-}(t)>1$, the entries of the conjugate of $\delta_1$ by $\mathfrak t^{-1}$ provide the decay in $x^{-}(t)$.

\begin{lemma}\label{lem:GLU0}
Let $k\ge1$ and $f\in V(s,\rho\otimes\rho')$. There is a real number $R_1$, depending only on $k$, $c$, $D$ and $D'$, such that for every $s$ with $\Re(s)>R_1$
there is a constant $C_{f}\ge0$ such that for all $t\in T_G$,
\begin{equation}\label{eq:GLU0bound}
\int_{U_0}\abs{f(s,\delta u_0\mathfrak t)}\,du_0\ \le\ C_{f}I_{\fd_0}(\sigma_s/2-\beta_1)\,\mathrm{mod}_{U_0}(\mathfrak t)\,\norm t^{D+\beta_1}\,x^{+}(t)^{\sigma_s}\,x^{-}(t)^{-\sigma_s/2},
\end{equation}
where $\beta_1:=(2kc-1)(D+D')$, and the right-hand side of \eqref{eq:GLU0bound} is finite. If $f$ is an entire section, then $C_f$ can be chosen to be bounded when $s$ varies in a compact subset of
$\{\Re(s)>R_1\}$, and $s\mapsto\int_{U_0}f(s,\delta u_0\mathfrak t)\,du_0$ is holomorphic on $\Re(s)>R_1$.
\end{lemma}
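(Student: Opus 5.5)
The plan is to imitate Remark~\ref{rem:U0Alonetorus}: conjugate $\mathfrak t$ to the left, and let the conjugate of $\delta_1$ by $\mathfrak t^{-1}$ play the role that $\phi_1$ plays in Lemma~\ref{lem:GLUPtorus}.

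\emph{Step 1 (change of variables).} Substitute $u_0\mapsto{}^{\mathfrak t}u_0$. This produces the factor $\mathrm{mod}_{U_0}(\mathfrak t)$ from \eqref{eq:GLmodU0}, and the integrand becomes $f(s,\delta_0\mathfrak t\,({}^{\mathfrak t^{-1}}\delta_1)\,u_0)$. Since ${}^{\mathfrak t^{-1}}u(Y)=u(Ya_t)$, we get ${}^{\mathfrak t^{-1}}\delta_1=u(\widehat{t})$. Using $\delta_0\mathfrak t=m(a_t,I_{kc})\delta_0$ and ${}^{\delta_0}u(Y)=v(Y)$, the integrand is
\[
f\big(s,m(a_t,I_{kc})\,v(X)\,\delta_0\big),\qquad X:=\widehat{t}+Y_0,\quad u_0=u(Y_0).
\]
The bottom left $c\times c$ block of $Y_0$ vanishes. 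Hence the entries of $X$ are the coordinates of $Y_0$ together with the entries of $t$ in the bottom left block.

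\emph{Step 2 (pointwise bound).} Write $v(X)=u\,m(a_1,a_2)\kappa$ as in Lemma~\ref{lem:GLlevi}. By \eqref{eq:GLsection} and Lemma~\ref{lem:GLlevi}\ref{it:GLl1}, as in \eqref{eq:GLintegrand}, the integrand equals
\[
\abs{\det t}^{\sigma_s}\,\ct^{*}(X)^{-2\sigma_s}\,\abs{W_{\kappa\delta_0}(a_ta_1,a_2)}.
\]
The functions $W_{\kappa\delta_0}$, $\kappa\in K_H$, range over a finite set of sums of tensors $W\otimes W'$. Here only Theorem~\ref{thm:mgpsi} is needed, not the gauge, which is why $k\ge1$ is allowed. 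It gives
\[
\abs{W(a_ta_1)W'(a_2)}\le C\norm t^{D}\norm{a_1}^{D}\norm{a_2}^{D'}\le C\norm t^{D}\vartheta(X)^{\beta_1}.
\]
Moreover $\vartheta(X)\le\max(\vartheta(Y_0),\norm t)\le\vartheta(Y_0)\norm t$, so $\vartheta(X)^{\beta_1}\le\vartheta(Y_0)^{\beta_1}\norm t^{\beta_1}$. This accounts for the factor $\norm t^{D+\beta_1}$.

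\emph{Step 3 (the key decay, the main point).} Every minor of $t$, of any size, is a minor of $X$ whose columns lie among the first $c$. By \eqref{eq:GLtminors} this gives $\ct^{*}(X)\ge x^{-}(t)$. We also have $\ct^{*}(X)\ge\vartheta(X)\ge\vartheta(Y_0)$ by \eqref{eq:GLmumonotone}. Split the exponent as in \eqref{eq:GLsplitmu}: since $\sigma_s>0$,
\[
\ct^{*}(X)^{-2\sigma_s}\le x^{-}(t)^{-3\sigma_s/2}\,\vartheta(Y_0)^{-\sigma_s/2}.
\]
Combined with $\abs{\det t}=x^{+}(t)x^{-}(t)$, this turns $\abs{\det t}^{\sigma_s}$ into $x^{+}(t)^{\sigma_s}x^{-}(t)^{-\sigma_s/2}$. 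Altogether the integrand is bounded by
\[
C_f\,\norm t^{D+\beta_1}x^{+}(t)^{\sigma_s}x^{-}(t)^{-\sigma_s/2}\,\vartheta(Y_0)^{-(\sigma_s/2-\beta_1)}.
\]
Integrating over the $\fd_0$ coordinates of $Y_0$ gives \eqref{eq:GLU0bound}, with $I_{\fd_0}$ finite by \eqref{eq:Iea} once $\sigma_s/2-\beta_1>\fd_0$. So I would take $R_1:=\tfrac12-\tfrac{kc}2+2(\beta_1+\fd_0)$. For $k=1$, $U_0$ is trivial, $I_0=1$, and the same pointwise bound applies.

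\emph{Step 4 (holomorphy).} For entire sections, Remark~\ref{rem:sections} shows that the $W_{\kappa\delta_0}$ vary in a fixed finite-dimensional space with coefficients that are Laurent polynomials in $q^{-s}$. Hence $C_f$ is locally bounded, and holomorphy follows as in Lemma~\ref{lem:UPtorus}.

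The main obstacle is Step 3: making sure that $\ct^{*}(X)$ dominates both $x^{-}(t)$ and $\vartheta(Y_0)$ simultaneously. The bound $x^{-}(t)\le\ct^{*}(X)$ relies on the bottom left block of $Y_0$ being zero, so that the block of $X$ there is exactly $t$.
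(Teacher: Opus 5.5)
Your proposal is correct and follows the paper's proof essentially step by step. It uses the same substitution yielding $X=\widehat{t}+Y_0$, the same use of Theorem~\ref{thm:mgpsi} with Lemma~\ref{lem:GLlevi}\ref{it:GLl1}, the same lower bound $\ct^{*}(X)\ge\max(x^{-}(t),\vartheta(Y_0))$ with the split $\ct^{*}(X)^{-2\sigma_s}\le x^{-}(t)^{-3\sigma_s/2}\vartheta(Y_0)^{-\sigma_s/2}$, and the same $R_1$; the only cosmetic difference is that you get $\ct^{*}(X)\ge x^{-}(t)$ from all minors of the bottom left block $t$ via \eqref{eq:GLtminors}, whereas the paper exhibits the single diagonal minor on the indices $i$ with $\abs{t_i}>1$.
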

\begin{proof}
Recall that $\fd_0=\dim U_0=(kc)^2-c^2$. We prove the assertion with
\begin{equation}\label{eq:GLRU0}
R_1:=\tfrac12-\tfrac{kc}2+2(\beta_1+\fd_0),
\end{equation}
so that $\Re(s)>R_1$ is equivalent to $\sigma_s/2-\beta_1>\fd_0$; in particular $\sigma_s>0$.

Since ${}^{\mathfrak t}u(Y)=u(Ya_t^{-1})$, we have $\mathfrak t^{-1}\delta_1\mathfrak t=u(\widehat{t})$. Substituting $u_0\mapsto{}^{\mathfrak t}u_0$ and using $\delta_0\mathfrak t=m(a_t,I_{kc})\delta_0$ and ${}^{\delta_0}u(Y)=v(Y)$,
\begin{equation}\label{eq:GLU0subst}
\int_{U_0}\abs{f(s,\delta u_0\mathfrak t)}\,du_0=\mathrm{mod}_{U_0}(\mathfrak t)\int_{U_0}\abs{f\big(s,m(a_t,I_{kc})\,v(X)\,\delta_0\big)}\,du_0,\qquad X:=\widehat{t}+Y_0,
\end{equation}
where $u_0=u(Y_0)$.
For $k=1$ we have $X=t$ and there is no integration.

Write $v(X)=u\,m(a_1,a_2)\kappa$ as in Lemma~\ref{lem:GLlevi}. As in \eqref{eq:GLintegrand},
\[
\abs{f\big(s,m(a_t,I_{kc})v(X)\delta_0\big)}=\abs{\det t}^{\sigma_s}\,\ct^{*}(X)^{-2\sigma_s}\,\abs{W_{\kappa\delta_0}(a_ta_1,a_2)} ,
\]
and the functions $W_{\kappa\delta_0}$, $\kappa\in K_H$, are finite sums of functions $W\otimes W'$ from a finite set. Note that
$\norm{a_t}=\norm t$. By Theorem~\ref{thm:mgpsi} and Lemma~\ref{lem:GLlevi}\ref{it:GLl1}, there is $C\ge0$, depending on $W$ and $W'$, such that 
\[
\abs{W(a_ta_1)W'(a_2)}\le C\,\norm{t}^{D}\norm{a_1}^{D}\norm{a_2}^{D'}\le C\,\norm t^{D}\,\vartheta(X)^{\beta_1}.
\]
The entries of $X$ are the entries of $Y_0$ and of $t$, so $\vartheta(X)\le\norm t\,\vartheta(Y_0)$.

We turn to bound $\ct^{*}(X)$ from below. First, $\ct^{*}(X)\ge\vartheta(X)\ge\vartheta(Y_0)$ by \eqref{eq:GLmumonotone}. Second, let $S$ be the set of indices $1\le i\le c$ with $\abs{t_i}>1$.
The submatrix of $X$ on the rows $(k-1)c+i$, $i\in S$, and the columns $i\in S$, lies in the bottom left $c\times c$ block of $X$, so it is the diagonal matrix with the entries $t_i$, $i\in S$,
and its determinant has the absolute value $x^{-}(t)$. Hence
\begin{equation}\label{eq:GLkeyU0}
\ct^{*}(X)\ge\max\big(x^{-}(t),\vartheta(Y_0)\big).
\end{equation}
Since $\sigma_s>0$, \eqref{eq:GLkeyU0} implies $\ct^{*}(X)^{-2\sigma_s}\le x^{-}(t)^{-3\sigma_s/2}\,\vartheta(Y_0)^{-\sigma_s/2}$. Combining the bounds (including
$\vartheta(X)\le\norm t\,\vartheta(Y_0)$) with $\abs{\det t}=x^{+}(t)x^{-}(t)$, and letting $C_f$ be the maximum over the constants $C$ above, we obtain
\[
\abs{f\big(s,m(a_t,I_{kc})v(X)\delta_0\big)}\le C_f\,\norm t^{D+\beta_1}\,x^{+}(t)^{\sigma_s}\,x^{-}(t)^{-\sigma_s/2}\,\vartheta(Y_0)^{-(\sigma_s/2-\beta_1)} .
\]
For $k=1$ this is \eqref{eq:GLU0bound} ($\fd_0=0$ so $I_{\fd_0}\equiv1$). For $k\ge2$, substituting this bound in \eqref{eq:GLU0subst} and integrating over the $\fd_0$ coordinates of $Y_0$,
we obtain \eqref{eq:GLU0bound}, where $I_{\fd_0}(\sigma_s/2-\beta_1)$ is finite by \eqref{eq:Iea} because $\sigma_s/2-\beta_1>\fd_0$.
The assertions on entire sections follow as in the proof of Lemma~\ref{lem:GLUPtorus} (the integer $l_f$ does not occur here).
\end{proof}

\begin{example}\label{ex:GLU0}
Let $k=c=1$, so that $H=\GL_2$, $U_0$ is trivial, $\rho$ and $\rho'$ are quasi-characters of $F^\times$, and $X=t\in F^\times$. Then
\[
\delta\,\mathfrak t=\begin{pmatrix}&1\\1&\end{pmatrix}\begin{pmatrix}1&1\\&1\end{pmatrix}\begin{pmatrix}1&\\&t\end{pmatrix}=\begin{pmatrix}&t\\1&t\end{pmatrix}=m(t,1)\,v(t)\,\delta_0,
\qquad\ct^{*}(t)=\max(1,\abs t).
\]
For $\abs t\le1$ we have $v(t)\in K_H$, and for $\abs t>1$ we have $v(t)=u(t^{-1})\,m(t^{-1},t)\left(\begin{smallmatrix}&-1\\1&t^{-1}\end{smallmatrix}\right)$. Hence
\[
\abs{f(s,\delta\mathfrak t)}=\abs t^{\sigma_s}\max(1,\abs t)^{-2\sigma_s}
C\abs{\rho(ta_1)\rho'(a_2)}=C\min(\abs t,\abs t^{-1})^{\sigma_s}\abs{\rho(ta_1)\rho'(a_2)},
\]
with $(a_1,a_2)=(1,1)$ or $(t^{-1},t)$, and $Z(s,\omega,f)$ is an integral of Tate type.
\end{example}
\begin{example}\label{ex:GLk2c1}
Let $k=2$ and $c=1$, so that $H=\GL_4$. For $u_0=u(Y_0)$,
\[
X=\begin{pmatrix}x_1&x_2\\t&x_3\end{pmatrix},\qquad du_0=dx_1\,dx_2\,dx_3,\qquad \ct^{*}(X)=\max\big(1,\abs t,\abs{x_1},\abs{x_2},\abs{x_3},\abs{x_1x_3-tx_2}\big),
\]
and \eqref{eq:GLkeyU0} reads $\ct^{*}(X)\ge\max(1,\abs t,\abs{x_1},\abs{x_2},\abs{x_3})$.
\end{example}

\subsection{The theorem}\label{ssec:GLtheorem}
For the representatives \eqref{eq:GLcone} we have, as in \eqref{eq:volKtK},
\begin{equation}\label{eq:GLvolKtK}
\vol(K_GtK_G)\le C_G\,\delta_{B_G}(t)^{-1}\le C_G\,x^{\mp}(t)^{c-1},
\end{equation}
because $\delta_{B_G}(t)^{-1}=\prod_{i<j}\abs{t_j/t_i}=q^{\sum_{i=1}^{c}(c+1-2i)e_i}$. As in Lemma~\ref{lem:mcbound}, by \cite{Wal03} there is $E\ge0$, depending only on $c$ and on a bound for the absolute values of the real parts of the cuspidal exponents of $\pi$,
such that every matrix coefficient $\omega$ of $\pi^\vee$ satisfies
\begin{equation}\label{eq:GLmcbound}
\abs{\omega(t)}\le C_\omega\,\norm t^{E}\le C_\omega\,x^{\mp}(t)^{E}
\end{equation}
for all $t$ as in \eqref{eq:GLcone}. If $\pi=\pi_0\abs{\det}^{r}$ with $\pi_0$ unitary and $r\in\bR$, then we can take $E=c\abs{r}$, because
$\omega=\omega_0\abs{\det}^{-r}$ where $\omega_0$ is a matrix coefficient of $\pi_0^\vee$, which is bounded,
and $\abs{\det t}^{\pm1}\le\norm t^{c}$ by property~\ref{it:h4} of \S~\ref{ssec:height}.

\begin{theorem}\label{thm:GLdoublingconv}
Let $G=\GL_c$, $k\ge1$, let $\pi\in\Irr(G)$, and let $\rho$ and $\rho'$ be $(k,c)$ representations of $\GL_{kc}$. There is a real number $R$,
depending only on $k$, $c$, $e_\rho$, $D(\rho)$, $D(\rho')$ and $E$, such that the following holds for every $s$ with $\Re(s)>R$, every matrix coefficient $\omega$ of
$\pi^\vee$ and every $f\in V(s,\rho\otimes\rho')$.
\begin{enumerate}[leftmargin=*,label=(\arabic*),ref=(\arabic*)]
\item\label{it:GLdc1} If $k\ge2$, the sum over the representatives \eqref{eq:GLcone} with the domain $U_P$ converges:
\begin{equation}\label{eq:GLstrongUP}
\sum_{t}\vol(K_GtK_G)\,\abs{\omega(t)}\int_{U_P}\abs{f(s,\delta_0u\mathfrak t)}\,du<\infty .
\end{equation}
\item\label{it:GLdc2} The integral $Z(s,\omega,f)$ is absolutely convergent.
\end{enumerate}
For an entire section $f$, $Z(s,\omega,f)$ is holomorphic on $\Re(s)>R$.
\end{theorem}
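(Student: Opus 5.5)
The proof follows the pattern of Theorem~\ref{thm:doublingconv}, with Lemmas~\ref{lem:GLUPtorus} and \ref{lem:GLU0} in place of Lemma~\ref{lem:UPtorus}.

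\textbf{Step 1: reduction to the Cartan representatives.} I first establish the analog of Proposition~\ref{prop:torusreduction} for $G=\GL_c$.
\begin{itemize}
\item Write $g=\kappa_1t\kappa_2$ with $t$ as in \eqref{eq:GLcone}. Then $\mathfrak e_2(g)=\mathfrak e(\kappa_1,\kappa_1)\,\mathfrak t\,\mathfrak e(\kappa_1^{-1},\kappa_2)$.
\item Apply the analog of Lemma~\ref{lem:abscor23}. It follows from \eqref{eq:GLdiaginvariance} together with $\abs{\eta_\rho(\kappa_1)\eta_{\rho'}(\kappa_1)}=1$. The modulus of the conjugation action on ${}^{\delta^{-1}}m(V_{(c^k)})\backslash U$ is a character of $\GL_c$ trivial on $K_G$.
\end{itemize}
This bounds $Z^{\mathrm{abs}}$ by a sum over $t$ of
\[
\vol(K_GtK_G)\,\max_j\abs{\omega_j(t)}\,\max_l\int_{U_0}\abs{f_l(s,\delta u_0\mathfrak t)}\,du_0 .
\]

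\textbf{Step 2: assertion (2).} I insert Lemma~\ref{lem:GLU0}, \eqref{eq:GLvolKtK}, \eqref{eq:GLmcbound} and \eqref{eq:GLmodU0}. Using $\norm t\le x^{\mp}(t)$ and $\abs{\det t}^{\pm1}\le x^{\mp}(t)$, each summand is at most a constant times
\[
x^{\mp}(t)^{c-1+E+(k-1)c+D+\beta_1}\,x^{+}(t)^{\sigma_s}\,x^{-}(t)^{-\sigma_s/2}.
\]
Since $x^{+}(t)\le1\le x^{-}(t)$ and $x^{\mp}=x^-/x^+$, this is at most $x^{\mp}(t)^{-(\sigma_s/2-\beta_2)}$, where $\beta_2$ is the exponent above.

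The representatives with $x^{\mp}(t)=q^{e}$ satisfy $\sum\abs{e_i}=e$, so there are at most $(2e+1)^c$ of them. The series is therefore dominated by $\sum_e(2e+1)^cq^{-e(\sigma_s/2-\beta_2)}$. This is finite once $R\ge R_1$ and $\sigma_s/2>\beta_2$. Note that $R_1$ of Lemma~\ref{lem:GLU0} depends only on $k,c,D,D'$, so this step is valid for every $k\ge1$.

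Holomorphy for entire sections follows because the constants $C_{f_l}$ are locally bounded, as in Theorem~\ref{thm:doublingconv}.

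\textbf{Step 3: assertion (1), $k\ge2$.} I use Lemma~\ref{lem:GLUPtorus} directly, with \eqref{eq:GLmoduli}: $\delta_P(\mathfrak t)\abs{\det t}^{-D}\le x^{\mp}(t)^{kc+D}$. Combined with the volume and matrix-coefficient bounds, the summand is at most
\[
C_{f,s}\,x^{\mp}(t)^{c-1+E+kc+D-\sigma_s/2}.
\]
The factor $C_{f,s}=C_fq^{3l_fc\sigma_s/2}$ does not depend on $t$, so the sum converges when $\sigma_s/2$ exceeds that exponent plus $0$ and $\Re(s)>R_0$.

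Finally, I take $R$ as the maximum of all these thresholds. It depends only on $k,c,e_\rho,D,D',E$, since $e_\rho$ enters through $\beta_0$.

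\textbf{Main obstacle.} The only delicate point is the equivariance step in Step~1 for $\GL_c$. Unlike the symplectic case, $\eta_\rho\eta_{\rho'}$ is a nontrivial character, and the modulus of conjugation by $\mathfrak e(g_0,g_0)$ is not identically $1$. Both must be checked to have absolute value $1$ on $K_G$, which holds since any character is unitary on a compact group. Everything else is bookkeeping of exponents.
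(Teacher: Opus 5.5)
Your proof is correct. The reduction to the Cartan representatives and part (1) coincide with the paper's proof. Part (2) for $k\ge2$ is where you diverge.

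The paper deduces (2) from (1). For each representative it passes from $U_0$ to $U_P$ by Lemma~\ref{lem:slice}. This costs the factor $\mathrm{mod}_{U_1}(\mathfrak t)^{-1}=\abs{\det t}^{c}\le x^{\mp}(t)^{c}$, hence the shift $R=R_2+2c$, and the constant $\vol(K_{f,1})^{-1}$. The paper uses Lemma~\ref{lem:GLU0} only for $k=1$.

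You instead use Lemma~\ref{lem:GLU0} for every $k\ge1$, which is exactly the alternative recorded in Remark~\ref{rem:GLU0allk}. For (2) your route is more economical:
\begin{itemize}
\item it needs only the moderate growth of Theorem~\ref{thm:mgpsi}, not the Schwartz--Bruhat function $\phi_1$ of \eqref{eq:thegauge2psi};
\item it treats all $k$ uniformly;
\item its half-plane for (2) does not involve $e_\rho$.
\end{itemize}
The paper's route has its own merit: it presents (2) as a consequence of the stronger statement \eqref{eq:GLstrongUP}, the form in which convergence was asserted in \cite[(2.10)]{CFK22}. Since (1) has to be proved for $k\ge2$ anyway, both routes end up relying on Lemma~\ref{lem:GLUPtorus} for the theorem as stated.

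There is a minor imprecision in your Step~1. The quotient ${}^{\delta^{-1}}m(V_{(c^k)})\backslash U$ is borrowed from the symplectic setting and has no literal meaning here, since in the appendix $m$ takes two arguments. It is simpler to argue as the paper does:
\begin{itemize}
\item $\mathfrak e(\kappa_0,\kappa_0)=m(\kappa_0^{\triangle},\kappa_0^{\triangle})$ normalizes $U_0$ itself, acting by $Y\mapsto\kappa_0^{\triangle}Y(\kappa_0^{\triangle})^{-1}$;
\item this action preserves $du_0$;
\item then \eqref{eq:GLdiaginvariance} gives \eqref{eq:GLabscor}.
\end{itemize}
Contrary to what you say under ``Main obstacle,'' this modulus is identically $1$ for every $g_0\in G$. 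The Jacobian of $Y\mapsto g^{\triangle}Y(g^{\triangle})^{-1}$ is $1$ on $\Mat_{kc}$ and also on the quotient by the $Y_0$-space, which is the bottom left block. The only factor that forces the restriction to $K_G$ is $\abs{\eta_\rho\eta_{\rho'}}$. Your conclusion is unaffected.
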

\begin{proof}
Let $\beta_0$ be given by \eqref{eq:GLA}, $\beta_1$ be as in Lemma~\ref{lem:GLU0}, and $R_0$ and $R_1$ be given by \eqref{eq:GLRprime} and \eqref{eq:GLRU0}.
We prove the assertions with $R:=R_2+2c$ for $k\ge2$ and $R:=R_3$ for $k=1$, where
\begin{equation*}
\begin{split}
R_2&:=\max\big(R_0,\ \tfrac12-\tfrac{kc}2+2(c-1+kc+D+E)\big),\\
R_3&:=\max\big(R_1,\ \tfrac12-\tfrac{c}2+2(c-1+D+\beta_1+E)\big).
\end{split}
\end{equation*}
Assume $k\ge2$ and let $\Re(s)>R_2$. Since $R_2\ge R_0$, Lemma~\ref{lem:GLUPtorus} applies; in particular $\sigma_s/2-\beta_0>\fd_P$ (see \eqref{eq:GLsigmas} and \eqref{eq:GLRprime}), so that
$I_{\fd_P}(\sigma_s/2-\beta_0)$ is finite. By Lemma~\ref{lem:GLUPtorus}, \eqref{eq:GLmoduli}, \eqref{eq:GLvolKtK}, \eqref{eq:GLmcbound} and $\abs{\det t}^{-1}\le x^{\mp}(t)$, the summand of
\eqref{eq:GLstrongUP} is at most
\[
C\,x^{\mp}(t)^{c-1+E+kc+D}\,x^{+}(t)^{\sigma_s}x^{-}(t)^{-\sigma_s/2}\le C\,x^{\mp}(t)^{-a},\qquad a:=\tfrac{\sigma_s}2-(c-1)-E-kc-D,
\]
because $x^{+}(t)^{\sigma_s}x^{-}(t)^{-\sigma_s/2}\le x^{\mp}(t)^{-\sigma_s/2}$ by \eqref{eq:GLxpm}. Here $C:=C_GC_\omega C_{f,s}\,I_{\fd_P}(\sigma_s/2-\beta_0)<\infty$. 
The inequality $\Re(s)>\tfrac12-\tfrac{kc}2+2(c-1+kc+D+E)$, which holds since $\Re(s)>R_2$, is equivalent to $a>0$. The number of the representatives $t$ with $x^{\mp}(t)=q^{j}$ is at most
$(2j+1)^c$, so \eqref{eq:GLstrongUP} is bounded by a constant times $\sum_{j\ge0}(2j+1)^cq^{-aj}<\infty$. This proves \ref{it:GLdc1} for $\Re(s)>R_2$, in particular for $\Re(s)>R$.

For $k\ge2$, \ref{it:GLdc2} also follows from \ref{it:GLdc1}. First we reduce $Z^{\mathrm{abs}}(s,\omega,f)$ to a sum over the representatives \eqref{eq:GLcone};
this reduction is valid for all $k\ge1$. Let $\kappa_0\in K_G$. The element $\mathfrak e(\kappa_0,\kappa_0)=m(\kappa_0^{\triangle},\kappa_0^{\triangle})$ normalizes $U_0$ and preserves $du_0$:
indeed ${}^{\mathfrak e(\kappa_0,\kappa_0)}u(Y)=u(\kappa_0^{\triangle}Y(\kappa_0^{\triangle})^{-1})$, and the bottom left $c\times c$ block of $\kappa_0^{\triangle}Y(\kappa_0^{\triangle})^{-1}$ is the conjugate by $\kappa_0$ of
the bottom left $c\times c$ block of $Y$. Since $\abs{\eta_\rho(\kappa_0)\eta_{\rho'}(\kappa_0)}=1$, the substitution $u_0\mapsto{}^{\mathfrak e(\kappa_0,\kappa_0)}u_0$ and \eqref{eq:GLdiaginvariance} imply, for all $h\in H$,
\begin{equation}\label{eq:GLabscor}
\int_{U_0}\abs{f(s,\delta u_0\,\mathfrak e(\kappa_0,\kappa_0)h)}\,du_0=\int_{U_0}\abs{f(s,\delta u_0h)}\,du_0 .
\end{equation}
Let $g=\kappa_1t\kappa_2$ with $\kappa_1,\kappa_2\in K_G$ and $t$ as in \eqref{eq:GLcone}. Since $\mathfrak e_1(G)$ and $\mathfrak e_2(G)$ commute,
$\mathfrak e_2(g)=\mathfrak e(\kappa_1,\kappa_1)\,\mathfrak t\,\mathfrak e(\kappa_1^{-1},\kappa_2)$ and $\mathfrak e(\kappa_1^{-1},\kappa_2)\in K_H$. The argument of the proof of
Proposition~\ref{prop:torusreduction}, with \eqref{eq:GLabscor} in place of Lemma~\ref{lem:abscor23}, provides finitely many matrix coefficients $\omega_j$ of $\pi^\vee$
and sections $f_i\in V(s,\rho\otimes\rho')$, which are translates of $\omega$ and $f$ by elements of $K_G$ and $K_H$, such that
\begin{equation}\label{eq:GLtorussum}
Z^{\mathrm{abs}}(s,\omega,f)\le\sum_t\vol(K_GtK_G)\max_j\abs{\omega_j(t)}\max_i\int_{U_0}\abs{f_i(s,\delta u_0\mathfrak t)}\,du_0 .
\end{equation}
Now the proof of Lemma~\ref{lem:slice} applies with the present $U_P$ and $U_0$, by \eqref{eq:GLUPsplit} and the commutativity of $U_P$;
hence the analog of \eqref{eq:slice} holds for $b=\mathfrak t$ and a compact open subgroup $K_f$ fixing the sections $f_i$, with
$\mathrm{mod}_{U_1}(\mathfrak t)^{-1}=\abs{\det t}^{c}\le x^{\mp}(t)^{c}$ by \eqref{eq:GLmoduli}. Combined with the bound $C\,x^{\mp}(t)^{-a}$ obtained above for the pairs $(\omega_j,f_i)$,
the summand of \eqref{eq:GLtorussum} is at most a constant multiple of $x^{\mp}(t)^{c-a}$. If $\Re(s)>R=R_2+2c$, then $a>c$, and the sum
converges as above. This proves \ref{it:GLdc2} for $k\ge2$. If $f$ is an entire section, the holomorphy follows as in the proof of Theorem~\ref{thm:doublingconv}, using Lemma~\ref{lem:GLUPtorus}.

Now let $k=1$ and $\Re(s)>R_3$. Since $R_3\ge R_1$, Lemma~\ref{lem:GLU0} applies, and
by Lemma~\ref{lem:GLU0}, \eqref{eq:GLmodU0}, \eqref{eq:GLvolKtK}, \eqref{eq:GLmcbound} and $\norm t\le x^{\mp}(t)$, the summand of \eqref{eq:GLtorussum} is at most a constant multiple of
\[
x^{\mp}(t)^{c-1+E+D+\beta_1}\,x^{+}(t)^{\sigma_s}x^{-}(t)^{-\sigma_s/2}\le x^{\mp}(t)^{-a'},\qquad a':=\tfrac{\sigma_s}2-(c-1)-E-D-\beta_1,
\]
and $a'>0$ since $\Re(s)>R_3$. The sum converges as above, which proves \ref{it:GLdc2}. The holomorphy follows as above, with Lemma~\ref{lem:GLU0} in place of Lemma~\ref{lem:GLUPtorus}.
\end{proof}

\begin{remark}\label{rem:GLU0allk}
The argument of the last paragraph of the proof of Theorem~\ref{thm:GLdoublingconv} applies to every $k\ge1$, with $R_3:=\max\big(R_1,\ \tfrac12-\tfrac{kc}2+2(c-1+(k-1)c+D+\beta_1+E)\big)$:
by Lemma~\ref{lem:GLU0}, \eqref{eq:GLmodU0}, \eqref{eq:GLvolKtK}, \eqref{eq:GLmcbound}, $\norm t\le x^{\mp}(t)$ and $\abs{\det t}^{-1}\le x^{\mp}(t)$, the summand of \eqref{eq:GLtorussum} is at most a constant multiple of
$x^{\mp}(t)^{-a'}$, $a':=\tfrac{\sigma_s}2-(c-1)-E-(k-1)c-D-\beta_1>0$.
Thus Theorem~\ref{thm:GLdoublingconv}\ref{it:GLdc2} and the holomorphy can be obtained without the passage to $U_P$.
\end{remark}

\begin{remark}\label{rem:GLkey}
The inequality \eqref{eq:GLkey} is the point of the proof of Lemma~\ref{lem:GLUPtorus}: on the support of the integrand, the part $x^{-}(t)$ of $t$ which makes $\abs{\det t}^{\sigma_s}$ large is dominated
by $\ct^{*}(X)=\abs{\det a_2}$, and the section contributes $\ct^{*}(X)^{-2\sigma_s}$, the exponent being $2\sigma_s$ because $s$ appears in the two blocks of $M_P$ with opposite signs.
A bound of the form $\abs{W(a_ta_1)}\le C\norm{a_ta_1}^{D}$, which is what Theorem~\ref{thm:mgpsi} provides, does not suffice for the integral over $U_P$ (in contrast with Lemma~\ref{lem:GLU0}, where \eqref{eq:GLkeyU0} holds for all $t$, not only on the support of the integrand), since the gain required is a power of $\ct^{*}(X)$
proportional to $\sigma_s$. In the proof of \cite[Proposition~2.5]{CFK22} the inner integral was bounded by $\upsilon(t)\abs{\det t}^{\Re(s)d+d'}\int_{U_P}\abs{f(s,\delta_0u)}\,du$,
where $\upsilon$ is a gauge which vanishes unless the coordinates of $t$ are small. This separation of $t$ from $u$ is not available as stated, because the argument of the
$(k,c)$ function is $a_ta_1$ and not $a_t$: by \eqref{eq:GLkey} the coordinates of $t$ are small only relative to $\ct^{*}(X)$. Lemma~\ref{lem:GLUPtorus} is the corrected form of this bound,
and it implies the statement \eqref{eq:GLstrongUP}, which is the content of \cite[(2.10)]{CFK22} for $G=\GL_c$.
\end{remark}

\begin{remark}\label{rem:GLk1}
For $k=1$ the bound \eqref{eq:thegauge2psi} contains no Schwartz--Bruhat function, and
\eqref{eq:GLstrongUP} fails, as noted in the proof of \cite[Proposition~2.5]{CFK22}. For example, let $c=1$: then $U_P=U_1$, and for $\abs t>1$ and $\abs x\le1$ we have
$\abs{f(s,m(t,1)v(x)\delta_0)}=\abs t^{\sigma_s}\abs{\eta_\rho(t)}\abs{f(s,\delta_0)}$ whenever $f(s,\cdot)$ is right-invariant under $u(\Oo)$, and this is unbounded in $t$ for $\Re(s)\gg0$.
The integral $Z(s,\omega,f)$ itself converges by Theorem~\ref{thm:GLdoublingconv}\ref{it:GLdc2}, and $\delta_1$ plays a key role in this, through \eqref{eq:GLkeyU0}; see Example~\ref{ex:GLU0}.
\end{remark}

\bibliographystyle{amsalpha}
\bibliography{lemma314-final}

\end{document}